\newcommand{\cod}
 {{\rm cod}}
\newcommand{\comp}
 {\circ}
\newcommand{\Cont}
 {{\bf Cont}}
\newcommand{\Gmod}
{\mathbb{G}\mbox{-mod}(\Set)}
\newcommand{\Gemod}
{\mathbb{G}_{e}\mbox{-mod}(\Set)}
\newcommand{\N}
{\mathbb{N}}
\newcommand{\Z}
{\mathbb{Z}}
\newcommand{\Q}
{\mathbb{Q}}
\newcommand{\R}
{\mathbb{R}}
\newcommand{\T}
{\mathbb{T}}
\newcommand{\G}
{\mathbb{G}}
\renewcommand{\O}
{\mathbb{O}}
 \newcommand{\kgen}
 {k-\rm{Gen}_{\vec{z}}(\vec{x}, \vec{y})}
\newcommand{\dom}
 {{\rm dom}}
\newcommand{\empstg}
 {[\,]}
\newcommand{\epi}
 {\twoheadrightarrow}
\newcommand{\hy}
 {\mbox{-}}
\newcommand{\im}
 {{\rm im}}
\newcommand{\imp}
 {\!\Rightarrow\!}
\newcommand{\Ind}[1]
 {{\rm Ind}\hy #1}
\newcommand{\mono}
 {\rightarrowtail}
\newcommand{\name}[1]
 {\mbox{$\ulcorner #1 \urcorner$}}
\newcommand{\ob}
 {{\rm ob}}
\newcommand{\op}
 {^{\rm op}}
\newcommand{\Set}
 {{\bf Set }}
\newcommand{\Sh}
 {{\bf Sh}}
\newcommand{\sh}
 {{\bf sh}}
\newcommand{\Sub}
 {{\rm Sub}}
\newtheorem{theorem}{Theorem}[section]
\newtheorem{lemma}[theorem]{Lemma}
\newtheorem{proposition}[theorem]{Proposition}
\theoremstyle{definition}
\newtheorem{definition}[theorem]{Definition}
\newtheorem{example}[theorem]{Example}
\newtheorem{obs}[theorem]{Remark}
\newtheorem{remark}[theorem]{Remark}
\newcommand{\E}{\mathscr{E}}
\renewcommand{\P}{\mathbb{P}^*}
\title[Lattice-ordered abelian groups and perfect MV-algebras]{Lattice-ordered abelian groups and perfect MV-algebras: a topos-theoretic perspective}
\author{Olivia Caramello}
\author{Anna Carla Russo}
\begin{document}
\baselineskip=15pt

\begin{abstract}
We establish, generalizing Di Nola and Lettieri's categorical equiva\-lence, a Morita-equivalence between the theory of lattice-ordered abelian groups and that of perfect MV-algebras. Further, after observing that the two theo\-ries are not bi-interpretable in the classical sense, we identify, by conside\-ring appropriate topos-theoretic invariants on their common classifying topos, three levels of bi-intepretability holding for particular classes of formulas: irreducible formulas, geometric sentences and imaginaries. Lastly, by investigating the classifying topos of the theory of perfect MV-algebras, we obtain various results on its syntax and semantics also in relation to the cartesian theory of the variety generated by Chang's MV-algebra, including a concrete representation for the finitely presentable models of the latter theory as finite products of finitely presentable perfect MV-algebras. Among the results established on the way, we mention a Morita-equivalence between the theory of lattice-ordered abelian groups and that of cancellative lattice-ordered abelian monoids with bottom element.
\end{abstract}

\maketitle

\tableofcontents

\section{Introduction}

\renewcommand{\arraystretch}{1.6}

This paper represents a contribution to the investigation, initiated in \cite{Russo}, of notable categorical equivalences arising in the field of many-valued logics from a topos-theoretic perspective.

A sound and complete algebraic semantics for the propositional infinite-valued logic of \L ukasiewicz is provided by the class of MV-algebras, introduced by Chang in 1958. Since then, many mathematicians from different backgrounds have deve\-loped an interest in this class of algebras. Indeed, the literature contains several applications of MV-algebras in different areas such as lattice-ordered abelian group theory and functional analysis (cf. \cite{Mundici} and \cite{P-L}).

In 1994, A. Di Nola and A. Lettieri established a categorical equivalence between the category of perfect MV-algebras and that of lattice-ordered abelian groups (cf. \cite{P-L}). Perfect MV-algebras form an interesting class of MV-algebras, which is directly related to the important problem of incompleteness of first-order \L ukasiewicz logic; indeed, the subalgebra of the Lindenbaum algebra of first-order \L ukasiewicz logic generated by the classes of formulas which are valid but not prova\-ble is a perfect MV-algebra (cf. \cite{BC}).   

In this paper we constructively generalize Di Nola-Lettieri's equivalence by interpreting the involved categories as categories of set-based models of two particular theories, namely the theory $\mathbb{P}$ of perfect MV-algebras and the theory $\mathbb{L}$ of lattice-ordered abelian groups. Furthermore, we show that we can actually `lift' this categorical equivalence between the categories of models of these two theories from the topos of sets to an arbitrary Grothendieck topos $\mathscr{E}$, naturally in $\mathscr{E}$; that is, for any geometric morphism $f:\mathscr{F}\to \mathscr{E}$ changing the universe in which the models live with inverse image functor $f^{\ast}:\mathscr{E}\rightarrow \mathscr{F}$, we have a commutative diagram

\begin{center} 

\begin{tikzpicture}
\node (1) at (0,0) {$\mathbb{L}$-mod$(\mathscr{F})$};
\node (2) at (5,0) {$\mathbb{P}$-mod$(\mathscr{F})$};
\node (3) at (0,2) {$\mathbb{L}$-mod$(\mathscr{E})$};

\node (5) at (-4,2) {$\mathscr{E}$};
\node (6) at (-4,0) {$\mathscr{F}$};
\draw[->] (5) to node [right, midway] {$f^*$} (6);

\node (4) at (5,2) {$\mathbb{P}$-mod$(\mathscr{E})$};
\draw[->] (1) to node [below,midway] {$\tau_{\mathscr{F}}$} (2);
\draw[->] (3) to node [left, midway] {$f^{\ast}$} (1);
\draw[->] (4) to node [right, midway] {$f^*$} (2);
\draw[->] (3) to node [above,midway] {$\tau_{\mathscr{E}}$} (4);
\end{tikzpicture}
\end{center}
where $\tau_{\mathscr{E}}:\mathbb{L}$-mod$(\mathscr{E})\rightarrow \mathbb{P}$-mod$(\mathscr{E})$ is the equivalence between the categories of models of the two theories in the topos $\mathscr{E}$.

This means that the two theories have the same (up to categorical equivalence) classifying topos, i.e. that they are \textit{Morita-equivalent}. 

Morita-equivalences are important for our purposes because they allow us to apply a particular topos-theoretic technique, namely the `bridge technique', introduced by the first author in \cite{Caramello1}, to transfer properties and results between the two theories by using the common classifying topos as a `bridge', on which different kinds of topos-theoretic invariants can be considered in relation to its different representations.  

Applications of this technique in the context of our Morita-equivalence produce a variety of insights on the theories, which would be not visible or hardly attainable by using alternative methods. For instance, whilst the two theories are not classically bi-interpretable, as we prove in the paper, the Morita-equivalence between them yields three different levels of bi-interpretability between particular classes of formulas: irreducible formulas, geometric sentences and imaginaries. Other applications are described in section \ref{sec:someapplications}.

In the final part of the paper, we study in detail the classifying topos of the theo\-ry of perfect MV-algebras, representing it as a subtopos of the classifying topos for the algebraic theory axiomatizing the variety generated by Chang's MV-algebra. This investigation sheds light on the relationship between these two theories, notably leading to a representation theorem for finitely generated (resp. finitely presented) algebras in Chang's variety as finite products of finitely generated (resp. finitely presented) perfect MV-algebras. It is worth to note that this result, unlike most of the representation theorems available in the literature, is fully constructive. Among the other insights, we mention a characterization of the perfect MV-algebras which correspond to finitely presented lattice-ordered abelian groups via Di Nola-Lettieri's equivalence as the finitely presented objects of Chang's variety which are perfect MV-algebras, and the property that the theory axiomatizing Chang's variety proves all the cartesian sequents (in particular, all the algebraic identities) which are valid in all perfect MV-algebras. Further applications are given in section \ref{sec:someapplications}.

The paper is organised as follows.

In section \ref{topos theory} we introduce the basic topos-theoretic background relevant for understanding the paper.

In section \ref{perfect} we recall the most important notions and notations for MV-algebras. Further, we present an axiomatization for the theory of perfect MV-algebras and give a list of provable sequents in this theory that will be useful in the following. An analogous treatment for lattice-ordered abelian groups is provided in section \ref{groups}.

After reviewing Di Nola-Lettieri's equivalence in section \ref{equivalence in set}, we generalize it to an arbitrary Grothendieck topos obtaining the main result of section \ref{equivalence in topos}: the theory of perfect MV-algebras and the theory of lattice-ordered abelian groups are Morita-equivalent.

In section \ref{intermediary}, with the purpose of better understanding the relationship between perfect MV-algebras and the associated lattice-ordered abelian groups, we establish an intermediary Morita-equivalence between the theory of lattice-ordered abelian groups and the theory of positive cones of these groups (i.e., the theory $\mathbb{M}$ of cancellative subtractive lattice-ordered abelian monoids with a bottom element). This analysis yields an equivalence between the cartesian syntactic categories of these two theories, providing in particular an alternative description of the  Grothendieck group of a model $\mathcal{M}$ of $\mathbb{M}$ as a subset, rather than a quotient as in the classical definition, of the product $\mathcal{M}\times \mathcal{M}$. 

In section \ref{sct:intepretability}, we show that, whilst the theories $\mathbb L$ and $\mathbb P$ are non bi-interpretable, there is an interpretation of the theory $\mathbb L$ in the theory $\mathbb P$, which can be described explicitly by using the intermediary Morita-equivalence established in section \ref{intermediary}. Moreover, the consideration of different kinds of invariants on the classifying topos of the theories $\mathbb{L}$ and $\mathbb{P}$ yields the above-mentioned three levels of bi-interpretatability between them. 

In section \ref{finitely presented} we characterize the finitely presentable perfect MV-algebras, that is the perfect MV-algebras which correspond to finitely presented lattice-ordered abelian groups via Di Nola-Lettieri's equivalence, as the finitely presentable objects of Chang's variety which are perfect MV-algebras. 

Section \ref{classifying topos} is devoted to the study of the classifying topos of the theory $\mathbb{P}$. We represent this topos as a subtopos of the classifying topos of the theory $\mathbb C$ axiomatizing Chang's variety, and explicitly describe the associated Grothendieck topology. This investigation leads in particular to a representation result genera\-lizing the Stone representation of finite Boolean algebra as powersets: every finitely presented (resp. finitely generated) MV-algebra in the variety generated by Chang's algebra is a finite product of finitely presented (resp. finitely generated) perfect MV-algebras. We also show that every MV-algebra in Chang's variety is a weak subdirect product of perfect MV-algebras. These results have close ties with the existing literature on weak Boolean products of MV-algebras. Finally, we generalize to the setting of MV-algebras in Chang's variety the Lindenbaum-Tarski characterization of Boolean algebras which are isomorphic to powersets as the complete atomic Boolean algebras, obtaining an intrinsic characterization of the MV-algebras in Chang's variety which are arbitrary products of perfect MV-algebras. These results show that Chang's variety constitutes a particularly natural MV-algebraic setting extending the variety of Boolean algebras. 

Section \ref{sec:relatedMoritaequivalence} discusses the relationship between Di Nola-Lettieri's equivalence and Mundici's equivalence, which we extended to a topos-theoretic setting in \cite{Russo}. Specifically, generalizing results in \cite{Yosida}, we show that a theory of \emph{pointed} perfect MV-algebras is Morita-equivalent to the theory of lattice-ordered abelian groups with a distinguished strong unit (and hence to that of MV-algebras).  

Section \ref{sec:someapplications} discusses some further applications of the Morita-equivalence esta\-blished in section \ref{Moritaeq}  and of the `calculation' of the classifying topos of the theory $\mathbb P$ carried out in section \ref{classifying topos}. These applications, obtained by considering appropriate invariants on the classifying topos of  the theory $\mathbb P$, yield insights on the syntax and semantics of $\mathbb P$ also in relation to the theory $\mathbb C$. Finally, in section \ref{sec:transferring} we transfer the above-mentioned representation theorems for the MV-algebras in Chang's variety in terms of perfect MV-algebras into the context of lattice-ordered abelian groups with strong unit.

\section{Topos-theoretic background}\label{topos theory}

For the basic background of topos theory needed for understanding the paper, we refer the reader to \cite{toposbackground} or, for a more succinct overview, to the Appendix of \cite{Russo}.

In this section, we limit ourselves to recalling a few central notions that will play a crucial role in the paper. 

\subsection{Grothendieck topologies and sheaves}\label{back-groth}

The notion of Grothendieck topo\-logy on a category is a categorical generalization of the classical concept of topology on a space. The open sets of the space are replaced by objects of the category and the families of open subsets of a given open set which cover it are replaced by families of arrows in the category with common codomain satisfying appropriate conditions. 

A \textit{sieve} on an object $c$ of a small category $\mathscr{C}$ is a set $S$ of arrows with codomain $c$ such that $f\circ g\in S$ whenever $f\in S$. 

\begin{definition}
A \textit{Grothendieck topology} on a category $\mathscr{C}$ is a function $J$ which assigns to each object $c\in \mathscr{C}$ a collection $J(c)$ of sieves on $c$ in such a way
\begin{itemize}
\item[(i)] the maximal sieve $\{f\mid cod(f)=c\}$ is in $J(c)$;
\item[(ii)] (stability axiom) if $S\in J(c)$, then $h^*(S)\in J(d)$ for any morphism $h:d\rightarrow c$, where with the symbol $h^*(S)$ we mean the sieve whose morphisms are the pullbaks along $h$ of the morphisms in $S$;
\item[(iii)] (transitivity axiom) if $S\in J(c)$ and $R$ is a sieve on $c$ such that $h^*(R)\in J(d)$ for all $h:d\rightarrow c$ in $S$, then $R\in J(c)$.
\end{itemize}
\end{definition}

The sieves $S\in J(c)$ are called the \emph{$J$-covering} sieves.

A \emph{site} is a pair $(\mathscr{C}, J)$ consisting of a small category $\mathscr{C}$ and a Grothendieck topology $J$ on $\mathscr{C}$.

\begin{definition}
Let $(\mathscr{C}, J)$ be a site.
\begin{enumerate}[(a)] 
\item A \emph{presheaf} on a category $\mathscr{C}$ is a functor $P:\mathscr{C}^{\textrm{op}}\to \Set\footnote{This is the category whose objects are sets and whose morphisms are functions between sets.}$.

\item A \emph{sheaf} on $(\mathscr{C}, J)$ is a presheaf $P:\mathscr{C}^\textrm{op}\to \Set$ on $\mathscr{C}$ such that for every $J$-covering sieve $S\in J(c)$ and every family $\{x_{f}\in P(\dom(f)) \mid f\in S\}$ such that $P(g)(x_f)=x_{f\circ g}$ for any $f\in S$ and any arrow $g$ in $\mathscr{C}$ composable with $g$ there exists a unique element $x\in P(c)$ such that $x_{f}=P(f)(x)$ for all $f\in S$.

\item The category $\Sh(\mathscr{C}, J)$ of sheaves on the site $(\mathscr{C}, J)$ has as objects the sheaves on $(\mathscr{C}, J)$ and as arrows the natural transformations between them, regarded as functors $\mathscr{C}^{\textrm{op}}\to \Set$.

\item A \emph{Grothendieck topos} is a category equivalent to a category $\Sh(\mathscr{C}, J)$ of sheaves on a site.
\end{enumerate}
\end{definition}

\begin{definition}\cite[section C2.1]{SE}
A sieve $R$ on an object $U$ of $\mathscr{C}$ is called \textit{effective-epimorphic} if it forms a colimit cone under the diagram consisting of the domains of all morphisms in $R$ and all the morphisms over $U$.
A Grothendieck topology is said to be \textit{subcanonical} if all its covering sieves are effective-epimorphic.
\end{definition}

A Grothendieck topology $J$ on $\mathscr{C}$ is subcanonical if and only if every representable functor $\mathscr{C}^\textrm{op}\to \Set$ is a $J$-sheaf.

\begin{definition}
Given a site $(\mathscr{C}, J)$, and a set $I$ of objects of $\mathscr{C}$. If for any arrow $f:a\to b$ in $\mathscr{C}$, $b\in I$ implies $a\in I$, we say that $I$ is an \textit{ideal}. If further for any $J$-covering sieve $S$ on an object $c$ of $\mathscr{C}$, if $dom(f)\in I$ for all $f\in S$ then $c\in I$, we say that $I$ is a \emph{$J$-ideal}.
\end{definition}

The $J$-ideals on $\mathscr{C}$ correspond bijectively to the subterminal objects of the topos $\Sh({\mathscr{C}}, J)$.

Given a site $(\mathscr{C}, J)$, a sieve $S$ on an object $c$ of $\mathscr{C}$ is said to be \emph{$J$-closed} if for every arrow $f$ with codomain $c$, $f^{\ast}(S)\in J(dom(f))$ implies $f\in S$. 

If the representable functor $Hom_{\mathscr{C}}(-, c)$ is a $J$-sheaf then the $J$-closed sieves on $c$ are in natural bijection with the subobjects of $Hom_{\mathscr{C}}(-, c)$ in the topos $\Sh(\mathscr{C}, J)$.  

\begin{definition}\cite[Definition C2.2.18]{SE}\label{rigidity}
A Grothendieck topology $J$ on a small category $\mathscr{C}$ is said to be \emph{rigid} if for every object $c$ of $\mathscr{C}$, the set of arrows from $J$-irreducible objects of $\mathscr{C}$ (i.e., the objects of $\mathscr{C}$ on which the only $J$-covering sieves are the maximal ones) generates a $J$-covering sieve. 
\end{definition}

\subsection{Geometric logic and classifying toposes}

From the point of view of toposes as classifying spaces for mathematical theories, the logic underlying Grothendieck toposes is geometric logic.  

\begin{definition}
A \textit{geometric theory} is a theory over a first-order signature $\Sigma$ whose axioms can be presented in the sequent form $(\phi\vdash_{\vec{x}}\psi)$ (intuitively meaning `$\phi$ entails $\psi$ in the context $\vec{x}$, i.e. for any values of the variables in $\vec{x}$'), where $\phi$ and $\psi$ are \emph{geometric formulas}, that is formulas with a finite number of free variables, all of which occurring in the context $\vec{x}$, built up from atomic formulas over $\Sigma$ by only using finitary conjunctions, infinitary disjunctions and existential quantifications.
\end{definition}

A geometric theory is said to be \emph{finitary algebraic} if its signature does not contain relation symbols and its axioms can be presented in the form $(\top \vdash_{\vec{x}} t=s)$, where $t$ and $s$ are terms over its signature.  

A geometric theory is said to be \emph{cartesian} if its axioms can be presented in the sequent form $(\phi\vdash_{\vec{x}}\psi)$, where $\phi$ and $\psi$ are \emph{$\mathbb T$-cartesian formulas}, that is formulas with a finite number of free variables, all of which occurring in the context $\vec{x}$, built up from atomic formulas by only using finitary conjunctions and $\mathbb T$-provably unique existential quantifications. Such sequents are called $\mathbb T$-cartesian sequents.  

A geometric theory is said to be \emph{coherent} if it is finitary, that is if its axioms can be presented in the sequent form $(\phi\vdash_{\vec{x}}\psi)$, where $\phi$ and $\psi$ are \emph{coherent formulas}, that is formulas with a finite number of free variables, all of which occurring in the context $\vec{x}$, built up from atomic formulas by only using finitary conjunctions, finitary disjunctions and existential quantifications. 

A \emph{quotient} of a geometric theory $\mathbb T$ over a signature $\Sigma$ is a geometric theory $\mathbb{T}'$ over $\Sigma$ such that every geometric sequent over $\Sigma$ which is provable in $\mathbb T$ is provable in $\mathbb{T}'$.

One can consider models of geometric theories in arbitrary Grothendieck toposes. Given a geometric theory $\mathbb T$ and a Grothendieck topos $\E$, we denote by $\mathbb{T}$-mod$(\E)$ the category of $\mathbb T$-models in $\E$ and model homomorphisms between them. 

\begin{definition}
A Grothendieck topos $\E$ \emph{classifies} a geometric theory $\mathbb{T}$ if there exists a categorical equivalence between the category of models of $\mathbb{T}$ in an arbitrary Grothendieck topos $\mathscr{F}$ and the category of geometric morphisms from $\mathscr{F}$ to $\E$, naturally in $\mathscr{F}$ or, in other words, if there exists a model $U$ of $\mathbb T$ in $\mathscr{E}$, called `the' universal model of $\mathbb T$ in $\mathscr{E}$, such that any other model of $\mathbb T$ in a topos $\mathscr{F}$ is isomorphic to $f^{\ast}(U)$ for a unique (up to isomorphism) geometric morphism $\mathscr{F}\to \mathscr{E}$. 
\end{definition}

By a fundamental theorem of Joyal-Reyes-Makkai, every geometric theory is classified by a unique (up to categorical equivalence) Grothen\-dieck topos. Conversely, every Grothendieck topos is the classifying topos of some geometric theory. If two geometric theories have the same classifying topos (up to equivalence), they are said to be \textit{Morita-equivalent}. By the universal property of the classifying topos, two theories are Morita-equivalent if and only if they have equivalent categories of models in every Grothendieck topos $\E$, naturally in $\E$. Notice that if the functors defining the equivalences between the categories of models of the two theories only involve \emph{geometric constructions}, i.e., constructions entirely expressible in terms of finite limits and arbitrary colimits, then the resulting equivalence is automatically natural. 

We indicate the classifying topos of a geometric theory $\mathbb{T}$ with the symbol $\mathscr{E}_\mathbb{T}$.

Classifying toposes can be built canonically by means of a syntactic construction.

\begin{definition}
Let $\mathbb T$ be a geometric theory over a signature $\Sigma$. The \emph{geometric syntactic category} $\mathscr{C}_{\mathbb T}$ of $\mathbb T$ has as objects the geometric formulas-in-context $\{\vec{x}.\phi\}$ over $\Sigma$ and as arrows from $\{\vec{x}.\phi\}$ to $\{\vec{y}.\psi\}$ the $\mathbb T$-provable equivalence classes $[\theta]$ of geometric formulas $\theta(\vec{x},\vec{y})$, where $\vec{x}$ and $\vec{y}$ are disjoint contexts, which are $\mathbb T$-provably functional from $\{\vec{x}.\phi\}$ to $\{\vec{y}.\psi\}$ in the sense that the sequents
\begin{itemize}
\item[-]$(\theta\vdash_{\vec{x},\vec{y}}(\phi\wedge\psi))$
\item[-]$(\theta\wedge \theta[\vec{z}/\vec{y}]\vdash_{\vec{x},\vec{y},\vec{z}}(\vec{z}=\vec{y}))$
\item[-]$(\phi\vdash_{\vec{x}}(\exists\vec{y})\theta)$
\end{itemize} 
are provable in $\mathbb{T}$.
\end{definition}

The classifying topos $\mathscr{E}_\mathbb{T}$ of a geometric theory $\mathbb T$ can always be represented as the category of sheaves $\Sh({\mathscr{C}_{\mathbb T}}, J_{\mathbb T})$ on the syntactic category ${\mathscr{C}_{\mathbb T}}$ of $\mathbb T$ with respect to the canonical topology $J_{\mathbb T}$ on it. A sieve $S=\{[\theta_{i}]:\{\vec{x_{i}}. \phi_{i}\} \to \{\vec{x}. \phi\} \mid i\in I\}$ in $\mathscr{C}_{\mathbb T}$ is $J_{\mathbb T}$-covering if and only if the sequent $(\phi \vdash_{\vec{x}} \mathbin{\mathop{\textrm{\huge $\vee$}}\limits_{i\in I}}(\exists \vec{x_{i}})\theta_{i})$ is provable in $\mathbb T$. 

The classifying topos of a coherent theory $\mathbb T$ can also be represented as the topos $\Sh({\mathscr{C}_{\mathbb T}}^{\textrm{coh}}, J_{\mathbb T}^{\textrm{coh}})$ of sheaves on the full subcategory $\mathscr{C}_{\mathbb T}^{\textrm{coh}}$ of ${\mathscr{C}_{\mathbb T}}$ on the coherent formulae with respect to the coherent topology $J_{\mathbb T}^{\textrm{coh}}$ on it (generated by finite $J_{\mathbb T}$-covering families).  

The classifying topos of a cartesian theory $\mathbb T$ can be represented as the presheaf topos $[{\mathscr{C}_{\mathbb T}^{\textrm{cart}}}^{\textrm{op}}, \Set]$, where $\mathscr{C}_{\mathbb T}^{\textrm{cart}}$ is the full subcategory of ${{\mathscr{C}_{\mathbb T}}}$ on the $\mathbb T$-cartesian formulas.

\subsection{The internal language of a topos}

Every topos can be regarded as a genera\-lized universe of sets, by means of its internal language. Recall that the \emph{internal language} of a topos $\mathscr{E}$ consists of a sort $\name{A}$ for each object $A$ of $\mathscr{E}$, a function symbol $\name{f}:\name{A_{1}}\dots\name{A_{n}}\to \name{B}$ for each arrow $f:A_{1}\times \cdots \times A_{n}\to B$ in $\mathscr{E}$ and a relation symbol $\name{R}\mono \name{A_{1}}\dots\name{A_{n}}$ for each subobject $R\mono A_{1}\times \cdots \times A_{n}$ in $\mathscr{E}$. There is a tautological $\Sigma_\mathscr{E}$-structure $\mathcal{S}_\mathscr{E}$ in $\mathscr{E}$, obtained by interpreting each $\name{A}$ as $A$, each $\name{f}$ as $f$ and each $\name{R}$ as $R$. For any objects $A_{1}, \ldots, A_{n}$ of $\mathscr{E}$ and any first-order formula $\phi(\vec{x})$ over $\Sigma_\mathscr{E}$, where $\vec{x}=(x_{1}^{\name{A_{1}}}, \ldots, x_{n}^{\name{A_{n}}})$, the expression $\{\vec{x}\in A_{1}\times \cdots \times A_{n} \mid \phi(\vec{x})\}$ can be given a meaning, namely the interpretation of the formula $\phi(\vec{x})$ in the $\Sigma_\mathscr{E}$-structure $\mathcal{S}_\mathscr{E}$.

The internal language allows to prove results concerning objects and arrows in the topos by formally arguing in an analogous way as we do in classical set theory. There are only two important exceptions to this rule which it is essential to keep in mind: the logic of a topos is sound only with respect to intuitionistic principles, so a classical proof can be lifted to a proof valid in an arbitrary topos written in its internal language only if it is constructive, in the sense of not involving applications of the law of excluded middle or the axiom of choice (nor of any other non-constructive principle). We shall exploit this fact at various points of the paper. An example of a reformulation of basic properties of sets in the internal language of a topos is provided by the following proposition:

\begin{proposition}\cite[cf. Lemma D1.3.11]{SE}\label{internal_language}
Let $\mathscr{E}$ be a topos. The following statements hold
\begin{itemize}
\item[(i)] $f:A\rightarrow A$ is the identity arrow if and only if $(\top\vdash_{x}f(x)=x)$ holds in $\mathscr{E}$.
\item[(ii)] $f:A\rightarrow C$ in the composite of $g:A\rightarrow B$ and $h:B\rightarrow C$ if and only if $(\top\vdash_{x}f(x)=h(g(x)))$ holds in $\mathscr{E}$.
\item[(iii)] $f:A\rightarrow B$ is monic if and only if $(f(x)=f(x')\vdash_{x}x=x')$ holds in $\mathscr{E}$.
\item[(iv)] $f:A\rightarrow B$ is an epimorphism if and only if $(\top\vdash_{x}(\exists x)(f(x)=y))$ holds in $\mathscr{E}$.
\item[(v)] $A$ is a terminal object if and only if the sequents $(\top\vdash (\exists x)\top)$ and $(\top\vdash_{x,x'}(x=x'))$ hold in $\mathscr{E}$.
\end{itemize}
\end{proposition}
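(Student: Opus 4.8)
The plan is to reduce each of the five biconditionals to a standard categorical fact by unwinding the definition of validity of a sequent in $\E$. Recall that a sequent $(\phi\vdash_{\vec{x}}\psi)$ holds in $\E$ exactly when, writing $[\![\vec{x}.\phi]\!]$ for the interpretation of a formula-in-context as a subobject of the product of the sorts of $\vec{x}$, one has $[\![\vec{x}.\phi]\!]\leq[\![\vec{x}.\psi]\!]$; that $[\![\vec{x}.\top]\!]$ is the maximal subobject (the product of the sorts itself, and the terminal object $1$ in the empty context); that the interpretation of an equation $s=t$ in a context $\vec{x}$ is the equalizer of the arrows interpreting $s$ and $t$; that $[\![x^{A},x'^{A}.x=x']\!]$ is the diagonal $\Delta_{A}\colon A\mono A\times A$; and that the interpretation of an existentially quantified formula $(\exists y)\chi$ is obtained as the image of the projection of $[\![\vec{x},y.\chi]\!]$ onto the product of the sorts of $\vec{x}$. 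With this dictionary in hand, each item is a short computation.

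For (i), $[\![x.f(x)=x]\!]$ is the equalizer $e\colon E\mono A$ of $f$ and $\mathrm{id}_{A}$, so $(\top\vdash_{x}f(x)=x)$ holds iff $e$ is the maximal subobject of $A$, i.e.\ an isomorphism; since $f\circ e=\mathrm{id}_{A}\circ e$, this happens iff $f=\mathrm{id}_{A}$. Item (ii) is proved in exactly the same way with $h\circ g$ in place of $\mathrm{id}_{A}$. For (iii), $[\![x^{A},x'^{A}.f(x)=f(x')]\!]$ is the equalizer of $f\circ\pi_{1}$ and $f\circ\pi_{2}\colon A\times A\rightrightarrows B$, that is the kernel pair $k\colon K\mono A\times A$ of $f$; the diagonal always factors through $k$, so $(f(x)=f(x')\vdash_{x,x'}x=x')$ holds iff $K=\Delta_{A}$ as subobjects of $A\times A$, which is exactly the condition for $f$ to be monic.

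For (iv), $[\![x^{A},y^{B}.f(x)=y]\!]$ is the equalizer of $\pi_{B}$ and $f\circ\pi_{A}$, namely the graph $\langle\mathrm{id}_{A},f\rangle\colon A\mono A\times B$; composing it with $\pi_{B}$ gives $f$, hence $[\![y^{B}.(\exists x)(f(x)=y)]\!]=\im(f)\mono B$, and the sequent holds iff $\im(f)$ is the maximal subobject of $B$, i.e.\ iff $f$ is a cover, which in a topos coincides with being an epimorphism. For (v), reading the formulas in the empty context, where subobjects are subobjects of $1$, the interpretation of $(\exists x^{A})\top$ is the image of $!_{A}\colon A\to 1$, so the first sequent holds iff $!_{A}$ is a cover, i.e.\ an epimorphism; whereas $[\![x^{A},x'^{A}.\top]\!]=A\times A$, so the second sequent holds iff $\Delta_{A}\colon A\mono A\times A$ is an isomorphism, which is the standard criterion for $A$ to be subterminal, equivalently for $!_{A}$ to be monic. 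Since a topos is balanced, $!_{A}$ is an isomorphism iff both sequents hold, and $!_{A}$ is an isomorphism iff $A$ is a terminal object.

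The argument is essentially obstruction-free once the categorical semantics is set up; the only points that deserve attention are the appeal to the facts special to toposes that every epimorphism is a cover (used in (iv) and in the `epi' half of (v)) and that a topos is balanced (used to conclude (v)), together with the minor care needed in (v) to read the formulas in the empty context, where the relevant subobjects live over the terminal object. One could equally well argue via the Kripke--Joyal forcing semantics, but the subobject-containment formulation above is the most economical route.
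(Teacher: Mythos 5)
Your proof is correct: the paper itself gives no argument for this proposition but simply cites \cite[Lemma D1.3.11]{SE}, and your unwinding of the subobject semantics (equalizers for equations, kernel pairs versus diagonals for (iii), images for the existentials, and the facts that in a topos every epimorphism is a cover and the topos is balanced) is precisely the standard proof found there. The only points worth noting are cosmetic: the contexts in items (iii) and (iv) of the statement contain typos (they should read $\vdash_{x,x'}$ and $\vdash_{y}$ respectively), which you have silently and correctly repaired.
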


\subsection{Theories of presheaf type}\label{review}

By definition, a \textit{theory of presheaf type} is a geometric theory whose classifying topos is (equivalent to) a topos of presheaves.

This class contains all the finitary algebraic (and, more generally, all the cartesian) theories as well as many other interesting, even infinitary, theories, such as the theory of lattice-ordered abelian groups with a distinguished strong unit considered in \cite{Russo} or the theory of algebraic extensions of a base field considered in \cite{Caramello5}. We shall see below that the theory $\mathbb P$ of perfect MV-algebras is also of presheaf type, it being Morita-equivalent to the cartesian theory $\mathbb L$ of lattice-ordered abelian groups.

In this section we recall some fundamental results on the class of geometric theories classified by a presheaf topos.  For a comprehensive investigation of this class of theories, containing various kinds of characterization theorems, we refer the reader to \cite{Caramello5}.  

\begin{definition}\cite{GU}
Let $\mathbb{T}$ be a geometric theory. A model $\mathcal{M}$ of $\mathbb{T}$ in $\Set$ is \textit{finitely presentable} if the representable functor $Hom(\mathcal{M},-):\mathbb{T}$-mod$(\Set)\rightarrow \Set$ preserves filtered colimits.
\end{definition}

As shown in \cite{Caramello7}, the classifying topos of a theory of presheaf type $\mathbb{T}$ can always be canonically represented as the functor category $[\textrm{f.p.}\mathbb{T}$-mod$(\Set),\Set]$, where $\textrm{f.p.}\mathbb{T}$-mod$(\Set)$ is the full subcategory of $\mathbb{T}$-mod$(\Set)$ on the finitely presentable $\mathbb T$-models.

\begin{definition}\cite{Caramello4}\label{fin_p.ati-cat}
Let $\mathbb{T}$ be a geometric theory over a one-sorted signature $\Sigma$ and $\phi(\vec{x})=\phi(x_1,\dots,x_n)$ be a geometric formula over $\Sigma$. We say that a $\mathbb{T}$-model $\mathcal{M}$ in $\Set$ is \textit{finitely presented} by $\phi(\vec{x})$ (or that $\phi(\vec{x})$ \textit{presents} $\mathcal{M}$) if there exists a string of elements $(a_1,\dots,a_n)\in \mathcal{M}^n$, called \textit{generators} of $\mathcal{M}$, such that for any $\mathbb{T}$-model $\mathcal{N}$ in $\Set$ and any string of elements $(b_1,\dots,b_n)\in [[\vec{x}.\phi]]_{\mathcal{N}}$, there exists a unique arrow $f:\mathcal{M}\rightarrow \mathcal{N}$ in $\mathbb{T}$-mod$(\Set)$ such that $f(a_i)=b_i$ for $i=1,\dots,n$. 
\end{definition}

This definition can be clearly generalized to multi-sorted theories.

The two above-mentioned notions of finitely presentability of a model coincide for cartesian theories (cf. pp. 882-883 \cite{SE}). More generally, as shown in \cite{Caramello2}, they coincide for all theories of presheaf type.

\begin{definition}
Let $\mathbb T$ be a geometric theory over a signature $\Sigma$ and $\phi(\vec{x})$ a geometric formula-in-context over $\Sigma$. Then $\phi(\vec{x})$ is said to be \emph{$\mathbb T$-irreducible} if for any family $\{\theta_{i} \mid i\in I\}$ of $\mathbb T$-provably functional geometric formulas $\{\vec{x_{i}}, \vec{x}.\theta_{i}\}$ from $\{\vec{x_{i}}. \phi_{i}\}$ to $\{\vec{x}. \phi\}$ such that $\phi \vdash_{\vec{x}} \mathbin{\mathop{\textrm{\huge $\vee$}}\limits_{i\in I}}(\exists \vec{x_{i}})\theta_{i}$ is provable in $\mathbb T$, there exist $i\in I$ and a $\mathbb T$-provably functional geometric formula $\{\vec{x}, \vec{x_{i}}. \theta'\}$ from $\{\vec{x}. \phi\}$ to $\{\vec{x_{i}}. \phi_{i}\}$ such that $\phi \vdash_{\vec{x}} (\exists \vec{x_{i}})(\theta' \wedge \theta_{i})$ is provable in $\mathbb T$. 
\end{definition}

We indicate with the symbol $\mathscr{C}_{\mathbb{T}}^{\textrm{irr}}$ the full subcategory of $\mathscr{C}_{\mathbb{T}}$ on $\mathbb{T}$-irreducible formulas. Notice that a formula $\{\vec{x}. \phi\}$ is $\mathbb T$-irreducible if and only if it is $J_{\mathbb T}$-irreducible as an object of the syntactic category $\mathscr{C}_{\mathbb T}$ of $\mathbb T$ (in the sense of Definition \ref{rigidity}).

\begin{theorem}
Let $\mathbb{T}$ be a geometric theory. Then $\mathbb T$ is of presheaf type if and only if the syntactic topology $J_{\mathbb T}$ on $\mathscr{C}_{\mathbb T}$ is rigid. 
\end{theorem}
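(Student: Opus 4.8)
The plan is to prove both implications by relating the representation $\E_{\mathbb T}\simeq\Sh(\mathscr{C}_{\mathbb T}, J_{\mathbb T})$ of the classifying topos with a hypothetical presheaf representation, and to exploit the general fact that for a rigid topology the category of sheaves is equivalent to the presheaf category on the full subcategory of irreducible objects. So the first thing I would recall is the theorem (due to the first author, or extractable from \cite[C2.2.20]{SE}) that if $J$ is a rigid Grothendieck topology on a small category $\mathscr{C}$, then $\Sh(\mathscr{C}, J)\simeq[\mathscr{C}_{\mathrm{irr}}^{\mathrm{op}}, \Set]$, where $\mathscr{C}_{\mathrm{irr}}$ is the full subcategory of $J$-irreducible objects, the equivalence being induced by restriction along the inclusion $\mathscr{C}_{\mathrm{irr}}\hookrightarrow\mathscr{C}\hookrightarrow\Sh(\mathscr{C}, J)$. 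Applied to $(\mathscr{C}_{\mathbb T}, J_{\mathbb T})$, this immediately gives that if $J_{\mathbb T}$ is rigid then $\E_{\mathbb T}\simeq[(\mathscr{C}_{\mathbb T}^{\mathrm{irr}})^{\mathrm{op}}, \Set]$ is a presheaf topos, hence $\mathbb T$ is of presheaf type. That is the easy direction.

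For the converse, suppose $\mathbb T$ is of presheaf type. By the canonical representation recalled above (from \cite{Caramello7}), $\E_{\mathbb T}\simeq[\textrm{f.p.}\mathbb T\textrm{-mod}(\Set), \Set]$. The plan is to transport this presheaf structure back to the syntactic site. The key technical input is that, since the Yoneda embedding $\mathscr{C}_{\mathbb T}\hookrightarrow\Sh(\mathscr{C}_{\mathbb T}, J_{\mathbb T})=\E_{\mathbb T}$ lands in the full subcategory of the topos on objects of a certain kind, and since under the equivalence $\E_{\mathbb T}\simeq[\textrm{f.p.}\mathbb T\textrm{-mod}(\Set), \Set]$ the representables $\mathrm{Hom}(\mathcal M, -)$ for $\mathcal M$ finitely presentable are precisely the \emph{irreducible} objects (the indecomposable projectives) of the presheaf topos, one identifies the irreducible objects of $\E_{\mathbb T}$ intrinsically. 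Then one shows that an object $\{\vec x.\phi\}$ of $\mathscr{C}_{\mathbb T}$ is $J_{\mathbb T}$-irreducible if and only if, as an object of $\E_{\mathbb T}$, it is irreducible in this sense — this uses that irreducibility of $\{\vec x.\phi\}$ as defined syntactically matches $J_{\mathbb T}$-irreducibility as an object of the site (the remark preceding the statement), together with the characterization of $J_{\mathbb T}$-covering sieves in terms of provability of disjunctions. Finally, every object of $\mathscr{C}_{\mathbb T}$ is covered, in $\E_{\mathbb T}$, by irreducible ones: concretely, any formula $\{\vec x.\phi\}$ is a quotient / is covered by a family coming from finitely presentable models realizing tuples satisfying $\phi$, because in a presheaf topos every object is a colimit of representables and the representables are the irreducibles. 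Pulling this back through Yoneda shows that the $J_{\mathbb T}$-irreducible objects of $\mathscr{C}_{\mathbb T}$ generate $J_{\mathbb T}$-covering sieves on every object, i.e. $J_{\mathbb T}$ is rigid.

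Concretely, the step I would spell out most carefully is the correspondence between finitely presentable $\mathbb T$-models and $\mathbb T$-irreducible formulas. One direction: if $\{\vec x.\phi\}$ is $\mathbb T$-irreducible, the model $M_{\{\vec x.\phi\}}$ it presents (via the universal model's interpretation, using the comparison of the two notions of finite presentability for presheaf-type theories recalled above) is finitely presentable, and conversely every finitely presentable $\mathbb T$-model is presented by some geometric formula $\phi(\vec x)$ which is then $\mathbb T$-irreducible. This is essentially in \cite{Caramello5}/\cite{Caramello2}, so I would cite it, but I would state the matching of Yoneda-representables with representables on f.p. models explicitly, since it is what makes the two presheaf representations of $\E_{\mathbb T}$ compatible.

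The main obstacle, I expect, is the converse direction — specifically, verifying that the $J_{\mathbb T}$-irreducible objects generate covering sieves on every object of $\mathscr{C}_{\mathbb T}$. The subtlety is that $\mathscr{C}_{\mathbb T}$ has \emph{all} geometric formulas as objects, not just irreducible ones, so one must genuinely use the presheaf hypothesis to produce, for an arbitrary $\{\vec x.\phi\}$, enough arrows from irreducible formulas covering it; this is where the identification of irreducibles with (representables on) finitely presentable models, and the fact that in $[\textrm{f.p.}\mathbb T\textrm{-mod}(\Set), \Set]$ every object is a canonical colimit of representables, does the real work. Translating "colimit of representables in the presheaf topos" into "$J_{\mathbb T}$-covering sieve by irreducible objects in the syntactic site" requires knowing that the Yoneda embedding $\mathscr{C}_{\mathbb T}\to\E_{\mathbb T}$ is the same, up to the equivalence, as the functor sending $\{\vec x.\phi\}$ to the subfunctor of a product of representables cut out by $\phi$, and then checking that the relevant colimit cone restricts to a $J_{\mathbb T}$-covering family — a diagram chase through the two descriptions of $\E_{\mathbb T}$ that I would present but not belabor.
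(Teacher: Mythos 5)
The paper states this theorem without proof, recalling it from \cite{Caramello2} and \cite{Caramello5}, so there is no internal argument to compare you against; judged on its merits, your proof is the standard one and is correct in outline. The direction ``rigid implies presheaf type'' via the Comparison Lemma applied to the full subcategory of $J_{\mathbb T}$-irreducible objects is unproblematic. In the converse direction, the real content is the step you cite rather than prove: that the indecomposable projectives of $[\textrm{f.p.}\mathbb{T}\textrm{-mod}(\Set),\Set]$ --- which are in general the \emph{retracts} of representables, coinciding with the representables here only because $\textrm{f.p.}\mathbb{T}\textrm{-mod}(\Set)$ has split idempotents --- are, up to isomorphism, Yoneda images of objects of $\mathscr{C}_{\mathbb T}$. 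The argument is that each $Hom(\mathcal{M},-)$, being irreducible and receiving a jointly epimorphic family from the separating set of objects $y\{\vec{x}.\psi\}$, is a retract of one of them; a retract splits through an image, hence through a subobject of $y\{\vec{x}.\psi\}$, hence is again of the form $y\{\vec{x}.\chi\}$, and $\{\vec{x}.\chi\}$ is then $\mathbb T$-irreducible. This is precisely Theorem \ref{irr-f.p.}, whose proof in \cite{Caramello2} uses only the presheaf hypothesis and not rigidity, so your appeal to it is not circular. Granting it, your concluding translation is sound, since a sieve in $\mathscr{C}_{\mathbb T}$ is $J_{\mathbb T}$-covering exactly when its Yoneda image is jointly epimorphic in the classifying topos.
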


\begin{theorem}\cite[Theorem 3.13]{Caramello2}\label{irr-f.p.}
Let $\mathbb{T}$ be a theory of presheaf type over a signature $\Sigma$. Then
\begin{itemize}
\item[(i)] Any finitely presentable $\mathbb{T}$-model in $\Set$ is presented by a $\mathbb{T}$-irreducible geometric formula $\phi(\vec{x})$ over $\Sigma$;
\item[(ii)] Conversely, any $\mathbb{T}$-irreducible geometric formula $\phi(\vec{x})$ over $\Sigma$ presents a $\mathbb{T}$-model.
\end{itemize}
In particular, the category $f.p.\mathbb{T}$-mod$(\Set)^{\textrm{op}}$ is equivalent to the full subcategory $\mathscr{C}^{\textrm{irr}}_{\mathbb{T}}$ of the geometric syntactic category $\mathscr{C}_{\mathbb{T}}$ of $\mathbb T$ on the $\mathbb{T}$-irreducible formulas.
\end{theorem}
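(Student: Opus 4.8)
The plan is to play the two standard presentations of the classifying topos $\mathscr{E}_{\mathbb{T}}$ against each other: the syntactic one $\mathscr{E}_{\mathbb{T}}\simeq\Sh(\mathscr{C}_{\mathbb{T}},J_{\mathbb{T}})$ and the functorial one $\mathscr{E}_{\mathbb{T}}\simeq[\textrm{f.p.}\mathbb{T}\text{-mod}(\Set),\Set]$, and to identify, on both sides, the objects $P$ of $\mathscr{E}_{\mathbb{T}}$ for which $Hom_{\mathscr{E}_{\mathbb{T}}}(P,-):\mathscr{E}_{\mathbb{T}}\to\Set$ preserves small colimits; call these the \emph{tiny} objects, a notion manifestly invariant under equivalences. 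First I would invoke the characterization of presheaf-type theories recalled above: $J_{\mathbb{T}}$ is rigid, which by definition means that the full subcategory $\mathscr{C}^{\textrm{irr}}_{\mathbb{T}}\hookrightarrow\mathscr{C}_{\mathbb{T}}$ of $J_{\mathbb{T}}$-irreducible formulas is $J_{\mathbb{T}}$-dense, so the Comparison Lemma (see \cite{SE}) yields $\Sh(\mathscr{C}_{\mathbb{T}},J_{\mathbb{T}})\simeq\Sh(\mathscr{C}^{\textrm{irr}}_{\mathbb{T}},J_{\mathbb{T}}')$ for the induced topology $J_{\mathbb{T}}'$. But a sieve on an object of $\mathscr{C}^{\textrm{irr}}_{\mathbb{T}}$ is $J_{\mathbb{T}}'$-covering precisely when it generates a $J_{\mathbb{T}}$-covering sieve, which --- the object being $J_{\mathbb{T}}$-irreducible --- forces it to be maximal; hence $J_{\mathbb{T}}'$ is the trivial topology and $\mathscr{E}_{\mathbb{T}}\simeq[(\mathscr{C}^{\textrm{irr}}_{\mathbb{T}})^{\textrm{op}},\Set]$, the equivalence being restriction of presheaves along the inclusion and therefore sending, for $\phi$ $\mathbb{T}$-irreducible, the syntactic representable on $\{\vec{x}.\phi\}$ to the representable on $\{\vec{x}.\phi\}$ in $[(\mathscr{C}^{\textrm{irr}}_{\mathbb{T}})^{\textrm{op}},\Set]$.

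Next I would observe that both small categories in play are Cauchy-complete. Idempotents split in $\mathscr{C}_{\mathbb{T}}$ because it carries $(\textrm{epi},\textrm{mono})$-factorizations, and a retract of a $J_{\mathbb{T}}$-irreducible object is again $J_{\mathbb{T}}$-irreducible (pulling a hypothetical proper covering sieve back along the retraction section yields a proper covering sieve on the irreducible object), so $\mathscr{C}^{\textrm{irr}}_{\mathbb{T}}$ is idempotent-complete; likewise idempotents split in $\mathbb{T}\text{-mod}(\Set)$ via filtered colimits, and retracts of finitely presentable models are finitely presentable, so $\textrm{f.p.}\mathbb{T}\text{-mod}(\Set)$ is idempotent-complete. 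In a presheaf category $[\mathscr{D}^{\textrm{op}},\Set]$ the tiny objects are exactly the retracts of representables, hence, when $\mathscr{D}$ is idempotent-complete, exactly the representables. Applying this to the two presentations, the tiny objects of $\mathscr{E}_{\mathbb{T}}$ are, on the one hand, exactly the $y(\{\vec{x}.\phi\})$ with $\phi$ $\mathbb{T}$-irreducible and, on the other hand, exactly the functors $Hom_{\mathbb{T}\text{-mod}(\Set)}(M,-)$ with $M$ finitely presentable; since tininess is equivalence-invariant, the composite equivalence restricts to an equivalence $\textrm{f.p.}\mathbb{T}\text{-mod}(\Set)^{\textrm{op}}\simeq\mathscr{C}^{\textrm{irr}}_{\mathbb{T}}$, which is the ``in particular'' clause.

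To read off (i) and (ii) concretely I would track the universal model $U$ of $\mathbb{T}$ through both presentations. In $\Sh(\mathscr{C}_{\mathbb{T}},J_{\mathbb{T}})$ the interpretation $[[\vec{x}.\phi]]_{U}$ is the representable $y(\{\vec{x}.\phi\})$, whereas in $[\textrm{f.p.}\mathbb{T}\text{-mod}(\Set),\Set]$ it is the functor $M\mapsto[[\vec{x}.\phi]]_{M}$, since evaluation at each finitely presentable $M$ is left exact and cocontinuous --- hence commutes with the geometric construction $[[\vec{x}.\phi]]$ --- and sends $U$ to $M$. Now if $\phi(\vec{x})$ is $\mathbb{T}$-irreducible, so that $y(\{\vec{x}.\phi\})$ is tiny, then $M\mapsto[[\vec{x}.\phi]]_{M}$ is a representable functor, i.e.\ there are a finitely presentable model $M_{\phi}$ and a natural isomorphism $[[\vec{x}.\phi]]_{M}\cong Hom(M_{\phi},M)$, which by the Yoneda lemma is $g\mapsto g(\vec{a})$ for the tuple $\vec{a}\in[[\vec{x}.\phi]]_{M_{\phi}}$ corresponding to the identity. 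As every $\mathbb{T}$-model is a filtered colimit of finitely presentable ones ($\mathbb{T}$ being of presheaf type) and both $[[\vec{x}.\phi]]_{(-)}$ and $Hom(M_{\phi},-)$ preserve filtered colimits, the isomorphism extends to all of $\mathbb{T}\text{-mod}(\Set)$; by Definition \ref{fin_p.ati-cat} this says exactly that $\phi$ presents $M_{\phi}$, which is (ii). For (i): given a finitely presentable $M$, the tiny object $Hom(M,-)$ is of the form $y(\{\vec{x}.\phi\})=[[\vec{x}.\phi]]_{U}$ with $\phi$ $\mathbb{T}$-irreducible, and evaluating and extending by filtered colimits exactly as above shows that $\phi$ presents $M$.

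The step I expect to be the main obstacle is the second paragraph: one must be genuinely careful that the Comparison Lemma produces the induced topology in the \emph{trivial} form, and that ``tiny object'' is precisely the right equivalence-invariant notion cutting out the representables on \emph{both} sides, for which the Cauchy-completeness of $\mathscr{C}^{\textrm{irr}}_{\mathbb{T}}$ and of $\textrm{f.p.}\mathbb{T}\text{-mod}(\Set)$ is indispensable. Everything downstream of that is a matter of patiently identifying what the universal model's interpretations become under each presentation and invoking the density of finitely presentable models in $\mathbb{T}\text{-mod}(\Set)$ to pass from finitely presentable models to arbitrary ones.
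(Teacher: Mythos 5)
This theorem is not proved in the paper: it is imported verbatim from \cite[Theorem 3.13]{Caramello2}, so there is no in-paper argument to compare yours against. Your proof is nonetheless correct and complete, and it follows the same conceptual route as the cited source (and as Remark \ref{rem:cartirr} of the paper): one computes a single equivalence-invariant class of objects of the classifying topos --- your ``tiny'' objects, i.e.\ those whose covariant hom preserves all small colimits --- in the two presentations $\Sh(\mathscr{C}_{\mathbb{T}},J_{\mathbb{T}})$ and $[\textrm{f.p.}\mathbb{T}\textrm{-mod}(\Set),\Set]$. The key steps all check out: rigidity of $J_{\mathbb{T}}$ plus the Comparison Lemma does yield $\mathscr{E}_{\mathbb{T}}\simeq[(\mathscr{C}^{\textrm{irr}}_{\mathbb{T}})^{\textrm{op}},\Set]$ with the induced topology trivial (a sieve in $\mathscr{C}^{\textrm{irr}}_{\mathbb{T}}$ generating a covering, hence maximal, sieve on an irreducible object must contain a split epi with irreducible domain and is therefore itself maximal); tiny objects of a presheaf category on a Cauchy-complete index category are exactly the representables; and both index categories are Cauchy-complete for the reasons you give. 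Your derivation of (i) and (ii) by tracking the universal model is also sound, the only tacitly used ingredient being that every set-based model of a presheaf-type theory is a filtered colimit of finitely presentable ones, which is standard and is used elsewhere in the paper as well. One cosmetic remark: the parenthetical justification that retracts of irreducibles are irreducible is phrased in terms of pulling back ``along the retraction section''; the clean argument pulls a covering sieve $S$ on the retract $d$ back along the retraction $r\colon c\to d$, concludes $r\in S$ from maximality of $r^{*}(S)$, and then $1_{d}=r\circ s\in S$.
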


\begin{remark}\label{rem:cartirr}
Given a cartesian theory $\mathbb T$, we saw above that its classifying topos can be represented as the presheaf topos $[{\mathscr{C}_{\mathbb T}^{\textrm{cart}}}^{\textrm{op}}, \Set]$. So $\mathbb T$ is a theory of presheaf type and the equivalence of classifying toposes $[{\mathscr{C}_{\mathbb T}^{\textrm{cart}}}^{\textrm{op}}, \Set]\simeq [{\mathscr{C}^{\textrm{irr}}_{\mathbb{T}}}^{\textrm{op}}, \Set]$ restricts, since both the categories $\mathscr{C}^{\textrm{irr}}_{\mathbb{T}}$ and ${\mathscr{C}_{\mathbb T}^{\textrm{cart}}}$ are Cauchy-complete, to an equiva\-lence of categories $\mathscr{C}^{\textrm{irr}}_{\mathbb{T}}\simeq {\mathscr{C}_{\mathbb T}^{\textrm{cart}}}$. Indeed, for any small category $\mathscr{C}$, the Cauchy-completion of $\mathscr{C}$ is equivalent to the full subcategory of $[\mathscr{C}^{\textrm{op}}, \Set]$ on its irreducible objects (cf. \cite{Caramello2}).
\end{remark}

Theories of presheaf type enjoy a very strong form of definability.

\begin{theorem}\label{definability}\cite[Corollary 3.2]{Caramello3}
Let $\mathbb{T}$ be a theory of preshef type and suppose that we are given, for every finitely presentable $\Set$-model $\mathcal{M}$ of $\mathbb{T}$, a subset $R_{\mathcal{M}}$ of $\mathcal{M}^n$ in such a way that every $\mathbb{T}$-model homomorphism $h:\mathcal{M}\rightarrow \mathcal{N}$ maps $R_{\mathcal{M}}$ into $R_{\mathcal{N}}$. Then there exists a geometric formula-in-context $\phi(x_1,\dots,x_n)$ such that $R_{\mathcal{M}}=[[\vec{x}.\phi]]_{\mathcal{M}}$ for each finitely presentable $\mathbb T$-model $\mathcal{M}$.
\end{theorem}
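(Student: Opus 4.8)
The plan is to read the given data as a subobject in the classifying topos and then invoke the density of the syntactic site. Since $\mathbb{T}$ is of presheaf type, its classifying topos admits the canonical representation $\mathscr{E}_\mathbb{T}\simeq[\textrm{f.p.}\mathbb{T}\textrm{-mod}(\Set),\Set]$, under which the universal model $U$ of $\mathbb{T}$ becomes the tautological one: its underlying object (we treat the one-sorted case; the multi-sorted case is identical, working sortwise) is the forgetful functor $\textrm{f.p.}\mathbb{T}\textrm{-mod}(\Set)\to\Set$, so that $U^n=[[\vec{x}.\top]]_U$ is the functor $\mathcal{M}\mapsto\mathcal{M}^n$. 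A family $(R_{\mathcal{M}})_{\mathcal{M}}$ as in the statement is then precisely a subfunctor of this functor, i.e.\ a subobject $R\mono U^n$ in $\mathscr{E}_\mathbb{T}$: the hypothesis that every homomorphism $h:\mathcal{M}\to\mathcal{N}$ maps $R_{\mathcal{M}}$ into $R_{\mathcal{N}}$ is exactly the condition that $(R_{\mathcal{M}})_{\mathcal{M}}$ be a subfunctor. Moreover, for each $\Set$-model $\mathcal{M}$ the point of $\mathscr{E}_\mathbb{T}$ that it determines has inverse image the evaluation functor $\textrm{ev}_{\mathcal{M}}$, which carries $U$ to $\mathcal{M}$ and, being the inverse image of a geometric morphism, preserves the interpretation of every geometric formula; hence $\textrm{ev}_{\mathcal{M}}([[\vec{x}.\psi]]_U)=[[\vec{x}.\psi]]_{\mathcal{M}}$ for every geometric $\psi(\vec{x})$. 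It therefore suffices to exhibit a geometric formula $\phi(\vec{x})$ with $[[\vec{x}.\phi]]_U=R$ as subobjects of $U^n$ in $\mathscr{E}_\mathbb{T}$, for then applying $\textrm{ev}_{\mathcal{M}}$ gives $R_{\mathcal{M}}=[[\vec{x}.\phi]]_{\mathcal{M}}$ for every finitely presentable $\mathcal{M}$.

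To produce such a $\phi$ I would use that $\mathscr{E}_\mathbb{T}$ is a presheaf topos in which the $\mathbb{T}$-irreducible formulas are dense. Concretely, the interpretation functor $\mathscr{C}_\mathbb{T}\to\mathscr{E}_\mathbb{T}$, $\{\vec{y}.\theta\}\mapsto[[\vec{y}.\theta]]_U$, is full and faithful ($J_\mathbb{T}$ being subcanonical), and by Theorem~\ref{irr-f.p.} together with the canonical representation of $\mathscr{E}_\mathbb{T}$ it identifies, up to isomorphism, the $\mathbb{T}$-irreducible formulas with the representable functors of $[\textrm{f.p.}\mathbb{T}\textrm{-mod}(\Set),\Set]$; in particular these objects form a separating family, so in the Grothendieck topos $\mathscr{E}_\mathbb{T}$ we may write $R=\bigcup_{i\in I}\im(\alpha_i)$, where $\{\alpha_i:[[\vec{x_{i}}.\phi_i]]_U\to R\mid i\in I\}$ is the (small) set of all arrows to $R$ whose domain is the interpretation of a $\mathbb{T}$-irreducible formula $\{\vec{x_{i}}.\phi_i\}$. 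Composing $\alpha_i$ with the inclusion $R\mono[[\vec{x}.\top]]_U$ and using fullness of the interpretation functor, the composite is the interpretation in $U$ of an arrow $[\theta_i]:\{\vec{x_{i}}.\phi_i\}\to\{\vec{x}.\top\}$ of $\mathscr{C}_\mathbb{T}$, for a geometric formula $\theta_i(\vec{x_{i}},\vec{x})$ that is $\mathbb{T}$-provably functional from $\{\vec{x_{i}}.\phi_i\}$ to $\{\vec{x}.\top\}$; since interpretation in $U$ commutes with image factorisations, the image of this composite is $[[\vec{x}.(\exists\vec{x_{i}})\theta_i]]_U$. Setting $\phi(\vec{x}):=\bigvee_{i\in I}(\exists\vec{x_{i}})\theta_i$ — a bona fide geometric formula, the index set $I$ being small because $\mathscr{E}_\mathbb{T}$ is well-powered and geometric logic is complete — we obtain $[[\vec{x}.\phi]]_U=\bigcup_{i\in I}[[\vec{x}.(\exists\vec{x_{i}})\theta_i]]_U=R$, which by the previous paragraph finishes the proof.

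The conceptually substantial ingredient, namely that $\mathbb{T}$-irreducible formulas present exactly the finitely presentable $\mathbb{T}$-models and conversely, is supplied by Theorem~\ref{irr-f.p.}; granting it, the argument is in essence a density computation, and the points I would treat most carefully are: the precise identification of the universal model of a presheaf-type theory with the tautological functor (this is what makes $[[\vec{x}.\phi]]_U$ evaluate to $[[\vec{x}.\phi]]_{\mathcal{M}}$ at each finitely presentable $\mathcal{M}$), the fullness of the interpretation functor $\mathscr{C}_\mathbb{T}\to\mathscr{E}_\mathbb{T}$, and the verification that the disjunction defining $\phi$ may be taken set-indexed. It is worth stressing where presheaf type is genuinely used: only in the opening reduction, to identify a functorial assignment $(R_{\mathcal{M}})_{\mathcal{M}}$ over finitely presentable models with an actual subobject of $U^n$ in $\mathscr{E}_\mathbb{T}$. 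For a general geometric theory $\mathscr{E}_\mathbb{T}$ is not a functor category and this identification fails, which is the reason the definability statement is phrased for theories of presheaf type.
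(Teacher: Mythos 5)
Your proposal is correct, and since the paper does not prove this theorem itself but cites it as \cite[Corollary 3.2]{Caramello3}, the relevant comparison is with that reference: your argument — identifying the functorial family $(R_{\mathcal{M}})_{\mathcal{M}}$ with a subobject of $U^n$ in the presheaf representation of the classifying topos, and then expressing that subobject as the interpretation of a set-indexed disjunction of existentially quantified provably functional formulas via the density of the irreducible/representable objects — is essentially the standard proof given there. The only point to tidy is the justification of smallness of the index set $I$: it follows not from well-poweredness per se but from the fact that the $\mathbb{T}$-irreducible formulas form, up to isomorphism in $\mathscr{C}_{\mathbb{T}}$, an essentially small category (equivalent to $\textrm{f.p.}\mathbb{T}\textrm{-mod}(\Set)^{\textrm{op}}$ by Theorem \ref{irr-f.p.}), so one may index the covering family by a set of representatives.
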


\begin{remark}\label{rmk:definability}
\begin{enumerate}[(a)]
\item The proof of the definability theorem in \cite{Caramello3} also shows that, for any two geometric formulas $\phi(\vec{x})$ and $\psi(\vec{y})$ over the signature of $\mathbb T$, every assignment $M\to f_{M}:[[\vec{x}. \phi]]_{M}\to [[\vec{y}. \psi]]_{M}$ (for finitely presentable $\mathbb T$-models $M$) which is natural in $M$ is definable by a $\mathbb T$-provably functional formula $\theta(\vec{x}, \vec{y})$ from $\phi(\vec{x})$ to $\psi(\vec{y})$.

\item If the property $R$ of tuples $\vec{x}$ of elements of set-based $\mathbb T$-models as in the statement of the theorem is also preserved by filtered colimits of $\mathbb T$-models then we have $R_{\mathcal{M}}=[[\vec{x}.\phi]]_{\mathcal{M}}$ for each set-based $\mathbb T$-model $M$, that is $R$ is definable by the formula $\phi(\vec{x})$.  

\item If $\mathbb T$ is coherent and the property $R$ is not only preserved but also reflected by arbitrary $\mathbb T$-model homomorphisms then the formula $\phi(\vec{x})$ in the statement of the theorem can be taken to be coherent and $\mathbb T$-Boolean (in the sense that there exists a coherent formula $\psi(\vec{x})$ in the same context such that the sequents $(\phi \vdash \psi \vdash_{\vec{x}} \bot)$ and $(\top \vdash_{\vec{x}} \phi \vee \psi)$ are provable in $\mathbb T$). Indeed, the theorem can be applied both to the property $R$ and to the negation of it yielding two geometric formulas $\phi(\vec{x})$ and $\psi(\vec{x})$ such that $(\phi \vdash \psi \vdash_{\vec{x}} \bot)$ and $(\top \vdash_{\vec{x}} \phi \vee \psi)$ are provable in $\mathbb T$. Hence, since every geometric formula is provably equivalent to a disjunction of coherent formulas and $\mathbb T$ is coherent, we can suppose $\phi$ and $\psi$ to be coherent without loss of generality (cf. \cite{Caramello2}).  
\end{enumerate}
\end{remark}

\section{Perfect MV-algebras}\label{perfect}

As a reference for this section use \cite{CDM}, if not otherwise specified.

\begin{definition}\label{MVdef}
An \textit{MV-algebra} is a structure $\mathcal{A}=(A,\oplus, \neg,0)$, where $\oplus$ is a binary relation symbol, $\neg$ is a unary relation symbol and $0$ is a constant, satisfying the following axioms:
\begin{enumerate}
\item $(x\oplus y)\oplus z=x\oplus (y\oplus z)$;
\item $x\oplus y= y\oplus x$;
\item $x\oplus 0=x$;
\item $\neg \neg x=x$;
\item $x\oplus \neg 0=\neg 0$;
\item $\neg (\neg x\oplus y)\oplus y=\neg (\neg y\oplus x)\oplus x$.
\end{enumerate}
\end{definition}

One can define in $\mathcal{A}$ the following derived operations:
\begin{itemize}
\item[-] $x\odot y:= \neg(\neg x\oplus \neg y)$
\item[-] $\sup(x, y):= (x\odot \neg y)\oplus y$
\item[-] $\inf(x,y):= (x\oplus \neg y)\odot y$
\item[-] $1:=\neg 0$
\end{itemize}

We write $x\leq y$ if $\inf(x,y)=x$; this relation defines a partial order called \textit{natural order} of $\mathcal{A}$. In the sequel we will use the notations $\inf$ or $\wedge$ and $\sup$ or $\vee$ to indicate respectively the infimum and the supremun of two or more elements in an MV-algebra.

\begin{lemma}\cite[Lemma 1.1.2]{CDM}\label{lemma:order}
Let $\mathcal{A}$ be an MV-algebra and $x,y\in A$. Then the following conditions are equivalent:
\begin{itemize}
\item[(i)] $\neg x\oplus y=1$;
\item[(ii)] $x\odot\neg y=0$;
\item[(iii)] there is an element $z\in A$ such that $x\oplus z=y$;
\item[(iv)] $x\leq y$.
\end{itemize}
\end{lemma}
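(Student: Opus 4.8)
The plan is to establish the cycle of implications (i) $\Leftrightarrow$ (ii), (ii) $\Rightarrow$ (iii) $\Rightarrow$ (i), and (i) $\Rightarrow$ (iv) $\Rightarrow$ (ii), working throughout directly from Definition \ref{MVdef} and the definitions of the derived operations. The equivalence (i) $\Leftrightarrow$ (ii) is immediate: unfolding $\odot$ and using axiom 4 one gets $x \odot \neg y = \neg(\neg x \oplus \neg\neg y) = \neg(\neg x \oplus y)$, so $x \odot \neg y = 0$ if and only if $\neg(\neg x \oplus y) = 0$, that is, applying $\neg$, if and only if $\neg x \oplus y = \neg 0 = 1$.

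Before treating the rest I would record some auxiliary identities, all consequences of the six axioms. From axiom 5 (with $1 := \neg 0$) one has $x \oplus 1 = 1$, hence $1 \oplus x = 1$ by axiom 2; axiom 4 gives $\neg 1 = \neg\neg 0 = 0$; axioms 2--3 give $0 \oplus x = x$; and, crucially, $x \oplus \neg x = 1$. This last identity is the only delicate point, and I expect it to be the main obstacle: one obtains it by specialising axiom 6 to $y = 1$, whereupon the left-hand side collapses, via $\neg x \oplus 1 = 1$ and $\neg 1 = 0$, to $\neg 1 \oplus 1 = 0 \oplus 1 = 1$, while the right-hand side simplifies to $\neg(0 \oplus x) \oplus x = \neg x \oplus x$. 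From these facts it follows routinely that $\odot$ is commutative and associative (dualising axioms 1--2 through De Morgan), that $x \odot 0 = 0$ and $x \odot 1 = x$, and that $x \odot \neg x = \neg(\neg x \oplus x) = \neg 1 = 0$. I would also observe that axiom 6 is precisely the commutativity $\sup(x,y) = \sup(y,x)$, since $\neg(\neg x \oplus y)\oplus y = (x \odot \neg y)\oplus y = \sup(x,y)$; that a direct De Morgan computation yields $\neg\inf(a,b) = \sup(\neg a, \neg b)$, so that $\inf$ is commutative too; and hence that $\inf(x,y) = \inf(y,x) = (y \oplus \neg x)\odot x = x \odot (\neg x \oplus y)$.

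The remaining implications are then short. For (ii) $\Rightarrow$ (iii), take $z := \neg x \odot y$: then $x \oplus z = (y \odot \neg x) \oplus x = \sup(y,x) = \sup(x,y) = (x \odot \neg y)\oplus y = 0 \oplus y = y$, using (ii) at the last step. For (iii) $\Rightarrow$ (i), if $x \oplus z = y$ then $\neg x \oplus y = (\neg x \oplus x)\oplus z = 1 \oplus z = 1$ by the auxiliary identities and axiom 1. For (i) $\Rightarrow$ (iv), the alternative form of $\inf$ gives $\inf(x,y) = x \odot (\neg x \oplus y) = x \odot 1 = x$, i.e.\ $x \leq y$. Finally, for (iv) $\Rightarrow$ (ii): $x \leq y$ says $(x \oplus \neg y)\odot y = x$, so by associativity and commutativity of $\odot$ one gets $x \odot \neg y = \big((x \oplus \neg y)\odot y\big)\odot \neg y = (x \oplus \neg y)\odot(y \odot \neg y) = (x \oplus \neg y)\odot 0 = 0$, which closes the cycle.
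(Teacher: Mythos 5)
Your proof is correct and complete: the cycle (i)$\Leftrightarrow$(ii), (ii)$\Rightarrow$(iii)$\Rightarrow$(i), (i)$\Rightarrow$(iv)$\Rightarrow$(ii) covers all four equivalences, and each step, including the key derivation of $x\oplus\neg x=1$ from axiom MV.6 specialised at $y=1$, checks out against Definition \ref{MVdef}. The paper itself gives no proof of this lemma --- it is quoted directly from \cite[Lemma 1.1.2]{CDM} --- so there is no in-paper argument to compare against; your derivation is essentially the standard one found in that reference.
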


We write $nx$ for $x\oplus \dots \oplus x$ ($n$ times) and $x^n$ for $x\odot \dots \odot x$ ($n$ times). The least integer for which $nx=1$ is called the \textit{order} of $x$. When such an integer exists, we denote it by $\textrm{ord}(x)$ and we say that $x$ has \textit{finite order}; otherwise we say that $x$ has \textit{infinite order} and we write $\textrm{ord}(x)=\infty$.

The equations of Definition \ref{MVdef}, written in form of sequents, give the axioms of the geometric theory of MV-algebras which we indicate with the symbol $\mathbb{MV}$. Its signature $\mathcal{L}_{MV}$ consists of two function symbols ($\oplus$, $\neg$) and a constant symbol $0$. The axioms of $\mathbb{MV}$ are the following sequents written over $\mathcal{L}_{MV}$:

\begin{itemize}
\item[MV.1] $\top \vdash_{x,y,z}x\oplus(y\oplus z)=(x\oplus y)\oplus z$
\item[MV.2] $\top\vdash_{x,y}x\oplus y=y\oplus x$
\item[MV.3] $\top\vdash_{x}x\oplus 0=x$
\item[MV.4] $\top\vdash_{x}\neg \neg x=x$
\item[MV.5] $\top\vdash_{x}x\oplus \neg 0=\neg 0$
\item[MV.6] $\top\vdash_{x,y}\neg(\neg x\oplus y)\oplus y=\neg(\neg y\oplus x)\oplus x$
\end{itemize}

\begin{example}
Let $[0,1]$ be the unit interval of real numbers. Consider the  operations
\begin{itemize}
\item $x\oplus y:=\min\{1,x+y\}$
\item $\neg x:=1-x$
\end{itemize}
The structure $([0,1],\oplus,\neg,0)$ is an MV-algebra. We shall refer to it as to the \textit{standard MV-algebra}; in fact, this algebra generates the variety of MV-algebras.
\end{example}

The congruence relations on an MV-algebra $\mathcal{A}=(A,\oplus,\neg,0)$ can be identified with its \textit{ideals}, i.e. the non-empty subsets $I$ of $A$ satisfying the conditions
\begin{itemize}
\item[I.1] $0\in I$;
\item[I.2] if $x\in I,y\in A$ and $y\leq x$, then $y\in I$;
\item[I.3] if $x,y\in I$, then $x\oplus y\in I$.
\end{itemize}

The intersection of all the maximal ideals (in the sense of the inclusion relation) of an MV-algebra $\mathcal{A}$ is called the \textit{radical} of $\mathcal{A}$ and denoted by $Rad(\mathcal{A})$.

The radical of an MV-algebra can also be characterized as the set of all the \textit{infinitesimal} elements plus $0$, where by infinitesimal we mean an element $x$ such that $x\neq 0$ and $nx\leq \neg x$ for every integer $n\geq 0$. 

\begin{obs}
Note that every infinitesimal element $x$ has infinite order. Indeed, for each $n\in \mathbb{N}$, we have $nx\leq \neg x<1$.
\end{obs}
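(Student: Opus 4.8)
The plan is to read the claim straight off the definition of an infinitesimal element together with two elementary facts about MV-algebras, arguing by contradiction. Recall that $x$ being infinitesimal means $x\neq 0$ and $nx\leq \neg x$ for every integer $n\geq 0$, whereas $\textrm{ord}(x)=\infty$ asserts precisely that there is no $n\in\N$ with $nx=1$.

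First I would record the auxiliary inequality $\neg x<1$. Since $1=\neg 0$ is the top element of any MV-algebra --- for any $y$ one has $y\oplus 1=y\oplus\neg 0=\neg 0=1$ by MV.5, hence $y\leq 1$ by Lemma~\ref{lemma:order} --- it suffices to check that $\neg x\neq 1$. But if $\neg x=1$ then, applying $\neg$ and using the involutivity axiom MV.4, we would get $x=\neg\neg x=\neg 1=\neg\neg 0=0$, contradicting $x\neq 0$; hence $\neg x<1$. Then, for an arbitrary $n\in\N$, the infinitesimal condition gives $nx\leq \neg x$, and combining this with the previous inequality yields $nx\leq \neg x<1$, so in particular $nx\neq 1$; as $n$ was arbitrary, no integer witnesses $nx=1$, which is exactly the assertion that $x$ has infinite order.

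There is essentially no obstacle here: the whole argument is just the substitution of the defining inequality of `infinitesimal' into the definition of order. The only step deserving a moment's attention is the auxiliary fact $\neg x<1$, which itself reduces to the observation that $1$ is the greatest element of an MV-algebra (so that $\neg x\leq 1$ always holds) together with the involutivity axiom MV.4.
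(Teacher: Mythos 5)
Your proposal is correct and follows exactly the paper's one-line argument ($nx\leq \neg x<1$ for all $n$, hence $nx\neq 1$), merely filling in the routine justification that $\neg x<1$ follows from $x\neq 0$ via involutivity and the fact that $1$ is the top element.
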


If an MV-algebra is generated by its radical, we say that it is perfect. More specifically, we have the following definition.

\begin{definition}
An MV-algebra $\mathcal{A}=(A,\oplus,\neg,0)$ is said to be \textit{perfect} if $\mathcal{A}$ is non-trivial (i.e., $A\neq \{0\}$ or equivalently $1\neq0$) and $A=Rad(\mathcal{A})\cup \neg Rad(\mathcal{A})$, where $\neg Rad(\mathcal{A})=\{x\in A\mid \neg x\in Rad(\mathcal{A})\}$.
\end{definition}

The set $\neg Rad(\mathcal{A})$ is called the \emph{coradical} of $\mathcal{A}$ and it is also denoted by $Corad(\mathcal{A})$ 

Chang's MV-algebra is the prototype of perfect MV-algebras, in the sense that it is a perfect MV-algebra and every perfect MV-algebra is contained in the variety $V(C)$, called \textit{Chang's variety}, generated by it (cf. \cite[Proposition 5(5)]{DiNola1}). It is defined on the following infinite set of formal symbols
\begin{center}
$C=\{0,c,\dots,nc,\dots,1-nc,\dots,1-c,1\}$
\end{center}
with the following operations:
\begin{itemize}
\item 
$x\oplus y:= \begin{cases}
(m+n)c & \textrm{if }x=nc\textrm{ and }y=mc\\
1-(n-m)c & \textrm{if }x=1-nc\textrm{, }y=mc\textrm{ and }0<m<n \\
1 & \textrm{if }x=1-nc\textrm{, } y=mc\textrm{ and }0<n\leq m \\
1 & \textrm{if }x=1-nc\textrm{, }y=1-mc
\end{cases}$

\item 
$\neg x:= 1-x$
\end{itemize}

\begin{remark}
Chang's algebra $C$ is the image $\Sigma(\mathbb{Z})$ under Di Nola-Lettieri's equiva\-lence of the lattice-ordered abelian group $\mathbb{Z}$ (cf. section \ref{equivalence in set}).
\end{remark}

Recalling that the elements of the radical are the infinitesimals plus 0, we have:
\begin{itemize}
\item[-] $Rad(C)=\{nc\mid n\in \mathbb{N}\}$ and $m(nc)=mnc$; thus, $ord(nc)=\infty$
\item[-] $\neg Rad(C)=\{1-nc\mid n\in\mathbb{N}\}$ and $(1-nc)\oplus (1-nc)=1$; thus, $ord(1-nc)=2$.
\end{itemize}

This is not a case. Indeed, in any perfect MV-algebra the radical contains all the elements with infinite order while the coradical contains all the elements with finite order (cf.  \cite[Proposition 5(8)]{DiNola1}). In fact, as we shall see below, all the elements of the coradical of an arbitrary perfect MV-algebra have at most order $2$.

By Proposition 5(6) \cite{DiNola1}, $V(C)$ is axiomatized by the sequent $\xi:(\top \vdash_{x} 2x^2=(2x)^2)$. Moreover, using the axiom of choice it can be proved that all the algebras in $V(C)$ satisfy the sequent $(\top \vdash_{x} 2(2x)^{2}=(2x)^{2})$, expressing the property that every element of the form $2x^2$ is Boolean (cf. Claim 1 in the proof of \cite[Theorem 5.8]{P-L}). We shall denote by $\mathbb C$ the quotient of $\mathbb{MV}$ obtained by adding these two sequents. Note that the models of $\mathbb C$ in a classical set-theoretic universe $\Set$ satisfying the axiom of choice coincide exactly with the MV-algebras in $V(C)$ (by Proposition 5(6)\cite{DiNola1}).

\begin{lemma}\label{lemseq}
The sequent 
\[
\gamma_{n}: (2^nx=1 \vdash_{x} 2x=1)
\]
is provable in $\mathbb C$. 

In particular, every element of finite order of an MV-algebra in ${\mathbb C}\textrm{-mod}(\Set)$ has order at most $2$.
\end{lemma}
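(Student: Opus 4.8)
The plan is to reduce the whole family $\{\gamma_n\}_{n}$ to the single instance $\gamma_2$, and then to prove $\gamma_2$ by passing, via De Morgan's laws, to a statement about $\odot$-powers which the two extra axioms of $\mathbb C$ are tailored to control. For the reduction, note that $2^{n}x$ and $2^{n-1}(2x)$ are $\mathbb{MV}$-provably equal (both are the $2^{n}$-fold sum of $x$ with itself), so $\gamma_{n-1}$, read with the term $2x$ substituted for its free variable, proves $(2^{n}x=1\vdash_{x}4x=1)$; composing this with $\gamma_{2}$ gives $\gamma_{n}$. Since $\gamma_{1}$ is trivially provable, an induction on $n$ finishes the reduction. (One could equally well argue semantically: $\mathbb C$ is algebraic and each $\gamma_{n}$ is a Horn sequent, so it is enough to verify the $\gamma_{n}$ in every $\mathbb C$-model, and the same induction applies there.)

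To prove $\gamma_{2}$, I would first dualise it. Setting $y=\neg x$ and using the De Morgan identity $\neg(a\oplus a)=\neg a\odot\neg a$ (provable in $\mathbb{MV}$), one has $\neg(2x)=y^{2}$ and $\neg(4x)=y^{4}$, so $2x=1\Leftrightarrow y^{2}=0$ and $4x=1\Leftrightarrow y^{4}=0$; hence it suffices to prove $(y^{4}=0\vdash_{y}y^{2}=0)$ in $\mathbb C$. Substituting the term $y^{2}$ for the variable in $\xi$ yields $2y^{4}=(2y^{2})^{2}$, so under the hypothesis $y^{4}=0$ we get $(2y^{2})\odot(2y^{2})=0$. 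Substituting $y$ for the variable in $\xi$ and in the second added axiom $(\top\vdash_{x}2(2x)^{2}=(2x)^{2})$ yields $2y^{2}=(2y)^{2}$ and $2(2y)^{2}=(2y)^{2}$, hence $(2y^{2})\oplus(2y^{2})=2y^{2}$, i.e. $2y^{2}$ is $\oplus$-idempotent. Using the standard fact that an $\oplus$-idempotent element $b$ of an MV-algebra is Boolean, and therefore also satisfies $b\odot b=b$ (cf. \cite{CDM}), one concludes $2y^{2}=(2y^{2})\odot(2y^{2})=0$, whence $y^{2}\le 2y^{2}=0$ and finally $y^{2}=0$.

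The `in particular' clause then follows at once: if $a$ has finite order $k$ in a $\mathbb C$-model, choose $n$ with $2^{n}\ge k$; since $ka=1$ and $2^{n}\ge k$ we get $2^{n}a=ka\oplus(2^{n}-k)a=1$ by axiom MV.5, so $\gamma_{n}$ gives $2a=1$, i.e. $\textrm{ord}(a)\le 2$. The step I expect to be the real obstacle is the one inside the proof of $\gamma_{2}$ where I pass from `$2y^{2}$ is $\oplus$-idempotent' to `$2y^{2}$ is $\odot$-idempotent': the Boolean axiom of $\mathbb C$ literally asserts only the additive idempotency $2(2x)^{2}=(2x)^{2}$, so one must invoke the (non-obvious) coincidence of the two idempotency notions in MV-algebras — equivalently, that the $\oplus$-idempotent elements are precisely the Boolean ones. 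For a fully equational proof inside $\mathbb C$ one should make sure this coincidence is available as a Horn sequent provable in $\mathbb{MV}$; it is, since it holds in every MV-algebra.
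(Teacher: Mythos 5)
Your proof is correct, but it takes a genuinely different route from the paper's. The paper does not reduce to the case $n=2$: it first proves $(\top\vdash_{x}2^{n}x^{2}=(2^{n}x)^{2})$ by induction on $n$ using $\xi$, and then concludes from $2^{n}x=1$ that $2^{n}x^{2}=(2^{n}x)^{2}=1$, invoking the family of sequents $\chi_{m}:(mx^{2}=1\vdash_{x}2x=1)$, which are provable already in $\mathbb{MV}$ by the proof of Theorem 3.9 of Chang's 1958 paper. Notably, the paper's argument uses only the axiom $\xi$ and not the second added axiom $(\top\vdash_{x}2(2x)^{2}=(2x)^{2})$, so it in fact shows that the $\gamma_{n}$ follow from $\mathbb{MV}+\xi$ alone; your argument genuinely uses the second axiom (to get the $\oplus$-idempotency of $2y^{2}$), which is legitimate since it is part of $\mathbb{C}$. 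What your approach buys in exchange is self-containedness: you avoid importing the $\chi_{m}$ from Chang's completeness argument, needing only the elementary equivalence between $\oplus$-idempotency and $\odot$-idempotency (Theorem 1.16 of Chang, which the paper itself uses elsewhere, e.g.\ in the proof of Theorem \ref{axiomatization}), and your reduction of $\gamma_{n}$ to $\gamma_{2}$ via the substitution $x\mapsto 2x$ and composition is clean and fully syntactic. Both the reduction step and the dualisation to $(y^{4}=0\vdash_{y}y^{2}=0)$ check out, as does the final step $y^{2}\leq 2y^{2}=0$; the point you flagged as the potential obstacle (additive versus multiplicative idempotency) is indeed available as an $\mathbb{MV}$-provable Horn sequent, so there is no gap.
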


\begin{proof}
By the proof of \cite[Theorem 3.9]{Chang}, for each natural number $n$, the sequent $\chi_{n}:(nx^2=1 \vdash 2x=1)$ is provable in the theory $\mathbb{MV}$. 

First, let us prove that the sequent $(\top \vdash_{x} 2^nx^2=(2^nx)^2)$ is provable in $\mathbb C$ by induction on $n$. For $n=1$, it is a tautology. For $n> 1$, we argue (informally) as follows. We have $2^nx^2=2(2^{n-1}x^{2})=2((2^{n-1}x)^{2})=(2(2^{n-1}x))^{2}=(2^{n}x)^{2}
$, where the second equality follows from the induction hypothesis and the third follows from sequent $\xi$.

Now, $(2^nx=1 \vdash_{x} 2x=1)$ is provable in $\mathbb C$ since $2^nx=1$ implies $(2^nx)^{2}=1$. But $(2^nx)^{2}=2^nx^2$. So $2^nx^2=1$ whence, by sequent $\chi_{2^n}$, $2x=1$, as required.   
\end{proof}

\begin{remark}
Let $\mathcal{A}$ be a perfect MV-algebra. For any $x\in \neg Rad(\mathcal{A})$ not equal to $1$, the order of $x$ is equal to $2$. Indeed, as we have already observed, every perfect MV-algebra is in the variety $V(C)$ and the coradical of a perfect MV-algebra contains only elements of finite order. Hence our claim follows from Lemma \ref{lemseq}.
\end{remark}

\begin{lemma}\label{lemmaP3}
The following sequent is provably entailed by the non-triviality axiom $(0=1 \vdash \bot)$ in the theory $\mathbb C$: 
\begin{center}
$\alpha:(x=\neg x\vdash_{x}\perp)$.
\end{center}
\end{lemma}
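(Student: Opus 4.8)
The plan is to work informally inside the theory $\mathbb{C}$ extended by the non-triviality axiom $(0=1 \vdash \bot)$, to assume the hypothesis $x = \neg x$, and to derive $0 = 1$; an application of the non-triviality axiom then yields $\bot$. Since every step will be an equational manipulation using only the $\mathbb{MV}$-axioms (which are among the axioms of $\mathbb{C}$) and the sequent $\xi:(\top \vdash_{x} 2x^2=(2x)^2)$ of $\mathbb{C}$, the argument is purely syntactic (and constructive), so it establishes the provability of $\alpha$ in the required sense.

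First I would recall the two identities $\neg x \oplus x = 1$ and $x \odot \neg x = 0$, which are provable already in $\mathbb{MV}$ (the first follows by instantiating MV.6 at $y := \neg 0$ and simplifying by MV.3, MV.4 and MV.5; the second then follows since $x \odot \neg x = \neg(\neg x \oplus \neg\neg x) = \neg(\neg x \oplus x) = \neg 1 = 0$; see also \cite{CDM}). Substituting the hypothesis $x = \neg x$ into these gives $2x = x \oplus x = 1$ and $x^{2} = x \odot x = 0$.

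Next I would apply $\xi$ to the term $x$ and evaluate both sides using the two equalities just obtained: on the left, $2x^{2} = x^{2} \oplus x^{2} = 0 \oplus 0 = 0$ by MV.3; on the right, $(2x)^{2} = (2x) \odot (2x) = 1 \odot 1 = \neg(\neg 1 \oplus \neg 1) = \neg(0 \oplus 0) = \neg 0 = 1$, using MV.4 (for $\neg 1 = 0$) and MV.3. Hence $0 = 2x^{2} = (2x)^{2} = 1$, and the non-triviality axiom yields $\bot$. This shows that $(x = \neg x \vdash_{x} \bot)$ is provable in $\mathbb{C}$ together with $(0=1\vdash\bot)$, as claimed.

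There is no real obstacle here; the argument is short and elementary. The only point requiring a little care is to use only identities provable in $\mathbb{MV}$ (rather than merely valid in the standard MV-algebra $[0,1]$), so that the derivation stays constructive and hence remains valid in an arbitrary topos. It is also worth noting that only the first defining sequent $\xi$ of $\mathbb{C}$ is used, the second one (about $2(2x)^{2}=(2x)^{2}$) playing no role.
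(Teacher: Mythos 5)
Your proof is correct and follows essentially the same route as the paper's: from $x=\neg x$ one gets $2x=1$ and $x^2=0$, and the sequent $\xi$ then forces $0=2x^2=(2x)^2=1$, contradicting non-triviality. The only (cosmetic) difference is that you justify $x^2=0$ via $x\odot\neg x=0$ while the paper expands $x^2\oplus x^2$ directly; the extra care you take to keep every step $\mathbb{MV}$-provable is a welcome clarification but not a different argument.
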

\begin{proof}

Given a non-trivial $\mathbb C$-model $\mathcal{A}$, suppose that there is an $x\in A$ such that $x=\neg x$; thus, $x\oplus x=1$.
By axiom $\xi$, we have that $x^2\oplus x^2=(x\oplus x)^2$; but

$(x\oplus x)^2=1$

$x^2\oplus x^2=(\neg (\neg x\oplus \neg x))\oplus (\neg (\neg x\oplus \neg x))=0\oplus 0=0$

This is a contradiction since $\mathcal{A}$ is non-trivial.
\end{proof}

\begin{lemma}\label{lemma_radical2}
The sequent $\alpha $ holds in every perfect MV-algebra.
\end{lemma}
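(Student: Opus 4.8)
The plan is to derive this statement immediately from Lemma~\ref{lemmaP3}. That lemma shows that $\alpha$ is provable in the theory obtained from $\mathbb C$ by adjoining the non-triviality axiom $(0=1\vdash\bot)$; hence $\alpha$ holds in any MV-algebra which is simultaneously a model of $\mathbb C$ and non-trivial. So the whole task reduces to checking that an arbitrary perfect MV-algebra $\mathcal A$ satisfies these two conditions.

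First, non-triviality is part of the very definition of a perfect MV-algebra, so the sequent $(0=1\vdash\bot)$ holds in $\mathcal A$ trivially. Second, I would invoke the facts recalled earlier in this section: by \cite[Proposition 5(5)]{DiNola1} every perfect MV-algebra lies in Chang's variety $V(C)$, which is axiomatized over $\mathbb{MV}$ by the sequent $\xi:(\top\vdash_x 2x^2=(2x)^2)$ (\cite[Proposition 5(6)]{DiNola1}); moreover all algebras in $V(C)$ satisfy $(\top\vdash_x 2(2x)^2=(2x)^2)$ (Claim 1 in the proof of \cite[Theorem 5.8]{P-L}). Since $\mathbb C$ was defined precisely as the quotient of $\mathbb{MV}$ obtained by adding these two sequents, $\mathcal A$ is a model of $\mathbb C$.

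Putting the two observations together, $\mathcal A$ is a non-trivial model of $\mathbb C$, so Lemma~\ref{lemmaP3} applies and gives that $\alpha$ holds in $\mathcal A$. There is no genuine obstacle in this argument; the only subtlety worth flagging is that it takes place in a classical set-theoretic universe, which is the setting in which the identification of the members of $V(C)$ with the set-based models of $\mathbb C$ is available and in which the statement of the lemma is intended.
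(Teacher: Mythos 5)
Your proposal is correct and follows exactly the paper's own argument: the paper likewise derives the lemma immediately from Lemma~\ref{lemmaP3} by noting that every perfect MV-algebra is non-trivial and lies in ${\mathbb C}\textrm{-mod}(\Set)$ by \cite[Proposition 5(5)]{DiNola1}. Your additional care in spelling out why perfect MV-algebras are models of $\mathbb C$ and in flagging the classical set-theoretic setting is sound but does not change the substance of the proof.
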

\begin{proof}
This result trially follows from previous lemma since every perfect MV-algebra is non-trivial and in ${\mathbb C}\textrm{-mod}(\Set)$ (cf. \cite[Proposition 5(5)]{DiNola1}).
\end{proof}

The class of perfect MV-algebras is a first-order definable subclass of the variety of MV-algebras. Indeed,  \cite[Proposition 6]{DiNola1} states, assuming classical logic, that a MV-algebra $\mathcal{A}$ is perfect if and only if it is non-trivial and it satisfies the sequents:
\begin{itemize}
\item $\sigma:(\top\vdash_x x^2\oplus x^2=(x\oplus x)^2)$
\item $\tau:(x^2=x\vdash_x x=0\vee x=1)$
\end{itemize}

Note that the non-triviality condition can be expressed by the sequent $(0=1 \vdash \bot)$ or, equivalently by Lemma \ref{lemma_radical2}, by the sequent  $(x=\neg x\vdash_{x} \bot)$. 

Let us consider the following sequents:
\begin{itemize}
\item[P.1] $\top\vdash_{x}x^2\oplus x^2=(x\oplus x)^2$
\item[P.2] $\top \vdash_{x} 2(2x)^{2}=(2x)^{2}$ 
\item[P.3] $x\oplus x=x\vdash_{x}x=0\vee x=1$
\item[P.4] $x=\neg x\vdash_{x}\perp$
\end{itemize}

\begin{theorem}\label{axiomatization}
The family of sequents $\{\textrm{P.1}, \textrm{P.2}, \textrm{P.3}\}$ is provably equivalent in the geometric theory $\mathbb{MV}$ to the family of sequents $\{\textrm{P.1}, \beta\}$, where
\[
\beta: (\top\vdash_{x}x\leq \neg x\vee \neg x\leq x).
\]
\end{theorem}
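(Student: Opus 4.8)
The plan is to translate the sequent $\beta$ into an equivalent equational condition via Lemma \ref{lemma:order} and then to argue by case analysis in both directions. Applying Lemma \ref{lemma:order} to the pairs $(x,\neg x)$ and $(\neg x, x)$ one obtains, in any MV-algebra, the equivalences
\[
x\leq\neg x \;\Longleftrightarrow\; x\odot x=0 \;\Longleftrightarrow\; \neg x\oplus\neg x=1, \qquad \neg x\leq x \;\Longleftrightarrow\; \neg x\odot\neg x=0 \;\Longleftrightarrow\; x\oplus x=1,
\]
so that $\beta$ asserts precisely that every $x$ satisfies $x\odot x=0$ or $\neg x\odot\neg x=0$. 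I would also record the elementary facts, all immediate from Lemma \ref{lemma:order}, that $x\odot\neg x=0$, that $0$ is the bottom element, that $a\leq a\oplus a$ and $a\odot b\leq a$ (hence $a^2\leq a$) for all $a,b$, and the identity $a^2=\neg(2\neg a)$ (which is just the definition of $\odot$).

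For the implication $\{\textrm{P.1},\beta\}\vdash\{\textrm{P.2},\textrm{P.3}\}$ I would split on the disjunction furnished by $\beta$. To derive \textrm{P.3}, assume $x\oplus x=x$: if $\neg x\leq x$ then $x=x\oplus x=1$; if $x\leq\neg x$ then $\inf(x,\neg x)=x$ by definition of the natural order, while on the other hand, by the paper's definition of $\inf$, $\inf(x,\neg x)=(x\oplus\neg\neg x)\odot\neg x=(x\oplus x)\odot\neg x=x\odot\neg x=0$ using the hypothesis, hence $x=0$. To derive \textrm{P.2}: if $\neg x\leq x$ then $x\oplus x=1$, so $(2x)^2=1$ and $2(2x)^2=1\oplus 1=1=(2x)^2$; if $x\leq\neg x$ then $2\neg x=1$, and since $(2x)^2=\neg(2\neg(2x))=\neg(2(\neg x)^2)$ (using $\neg(2x)=(\neg x)^2$), while \textrm{P.1} in the form $2(a^2)=(2a)^2$ with $a=\neg x$ gives $2(\neg x)^2=(2\neg x)^2=1$, we get $(2x)^2=\neg 1=0$ and again $2(2x)^2=(2x)^2$.

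For the converse implication $\{\textrm{P.1},\textrm{P.2},\textrm{P.3}\}\vdash\beta$, the key observation is that \textrm{P.2} says exactly that the term $(2x)^2$ is $\oplus$-idempotent, so \textrm{P.3} applied to $(2x)^2$ yields $(2x)^2=0$ or $(2x)^2=1$. In the first case $2x\odot 2x=0$, hence $2x\leq\neg(2x)=(\neg x)^2\leq\neg x$; combined with $x\leq x\oplus x=2x$ this gives $x\leq\neg x$ by transitivity. In the second case $(2x)^2=\neg(2\neg(2x))=1$ forces $2\neg(2x)=0$, hence $\neg(2x)=0$ since $\neg(2x)\leq 2\neg(2x)=0$ and $0$ is the bottom element, i.e. $x\oplus x=1$, i.e. $\neg x\leq x$. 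Either way $\beta$ holds; since each of the two families of sequents contains \textrm{P.1}, this establishes the asserted equivalence.

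I expect the only slightly delicate point to be the case $x\leq\neg x$ in the derivation of \textrm{P.3}: one must recognize that the expression $(x\oplus x)\odot\neg x$ is literally $\inf(x,\neg x)$ under the paper's definition of $\inf$, and that this one expression simultaneously collapses to $x$ (because $x\leq\neg x$) and to $0$ (because $x\oplus x=x$ and $x\odot\neg x=0$). Everything else is routine equational rewriting, and because every case distinction used above is an instance of disjunction elimination, the whole argument remains within geometric --- indeed intuitionistic --- logic and is therefore a genuine derivation in $\mathbb{MV}$.
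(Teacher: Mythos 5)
Your proof is correct and follows essentially the same strategy as the paper's: case-splitting on the disjunction supplied by $\beta$ to derive P.2 and P.3, and, for the converse, applying P.3 to the element $(2x)^2$, which P.2 exhibits as $\oplus$-idempotent. The only differences are micro-level (e.g.\ you compute $\inf(x,\neg x)$ directly instead of invoking the equivalence $x\oplus x=x\Leftrightarrow x\odot x=x$, and you work with $(2x)^2$ rather than $2x^2$ in the converse direction), neither of which changes the substance of the argument.
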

\begin{proof}
To prove that sequent P.2 follows from $\beta$ and P.1 it suffices to observe that if $x\leq \neg x$ then $x^2=0$, while if $\neg x\leq x$ then $2x=1$. 

Let us now show that $\beta$ and P.1 entail sequent P.3. Given $x$ such that $x\oplus x=x$ (equivalently $x\odot x=x$, cf. \cite[Theorem 1.16]{Chang}), we know from the sequent $\beta$ that $x\leq \neg x$ or $\neg x\leq x$. Recall that $x\leq y$ iff $\neg x\oplus y=1$ iff 
$x\odot\neg y=0$. Hence, if $x\leq\neg x$ then $x\odot x=0$ whence $x=0$. On the other hand, if $\neg x\leq x$ then $x\oplus x=1$ whence $x=1$. This proves sequent P.3.

Conversely, let us show that the family of sequents $\{\textrm{P.1}, \textrm{P.2}, \textrm{P.3}\}$ entails $\beta$. Given $x$, the element $2x^2$ is Boolean by sequent P.2, while by sequent P.1, $2x^2=(2x)^{2}$. Sequent P.3 thus implies that either $2x^2=0$ or $(2x)^{2}=1$. But $2x^2=0$ clearly implies $x^2=0$, which is equivalent to $x\leq \neg x$, while $(2x)^{2}=1$ implies $2x=1$, which is equivalent to $\neg x\leq x$.   
\end{proof}

Let us define the geometric theory $\mathbb{P}$ of perfect MV-algebras as the quotient of the theory $\mathbb{MV}$ obtained by adding as axioms sequents P.1, P.2, P.3 and P.4.



The radical of a perfect MV-algebra is definable by a first-order formula, as shown by the following more general result.

\begin{proposition}\label{radical}
Let $\mathcal{A}$ be a MV-algebra in ${\mathbb C}\textrm{-mod}(\mathscr{E})$ (in particular, a perfect MV-algebra). Then
\begin{center}
$Rad(\mathcal{A})=\{x\in A\mid x\leq \neg x\}$.
\end{center}
\end{proposition}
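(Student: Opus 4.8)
The plan is to reduce everything to the case $\mathscr{E}=\Set$ and to prove it there by a direct equational computation. For a general Grothendieck topos $\mathscr{E}$ the symbol $Rad(\mathcal{A})$ is to be read as the subobject $[[x.\, x\leq \neg x]]_{\mathcal{A}}$ of $\mathcal{A}$; that this is the appropriate notion is precisely what the proposition asserts when $\mathscr{E}=\Set$, where $Rad(\mathcal{A})$ carries its classical meaning. Moreover the reasoning below is purely equational (hence constructive), so it transfers verbatim to the internal language of any topos. Thus it suffices to prove: for every $\mathbb{C}$-model $\mathcal{A}$ in $\Set$ (these include, and under the axiom of choice coincide with, the MV-algebras in $V(C)$) one has $Rad(\mathcal{A})=\{x\in A\mid x\leq\neg x\}$, where I use the description of $Rad(\mathcal{A})$ recalled above as the set of infinitesimal elements of $\mathcal{A}$ together with $0$.

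For the inclusion $Rad(\mathcal{A})\subseteq\{x\in A\mid x\leq\neg x\}$, which in fact holds in \emph{every} MV-algebra, I would argue: if $x$ is infinitesimal then $nx\leq\neg x$ for every $n\geq 0$, and taking $n=1$ gives $x\leq\neg x$; and $0\leq\neg 0$ since $\neg 0=1$ is the top element. So every element of the radical satisfies $x\leq\neg x$, with no use of Chang's variety.

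The substance is the reverse inclusion. Suppose $x\leq\neg x$. By Lemma \ref{lemma:order} (equivalence of (ii) and (iv), using $\neg\neg x=x$) this says exactly that $x^2=x\odot x=0$. The key input is that in the proof of Lemma \ref{lemseq} it is established that the identity $(\top\vdash_{x}2^{n}x^{2}=(2^{n}x)^{2})$ is provable in $\mathbb{C}$; hence $(2^{n}x)^{2}=2^{n}x^{2}=0$ for every $n$, since $2^{n}\cdot 0=0$. Now, using that $\oplus$ and $\odot$ are monotone for the natural order (both facts follow from Lemma \ref{lemma:order}) together with $n\leq 2^{n}$, one gets $nx\leq 2^{n}x$ and $x\leq 2^{n}x$ for $n\geq 1$ (the case $n=0$ being trivial, as $0\odot x=0$), whence $nx\odot x\leq (2^{n}x)\odot(2^{n}x)=(2^{n}x)^{2}=0$, that is $nx\odot x=0$, i.e.\ $nx\leq\neg x$ by Lemma \ref{lemma:order} again. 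Therefore $nx\leq\neg x$ for all $n\geq 0$, so $x$ is either $0$ or infinitesimal, and in both cases $x\in Rad(\mathcal{A})$.

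I expect the main obstacle to be this reverse inclusion, and within it the passage from $x^{2}=0$ to $(2^{n}x)^{2}=0$: this is the only step where membership in Chang's variety (through sequents P.1, P.2, $\xi$ and Lemma \ref{lemseq}) is genuinely needed. A secondary point is making the passage to an arbitrary topos $\mathscr{E}$ clean; since everything reduces to equational identities and to taking the coherent formula $\{x.\, x\leq\neg x\}$ as the definition of $Rad(\mathcal{A})$ in $\mathscr{E}$, there is no real obstruction. As an alternative one could route through Theorem \ref{definability} and Remark \ref{rmk:definability}, observing that $\mathbb{C}$ is of presheaf type and that the classical radical is preserved both by $\mathbb{C}$-model homomorphisms and by filtered colimits of $\mathbb{C}$-models; but the direct computation is preferable here since it exhibits the defining formula explicitly.
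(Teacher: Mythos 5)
Your proof is correct, but it takes a genuinely different route from the paper's. Both arguments reduce the proposition to proving that $\mathbb C$ proves the sequents $(x\leq\neg x\vdash_{x}nx\leq\neg x)$ for all $n$ (the reverse inclusion being the trivial $n=1$ instance, and the topos case following by soundness), but they establish this entailment differently. The paper passes through the Morita-equivalence with $\mathbb{L}_{u}$ established in \cite{Russo}: it applies the interpretation functor $I:\mathscr{C}_{\mathbb{MV}}\to\mathscr{C}_{\mathbb{L}_{u}}$, translates axiom P.1 and the target sequents into the language of $\ell$-groups with strong unit, and verifies there that the translated hypothesis entails $(2x\leq u\vdash_{x}2^{n}x\leq u)$ by induction. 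You instead stay entirely inside the equational theory of MV-algebras: from $x\leq\neg x$ you extract $x^{2}=0$ via Lemma \ref{lemma:order}, invoke the identity $(\top\vdash_{x}2^{n}x^{2}=(2^{n}x)^{2})$ already isolated in the proof of Lemma \ref{lemseq} to get $(2^{n}x)^{2}=0$, and then use monotonicity of $\odot$ together with $n\leq 2^{n}$ to conclude $nx\odot x=0$, i.e.\ $nx\leq\neg x$. Your argument has the advantage of being self-contained --- it does not depend on the interpretation functor from \cite{Russo}, which the paper cites with an incomplete reference --- and of making explicit exactly which consequence of the axiom $\xi$ is used; the paper's translation, on the other hand, renders the statement transparent on the $\ell$-group side, where it reads ``$2x\leq u$ implies $2^{n}x\leq u$ for all $n$''. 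Your treatment of the radical in an arbitrary topos (reading $Rad(\mathcal{A})$ via the infinitesimal characterization, which is what the paper's own proof implicitly does) is consistent with the paper's subsequent adoption of $\{x\in A\mid x\leq\neg x\}$ as the internal definition.
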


\begin{proof}
We shall verify that the sequent $(\top \vdash_{x}x^2\oplus x^2=(x\oplus x)^2)$ entails the sequents $(x\leq \neg x \vdash_{x} nx\leq \neg x )$ for each $n\in {\mathbb N}$. This will imply our thesis by soundness.

By the equivalence between the quotients of the theory $\mathbb{MV}$ and the theory ${\mathbb L}_{u}$ established in \cite{Russo}, it is equivalent to prove that the image of the sequent $(\top \vdash_{x}x^2\oplus x^2=(x\oplus x)^2)$ under the interpretation functor $I:\mathscr{C}_{\mathbb{MV}}\to \mathscr{C}_{{\mathbb L}_{u}}$ defined in \cite{} entails the image of each of the sequents  $(x\leq \neg x \vdash_{x} nx\leq \neg x )$ in the theory ${\mathbb L}_{u}$. 

Now, the image of the sequent $(\top \vdash_{x}x^2\oplus x^2=(x\oplus x)^2)$ under $I$ is the sequent $(0\leq x\wedge x\leq u\vdash_{x} \sup(0,2\inf(2x,u)-u)=\inf(u,2\sup(2x-u,0))$, while the image under $I$ of the sequent $(x\leq \neg x \vdash_{x} nx\leq \neg x )$ is $( 0\leq x\wedge x\leq u \wedge x\leq (u-x) \vdash_{x} \inf(u,nx)\leq (u-x))$. It is readily seen that the former sequent entails the sequent $(2x\leq u\vdash_{x} 4x\leq u)$ and hence, by induction, that of the sequent $(2x\leq u\vdash_{x} 2^{n}x\leq u)$ for each $n\geq 1$, which in turns entails that of the latter sequent, as required. 
\end{proof}

The following lemma gives a list of sequents that are provable in the theory $\mathbb P$ and which therefore hold in every perfect MV-algebra by soundness.

\begin{lemma}\label{rad_ideal}
The following sequents are provable in $\mathbb P$:
\begin{itemize}
\item[(i)] $(x\leq \neg x\wedge y\leq x\vdash_{x,y}y\leq \neg y)$.
\item[(ii)] $(\neg z\leq z\vdash_{z}\neg z^2\leq z^2)$.
\item[(iii)] $(z\leq \neg z\vdash_{z}2z\leq \neg 2z)$.
\item[(iv)] $(z^2\leq \neg z^2\vdash_{z}z\leq \neg z)$.
\item[(v)] $(x\leq \neg x\wedge y\leq \neg y\vdash_{x,y}\sup(x,y)\leq \neg \sup(x,y))$.
\item[(vi)] $(x\leq \neg x\wedge y\leq \neg y\vdash_{x,y}\inf(x,y)\leq \neg \inf(x,y))$.
\item[(vii)] $(x\leq \neg x\wedge y\leq \neg y\vdash_{x,y}x\oplus y\leq \neg (x\oplus y))$.
\item[(viii)] $(\neg x\leq x\wedge \neg y\leq y\vdash_{x,y}x\oplus y=1)$.
\item[(ix)] $(x\leq \neg x\wedge \neg y\leq y\vdash_{x,y}x\leq y)$.
\end{itemize}
\end{lemma}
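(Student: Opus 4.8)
The plan is to lean on two facts already in hand. First, by Theorem~\ref{axiomatization} the theory $\mathbb{P}$ proves the sequent $\beta:(\top\vdash_{x}x\leq \neg x\vee \neg x\leq x)$, so in a perfect MV-algebra every element lies in the radical $\{x\mid x\leq\neg x\}$ or in the coradical $\{x\mid \neg x\leq x\}$; second, by P.4 no element satisfies $x=\neg x$. The recurring technique will be a case split: to show that a compound element $t$ lies in the radical, invoke $\beta$ on $t$ and rule out the alternative $\neg t\leq t$ by deriving a contradiction with P.4. All the remaining manipulations take place in the elementary MV-calculus provable already in $\mathbb{MV}$: Lemma~\ref{lemma:order}, the De Morgan law $\neg(a\oplus b)=\neg a\odot\neg b$, monotonicity of $\oplus$ and $\odot$, antisymmetry of $\leq$, and the identity P.1, $(\top\vdash_{x}x^2\oplus x^2=(x\oplus x)^2)$.

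I would first dispatch the items that do not require $\beta$. Item (i) is pure $\mathbb{MV}$: from $y\leq x$ and $x\leq\neg x$ one gets $y\leq\neg x$, while $y\leq x$ also gives $\neg x\leq\neg y$, whence $y\leq\neg x\leq\neg y$. Items (ii) and (iii) come straight from P.1: if $\neg z\leq z$ then $z\oplus z=1$, so $z^2\oplus z^2=(z\oplus z)^2=1$, i.e.\ $\neg z^2\leq z^2$; dually, if $z\leq\neg z$ then $z^2=0$, so $(2z)^2=z^2\oplus z^2=0$, i.e.\ $2z\leq\neg 2z$. Item (vi) is then immediate from (i), since $\inf(x,y)\leq x$; and (v) will follow from (i) together with (vii), because $\sup(x,y)=(x\odot\neg y)\oplus y\leq x\oplus y$.

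The core of the argument is a single auxiliary statement subsuming (vii), (viii) and (ix), namely that $\mathbb{P}$ proves $(x\leq\neg x\wedge y\leq\neg y\vdash_{x,y}(x\oplus y\leq\neg(x\oplus y))\wedge(x\odot y=0))$. To prove it, apply $\beta$ to $x\oplus y$. In the coradical case $\neg(x\oplus y)\leq x\oplus y$ we have $(x\oplus y)\oplus(x\oplus y)=1$, i.e.\ $2x\oplus 2y=1$; by (iii) both $2x$ and $2y$ lie in the radical, and $2x\oplus 2y=1$ gives $\neg 2x\leq 2y$, so $2x\leq\neg 2x\leq 2y$; hence $2x\leq 2y$ and $1=2x\oplus 2y\leq 2y\oplus 2y$, forcing $\neg 2y\leq 2y$, which with $2y\leq\neg 2y$ yields $2y=\neg 2y$, contradicting P.4. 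So we are in the radical case $x\oplus y\leq\neg(x\oplus y)$, which is exactly (vii); moreover $\neg(x\oplus y)=\neg x\odot\neg y\leq\neg x$, so $x\oplus y\leq\neg x$, whence $(x\oplus y)\odot x=0$ by Lemma~\ref{lemma:order}, and since $y\leq x\oplus y$, monotonicity of $\odot$ gives $x\odot y\leq(x\oplus y)\odot x=0$. From this, (ix) follows by applying the auxiliary statement to $x$ and $\neg y$ (which lies in the radical when $y$ is in the coradical): $x\odot\neg y=0$ means $x\leq y$. And (viii) follows from (ix): from $\neg x\leq x$ and $\neg y\leq y$ we get $\neg x\leq y$ by (ix), i.e.\ $x\oplus y=1$. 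Finally (iv): apply $\beta$ to $z$; the case $\neg z\leq z$ gives $\neg z^2\leq z^2$ by (ii), which with the hypothesis $z^2\leq\neg z^2$ yields $z^2=\neg z^2$, contradicting P.4, so $z\leq\neg z$.

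The step I expect to be the real obstacle is the auxiliary statement, and specifically the equality $x\odot y=0$ for radical $x,y$: this is a genuinely \emph{perfect} phenomenon (in a general MV-algebra the radical is only a proper ideal, on which $\odot$ need not be trivial), so the $\mathbb{MV}$-calculus alone cannot deliver it. The non-obvious move is to route through $x\oplus y$: first show it is again radical --- the only place where the coradical horn of $\beta$ must be excluded, using (iii) and P.4 --- and then extract $x\odot y=0$ from $x\oplus y\leq\neg x$ by monotonicity of $\odot$ along $y\leq x\oplus y$. Everything else is routine bookkeeping in the MV-calculus.
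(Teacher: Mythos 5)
Your proof is correct, but it organizes the middle of the lemma differently from the paper. The paper's order of attack is (v) first, by a direct computation showing $\sup(x,y)^3=0$ (hence $\sup(x,y)^4=0$) and then invoking (iv); then (vii) follows from (iii) and (v) via $x\oplus y\leq 2\sup(x,y)$; then (viii) from (vii) by appealing to Chang's result that $(x\odot y)\oplus(x\odot y)=1$ implies $x\oplus y=1$; and finally (ix) from (viii). You instead prove (vii) directly by applying $\beta$ to $x\oplus y$ and excluding the coradical horn with (iii) and P.4, package it with the extra conclusion $x\odot y=0$, and then read off (v), (ix) and (viii) as corollaries. The trade-off: the paper's route for (v) is a self-contained equational computation (no case split beyond the one already buried in (iv)), while yours replaces both the cube computation and the citation of Chang's Theorem 3.8 with a single case analysis plus elementary monotonicity, which arguably makes the role of $\beta$ and P.4 more transparent. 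Your observation that $x\odot y=0$ for radical $x,y$ is a genuinely perfect (not merely $\mathbb{MV}$-provable) fact is accurate and is exactly the content that the paper extracts instead through $\sup(x,y)^3=0$ together with (iv). Both arguments are sound; as a minor remark, your proof of (i) via antitonicity of $\neg$ is slightly more direct than the paper's, which goes through $y\odot y\leq x\odot x=0$.
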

\begin{proof}
In the proof of this lemma we shall make an extensive use of the equivalent definitions of the natural order given by Lemma \ref{lemma:order}.
\begin{itemize}
\item[(i)] Given $x\leq \neg x$ and $y\leq x$, we have that:
\begin{center}
$y\leq x\Rightarrow y\odot y\leq x\odot x\Rightarrow y\odot y=0\Leftrightarrow y\leq \neg y$
\end{center}
\item[(ii)] If $\neg z\leq z$, from axiom P.1 and from identities that are provable in the theory of MV-algebras we have that:
\begin{center}
$0=(2\neg z)^2=(\neg z\oplus \neg z)\odot (\neg z\oplus \neg z)=$

$=(\neg z^2)\odot (\neg z^2)=$

$=\neg (z^2\oplus z^2)$,
\end{center}
which means that $\neg z^2\leq z^2$.

\item[(iii)] Given $z\leq \neg z$, we want to prove that $2z \leq \neg (2z)$. But this is equivalent to $(2z)^{2}=0$, which follows from $z^{2}=0$ (which is equivalent to $z\leq \neg z$) since $2z^{2}=(2z)^{2}$ by axiom P.1. 

\item[(iv)] Given $z^2\leq \neg z^2$, by axiom $\beta$ either  $z\leq \neg z$ or $\neg z\leq z$. If $\neg z\leq z$, by point (ii) $\neg z^2\leq z^2$ and hence $\neg z^2 = z^2$. But from Lemma \ref{lemma_radical2} we know that it is false, whence $z\leq \neg z$.
\item[(v)] Given $x\leq \neg x$ and $y\leq \neg y$, we have already observed that $x^2=0$ and $y^2=0$. From this it follows that $\sup(x,y)^3=0$ whence $\sup(x,y)^4=0$. Indeed, by using the identity $x\odot \sup(y,z)= \sup(x\odot y,x\odot z)$ (cf. \cite[Lemma 1.1.6(i)]{CDM}), we obtain that 
\begin{center}
$\sup(x,y)^3=\sup(x,y)\odot \sup(x,y)\odot \sup(x,y)=$

$=\sup(x,y)\odot \sup(\sup(x,y)\odot x,\sup(x,y)\odot y)=$

$=\sup(x,y)\odot \sup(\sup(x^2,x\odot y),\sup(x\odot y,y^2))=$

$=\sup(x,y)\odot \sup(x\odot y,x\odot y)=$

$=\sup(x,y)\odot (x\odot y)=$

$=\sup(x^2,x\odot y)\odot y=$

$=x\odot y\odot y=0$
\end{center}
If $\sup(x,y)^4=0$, then $\sup(x,y)^2 \leq \neg (\sup(x,y)^2) $. By point (iv) we thus have that $\sup(x, y)\leq \neg \sup(x, y)$, as required.
\item[(vi)] Given $x\leq \neg x$ and $y\leq \neg y$, since $\inf(x,y)\leq x,y$, the thesis follows from point (i).
\item[(vii)] Given $x\leq \neg x$ and $y\leq \neg y$, we know from points (iii) and (v) that $2\sup(x, y) \leq \neg (2\sup(x, y))$. But $x\oplus y\leq 2\sup(x,y)$, whence the thesis follows from point (i).
\item[(viii)] Given $\neg x\leq x$ and $\neg y\leq y$, by point (vi) we have that 
\begin{center}
$\neg x\oplus \neg y\leq \neg (\neg x\oplus \neg y)\Leftrightarrow$

$\neg (x\odot y)\leq (x\odot y)\Leftrightarrow$

$(x\odot y)\oplus (x\odot y)=1$
\end{center}
But $(x\odot y)\oplus (x\odot y)=1$ implies $x\oplus y=1$ (cf. \cite[Theorem 3.8]{Chang}), as required.
\item[(ix)] Given $x\leq \neg x$ and $\neg y\leq y$, by point (viii) we have that 
\begin{center}
$\neg x\oplus y=1\Leftrightarrow x\leq y$,
\end{center} 
as required.
\end{itemize}
\end{proof}

\begin{remark}
It will follow from Proposition \ref{prop:cartesianization} that each of the sequents in the statement of the lemma are already provable in the theory $\mathbb C$.
\end{remark}

Let $\mathscr{E}$ be an arbitrary Grothendieck topos. An MV-algebra in $\mathscr{E}$ is a structure $\mathcal{A}=(A,\oplus,\neg 0)$ in $\mathscr{E}$ that satisfies the axioms MV.1-MV.6. If this structure satisfies also the axioms P.1, P.2, P.3 and P.4 we call it a perfect MV-algebra in $\mathscr{E}$. Recall that $A$ is an object in the topos $\E$ and the operations and the constant in $\mathcal{A}$ are arrows in the topos $\E$.
\begin{itemize}
\item $\oplus:A\times A\rightarrow A$
\item $\neg: A\rightarrow A$
\item $0:1\rightarrow A$
\end{itemize}

The arrows of $\mathbb{MV}$-mod$(\mathscr{E})$, as well as those of $\mathbb{P}$-mod$(\mathscr{E})$, are called MV-homomorphisms.

While arguing in the internal language of the topos, we shall adopt the following abbreviations:
\begin{itemize}
\item $1:=\neg 0$
\item $x\odot y:= \neg (\neg x\oplus \neg y)$
\item $\sup(x,y):=(x\odot \neg y)\oplus y$
\item $\inf(x,y):=\neg \sup(\neg x, \neg y)$
\item $x\leq y:=\neg x\oplus y=1$
\end{itemize}

\begin{obs}
If $\mathscr{E}=\mathbf{Set}$, a perfect MV-algebra in $\mathscr{E}$ is exactly a perfect MV-algebra in the traditional sense. 
\end{obs}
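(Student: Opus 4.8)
The plan is to reduce both the ``internal'' notion (an $\mathbf{Set}$-structure over $\mathcal{L}_{MV}$ satisfying MV.1--MV.6 together with P.1, P.2, P.3, P.4) and the ``traditional'' notion of perfect MV-algebra to a common elementary description and compare them. The classical side is handled by Di Nola--Lettieri's characterization \cite[Proposition 6]{DiNola1}, which (using classical logic, available in $\mathbf{Set}$) states that an MV-algebra $\mathcal{A}$ is perfect in the traditional sense precisely when it is non-trivial and satisfies $\sigma = \textrm{P.1}$ and $\tau\colon (x^2 = x \vdash_x x = 0 \vee x = 1)$; and $\tau$ is equivalent over $\mathbb{MV}$ to P.3 because $x \odot x = x$ iff $x \oplus x = x$ by \cite[Theorem 1.16]{Chang}. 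So it suffices to prove that an $\mathbf{Set}$-structure $\mathcal{A}$ over $\mathcal{L}_{MV}$ satisfies $\mathbb{P}$ if and only if it is a non-trivial MV-algebra satisfying P.1 and P.3.

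The implication ``$\mathcal{A}\models\mathbb{P}$ $\Rightarrow$ traditional perfect'' is then immediate: such an $\mathcal{A}$ satisfies MV.1--MV.6, P.1 and P.3, and it is non-trivial because $0 = 1$ would give $0 = \neg 0$, contradicting P.4; apply the reduction above. For the converse, let $\mathcal{A}$ be non-trivial and satisfy MV.1--MV.6, P.1, P.3, so that $\mathcal{A}$ is a perfect MV-algebra in the traditional sense. Then P.4 holds by Lemma \ref{lemma_radical2}. It remains to check P.2: since $\mathcal{A}$ is perfect it lies in Chang's variety $V(C)$ by \cite[Proposition 5(5)]{DiNola1}, and, as recalled in the paragraph preceding Lemma \ref{lemseq}, every algebra in $V(C)$ satisfies $(\top \vdash_x 2(2x)^2 = (2x)^2)$, i.e.\ P.2 (this is where, besides \cite[Proposition 6]{DiNola1}, the axiom of choice enters — legitimately, since we are working in $\mathbf{Set}$). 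Hence $\mathcal{A}\models\mathbb{P}$.

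A slightly different, and perhaps more illuminating, route uses the intrinsic description of the radical: by Theorem \ref{axiomatization}, $\{\textrm{P.1},\textrm{P.2},\textrm{P.3}\}$ is equivalent over $\mathbb{MV}$ to $\{\textrm{P.1},\beta\}$, where $\beta\colon (\top \vdash_x x\leq\neg x \vee \neg x\leq x)$; since P.1 is the axiom $\xi$ and P.2 is one of the two sequents added to $\mathbb{MV}$ to form $\mathbb C$, any model of these sequents lies in $\mathbb{C}$-mod$(\mathbf{Set})$, so Proposition \ref{radical} applies and gives $Rad(\mathcal{A}) = \{x \mid x\leq\neg x\}$; then $\beta$ says exactly that $A = Rad(\mathcal{A})\cup\neg Rad(\mathcal{A})$, which together with non-triviality (coming from P.4 as above) is the definition of a perfect MV-algebra, and conversely a traditional perfect MV-algebra lies in $V(C)\subseteq\mathbb{C}$-mod$(\mathbf{Set})$ (using the axiom of choice), whence P.1 and P.2 hold, $\beta$ holds by Proposition \ref{radical} and the defining equality $A = Rad(\mathcal{A})\cup\neg Rad(\mathcal{A})$ (hence P.3), and P.4 holds by Lemma \ref{lemma_radical2}. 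Either way there is no genuinely hard step; the only points demanding care are the bookkeeping that $\mathbb{P}$ really extends $\mathbb C$ (so that Proposition \ref{radical} is available) and the two legitimate invocations of the axiom of choice noted above.
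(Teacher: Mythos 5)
Your argument is correct and is essentially the justification the paper leaves implicit: the remark is stated without proof, and the ingredients you assemble --- Di Nola--Lettieri's classical characterization of perfect algebras via $\sigma$ and $\tau$ (\cite[Proposition 6]{DiNola1}), the equivalence of $\tau$ with P.3, Lemma \ref{lemma_radical2} for P.4, and the fact that perfect algebras lie in $V(C)$ and hence satisfy P.2 --- are exactly those the paper develops in the paragraphs preceding the definition of $\mathbb{P}$. Both of your routes are sound, including the two flagged uses of the axiom of choice, which are indeed legitimate when working in $\mathbf{Set}$.
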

For any perfect MV-algebra $\mathcal{A}=(A,\oplus,\neg, 0)$ in $\mathscr{E}$, we define by using the internal language the subobject
\begin{center}
$Rad(\mathcal{A})=\{x\in A\mid x\leq \neg x\}\mono \mathcal{A}$,
\end{center}
and we call it the \textit{radical} of $\mathcal{A}$. Similarly, we define the subobject 
\begin{center}
$Corad(\mathcal{A})=\{x\in A\mid \neg x\leq x\}\mono \mathcal{A}$
\end{center}
and call it the \textit{coradical} of $\mathcal{A}$.
 
Notice that the union of the subobjects $Rad(\mathcal{A})$ and $Corad(\mathcal{A})$ is precisely the interpretation in $\mathcal{A}$ of the formula $\{x. x\leq \neg x \vee \neg x\leq x\}$. In particular, a perfect MV-algebra is generated by its radical also in an arbitrary Grothendieck topos $\E$.

\section{Lattice-ordered abelian groups}\label{groups}

\begin{definition}(cf. \cite{BK})
A \textit{lattice-orderd abelian group} ($\ell$-group, for brevity) is a structure $\mathcal{G}=(G,+,-,\leq, 0)$ such that $ (G,+,-, 0)$ is an abelian group, $(G,\leq)$ is a lattice-ordered set and the following \emph{translation invariance property} holds:
\[\text{for any } x,y,z\in G \qquad x\leq y \text{ implies }x+z\leq y+z. \]

\end{definition}
Any pair of elements $x$ and $y$ of an $\ell$-group has a supremum, which we indicate by $\sup(x,y)$, and an infimum, indicated by $\inf(x,y)$.  We also write $nx$ for $x+\dots+x$, $n$-times.  For each element $x$ of an $\ell$-group, one can define the \textit{positive part} $x^+$, the \textit{negative part} $x^-$, and the \textit{absolute value} $|x|$ as follows:
\begin{enumerate}
\item $x^+:= \sup(0,x)$;
\item $x^-:= \sup(0,-x)$;
\item $|x|:=x^+ + x^-=\sup(x,-x)$.
\end{enumerate}
Recall that, for every $x\in G$, $x=x^+-x^-$.

\begin{example}
A simple example of an $\ell$-group is given by the group of integers with the natural order $(\mathbb{Z}, +,\leq)$.
\end{example}

Given an $\ell$-group $\mathcal{G}$ with a distinguished element $u$, $u$ is said to be a \textit{strong unit} for $\mathcal{G}$ if the following properties are satisfied:
\begin{itemize}
\item $u\geq 0$;
\item for any positive element $x$ of $\mathcal{G}$ there is a natural number $n$ such that $x\leq nu$.
\end{itemize}

We shall refer to $\ell$-groups with strong unit simply as to $\ell$-u groups.

\begin{example}
The structure $(\mathbb{R},+,-,0,\leq)$ is clearly an $\ell$-group. Further, any strictly positive element of $\mathbb{R}$ is a strong unit, $\mathbb{R}$ being archimedean. 
\end{example}

Let $\mathcal{L}_g$ be the first-order signature consisting of  a relation symbol $\leq$, of a constant $0$, and of function symbols $+$, $-$, $\inf$ and $\sup$ formalizing the $\ell$-group operations. We denote by $\mathbb{L}$ the geometric theory of  $\ell$-groups, whose axioms are the following sequents:
\begin{itemize}
\item[L.1] $\top\vdash_{x,y,z} x+(y+z)=(x+y)+z$
\item[L.2] $\top\vdash_{x}x+0=x$
\item[L.3] $\top\vdash_{x}x+(-x)=0$
\item[L.4] $\top\vdash_{x,y}x+y=y+x$
\item[L.5] $\top\vdash_{x}x\leq x$
\item[L.6] $(x\leq y)\wedge(y\leq x)\vdash_{x,y}x=y$
\item[L.7] $(x\leq y)\wedge (y\leq z)\vdash_{x,y,z}x\leq z$
\item[L.8] $\top\vdash_{x,y} \inf(x,y)\leq x\wedge \inf(x,y)\leq y$
\item[L.9] $z\leq x\wedge z\leq y\vdash_{x,y,z} z\leq \inf(x,y)$
\item[L.10] $\top\vdash_{x,y}x\leq \sup(x,y) \wedge y\leq \sup(x, y)$
\item[L.11] $x\leq z \wedge y\leq z\vdash_{x,y,z} \sup(x,y)\leq z$
\item[L.12] $x\leq y\vdash_{x,y,t}t+x\leq t+y$
\end{itemize}

Extending the signature $\mathcal{L}_g$ by adding a new constant symbol $u$, we can define the theory of $\ell$-groups with strong unit $\mathbb{L}_u$, whose axioms are L.1-L.12 plus
\begin{itemize}
\item[L$_u$.1] $\top\vdash u\geq 0$
\item[L$_u$.2] $x\geq 0\vdash_{x} \mathbin{\mathop{\textrm{\huge $\vee$}}\limits_{n\in \mathbb{N}}} (x\leq nu)$
\end{itemize}

A model of $\mathbb{L}$ (respectively of $\mathbb{L}_u$) in $\mathscr{E}$ is called an $\ell$-group in $\mathscr{E}$ (resp. an $\ell$-group with strong unit) and the arrows of $\mathbb{L}$-mod$(\mathscr{E})$ (resp. of $\mathbb{L}_u$-mod$(\mathscr{E})$) are called $\ell$-homomorphisms (resp. unital $\ell$-homomorphisms).

An $\ell$-group in $\mathscr{E}$ is a structure $\mathcal{G}=(G,+,-,\leq,\inf,\sup,0)$ in $\mathscr{E}$ which satisfies the axioms L.1-L.12. Note that such a structure consists of an object $G$ in the topos $\E$ and arrows (resp. subobjects) in the topos interpreting the function (resp. the relation) symbols of the signature $\mathcal{L}_g$: 
\begin{itemize}
\item $+:G\times G\rightarrow G$ 
\item $-:G\rightarrow G$
\item $\leq\rightarrowtail G$
\item $\inf:G\times G\rightarrow G$
\item $\sup:G\times G\rightarrow G$
\item $0:1\rightarrow G$
\end{itemize}

\begin{obs}
An $\ell$-group in $\mathscr{E}$ is an $\ell$-group in the traditional sense if $\mathscr{E}=\mathbf{Set}$.
\end{obs}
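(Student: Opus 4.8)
The plan is to simply unwind what a model of the geometric theory $\mathbb{L}$ in the topos $\mathbf{Set}$ is and to compare it with the classical notion of $\ell$-group recalled at the beginning of the section. When $\mathscr{E}=\mathbf{Set}$, interpreting the signature $\mathcal{L}_g$ amounts to choosing a set $G$, functions $+,\inf,\sup:G\times G\to G$ and $-:G\to G$, an element $0\in G$ and a binary relation ${\leq}\subseteq G\times G$, and satisfaction of a geometric sequent in $\mathbf{Set}$ is just ordinary Tarskian satisfaction. So a model of $\mathbb{L}$ in $\mathbf{Set}$ is precisely such a structure validating the first-order sentences corresponding to L.1--L.12.

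First I would observe that L.1--L.4 say exactly that $(G,+,-,0)$ is an abelian group, that L.5--L.7 say exactly that $(G,\leq)$ is a partially ordered set, and that L.12 is the translation-invariance property verbatim. The only point requiring a comment is the presence of the primitive function symbols $\inf$ and $\sup$, which do not appear in the traditional signature $(G,+,-,\leq,0)$. Here I would note that axioms L.8 and L.9 together assert that, for all $x,y$, the element $\inf(x,y)$ is a lower bound of $\{x,y\}$ lying above every other lower bound, i.e. the greatest lower bound, and dually L.10 and L.11 make $\sup(x,y)$ the least upper bound. Since in any poset the meet and join of a pair are unique when they exist, a structure satisfying L.1--L.12 is the same thing as an abelian group equipped with a translation-invariant partial order in which every pair of elements has an infimum and a supremum---that is, an $\ell$-group in the traditional sense---together with the forced choice of operations $\inf$ and $\sup$ realizing these binary meets and joins. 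Conversely, every traditional $\ell$-group carries exactly one such expansion.

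Passing between the two descriptions is therefore a bijection, and in fact an isomorphism of categories, since an order-preserving abelian group homomorphism automatically commutes with the (order-determined) operations $\inf$ and $\sup$; so under it the $\mathbb{L}$-models in $\mathbf{Set}$ correspond to the classical $\ell$-groups. I do not expect any real obstacle here: the content is entirely definitional, the only thing worth flagging being the harmless redundancy of the symbols $\inf$ and $\sup$, whose interpretation is pinned down by the order through L.8--L.11.
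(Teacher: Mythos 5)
Your unwinding of L.1--L.12 is correct and is exactly the (tacit) content of the remark, which the paper states without proof precisely because it is definitional: L.1--L.4 give the abelian group, L.5--L.7 the partial order, L.8--L.11 pin down $\inf$ and $\sup$ as the (unique) binary meet and join, and L.12 is translation invariance; satisfaction of these cartesian sequents in $\mathbf{Set}$ is ordinary satisfaction, so the objects of $\mathbb{L}\mbox{-mod}(\mathbf{Set})$ are the classical $\ell$-groups.

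One side claim in your last paragraph is false, though it does not affect the remark (which concerns only objects): an order-preserving abelian group homomorphism does \emph{not} automatically commute with $\inf$ and $\sup$. For instance, $(a,b)\mapsto a+b$ from $\mathbb{Z}^{2}$ with the product order to $\mathbb{Z}$ is an order-preserving group homomorphism, yet it sends $\inf\bigl((1,0),(0,1)\bigr)=(0,0)$ to $0$, whereas $\inf(1,1)=1$. The categories of $\mathbb{L}$-models and of classical $\ell$-groups nevertheless agree on morphisms, but for the tautological reason that $\ell$-homomorphisms are \emph{defined} to preserve the lattice operations, exactly as homomorphisms of $\Sigma$-structures preserve the function symbols $\inf$ and $\sup$ of $\mathcal{L}_g$ --- not because preservation of the order forces preservation of meets and joins.
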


\section{Equivalence in $\mathbf{Set}$}\label{equivalence in set}

In this section we briefly review the well-known equivalence between the category of perfect MV-algebras and that of $\ell$-groups established by Di Nola and Lettieri in \cite{P-L}. 

Let $\mathcal{G}$ be an $\ell$-group and $\mathbb{Z}\times_{lex} \mathcal{G}$ be the lexicographic product of the $\ell$-group $\mathbb{Z}$ of integers with $\mathcal{G}$. This is again an $\ell$-group, whose underlying set is the cartesian product $\mathbb{Z}\times G$, whose group operations are defined pointwise and whose order relation is given by the lexicographic order. The element $(1,0)$ is a strong unit of $\mathbb{Z}\times_{lex} \mathcal{G}$; hence, we can consider the MV-algebra $\Sigma(\mathcal{G}):=\Gamma(\mathbb{Z}\times_{lex} \mathcal{G},(1,0))$, where $\Gamma$ is the truncation functor from the category of $\ell$-groups with strong unit to the category of MV-algebras introduced by Mundici in \cite{Mundici}. By definition of lexicographic order, we have that 
\begin{center}
$\Sigma(G)=\{(0,x)\in \Gamma(\mathbb{Z}\times G)\mid x\geq 0\}\cup\{(1,x)\in \Gamma(\mathbb{Z}\times G)\mid x\leq 0\}$,
\end{center}
where $\Sigma(G)$ is the underlying set of $\Sigma(\mathcal{G})$. This MV-algebra is perfect; indeed, $\{(0,x)\in \Gamma(\mathbb{Z}\times G)\mid x\geq 0\}$ is the radical and $\{(1,x)\in \Gamma(\mathbb{Z}\times G)\mid x\leq 0\}$ is the coradical. If $h:\mathcal{G}\rightarrow \mathcal{G'}$ is an $\ell$-homomorphism, the function
\begin{center}
$h^*:(m,g)\in \mathbb{Z}\times_{lex} \mathcal{G}\rightarrow (m,h(g))\in \mathbb{Z}\times_{lex} \mathcal{G}$
\end{center}
is a unital $\ell$-homomorphism. We set $\Sigma(h)=h^*|_{\Gamma(\mathbb{Z}\times_{lex} \mathcal{G})}$. It is easily seen that $\Sigma$ is a functor.

In the converse direction, let $\mathcal{A}$ be a perfect MV-algebra.

\begin{lemma}\label{monoid}
For every MV-algebra $\mathcal{A}$, the structure $(Rad(\mathcal{A}),\oplus,\leq,\inf,\sup,0)$ is a cancellative lattice-ordered abelian monoid.
\end{lemma}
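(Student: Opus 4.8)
The plan is to work entirely inside $Rad(\mathcal{A})$, verifying in turn that $\oplus$ restricts to a binary operation on $Rad(\mathcal{A})$, that $(Rad(\mathcal{A}),\oplus,0)$ is a commutative monoid, that the natural order $\leq$ restricted to $Rad(\mathcal{A})$ is a lattice order with $\inf$ and $\sup$ computed as in $\mathcal{A}$, and finally that $\oplus$ is cancellative on $Rad(\mathcal{A})$. The first point is the content of Lemma \ref{rad_ideal}(vii) (or rather, its underlying computation: if $x,y$ are infinitesimal then so is $x\oplus y$, since $x\leq\neg x$ and $y\leq\neg y$ entail $x\oplus y\leq\neg(x\oplus y)$), together with the observation that $0\in Rad(\mathcal{A})$; alternatively one cites directly that $Rad(\mathcal{A})$ is an ideal, hence closed under $\oplus$. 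Associativity, commutativity and the unit law for $\oplus$ on $Rad(\mathcal{A})$ are inherited verbatim from axioms MV.1, MV.2, MV.3 in $\mathcal{A}$, so there is nothing to prove there. The lattice structure: $\leq$ is a partial order on all of $A$ (the natural order), so it restricts to a partial order on $Rad(\mathcal{A})$; and Lemma \ref{rad_ideal}(v), (vi) show that $Rad(\mathcal{A})$ is closed under $\sup$ and $\inf$ as computed in $\mathcal{A}$, so these remain the supremum and infimum inside $Rad(\mathcal{A})$. Thus $(Rad(\mathcal{A}),\oplus,\leq,\inf,\sup,0)$ is a lattice-ordered commutative monoid, and compatibility of $\oplus$ with $\leq$ (the translation-invariance/isotonicity $x\leq y \Rightarrow x\oplus z\leq y\oplus z$) is a standard identity provable in $\mathbb{MV}$ (it follows from Lemma \ref{lemma:order}(iii)).

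The one genuinely substantive point, and the step I expect to be the main obstacle, is \textbf{cancellativity}: if $a,b,c\in Rad(\mathcal{A})$ and $a\oplus c = b\oplus c$, then $a=b$. This is false for MV-algebras in general (e.g. $1\oplus 1 = 0\oplus 1$), so it genuinely uses that $a,b,c$ are infinitesimal. The idea is to exploit that on the radical $\oplus$ behaves like truncation-free addition because no "overflow to $1$" can occur. Concretely, I would first establish the key sub-fact that for infinitesimal elements the operations $\oplus$ and $\odot$ interact well: since $c\leq\neg c$, i.e. $c^2 = c\odot c = 0$, and more generally (by the computation in Lemma \ref{rad_ideal}, or by $Rad(\mathcal{A})$ being an ideal closed under the relevant operations) $nc$ is infinitesimal for all $n$, one has good control of $\neg(a\oplus c)$. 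The cleanest route is: from $a\oplus c = b\oplus c$ deduce $(a\oplus c)\odot\neg c = (b\oplus c)\odot\neg c$, and then show that for $a,c$ infinitesimal one has $(a\oplus c)\odot \neg c = a$. The latter identity is where the infinitesimality is used: in a general MV-algebra $(a\oplus c)\odot\neg c = \sup(a\odot\neg c,\, 0\text{-type corrections})$, but when $a\odot c = 0$ (which holds because $a\leq\neg a$ and $c\leq a\oplus\cdots$, or directly since the product of two infinitesimals lies below each and below $\neg$ of each) the truncation disappears and $(a\oplus c)\odot\neg c = a\odot\neg c = a$ — using once more that $a\odot\neg c = a$ because $a\leq\neg c$ (both $a$ and $c$ infinitesimal forces $a\leq\neg c$, e.g. by $a\leq\neg a$ and $c\leq \neg c$ combined with $a\odot c=0 \Leftrightarrow a\leq\neg c$, Lemma \ref{lemma:order}(ii)).

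So the skeleton of the cancellativity argument is: (1) $a,c\in Rad(\mathcal{A})$ implies $a\odot c = 0$, equivalently $a\leq\neg c$ (Lemma \ref{lemma:order}); (2) hence $a\odot\neg c = a$ (again Lemma \ref{lemma:order}(iii) applied to $a\leq \neg c$, or the MV-identity $x\odot\neg y = x$ when $x\leq\neg y$); (3) the MV-identity $(x\oplus y)\odot\neg y = \sup(x, \ldots)$ simplifies, under $x\odot y = 0$, to $(x\oplus y)\odot \neg y = x$ — this is essentially $\sup(x,y)\odot\neg y$ unwound, or can be checked from $\neg((x\oplus y)\odot\neg y) = \neg x \oplus (\,y\odot\cdots)$ using De Morgan and MV.6; (4) apply $(-)\odot\neg c$ to both sides of $a\oplus c = b\oplus c$ and invoke (3) twice to get $a = b$. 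I would double-check step (3) against \cite[Lemma 1.1.6]{CDM} or prove it by the standard manipulation $(x\oplus y)\odot\neg y = \neg(\neg(x\oplus y)\oplus y) = \neg(\neg x \odot \neg y \oplus y)$ and then use $x\odot y = 0 \Leftrightarrow \neg x \oplus \neg y = 1$ to collapse it; in the worst case one passes through the known fact that $Rad(\mathcal{A})$, under $\oplus$, embeds into the positive cone of the $\ell$-group associated to $\mathcal{A}$ via Mundici's $\Gamma$, where cancellativity is immediate — but I would prefer the direct MV-algebraic computation so as to keep the lemma self-contained and constructive.
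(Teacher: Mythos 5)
Your overall strategy is sound and considerably more self-contained than the paper's: the paper disposes of this lemma in two lines, observing that the radical, being an ideal of $\mathcal{A}$, is a lattice-ordered abelian monoid, and citing Lemma 3.2 of \cite{P-L} for cancellativity, whereas you supply a direct MV-algebraic argument. Your reduction of cancellativity to the single identity $(x\oplus y)\odot\neg y=x$ under the hypothesis $x\odot y=0$, applied to both sides of $a\oplus c=b\oplus c$, is exactly right, and the hypothesis is available precisely because the radical is an ideal: $a\oplus c\in Rad(\mathcal{A})$ gives $(a\oplus c)^{2}=0$, whence $a\odot c\leq (a\oplus c)^{2}=0$. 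The identity itself follows from the MV-identity $((x\oplus y)\odot\neg y)\oplus(x\odot y)=x$, which one checks in $[0,1]$ and which therefore holds in every MV-algebra by Chang's completeness theorem; specializing to $x\odot y=0$ gives what you need. What your approach buys is a constructive, citation-free proof; what the paper's buys is brevity.

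Two of your intermediate claims are, however, false and should be excised. The identity ``$x\odot\neg y=x$ whenever $x\leq\neg y$'' (your step (2)) fails already in Chang's algebra: for $x=nc$ and $y=mc$ with $0<m<n$ one has $x\leq\neg y$ but $x\odot\neg y=(n-m)c\neq x$; for the same reason $(x\oplus y)\odot\neg y\neq x\odot\neg y$ there, so the chain $(a\oplus c)\odot\neg c=a\odot\neg c=a$ by which you motivate step (3) is broken at both links even though its endpoints agree. Neither claim is load-bearing --- your step (4) invokes only the correct identity of step (3) --- but as written they are offered as the justification of that identity, so the argument needs the repair indicated above (or the fallback you mention via \cite{CDM}). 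A minor further point: Lemma \ref{rad_ideal} is proved for the theory $\mathbb{P}$, where the radical coincides with $\{x\mid x\leq\neg x\}$, while the present lemma concerns an arbitrary MV-algebra; your alternative of quoting that $Rad(\mathcal{A})$ is an ideal (hence closed under $\oplus$, and under $\inf$ and $\sup$ since $\inf(x,y)\leq x$ and $\sup(x,y)\leq x\oplus y$) is the correct route in that generality, and is the one the paper takes.
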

\begin{proof}
As an ideal of $\mathcal{A}$, the radical is a lattice-ordered abelian monoid. It is also cancellative (see Lemma 
3.2 \cite{P-L}).
\end{proof}

From a cancellative lattice-ordered abelian monoid $\mathcal{M}$ we can canoni\-cally define an $\ell$-group by mimicking the construction of the group of integers from the monoid of natural numbers. Its underlying set is the quotient of the product $\mathcal{M}\times \mathcal{M}$ by the equivalence relation which identifies the pairs $(x,y),(z,t)$ such that $x+t=y+z$, where $+$ is the sum operation of $\mathcal{M}$. Let $\Delta(\mathcal{A})$ be the $\ell$-group built from $Rad(\mathcal{A})$ by using this construction. Any  MV-homomorphism $f:\mathcal{A}\rightarrow \mathcal{A}'$  between perfect MV-algebras preserves the radical, the MV-operations and the natural order. Thus $f$ induces by restriction a homomorphism between the associated lattice-ordered abelian monoids, which in turn can be extended to a homomorphism between the corresponding $\ell$-groups, as follows:
\begin{center}
$\Delta(f):[x,y]\in \Delta(\mathcal{A})\rightarrow [f(x),f(y)]\in \Delta(\mathcal{A}')$
\end{center}
It is easy to prove that $\Delta$ is a functor.

The functors $\Sigma$ and $\Delta$ are categorical inverses to each other, i.e. $\Sigma(\Delta(\mathcal{G}))\simeq \mathcal{G}$ and $\Delta(\Sigma(\mathcal{A}))\simeq \mathcal{A}$ for every $\ell$-group $\mathcal{G}$ and every perfect MV-algebra $\mathcal{A}$, naturally in $\mathcal{G}$ and $\mathcal{A}$.


\section{Topos-theoretic generalization}\label{equivalence in topos}

The categorical equivalence reviewed in the previous section can be seen as an equivalence between the categories $\mathbb{P}$-mod($\mathbf{Set}$) and $\mathbb{L}$-mod($\mathbf{Set}$). In this section we show that it can be generalized to an equivalence between $\mathbb{P}$-mod($\mathscr{E}$) and $\mathbb{L}$-mod($\mathscr{E}$), for every Grothendieck topos $\mathscr{E}$, natural in $\E$.

In the following we shall denote models in $\mathscr{E}$ by calligraphic letters $\mathcal{G},\mathcal{A}$ and by $G,A$ their underlying objects.

\subsection{From $\mathbb{L}$-models to $\mathbb{P}$-models}
In every Grothendieck topos $\mathscr{E}$ there is an object generalizing the set of integers which we call $\mathbb{Z}_{\mathscr{E}}$. This object is the coproduct
$\mathbin{\mathop{\bigsqcup}\limits_{z\in \mathbb{Z}}}1$ of $\mathbb Z$ copies of the terminal object $1$ of the topos; 
we denote by $\{\chi_z:1\rightarrow \mathbin{\mathop{\bigsqcup}\limits_{z\in \mathbb{Z}}}1\mid z\in \mathbb{Z}\}$ the canonical coproduct arrows. This object is precisely the image of $\mathbb{Z}$ under the inverse image functor $\gamma_{\mathscr{E}}^*$ of the unique geometric morphism $\gamma_{\mathscr{E}}:\mathscr{E}\rightarrow \mathbf{Set}$. The $\ell$-group structure with strong unit of $\mathbb{Z}$ induces an $\ell$-group structure with strong unit on $\mathbb{Z}_{\mathscr{E}}$, since $\gamma_{\mathscr{E}}^*$ preserves it. In particular the total order relation $\leq$ on $\mathbb{Z}$ induces a total order relation on $\mathbb{Z}_{\mathscr{E}}$ which we indicate, abusing notation, also with the symbol $\leq$.
Note that $\mathbb{Z}_{\mathscr{E}}$ is a decidable object of $\mathscr{E}$, it being the image under $\gamma_{\E}$ of a decidable object, i.e. the equality relation on it is complemented. This allows to define the strict order $<$ as the intersection of $\leq$ with the complement of the equality relation.


Let $\mathcal{G}$ be an  $\ell$-group in $\mathscr{E}$. The \emph{lexicographic product} $\mathbb{Z}_{\mathscr{E}}\times_{lex} \mathcal{G}$ of $\mathbb{Z}_{\mathscr{E}}$ and $\mathcal{G}$ is an  $\ell$-group whose underlying object is the product $\mathbb{Z}_{\mathscr{E}}\times G$, whose group operations are defined componentwise and whose order relation is defined by using the internal language as follows:
\begin{center}
$(a,x)\leq(b,y)$ iff $(a< b)\vee (a=b \wedge x\leq y)$
\end{center}
Note that the infimum and the supremum of two ``elements'' are given by:
\begin{center}
$\inf((a,x),(b,y))= \begin{cases}
(a,x) & \textrm{if }a<b\\
(b,y) & \textrm{if }b<a\\
(a,\inf(x,y)) & \textrm{if }a=b 
\end{cases}$

\

\

$\sup((a,x),(b,y))= \begin{cases}
(a,x) & \textrm{if }a>b\\
(b,y) & \textrm{if }b>a\\
(a,\sup(x,y)) & \textrm{if }a=b 
\end{cases}$
\end{center}

The generalized element $<\chi_{1}, 0>:1 \to \mathbb{Z}_{\mathscr{E}}\times_{lex} \mathcal{G}$ yields a strong unit for the $\ell$-group $\mathbb{Z}_{\mathscr{E}}\times_{lex} \mathcal{G}$, which we denote, abusing notation, simply by $(1, 0)$.

\begin{proposition}
The lexicographic product $\mathbb{Z}_{\mathscr{E}}\times_{lex} \mathcal{G}$ is an $\ell$-group and $(1,0)$ is a strong unit for it.
\end{proposition}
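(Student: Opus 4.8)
The plan is to verify the $\ell$-group axioms L.1--L.12 for $\mathbb{Z}_{\mathscr{E}}\times_{lex}\mathcal{G}$ by arguing in the internal language of $\mathscr{E}$, exploiting the fact that any constructive proof of a statement about $\ell$-groups is sound in an arbitrary Grothendieck topos. The group axioms L.1--L.4 are immediate, since the group operations on $\mathbb{Z}_{\mathscr{E}}\times G$ are defined componentwise and both $\mathbb{Z}_{\mathscr{E}}$ and $G$ are abelian groups in $\mathscr{E}$. The reflexivity axiom L.5 follows at once from the defining disjunction. For antisymmetry (L.6), transitivity (L.7) and translation-invariance (L.12), the key point is that the strict order $<$ on $\mathbb{Z}_{\mathscr{E}}$ is available as a \emph{complemented} subobject (as noted in the excerpt, $\mathbb{Z}_{\mathscr{E}}$ is decidable), so one may reason by the trichotomy $a<b$, $a=b$, $b<a$ exactly as one does classically: this is where decidability of $\mathbb{Z}_{\mathscr{E}}$ is used essentially, and it is the only non-formal ingredient.

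For the lattice axioms L.8--L.11 I would simply check that the explicit formulas for $\inf((a,x),(b,y))$ and $\sup((a,x),(b,y))$ displayed above do define, internally, the greatest lower bound and least upper bound. Again one splits into the three decidable cases $a<b$, $a=b$, $b<a$; in the first two, the bound is forced by the first coordinate, while in the case $a=b$ one invokes the corresponding lattice axiom of $\mathcal{G}$ on the second coordinate. One subtlety worth spelling out is that, with the case split being over a decidable object, the case-defined arrows $\inf,\sup:\mathbb{Z}_{\mathscr{E}}\times G\times \mathbb{Z}_{\mathscr{E}}\times G\to \mathbb{Z}_{\mathscr{E}}\times G$ genuinely exist as morphisms of $\mathscr{E}$ (they are patched together along a complemented cover), so that $\mathbb{Z}_{\mathscr{E}}\times_{lex}\mathcal{G}$ is a structure in the signature $\mathcal{L}_g$ in the first place.

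Finally, for the strong-unit claim one must verify axioms L$_u$.1 and L$_u$.2 for $(1,0)=\langle\chi_1,0\rangle$. Axiom L$_u$.1, that $(1,0)\geq (0,0)$, is immediate from $0<1$ in $\mathbb{Z}_{\mathscr{E}}$. For L$_u$.2 one must show that for every generalized element $(a,x)\geq(0,0)$ there is $n\in\mathbb{N}$ with $(a,x)\leq n\cdot(1,0)=(n,0)$; arguing internally, $(a,x)\geq(0,0)$ means $a>0$, or $a=0$ and $x\geq 0$. In the case $a=0$ we may take $n=1$, since $(0,x)\leq(1,0)$; in the case $a>0$, since $\mathbb{Z}_{\mathscr{E}}=\gamma_{\mathscr{E}}^*(\mathbb{Z})$ is the image of the \emph{discrete} object $\mathbb{Z}$, the disjunction $\bigvee_{n\in\mathbb{N}}(a<n)$ holds of any positive $a$, and $a<n$ yields $(a,x)<(n,0)$. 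The main obstacle — and really the crux of the whole proposition — is precisely this last point: one has to be careful that the archimedean-type covering property of $\mathbb{Z}_{\mathscr{E}}$ (an \emph{infinitary} disjunction) is preserved when passing from $\mathbf{Set}$ to $\mathscr{E}$; this is guaranteed because $\gamma_{\mathscr{E}}^*$, being the inverse image of a geometric morphism, preserves the geometric sequent expressing that $1$ is a strong unit for $\mathbb{Z}$, so $(1,0)$ inherits the strong-unit property of $1$ in $\mathbb{Z}_{\mathscr{E}}$. Everything else is a routine, fully constructive transcription of the classical argument.
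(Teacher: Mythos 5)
Your proof is correct and follows essentially the same route as the paper's: a direct verification of L.1--L.12 in the internal language (the paper checks only L.2 and L.4 explicitly and declares the rest similar), followed by the two strong-unit axioms, with L$_u$.2 reduced to the corresponding property of $\mathbb{Z}_{\mathscr{E}}$ via preservation of geometric sequents under $\gamma_{\mathscr{E}}^*$. If anything, your treatment is slightly more careful than the paper's at the final step, where the paper passes from $a\leq n\cdot 1$ directly to $(a,x)\leq n(1,0)$ without noting that one may need to increase $n$ by one when $a=n$ and $x>0$; your case split via $a<n$ handles this cleanly.
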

\begin{proof}

It is easy to see that $\mathbb{Z}_{\mathscr{E}}\times_{lex} \mathcal{G}$ satisfies the axioms L.1-L.12. For instance, given $(a,x),(b,y)\in \mathbb{Z}_{\mathscr{E}}\times G$, we have

\begin{itemize}
\item[-]$(a,x)+(b,y)=(a+b,x+y)=$ by definition of sum

$=(a+b,y+x)=$ by L.4 in $\mathbb{Z}_{\mathscr{E}}$ and $\mathcal{G}$

$=(b,y)+(a,x)$;

\item[-]$(a,x)+(0,0)=$

$=(a+0,x+0)=(a,x)$ by L.2 in $\mathbb{Z}_{\mathscr{E}}$ and $\mathcal{G}$.
\end{itemize}
Thus L.2 and L.4 hold. In a similar way it can be shown that the other axioms of $\mathbb L$ hold. Finally, we have to prove that $(1,0)$ is a strong unit, i.e. that it satisfies the axioms L$_u$.1 and L$_u$.2. By definition of order in $\mathbb{Z}_{\mathscr{E}}\times_{lex} \mathcal{G}$, we have that $(1,0)\geq (0,0)$, thus L$_u$.1 holds. Given $(a,x)\geq (0,0)$, this means that $a\geq 0$. From axiom L$_u$.2 applied to $\mathbb{Z}_{\mathscr{E}}$ we know that $\mathbin{\mathop{\textrm{\huge $\vee$}}\limits_{n\in \mathbb{N}}}a\leq n1$. Therefore $\mathbin{\mathop{\textrm{\huge $\vee$}}\limits_{n\in \mathbb{N}}}(a,x)\leq n(1,0)$. Thus, L$_u$.2 holds too.
\end{proof}

We set $\Sigma(\mathcal{G}):=\Gamma(\mathbb{Z}_{\mathscr{E}}\times_{lex} \mathcal{G},(1,0))$, where $\Gamma$ is the unit interval functor from $\mathbb{L}_u$-mod$(\mathscr{E})$ to $ \mathbb{MV}$-mod$(\mathscr{E})$ introduced in \cite{Russo}. The structure $\Sigma(\mathcal{G})$ is thus an MV-algebra in $\mathscr{E}$ whose underlying object is $\Sigma(G)=\{(a,x)\in\mathbb{Z}_{\mathscr{E}}\times G\mid (0,0)\leq (a,x)\leq (1,0) \}$. 

\newpage
\begin{proposition}\label{Sigma}
The MV-algebra $\Sigma(\mathcal{G})$ in $\mathscr{E}$ is perfect. 
\end{proposition}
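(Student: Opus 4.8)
The plan is to verify the two conditions in the definition of a perfect MV-algebra directly in the internal language of $\mathscr{E}$: that $\Sigma(\mathcal{G})$ is non-trivial, and that its underlying object is the union of its radical and coradical. Working internally, a generalized element of $\Sigma(G)$ is a pair $(a,x)\in\mathbb{Z}_{\mathscr{E}}\times G$ with $(0,0)\leq(a,x)\leq(1,0)$; by the definition of the lexicographic order and the fact that $\mathbb{Z}_{\mathscr{E}}$ is decidable (so that $a<1$ is equivalent to $a=0$, and $a>0$ is equivalent to $a=1$), such a pair satisfies either $a=0\wedge x\geq 0$ or $a=1\wedge x\leq 0$. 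This disjunction is the internal statement $\Sigma(G)=R\cup C$, where $R=\{(0,x)\mid x\geq 0\}$ and $C=\{(1,x)\mid x\leq 0\}$; it remains to identify $R$ with $Rad(\Sigma(\mathcal{G}))$ and $C$ with $Corad(\Sigma(\mathcal{G}))$.

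First I would compute $\neg(a,x)$ inside $\Sigma(\mathcal{G})$. Since $\Gamma$ sends the $\ell$-$u$ group $(\mathbb{Z}_{\mathscr{E}}\times_{lex}\mathcal{G},(1,0))$ to its unit interval with $\neg z = (1,0)-z$, we get $\neg(a,x)=(1-a,-x)$. Then, using the characterization $x\leq\neg x \iff \neg x\oplus y$-type manipulations — concretely, $\leq$ on $\Sigma(\mathcal{G})$ is the restriction of the group order — one checks internally that $(a,x)\leq\neg(a,x)$ holds precisely when $(a,x)\leq(1-a,-x)$ in the lexicographic order, i.e. when $a<1-a$, which for $a\in\{0,1\}$ forces $a=0$ (and then automatically $x\leq -x$ is not required since $0<1$ already decides it). Hence $Rad(\Sigma(\mathcal{G}))=\{(0,x)\mid x\geq 0\}=R$, and symmetrically $Corad(\Sigma(\mathcal{G}))=\{(1,x)\mid x\leq 0\}=C$. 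Combined with the first paragraph, this gives $\Sigma(G)=Rad(\Sigma(\mathcal{G}))\cup Corad(\Sigma(\mathcal{G}))$. For non-triviality, I would observe that $0=(0,0)$ and $1=(1,0)$ are distinct generalized elements because $\mathbb{Z}_{\mathscr{E}}$ is decidable with $0\neq 1$, so $(0=1\vdash\bot)$ holds in $\Sigma(\mathcal{G})$; alternatively one verifies axiom P.4 directly.

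Since Proposition \ref{axiomatization} tells us that P.1--P.3 together with $\mathbb{MV}$ are equivalent to P.1 plus the sequent $\beta:(\top\vdash_x x\leq\neg x\vee\neg x\leq x)$, an efficient route is to check that $\Sigma(\mathcal{G})$ is an $\mathbb{MV}$-model (which follows from $\Gamma$ landing in $\mathbb{MV}$-mod$(\mathscr{E})$ and the previous proposition establishing that $\mathbb{Z}_{\mathscr{E}}\times_{lex}\mathcal{G}$ is an $\ell$-$u$ group), then verify $\beta$ — which is exactly the $R\cup C$ decomposition computed above — and P.4 and non-triviality separately, giving all of P.1--P.4. The main obstacle I anticipate is purely bookkeeping: one must be careful that every step of the "case split on $a=0$ or $a=1$" is genuinely constructive, relying only on the decidability of $\mathbb{Z}_{\mathscr{E}}$ (which gives the complemented equality and hence the trichotomy on the integer coordinate) rather than on any case distinction over $G$ itself, for which excluded middle is unavailable. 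Once the internal-language computation of $\neg$ and of the order on $\Sigma(\mathcal{G})$ is pinned down, the identification of $Rad$ and $Corad$ and the verification of $\beta$, P.4 and non-triviality are routine.
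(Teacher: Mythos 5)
Your proposal follows essentially the same route as the paper's proof: reduce via Theorem \ref{axiomatization} to checking P.1, P.4 and $\beta$, establish the decomposition $\Sigma(G)=\{(0,x)\mid x\geq 0\}\cup\{(1,x)\mid x\leq 0\}$ by exploiting that $0\leq a\leq 1$ forces $a=0$ or $a=1$ in $\mathbb{Z}_{\mathscr{E}}$ (the paper phrases this as transfer of a geometric sequent from $\mathbb{Z}$, you phrase it via decidability of $\mathbb{Z}_{\mathscr{E}}$ --- both are valid and indeed the same underlying fact), and then identify the two pieces with the radical and coradical using $\neg(a,x)=(1-a,-x)$. The one slip is your closing claim that verifying $\beta$, P.4 and non-triviality ``gives all of P.1--P.4'': Theorem \ref{axiomatization} replaces $\{$P.1, P.2, P.3$\}$ by $\{$P.1, $\beta\}$, so P.1 is \emph{not} subsumed by $\beta$ and must still be checked separately for $\Sigma(\mathcal{G})$ --- a routine computation using the radical/coradical decomposition, which is exactly the step the paper notes and leaves to the reader.
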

\begin{proof}

By Theorem \ref{axiomatization}, it suffices to prove that $\Sigma(\mathcal{G})$ satisfies axioms P.1, P.4 and $\beta$. Clearly, $\Sigma(\mathcal{G})$ satisfies $\beta$ and P.4 if and only if it is the disjoint union of its radical and its coradical. Let us prove this by steps:
\begin{itemize}
\item[\em Claim 1.] $\Sigma(G)=\{(0,x)\in \mathbb{Z}_{\mathscr{E}}\times G\mid x\geq 0\}\cup \{(1,x)\in \mathbb{Z}_{\mathscr{E}}\times G\mid x\leq 0\}$;
\item[\em Claim 2.] $Rad(\mathbb{Z}_{\mathscr{E}}\times_{lex} \mathcal{G})=\{(0,x)\in \mathbb{Z}_{\mathscr{E}}\times G\mid x\geq 0\}$;
\item[\em Claim 3.] $Corad(\mathbb{Z}_{\mathscr{E}}\times_{lex} \mathcal{G})=\{(1,x)\in \mathbb{Z}_{\mathscr{E}}\times G\mid x\leq 0\}$.
\end{itemize}
We shall argue informally in the internal language of the topos $\E$ to prove these claims.
\begin{itemize}
\item[\em Claim 1.] Given $(a,x)\in \Sigma(G)$, we have to prove that it belongs to $\{(0,x)\in \mathbb{Z}_{\mathscr{E}}\times G\mid x\geq 0\}$ or $\{(1,x)\in \mathbb{Z}_{\mathscr{E}}\times G\mid x\leq 0\}$. 
Recall that $(0,0)\leq (a,x)\leq (1,0)$. This implies that $0\leq a\leq 1$. In $\mathbb{Z}$ the following sequent holds
\begin{center}
$0\leq a\leq 1\vdash_{a} (a=0)\vee (a=1)$
\end{center}
This is a geometric sequent; thus, it holds in $\mathbb{Z}_{\mathscr{E}}$ too. If $a=0$, we have that $(0,0)\leq (a,x)$ whence $0\leq x$. This implies that $(a,x)\in \{(0,x)\in \mathbb{Z}_{\mathscr{E}}\times G\mid x\geq 0\}$. If instead $a=1$ we have that $(a,x)\leq (1,0)$, whence $x\leq 0$ and $(a,x)\in \{(1,x)\in \mathbb{Z}_{\mathscr{E}}\times G\mid x\leq 0\}$. 
 
\item[\em Claim 2.] Given $(a,x)\in \Sigma(G)$, if $(a,x)\in Rad(\mathbb{Z}_{\mathscr{E}}\times_{lex} \mathcal{G})$ then
\begin{center}
$(0,0)\leq (a,x)\leq (1,0)$;

\

$(a,x)\leq \neg (a,x)=(1-a,-x)$.
\end{center}
It follows that $(a,x)=(0,x)$ with $x\geq 0$. Conversely, for any $x\geq 0$, $(0,x)\leq \neg (0,x)=(1,-x)$; thus, $(0,x)\in Rad(\mathbb{Z}_{\mathscr{E}}\times_{lex} \mathcal{G})$.

\item[\em Claim 3.] The proof is analogous to that of Claim 2.
\end{itemize}

To conclude our proof, it remains to show that $\Sigma(\mathcal{G})$ satisfies axiom P.1. This is straightforward, using the decomposition of the algebra as the disjoint union of its radical and coradical, and left to the reader. 
\end{proof}

Let $h:\mathcal{G}\rightarrow \mathcal{G}'$ be an $\ell$-homomorphism in $\mathscr{E}$. We define the following arrow in $\mathscr{E}$ by using the internal language:
\begin{center}
$h^*:(a,x)\in \mathbb{Z}_{\mathscr{E}}\times G\rightarrow (a,h(x))\in \mathbb{Z}_{\mathscr{E}}\times G'$
\end{center}

This is trivially an $\ell$-homomorphism which preserves the strong unit $(1,0).$ We set $\Sigma(h):=\Gamma(h^*)=h^*|_{\Gamma(\mathbb{Z}_{\mathscr{E}}\times G)}$.

\begin{proposition}
$\Sigma$ is a functor from $\mathbb{L}$-mod$(\mathscr{E})$ to $\mathbb{P}$-mod$(\mathscr{E})$.
\end{proposition}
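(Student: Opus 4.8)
The plan is to verify that the two data needed for $\Sigma$ to be a functor are in place: first, that for each $\ell$-homomorphism $h\colon\mathcal{G}\to\mathcal{G}'$ in $\mathscr{E}$ the arrow $\Sigma(h)$ is a well-defined morphism of $\mathbb{P}$-models, and second, that $\Sigma$ respects identities and composition. Since the target category is $\mathbb{P}$-mod$(\mathscr{E})$, which by Proposition \ref{Sigma} contains $\Sigma(\mathcal{G})$ and $\Sigma(\mathcal{G}')$ as objects, the only real content is the first point.

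First I would observe that $h^{\ast}\colon\mathbb{Z}_{\mathscr{E}}\times_{lex}\mathcal{G}\to\mathbb{Z}_{\mathscr{E}}\times_{lex}\mathcal{G}'$ sending $(a,x)$ to $(a,h(x))$ is a unital $\ell$-homomorphism: this is checked by arguing in the internal language of $\mathscr{E}$, using that $h$ preserves $+,-,0,\inf,\sup,\leq$ and that $h^{\ast}$ is the identity on the $\mathbb{Z}_{\mathscr{E}}$-component; in particular it preserves the order defined by $(a,x)\leq(b,y)$ iff $(a<b)\vee(a=b\wedge x\leq y)$, and it sends $(1,0)$ to $(1,0)$. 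This is precisely the kind of constructive, internal-language verification already carried out in the proof of the previous proposition, so it can be stated briefly. Then, since $\Gamma\colon\mathbb{L}_u$-mod$(\mathscr{E})\to\mathbb{MV}$-mod$(\mathscr{E})$ is a functor (established in \cite{Russo}), $\Sigma(h)=\Gamma(h^{\ast})$ is an MV-homomorphism from $\Sigma(\mathcal{G})$ to $\Sigma(\mathcal{G}')$; and by Proposition \ref{Sigma} both source and target are perfect MV-algebras, so $\Sigma(h)$ is automatically a morphism in $\mathbb{P}$-mod$(\mathscr{E})$, as the arrows of $\mathbb{P}$-mod$(\mathscr{E})$ are just the MV-homomorphisms between perfect MV-algebras.

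Finally, functoriality — $\Sigma(\mathrm{id}_{\mathcal{G}})=\mathrm{id}_{\Sigma(\mathcal{G})}$ and $\Sigma(k\circ h)=\Sigma(k)\circ\Sigma(h)$ — follows immediately: on underlying arrows $(-)^{\ast}$ visibly preserves identities and composition (again an internal-language triviality, since $(k\circ h)^{\ast}(a,x)=(a,k(h(x)))=k^{\ast}(h^{\ast}(a,x))$), and $\Gamma$ is a functor, so $\Sigma=\Gamma\circ(-)^{\ast}$ is a composite of functorial assignments restricted appropriately. I do not expect any genuine obstacle here; the statement is essentially bookkeeping on top of Proposition \ref{Sigma} and the functoriality of $\Gamma$, the only mild subtlety being to make sure every verification is phrased constructively so that it is valid in an arbitrary Grothendieck topos $\mathscr{E}$ rather than just in $\mathbf{Set}$.
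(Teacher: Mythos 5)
Your proposal is correct and follows essentially the same route as the paper, whose entire proof is the observation that the claim follows from $\Gamma$ being a functor (the fact that $h^{\ast}$ is a unital $\ell$-homomorphism and that $\Sigma(\mathcal{G})$ is perfect having already been established just before the proposition). You simply spell out the bookkeeping that the paper leaves implicit.
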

\begin{proof}
This easily follows from the fact that $\Gamma$ is a functor. 
\end{proof}

\newpage
\subsection{From $\mathbb{P}$-models to $\mathbb{L}$-models}

\begin{lemma}\label{monoid_topos}
The structure $(Rad(\mathcal{A}),\oplus \leq,\inf,\sup,0)$ is a cancellative lattice-ordered abelian monoid in $\mathscr{E}$, i.e.\ it is a model in $\mathscr{E}$ of the theory whose axioms are L.1-L.12 (except axiom L.3) plus 
\begin{itemize}
\item[C.] $(x+a=y+a\vdash_{x,y,a}x=y)$.
\end{itemize}
\end{lemma}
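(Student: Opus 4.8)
The plan is to verify, working informally in the internal language of the topos $\mathscr{E}$, that the structure $(Rad(\mathcal{A}),\oplus,\leq,\inf,\sup,0)$ satisfies each of the axioms L.1, L.2, L.4--L.12 as well as the cancellativity axiom C; axiom L.3 is of course the only one we do not expect to hold, since the radical need not be closed under the group inverse. The first point to address is well-definedness: we must check that $Rad(\mathcal{A})$, defined as the subobject $\{x\in A\mid x\leq\neg x\}\mono A$ (using Proposition \ref{radical}), is closed under the operations $\oplus$, $\inf$, $\sup$ and contains $0$, so that the restricted structure actually makes sense. This is precisely what Lemma \ref{rad_ideal} provides: point (vii) gives closure under $\oplus$, points (v) and (vi) give closure under $\sup$ and $\inf$, and $0\leq\neg 0$ holds trivially (it is $1=1$). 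Since all these sequents are provable in $\mathbb{P}$, they hold in $\mathcal{A}$ by soundness, so $Rad(\mathcal{A})$ is genuinely a sub-structure.

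Next I would check the monoid and lattice axioms. Axioms L.1 (associativity of $\oplus$), L.2 ($x\oplus 0=x$) and L.4 (commutativity of $\oplus$) are just the MV-algebra axioms MV.1, MV.3, MV.2 restricted to the radical, hence hold. For the order axioms, note that the natural order $\leq$ on $\mathcal{A}$ restricts to $Rad(\mathcal{A})$, so reflexivity (L.5), antisymmetry (L.6) and transitivity (L.7) are inherited; and the defining universal properties of $\inf$ and $\sup$ in an MV-algebra (which hold in $\mathscr{E}$ because $\mathcal{A}$ is an MV-algebra internally) give L.8--L.11, once one observes that these properties are expressed by geometric sequents and that the witnesses $\inf(x,y),\sup(x,y)$ again lie in the radical by Lemma \ref{rad_ideal}(v)--(vi). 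The one axiom requiring a small argument is translation invariance L.12: given $x\leq y$ in $Rad(\mathcal{A})$ and $t\in Rad(\mathcal{A})$, we must show $t\oplus x\leq t\oplus y$. By Lemma \ref{lemma:order} there is $z$ with $x\oplus z=y$; then $(t\oplus x)\oplus z=t\oplus(x\oplus z)=t\oplus y$ by MV.1, so $t\oplus x\leq t\oplus y$ again by Lemma \ref{lemma:order}. (Here one should be a little careful that the argument is phrased purely existentially, hence geometrically valid in $\mathscr{E}$; the element $z$ need not be taken in the radical for the argument to work, only in $\mathcal{A}$.)

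The genuinely new content is the cancellativity axiom C, and I expect this to be the main obstacle, since it is the only axiom not a direct restriction of an MV or lattice axiom. In the classical setting (Lemma \ref{monoid}) this is quoted from \cite[Lemma 3.2]{P-L}, but we need a constructive proof valid in an arbitrary topos. The strategy is to reduce it, via the definability/soundness machinery, to a geometric sequent provable in $\mathbb{P}$ (or, better, in the cartesian theory $\mathbb{C}$, so that it holds in all MV-algebras of Chang's variety internally in $\mathscr{E}$). Concretely, one shows that the sequent
\[
(x\leq\neg x\wedge y\leq\neg y\wedge a\leq\neg a\wedge x\oplus a=y\oplus a \vdash_{x,y,a} x=y)
\]
is provable in $\mathbb{P}$, arguing informally as follows: from $x\leq\neg x$ and $a\leq\neg a$ we have $x^2=0=a^2$ (as already observed in the proof of Lemma \ref{rad_ideal}), so the radical elements behave like their analogues in $\ell$-groups, and one can run the usual cancellation argument --- expressing $x\oplus a=y\oplus a$ via $\inf,\odot$ and using that $a$ is "infinitesimal" so that adding it is injective on the radical. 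Alternatively, and perhaps more cleanly, one transports the statement through the Morita-equivalence already recalled in Proposition \ref{radical}: under the interpretation functor $I:\mathscr{C}_{\mathbb{MV}}\to\mathscr{C}_{\mathbb{L}_u}$, the radical corresponds to the positive cone $\{0\leq x\}$ of the $\ell$-group, and cancellativity there is an instance of the group axiom L.3 composed with L.12, hence provable; pulling back along $I$ yields the desired provability in $\mathbb{MV}+\{\text{P.1}\}$ (hence in $\mathbb{P}$ and in $\mathbb{C}$). Once the sequent is provable, soundness in $\mathscr{E}$ gives axiom C for $Rad(\mathcal{A})$, completing the proof. I would present the second (Morita-theoretic) route as the main argument, since it is manifestly constructive and reuses the infrastructure already set up, relegating the direct computation to a remark.
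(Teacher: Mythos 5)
Your treatment of well-definedness and of axioms L.1--L.12 matches the paper's, which simply invokes Lemma \ref{rad_ideal}(i)--(ii)--(iii)--(v)--(vi)--(vii) to conclude that $Rad(\mathcal{A})$ is a lattice-ordered abelian monoid; your extra care with L.12 via Lemma \ref{lemma:order} is fine. The divergence is at cancellativity, which is indeed the only real content, and here you leave the decisive step uncompleted on both of your proposed routes. The paper's argument is a two-line internal computation that you gesture at but do not perform: from $x\oplus a=y\oplus a$ one gets $\neg a\odot(x\oplus a)=\neg a\odot(y\oplus a)$, and the MV identity $\neg a\odot(x\oplus a)=\inf(\neg a,x)$ turns this into $\inf(\neg a,x)=\inf(\neg a,y)$; since $a\in Rad(\mathcal{A})$ puts $\neg a$ in the coradical, Lemma \ref{rad_ideal}(ix) gives $x\leq\neg a$ and $y\leq\neg a$, so both infima collapse and $x=y$. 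This is exactly the "multiply by $\neg a$" idea you name, but the point of the lemma is that it closes in one step via \ref{rad_ideal}(ix), with no appeal to infinitesimality or to any external machinery; saying "one can run the usual cancellation argument" is precisely the part that needed to be written down.

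Your preferred Morita-theoretic route is workable in principle (it is the style of the paper's proof of Proposition \ref{radical}), but as stated it has a genuine hole: under the interpretation $I$, the operation $\oplus$ becomes \emph{truncated} addition $\inf(u,x+a)$, so cancellativity on the radical is not "an instance of L.3 composed with L.12". One must first show that for $x,a$ in the image of the radical (i.e.\ $0\leq x$, $2x\leq u$, $0\leq a$, $2a\leq u$) the truncation is inoperative, i.e.\ $x+a\leq u$; this follows from $x+a\leq\sup(2x,2a)\leq u$, but that observation is nowhere in your sketch, and without it the reduction to group cancellation does not go through. Given that the direct internal computation is shorter, fully constructive, and reuses only Lemma \ref{rad_ideal}, the detour through the interpretation functor buys nothing here.
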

\begin{proof}
From Lemma \ref{rad_ideal}(i)-(ii)-(iii)-(v)-(vi)-(vii) it follows that $Rad(\mathcal{A})$ is a lattice-ordered abelian monoid. Given $x,y,a\in Rad(\mathcal{A})$ such that $x\oplus a=y\oplus a$, we have that 
\begin{center}
$\neg a\odot (x\oplus a)=\neg a\odot (y\oplus a)\Leftrightarrow\inf(\neg a, x)=\inf(\neg a, y)$.  
\end{center}
Lemma \ref{rad_ideal}(ix) thus implies that $x=y$. This completes the proof.
\end{proof}

In \cite{Russo} we showed how to construct an $\ell$-group from a cancellative lattice-ordered abelian monoid in $\mathscr{E}$, generalizing the classical Grothendieck construction to a topos-theoretic setting. The resulting group is called the \textit{Grothendieck group} of the monoid (cf. also section \ref{intermediary} below). We also showed how to extend a homomorphism between two such monoids to an $\ell$-group homomorphism between the associated Grothendieck groups. Applying this construction to the monoid $Rad(\mathcal{A})$, we obtain obtain an  $\ell$-group which we name $\Delta(\mathcal{A})$. The constant, the order relation and the operations on $\Delta(\mathcal{A})$ are defined as follows by using the internal language of the topos $\E$: given $[x,y],[h,k]\in \Delta(\mathcal{A})$
\begin{itemize}
\item $[x,y]+[h,k]:=[x\oplus h,y\oplus k]$;
\item $-[x,y]:=[y,x]$;
\item $Inf([x,y],[h,k]):=[\inf((x\oplus k),(y\oplus h)),y\oplus k]$;
\item $Sup([x,y],[h,k]):=[\sup((x\oplus k),(y\oplus h)),y\oplus k]$;
\item $[x,y]\leq [h,k]$ iff $\inf([x,y],[h,k])=[x,y]$;
\item $[0,0]$ is the identity element.
\end{itemize}

Recall that two ``elements'' $[x,y], [h,k]$ of this $\ell$-group ``coincide'' if and only if $x\oplus k=y\oplus h$.

Any MV-homomorphism $h:\mathcal{A}\rightarrow \mathcal{A}'$ between perfect MV-algebras preserves the natural order, thus $h(Rad(\mathcal{A}))\subseteq Rad(\mathcal{A}')$. Hence the arrow $h^*:=h|_{Rad(\mathcal{A})}:Rad(\mathcal{A})\rightarrow Rad(\mathcal{A}')$ is a lattice-ordered monoid homomorphism. We set
\begin{center}
$\Delta(h):(x,y)\in \Delta(\mathcal{A})\rightarrow [h^*(x),h^*(y)]\in \Delta (\mathcal{A}')$
\end{center}

\begin{proposition}
$\Delta$ is a functor from $\mathbb{P}$-mod$(\mathscr{E})$ to $\mathbb{L}$-mod$(\mathscr{E})$.
\end{proposition}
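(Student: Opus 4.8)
The plan is to verify the two defining conditions of a functor: that $\Delta$ sends identities to identities and respects composition. Since $\Delta$ is built in two stages --- first restrict an MV-homomorphism to the radical, obtaining a homomorphism of cancellative lattice-ordered abelian monoids, and then apply the Grothendieck-group construction of \cite{Russo} --- it suffices to check that each stage is functorial and then compose.

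First I would record that the assignment $h \mapsto h^{\ast} := h|_{Rad(\mathcal{A})}$ is functorial from $\mathbb{P}$-mod$(\mathscr{E})$ to $\mathbb{M}$-mod$(\mathscr{E})$, where $\mathbb{M}$ is the theory of cancellative lattice-ordered abelian monoids (axioms L.1--L.12 without L.3, plus C). This is immediate: $h^{\ast}$ is well-defined as an arrow $Rad(\mathcal{A}) \to Rad(\mathcal{A}')$ because $h$ preserves the natural order (so $h$ maps $\{x \mid x \leq \neg x\}$ into $\{x \mid x \leq \neg x\}$ by Proposition \ref{radical}, or directly since $x \leq \neg x$ implies $h(x) \leq h(\neg x) = \neg h(x)$); it is a monoid homomorphism because $h$ preserves $\oplus$, $\leq$, $\inf$, $\sup$ and $0$; and restriction visibly preserves identities and composition, since $(\mathrm{id}_{\mathcal{A}})|_{Rad(\mathcal{A})} = \mathrm{id}_{Rad(\mathcal{A})}$ and $(g \circ h)|_{Rad(\mathcal{A})} = (g|_{Rad(\mathcal{A}')}) \circ (h|_{Rad(\mathcal{A})})$ --- all of which can be checked in the internal language using Proposition \ref{internal_language}.

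Next I would invoke the functoriality of the Grothendieck-group construction established in \cite{Russo}: that construction takes a cancellative lattice-ordered abelian monoid $\mathcal{M}$ in $\mathscr{E}$ to an $\ell$-group, and a monoid homomorphism $\phi : \mathcal{M} \to \mathcal{M}'$ to the $\ell$-homomorphism $[x,y] \mapsto [\phi(x), \phi(y)]$, and this is shown there to be a functor. Composing this with the first stage gives precisely the assignment $\Delta(h) : [x,y] \mapsto [h^{\ast}(x), h^{\ast}(y)]$ in the statement, so $\Delta$ is a composite of two functors and hence a functor. The only point requiring a word of care is that $\Delta(h)$ is a well-defined arrow in $\mathscr{E}$ --- i.e. that it respects the equivalence relation defining $\Delta(\mathcal{A})$ --- but this is exactly what functoriality of the second stage guarantees (if $x \oplus k = y \oplus h$ in $Rad(\mathcal{A})$ then applying the monoid homomorphism $h^{\ast}$ gives $h^{\ast}(x) \oplus h^{\ast}(k) = h^{\ast}(y) \oplus h^{\ast}(h)$), and the fact that it is an $\ell$-homomorphism and respects identities and composition is then inherited from the second functor.

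I do not expect a genuine obstacle here: the proposition is essentially a bookkeeping statement once Lemma \ref{monoid_topos} and the results of \cite{Russo} are in hand. The mildest subtlety is ensuring that all the verifications (that $h^{\ast}$ lands in the radical, that it is a monoid homomorphism, that the induced map on Grothendieck groups preserves the $\ell$-group structure) are carried out constructively, so that they remain valid in an arbitrary Grothendieck topos; but since every ingredient --- Proposition \ref{radical}, Lemma \ref{rad_ideal}, Lemma \ref{monoid_topos}, and the construction in \cite{Russo} --- is already phrased constructively, this is automatic. Accordingly the proof can be given in a single short paragraph, as the authors do, citing the functoriality of $\Gamma$'s analogue on the monoid side and of the Grothendieck construction.
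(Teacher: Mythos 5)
Your proof is correct and follows the same route the paper takes: the paper's own proof is simply ``This follows by a straightforward computation,'' and your decomposition into restriction-to-the-radical (via Lemma \ref{monoid_topos}) followed by the Grothendieck-group construction of \cite{Russo} is exactly the computation being alluded to. The only blemish is notational: in your well-definedness check you use $h$ both for the MV-homomorphism and for an element of the pair $[h,k]$, and the parenthetical reference to ``$\Gamma$'s analogue'' is misplaced ($\Gamma$ belongs to the $\Sigma$ direction), but neither affects the argument.
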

\begin{proof}
This follows by a straightforward computation.
\end{proof}

\subsection{Morita-equivalence}\label{Moritaeq}
In the previous section we have defined, for each Grothen-dieck topos $\E$, two functors
\begin{center}
$\Sigma: \mathbb{L}$-mod$(\mathscr{E})\rightarrow \mathbb{P}$-mod$(\mathscr{E})$

\

$\Delta: \mathbb{MV}$-mod$(\mathscr{E})\rightarrow \mathbb{L}$-mod$(\mathscr{E})$.
\end{center}

\begin{theorem}\label{equivalence}
For every Grothendieck topos $\mathscr{E}$, the categories $\mathbb{P}$-mod$(\mathscr{E})$ and $\mathbb{L}$-mod$(\mathscr{E})$ are naturally equivalent.
\end{theorem}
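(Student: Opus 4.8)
The plan is to show that the functors $\Sigma: \mathbb{L}\textrm{-mod}(\mathscr{E})\to \mathbb{P}\textrm{-mod}(\mathscr{E})$ and $\Delta: \mathbb{P}\textrm{-mod}(\mathscr{E})\to \mathbb{L}\textrm{-mod}(\mathscr{E})$ constructed above are quasi-inverse to each other, i.e.\ to exhibit natural isomorphisms $\Delta\circ\Sigma \cong 1_{\mathbb{L}\textrm{-mod}(\mathscr{E})}$ and $\Sigma\circ\Delta\cong 1_{\mathbb{P}\textrm{-mod}(\mathscr{E})}$. Since all the constructions entering $\Sigma$ and $\Delta$ (the lexicographic product, the unit-interval functor $\Gamma$, the radical $Rad$, and the Grothendieck-group construction) are geometric — built from finite limits and arbitrary colimits — the isomorphisms will automatically be natural in $\mathscr{E}$ as well, so it suffices to produce, for a fixed $\mathscr{E}$, isomorphisms natural in the model. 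First I would argue informally in the internal language of $\mathscr{E}$, exactly as in the proof of Proposition \ref{Sigma}, reducing everything to manipulations that mimic the classical set-based proof of Di Nola--Lettieri's equivalence recalled in section \ref{equivalence in set}, but taking care to use only intuitionistically valid reasoning.

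For $\Delta\circ\Sigma\cong 1$: given an $\ell$-group $\mathcal{G}$ in $\mathscr{E}$, by Claims 1--3 in the proof of Proposition \ref{Sigma} the radical of $\Sigma(\mathcal{G})$ is $Rad(\Sigma(\mathcal{G}))=\{(0,x)\in \mathbb{Z}_{\mathscr{E}}\times G\mid x\geq 0\}$, which as a cancellative lattice-ordered abelian monoid is isomorphic (via $(0,x)\mapsto x$) to the positive cone $G^{+}=\{x\in G\mid x\geq 0\}$ of $\mathcal{G}$. Applying the Grothendieck-group construction to $G^{+}$ returns $\mathcal{G}$ itself: the map $[x,y]\mapsto x-y$ from $\Delta(\Sigma(\mathcal{G}))$ to $\mathcal{G}$ is well-defined (two representatives coincide iff $x\oplus k=y\oplus h$, i.e.\ $x+k=y+h$, i.e.\ $x-y=h-k$), is an $\ell$-group homomorphism (compatibility with $+$, $-$, $\inf$, $\sup$, $\leq$ follows from the explicit formulas for the operations on $\Delta(\mathcal{A})$ together with $x=x^{+}-x^{-}$), and has inverse $x\mapsto [x^{+},x^{-}]$. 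This is the point where I would have to be careful: the classical verification that every element of an $\ell$-group has a canonical positive/negative decomposition and that the Grothendieck group of the positive cone recovers the group is constructive, and the relevant facts were already set up in \cite{Russo}, so I would invoke that reference for the statement that the Grothendieck-group construction is quasi-inverse to taking the positive cone, naturally in the monoid.

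For $\Sigma\circ\Delta\cong 1$: given a perfect MV-algebra $\mathcal{A}$ in $\mathscr{E}$, I would define a homomorphism $\eta_{\mathcal{A}}:\Sigma(\Delta(\mathcal{A}))\to \mathcal{A}$ using the internal language. Recall $\Sigma(\Delta(\mathcal{A}))$ has underlying object $\{(0,[x,y])\mid [x,y]\geq[0,0]\}\cup\{(1,[x,y])\mid [x,y]\leq [0,0]\}$, where $[x,y]$ ranges over pairs of elements of $Rad(\mathcal{A})$. Send $(0,[x,y])$ to the element $z\in Rad(\mathcal{A})$ with $x=y\oplus z$ (which exists and is unique by Lemma \ref{lemma:order} and cancellativity, since $[x,y]\geq[0,0]$ means $y\leq x$ in $Rad(\mathcal{A})$), and send $(1,[x,y])$ to $\neg z$ where $z\in Rad(\mathcal{A})$ satisfies $y=x\oplus z$. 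Checking that this is well-defined on the two pieces, that the two definitions agree on the overlap (where $[x,y]=[0,0]$, forcing $z=0$ and $\neg z=1$, but there the two pieces of $\Sigma(\Delta(\mathcal{A}))$ meet only at $(0,[0,0])$ versus... — one must check the pieces are actually disjoint except for this normalization, as in Claim 1), and that it is an MV-homomorphism, will be a computation in the internal language using the axioms P.1--P.4 and Lemma \ref{rad_ideal}. The inverse uses the decomposition of $\mathcal{A}$ as $Rad(\mathcal{A})\cup Corad(\mathcal{A})$ (valid in $\mathscr{E}$ by axiom $\beta$, as noted after the definition of the radical in a topos): $w\in Rad(\mathcal{A})$ maps to $(0,[w,0])$ and $w\in Corad(\mathcal{A})$ maps to $(1,[0,\neg w])$, the two agreeing on $Rad\cap Corad$ — which is empty by P.4 together with Lemma \ref{lemmaP3}/\ref{lemma_radical2} — so the map is defined by case analysis on the cover $\{Rad(\mathcal{A})\hookrightarrow A, Corad(\mathcal{A})\hookrightarrow A\}$.

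The main obstacle I anticipate is not any single deep step but the bookkeeping needed to make the case-splits legitimate in an arbitrary topos: every definition ``by cases'' on whether an element lies in the radical or the coradical must be presented as gluing along a genuine (epimorphic) cover of the underlying object, and one must verify agreement on intersections, which are controlled by P.4 (radical and coradical meet only in elements $w$ with $w=\neg w$, impossible). Once the two natural transformations $\eta$ and its candidate inverse are set up this way, checking $\eta_{\mathcal{A}}\circ\eta_{\mathcal{A}}^{-1}=1$ and $\eta_{\mathcal{A}}^{-1}\circ\eta_{\mathcal{A}}=1$ reduces, by Proposition \ref{internal_language}, to proving two sequents in the internal language, which is a routine (if tedious) MV-algebraic calculation using cancellativity of the radical and the identities in Lemma \ref{rad_ideal}. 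Naturality in the model is immediate from the formulas, since both $\Sigma$ and $\Delta$ act on homomorphisms by ``applying $h$ componentwise'', and naturality in $\mathscr{E}$ follows because the whole construction is geometric.
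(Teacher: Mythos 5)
Your proposal is correct and follows essentially the same route as the paper: the two natural isomorphisms you construct are precisely the inverses of the paper's $\varphi_{\mathcal{G}}:g\mapsto [(0,g^{+}),(0,g^{+}-g)]$ and $\beta_{\mathcal{A}}$ (defined by cases on the disjoint cover $Rad(\mathcal{A})\cup Corad(\mathcal{A})$), verified in the internal language with naturality in $\mathscr{E}$ coming from geometricity of the constructions. The only cosmetic differences are that you describe the maps in the opposite direction and factor the first half through the positive-cone/Grothendieck-group equivalence, which the paper develops separately as the Morita-equivalence between $\mathbb{M}$ and $\mathbb{L}$; also note that the two pieces of $\Sigma(\Delta(\mathcal{A}))$ are automatically disjoint (their first coordinates $0$ and $1$ differ in the decidable object $\mathbb{Z}_{\mathscr{E}}$), so the overlap check you worry about is vacuous.
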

\begin{proof}
We have to define two natural isomorphisms
\begin{center}
$\varphi: 1_{\mathbb{L}}\rightarrow \Delta\circ \Sigma$,

\

$\beta: 1_{\mathbb{P}}\rightarrow \Sigma\circ\Delta$,
\end{center}
where $1_{\mathbb{L}}$ and $1_{\mathbb{P}}$ are, respectively, the identity functors on the categories $\mathbb{L}$-mod$(\mathscr{E})$ and $\mathbb{P}$-mod$(\mathscr{E})$.

Let $\mathcal{G}=(G,+,-,\leq,\inf,\sup,0)$ be an abelian $\ell$-group in $\mathscr{E}$. Let $\varphi_{\mathcal{G}}:\mathcal{G}\to (\Delta\circ\Sigma)(\mathcal{G})$ be the arrow defined by using the internal language of the topos as follows: 

\begin{center}
$\varphi_{\mathcal{G}}:g\in G\rightarrow [(0,g^+),(0,g^+-g)]\in \Delta(\Sigma(G))$
\end{center}

\item[\em Claim 1.] $\varphi_{\mathcal{G}}$ is monic. Indeed, for any elements $g_1,g_2\in G$ such that $\varphi_{\mathcal{G}}(g_1)=\varphi_{\mathcal{G}}(g_2)$, we have that $(0,g^+_1)+(0,g^+_2-g_2)=(0,g^+_2)+(0,g^+_1-g_1)$, whence $g_1=g_2$. The monicity of $\varphi_{\mathcal{G}}$ thus follows from Proposition \ref{internal_language}(iii).
\item[\em Claim 2.] $\varphi_{\mathcal{G}}$ is epic. Given $[(0,g_1),(0,g_2)]\in \Delta\circ \Sigma(G)$, the element $g_1-g_2$ satisfies $\varphi_{\mathcal{G}}(g_1-g_2)=[(0, (g_{1}-g_{2})^{+}), (0, (g_{1}-g_{2})^{+}-(g_{1}-g_{2}))]= [(0,g_1),(0,g_2)]$. Proposition \ref{internal_language}(iv) thus implies that $\varphi_{\mathcal{G}}$ is an epimorphisms.
\item[\em Claim 3.] $\varphi_{\mathcal{G}}$ preserves $+$ and $-$. This follows by direct computation.
\item[\em Claim 4.] $\varphi_{\mathcal{G}}$ preserves $\inf$ and $\sup$. Given $g_1,g_2\in G$,
\begin{center}
$\varphi_{\mathcal{G}}(\sup(g_1,g_2))=[(0,\sup(g_1,g_2)^+),(0,\sup(g_1,g_2)^+-\sup(g_1,g_2))]$;

\

$\sup(\varphi_{\mathcal{G}}(g_1),\varphi_{\mathcal{G}}(g_2))=[(0,g_1^++g_2^+),\inf((0,(g_1^++g_2^+-g_2),(g_2^++g_1^+-g_1)))]$.
\end{center} 
Now, the sequent
\begin{center}
$\top\vdash_{g_1,g_2}\sup(g_1,g_2)^++\inf((g_1^++g_2^+-g_2),(g_2^++g_1^+-g_1))=g_1^++g_2^++\sup(g_1,g_2)^+-\sup(g_1, g_2)$
\end{center}
is provable in $\mathbb{L}$, hence it holds in every $\mathbb L$-model by soundness. This ensures that $\varphi_{\mathcal{G}}$ preserves $\sup$. In a similar way it can be shown that $\varphi_{\mathcal{G}}$ preserves $\inf$.

By Claims 1-4 the arrow $\varphi_{\mathcal{G}}$ is an isomorphism in $\mathbb{L}$-mod$(\mathscr{E})$. Further, it is easy to prove that for any $\ell$-homomorphism $h:\mathcal{G}\rightarrow \mathcal{G}'$ in $\E$, the following square commutes:
\begin{center}
\begin{tikzpicture}
\node (a) at (0,0) {$\mathcal{G}'$};
\node (b) at (3,0) {$\Delta\circ \Sigma(\mathcal{G}')$};
\node (c) at (0,3) {$\mathcal{G}$};
\node (d) at (3,3) {$\Delta\circ\Sigma(\mathcal{G})$};
\draw[->](a) to node [below,midway]{$\varphi_{\mathcal{G}'}$} (b);
\draw[->](c) to node [above,midway] {$\varphi_{\mathcal{G}}$} (d);
\draw[->](c) to node [midway, left] {$h$} (a);
\draw[->](d) to node [midway, right] {$\Delta\circ\Sigma(h)$} (b);
\end{tikzpicture}
\end{center}

We set $\varphi$ equal to the natural isomorphism whose components are the $\varphi_{\mathcal{G}}$ (for every abelian $\ell$-group $\mathcal{G}$).

In the converse direction, let $\mathcal{A}$ be a perfect MV-algebra in $\mathscr{E}$. Recall that $A=Rad(\mathcal{A})\cup  Corad(\mathcal{A})$ and that the sequent P.1 holds in $\mathcal{A}$. We define the following arrow by using the internal language
\begin{center}
$\beta_{\mathcal{A}}:x\in A\rightarrow \begin{cases}
(0,[x,0]) & \textrm{for }x\in Rad(\mathcal{A})\\
(1,[0,\neg x]) & \textrm{for }x\in Corad(\mathcal{A})
\end{cases}\in \Sigma(\Delta(\mathcal{A}))$
\end{center}
 
Let us prove that $\beta_{\mathcal{A}}$ preserves $\oplus$. Given $x,y\in A$, we can distinguish three cases:
\begin{itemize}
\item[Case i.] $x,y\in Rad(\mathcal{A})$. By direct computation it follows at once that $\beta_{\mathcal{A}}(x\oplus y)=\beta_{\mathcal{A}}(x)\oplus \beta_{\mathcal{A}}(y)$.
\item[Case ii.] $x,y\in Corad(\mathcal{A})$. From Lemma \ref{rad_ideal}(viii) we have that $x\oplus y=1$; thus $\beta_{\mathcal{A}}(x\oplus y)=(1,[0,0])$. On the other hand, $\beta_{\mathcal{A}}(x)=(1,[0,\neg x])$ and $\beta_{\mathcal{A}}(y)=(1,[0,\neg y])$, whence $\beta_{\mathcal{A}}(x),\beta_{\mathcal{A}}(y)\in Corad(\Sigma(\Delta (\mathcal{A})))$ and $\beta_{\mathcal{A}}(x)\oplus \beta_{\mathcal{A}}(y)=(1,[0,0])$.
\item[Case iii.] $x\in Rad(\mathcal{A}),y\in \neg Rad(\mathcal{A})$. In a similar way we obtain that $\beta_{\mathcal{A}}(x\oplus y)=\beta_{\mathcal{A}}(x)\oplus \beta_{\mathcal{A}}(y)$. 
\end{itemize}

The fact that $\beta_{\mathcal{A}}$ preserves $\neg$ and is both monic and epic is clear. We can thus conclude that $\beta_{\mathcal{A}}$ is an isomorphism.
  
It is clear that if $h:\mathcal{A}\rightarrow \mathcal{A}'$ is an MV-homomorphism then following square commutes:
\begin{center}
\begin{tikzpicture}
\node (a) at (0,0) {$\mathcal{A}'$};
\node (b) at (3,0) {$\Sigma(\Delta(\mathcal{A}'))$};
\node (c) at (0,3) {$\mathcal{A}$};
\node (d) at (3,3) {$\Sigma(\Delta(\mathcal{A}))$};
\draw[->](a) to node [below,midway]{$\beta_{\mathcal{A}'}$} (b);
\draw[->](c) to node [above,midway] {$\beta_{\mathcal{A}}$} (d);
\draw[->](c) to node [midway, left] {$h$} (a);
\draw[->](d) to node [midway, right] {$\Sigma(\Delta(h))$} (b);
\end{tikzpicture}
\end{center}

Thus, we have a natural isomorphism $\beta$ whose components are the arrows $\beta_{\mathcal{A}}$ (for every perfect MV-algebra $\mathcal{A}$).
\end{proof}

Note that all the constructions that we used to define the functors $\Sigma$ and $\Delta$ are geometric. Hence, the categorical equivalence proved in the last theorem is natural in the topos $\E$. This implies that the classifying toposes $\mathscr{E}_{\mathbb{P}}$ and $\mathscr{E}_{\mathbb{L}}$ are equivalent, i.e., that the theories $\mathbb{P}$ and $\mathbb{L}$ are Morita-equivalent. Summarizing, we have the following

\begin{theorem}\label{Moritaeq1}
The functors $\Delta_{\E}$ and $\Sigma_{\E}$ yield a Morita-equivalence between the coherent theory $\mathbb P$ of perfect MV-algebras and the cartesian theory $\mathbb L$ of lattice-ordered abelian groups.
\end{theorem}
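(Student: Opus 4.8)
The plan is to leverage Theorem~\ref{equivalence}, which already provides, for every Grothendieck topos $\mathscr{E}$, an equivalence of categories $\tau_{\mathscr{E}} := \Sigma_{\mathscr{E}} : \mathbb{L}\textrm{-mod}(\mathscr{E}) \xrightarrow{\sim} \mathbb{P}\textrm{-mod}(\mathscr{E})$ with quasi-inverse $\Delta_{\mathscr{E}}$. By the universal property of classifying toposes recalled in Section~\ref{topos theory}, two geometric theories are Morita-equivalent precisely when they have equivalent categories of models in every Grothendieck topos, \emph{naturally} in the topos; so the only thing that remains to be checked, beyond Theorem~\ref{equivalence}, is the naturality of these equivalences with respect to inverse image functors of geometric morphisms.

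First I would make precise what naturality means here: for any geometric morphism $f : \mathscr{F} \to \mathscr{E}$ with inverse image $f^{\ast} : \mathscr{E} \to \mathscr{F}$, we must exhibit a natural isomorphism filling the square whose horizontal arrows are $\Sigma_{\mathscr{E}}$, $\Sigma_{\mathscr{F}}$ and whose vertical arrows are induced by $f^{\ast}$ (noting that $f^{\ast}$ sends $\mathbb{L}$-models to $\mathbb{L}$-models and $\mathbb{P}$-models to $\mathbb{P}$-models, since it preserves finite limits and arbitrary colimits and hence the interpretation of any geometric theory). The key observation, already flagged in the remark following Theorem~\ref{equivalence}, is that every ingredient entering the definition of $\Sigma$ and $\Delta$ is a \emph{geometric construction}: the object $\mathbb{Z}_{\mathscr{E}} = \gamma_{\mathscr{E}}^{\ast}(\mathbb{Z})$ is preserved by $f^{\ast}$ because $\gamma_{\mathscr{F}} = \gamma_{\mathscr{E}} \circ f$; the lexicographic product, the radical $Rad(\mathcal{A}) = \{x \mid x \leq \neg x\}$, the coradical, the truncation functor $\Gamma$, and the Grothendieck-group construction are all defined by finite limits, images, and colimits in the internal language, all of which $f^{\ast}$ preserves. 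Hence $f^{\ast} \circ \Sigma_{\mathscr{E}} \cong \Sigma_{\mathscr{F}} \circ f^{\ast}$ canonically, and similarly for $\Delta$, and the coherence of these isomorphisms (i.e.\ functoriality in $f$) follows from the fact that they are built pointwise from the universal comparison maps that express preservation of finite limits and colimits.

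The concrete steps, then, are: (1) recall that $\mathbb{L}$ is cartesian, hence of presheaf type, with classifying topos $[\mathscr{C}_{\mathbb{L}}^{\textrm{cart}\,\textrm{op}}, \Set]$, so it certainly has a classifying topos $\mathscr{E}_{\mathbb{L}}$; (2) invoke Theorem~\ref{equivalence} to get the equivalences $\Sigma_{\mathscr{E}}$ in each topos; (3) verify naturality in $\mathscr{E}$ by checking that $\Sigma$ and $\Delta$ commute with inverse image functors up to canonical isomorphism, as sketched above, using that all the constructions involved are geometric; (4) conclude by the universal property of the classifying topos that $\mathscr{E}_{\mathbb{P}} \simeq \mathscr{E}_{\mathbb{L}}$, i.e.\ that $\mathbb{P}$ and $\mathbb{L}$ are Morita-equivalent; and finally observe that, $\mathbb{P}$ having a finitary (coherent) axiomatization by sequents P.1--P.4 over $\mathbb{MV}$, it is a coherent theory, while $\mathbb{L}$ is cartesian, giving the sharpened statement. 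I expect the only genuinely delicate point to be step~(3): one has to be careful that $f^{\ast}$ preserves not merely the \emph{objects} $\Sigma(\mathcal{G})$ but the whole MV-algebra \emph{structure} and that the comparison isomorphisms $f^{\ast}(\Sigma_{\mathscr{E}}(\mathcal{G})) \cong \Sigma_{\mathscr{F}}(f^{\ast}\mathcal{G})$ are natural in $\mathcal{G}$ and compatible with composition of geometric morphisms — but since each defining clause is manifestly geometric, this reduces to a routine (if somewhat tedious) bookkeeping exercise rather than a conceptual obstacle. In practice one can shortcut most of it by appealing to the general principle, stated in Section~\ref{topos theory}, that an equivalence between categories of models defined by geometric constructions is automatically natural in the topos.
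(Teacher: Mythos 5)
Your proposal is correct and follows essentially the same route as the paper: the authors likewise deduce the Morita-equivalence from Theorem \ref{equivalence} by observing that all the constructions defining $\Sigma$ and $\Delta$ are geometric, hence the equivalence is automatically natural in $\mathscr{E}$, and then invoke the universal property of the classifying topos. Your extra discussion of the comparison isomorphisms $f^{\ast}\circ\Sigma_{\mathscr{E}}\cong\Sigma_{\mathscr{F}}\circ f^{\ast}$ merely spells out what the paper leaves implicit in its appeal to the general principle on geometric constructions.
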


\section{An intermediary Morita-equivalence}\label{intermediary}

In this section we shall establish an auxiliary Morita-equivalence involving the theory $\mathbb L$ that we will be useful in the following section. This stems from the observation that the $\ell$-groups arising in the context of MV-algebras as the counterparts of MV-algebras via Mundici's functor, as well as those which correspond to perfect MV-algebras under Di Nola and Lettieri's equivalence, are determined by their positive cones. As we shall see in this section, one can naturally axiomatize the monoids arising as the positive cones of such groups in such a way as to obtain a theory Morita-equivalent to that of $\ell$-groups. 

Specifically, let $\mathcal{L}_M$ be the one-sorted first-order signature consisting of three function symbols $+$, $\inf$, $\sup$, a constant symbol $0$ and a derivable relation symbol: $x\leq y$ iff $\inf(x,y)=x$. Over this signature we define the theory $\mathbb{M}$, whose axioms are the following sequents:
\begin{itemize}
\item[M.1] $\top\vdash_{x,y,z}x+(y+z)=(x+y)+z$
\item[M.2] $\top\vdash_{x}x+0=0$
\item[M.3] $\top\vdash_{x,y}x+y=y+x$
\item[M.4] $\top\vdash_{x}x\leq x$
\item[M.5] $(x\leq y)\wedge (y\leq x)\vdash_{x,y}x=y$
\item[M.6] $(x\leq y)\wedge (y\leq z)\vdash_{x,y,z} x\leq z$
\item[M.7] $\top\vdash_{x,y} \inf(x,y)\leq x\wedge \inf(x,y)\leq y$
\item[M.8] $z\leq x\wedge z\leq y\vdash_{x,y,z} z\leq \inf(x,y)$
\item[M.9] $\top\vdash_{x,y}x\leq \sup(x,y) \wedge y\leq \sup(x, y)$
\item[M.10] $x\leq z \wedge y\leq z\vdash_{x,y,z} \sup(x,y)\leq z$
\item[M.11] $x\leq y\vdash_{x,y,t}t+x\leq t+y$
\item[M.12] $x+y=x+z\vdash_{x,y,z}y=z$
\item[M.13] $\top\vdash_{x}0\leq x$
\item[M.14] $x\leq y\vdash_{x,y}(\exists z)x+z=y$
\end{itemize} 

We call $\mathbb{M}$ the theory of \emph{cancellative subtractive lattice-ordered abelian monoids with bottom element}. This theory is cartesian; indeed, by the cancellation pro\-perty, the existential quantification of the axiom M.14 is provably unique.

Notice that the sequent $(x+z \leq y+z \vdash_{x,y,z} x\leq y)$ is provable in $\mathbb M$. From this it easily follows that the sequent $(\top\vdash_{a,b,c}\inf(a,b)+c=\inf(a+c,b+c))$ is also provable in $\mathbb{M}$.

The models of $\mathbb{M}$ are particular lattice-ordered abelian monoids. We shall prove that $\mathbb{M}$ is the theory of positive cones of $\ell$-groups. 

\begin{obs}
Let $\mathcal{A}$ be an arbitrary MV-algebra and $\mathcal{M}_{\mathcal{A}}$ the cancellative lattice-ordered abelian monoid of good sequences introduced by Mundici in \cite{Mundici}. This is a model of $\mathbb{M}$ in $\Set$. Indeed, the axioms M.1-M.13 are trivially satisfied, while axiom M.14 holds by \cite[Proposition 2.3.2]{CDM}.
\end{obs}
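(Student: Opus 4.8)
The plan is to make the structure $\mathcal{M}_{\mathcal{A}}$ of good sequences explicit and then verify the fourteen axioms of $\mathbb{M}$, grouping them so that all but one reduce to standard facts about Mundici's construction. Recall that a \emph{good sequence} in $\mathcal{A}$ is a sequence $\mathbf{a}=(a_{1},a_{2},\ldots)$ of elements of $A$ that is eventually equal to $0$ and satisfies $a_{i}\oplus a_{i+1}=a_{i}$ for every $i$; the sum is interpreted as Mundici's operation $(\mathbf{a}+\mathbf{b})_{n}=a_{n}\oplus(a_{n-1}\odot b_{1})\oplus\cdots\oplus(a_{1}\odot b_{n-1})\oplus b_{n}$, the constant $0$ as the zero sequence, and $\inf$, $\sup$ (hence the derived relation $\leq$, where $\mathbf{a}\leq\mathbf{b}$ iff $\inf(\mathbf{a},\mathbf{b})=\mathbf{a}$) are computed componentwise, the good-sequence condition guaranteeing that the result is again a good sequence (cf. Chapter~2 of \cite{CDM}).

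First I would dispatch the commutative-monoid axioms M.1--M.3, which assert precisely that $(\mathcal{M}_{\mathcal{A}},+,0)$ is a commutative monoid; this is the foundational content of the theory of good sequences in \cite{CDM}. The order-theoretic axioms M.4--M.10 then follow at once from the fact that the componentwise order is a partial order and that componentwise $\inf$ and $\sup$ realise, respectively, the greatest lower bound and the least upper bound among good sequences. Translation invariance M.11 follows from the monotonicity of Mundici's addition formula in each argument (each term $a_{n-j}\odot b_{j}$ being monotone and $\oplus$ being monotone), while M.13 is immediate since the zero sequence is the componentwise smallest good sequence. Finally, M.12 is the cancellativity of $\mathcal{M}_{\mathcal{A}}$, a standard property of the good-sequence monoid.

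The only genuinely nontrivial point, and hence the expected main obstacle, is the subtractivity axiom M.14: whenever $\mathbf{a}\leq\mathbf{b}$ componentwise, one must produce a good sequence $\mathbf{c}$ with $\mathbf{a}+\mathbf{c}=\mathbf{b}$. Because Mundici's addition is \emph{not} componentwise, the difference $\mathbf{c}$ cannot be read off coordinatewise, so this step carries all the combinatorial weight; I would simply invoke \cite[Proposition 2.3.2]{CDM}, which identifies the componentwise order on good sequences with the algebraic difference order and thereby supplies the required $\mathbf{c}$ (with uniqueness already guaranteed by M.12). Assembling these verifications shows that $\mathcal{M}_{\mathcal{A}}$ satisfies M.1--M.14 and is therefore a model of $\mathbb{M}$ in $\Set$.
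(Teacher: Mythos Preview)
Your proposal is correct and follows exactly the same approach as the paper: the paper's justification is simply the single sentence ``the axioms M.1--M.13 are trivially satisfied, while axiom M.14 holds by \cite[Proposition 2.3.2]{CDM}'', and you have merely unpacked what ``trivially satisfied'' means for each group of axioms before invoking the same citation for M.14. There is no difference in strategy or in the key step.
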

Let $\mathcal{M}=(M,+,\leq,\inf,\sup,0)$ be a model of $\mathbb{M}$ in an arbitrary Grothedieck topos $\E$. We showed in \cite{Russo} how to construct the lattice-ordered Grothendieck group $G({\mathcal{M}})$ associated to $\mathcal{M}$. Specifically, the underlying object of $G({\mathcal{M}})$ is the quotient of $M\times M$ under the following equivalent relation: $(x,y)\sim (h,k)$ if and only if $x+k=y+h$. This equivalence relation, as well as the operations and the order relation below, is defined by using the internal language of the topos. The operations are defined as follows:
\begin{itemize}
\item $[x,y]+[h,k]:=[x+h,y+k]$
\item $-[x,y]:=[y,t]$
\item $\textrm{Inf}([x,y],[h,k]):=[\inf((x+ k),(y+ h)),y+ k]$;
\item $\textrm{Sup}([x,y],[h,k]):=[\sup((x+ k),(y+ h)),y+ k]$;
\item $[x,y]\leq [h,k]$ iff $\textrm{Inf}([x,y],[h,k])=[x,y]$;
\item $[0,0]$ is the identity element.
\end{itemize}

Notice that, for every perfect MV-algebra $\mathcal{A}$, $\Delta(Rad(\mathcal{A}))$ is the lattice-ordered Grothendieck group $G(Rad(\mathcal{A}))$ associa\-ted to $Rad(\mathcal{A})$, where the latter is regarded as a model of $\mathbb{M}$. 

\begin{theorem}\label{monoid-group}
The theories $\mathbb{M}$ and $\mathbb{L}$ are Morita-equivalent.
\end{theorem}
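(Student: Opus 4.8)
The plan is to establish the Morita-equivalence by exhibiting, for every Grothendieck topos $\mathscr{E}$, an equivalence of categories $\mathbb{M}\textrm{-mod}(\mathscr{E})\simeq \mathbb{L}\textrm{-mod}(\mathscr{E})$ which is natural in $\mathscr{E}$, and then invoking the fact that a natural equivalence of categories of models in every Grothendieck topos amounts to a Morita-equivalence. Since all the constructions involved are geometric (they only use finite limits, images, and quotients by definable equivalence relations, all expressible in the internal language), the naturality in $\mathscr{E}$ will be automatic, exactly as in the proof of Theorem \ref{equivalence}; so the real content is to produce the two functors and the two natural isomorphisms witnessing that they are quasi-inverse.

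In one direction, I would send a model $\mathcal{M}$ of $\mathbb{M}$ in $\mathscr{E}$ to its lattice-ordered Grothendieck group $G(\mathcal{M})$, as recalled just above the statement; the functoriality was already established in \cite{Russo}, so $G(-):\mathbb{M}\textrm{-mod}(\mathscr{E})\to \mathbb{L}\textrm{-mod}(\mathscr{E})$ is available for free. One must of course check that $G(\mathcal{M})$ genuinely satisfies L.1--L.12, the only non-formal point being the existence of inverses (axiom L.3), which holds by construction since $-[x,y]=[y,x]$ and $[x,y]+[y,x]=[x+y,y+x]=[0,0]$ using commutativity. In the converse direction, I would send an $\ell$-group $\mathcal{G}=(G,+,-,\leq,\inf,\sup,0)$ to its \emph{positive cone} $\mathcal{G}^{+}$, defined in the internal language as the subobject $\{x\in G\mid 0\leq x\}\mono G$ equipped with the restricted operations $+$, $\inf$, $\sup$ and the constant $0$; the relevant closure properties ($0\leq x$ and $0\leq y$ imply $0\leq x+y$, $0\leq\inf(x,y)$, $0\leq\sup(x,y)$) are provable in $\mathbb{L}$ and hence hold in $\mathcal{G}$ by soundness. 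The verification that $\mathcal{G}^{+}$ is a model of $\mathbb{M}$ is routine: M.1--M.11 are inherited, M.12 (cancellation) follows from L.3 by adding $-x$ to both sides, M.13 is the defining condition $0\leq x$, and M.14 — given $x\leq y$, the element $z:=y-x$ lies in $\mathcal{G}^{+}$ by translation invariance and satisfies $x+z=y$ — is where subtractivity enters. On morphisms, an $\ell$-homomorphism $h:\mathcal{G}\to\mathcal{G}'$ restricts to $\mathcal{G}^{+}\to\mathcal{G}'^{+}$ since it preserves $\leq$ and $0$.

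It remains to construct the natural isomorphisms $1_{\mathbb{M}}\Rightarrow (-)^{+}\circ G(-)$ and $1_{\mathbb{L}}\Rightarrow G(-)\circ(-)^{+}$. For a model $\mathcal{M}$ of $\mathbb{M}$, I would define $\eta_{\mathcal{M}}:\mathcal{M}\to G(\mathcal{M})^{+}$ in the internal language by $x\mapsto [x,0]$; this lands in the positive cone because $[0,0]\leq[x,0]$ iff $\inf(x,0)=0$, which holds by M.13, and it is a monoid homomorphism preserving $\inf$, $\sup$ and $\leq$ by the formulas defining the operations on $G(\mathcal{M})$. Monicity follows from Proposition \ref{internal_language}(iii) together with the cancellation axiom M.12 (if $[x,0]=[y,0]$ then $x+0=y+0$, so $x=y$); epicity onto the positive cone follows from Proposition \ref{internal_language}(iv) and axiom M.14, since $[0,0]\leq[h,k]$ forces $k\leq h$, hence $h=k+z$ for some $z$ with $[h,k]=[z,0]=\eta_{\mathcal{M}}(z)$. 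For an $\ell$-group $\mathcal{G}$, I would define $\varepsilon_{\mathcal{G}}:G(\mathcal{G}^{+})\to\mathcal{G}$ by $[x,y]\mapsto x-y$ (well-defined since $x+k=y+h$ implies $x-y=h-k$), and check it is an $\ell$-isomorphism using that every element of an $\ell$-group is $x^{+}-x^{-}$ with $x^{+},x^{-}\in\mathcal{G}^{+}$, which gives surjectivity, while injectivity follows from translation invariance and antisymmetry L.6. The preservation of $\inf$ and $\sup$ by $\varepsilon_{\mathcal{G}}$ is the one genuinely computational step: one must verify an identity of the shape $\sup(x-y,h-k)=\bigl(\sup(x+k,y+h)\bigr)-(y+k)$ in an arbitrary $\ell$-group, which reduces to the translation-invariance of $\sup$; I expect this — and the analogous check that $\eta_{\mathcal{M}}$ respects $\sup$, using that the $\mathbb{M}$-provable identity $\inf(a,b)+c=\inf(a+c,b+c)$ noted before the statement lifts to $G(\mathcal{M})$ — to be the main, though entirely mechanical, obstacle. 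Finally, naturality of $\eta$ and $\varepsilon$ in the respective models is immediate from the formulas, and naturality in $\mathscr{E}$ holds because $G(-)$, $(-)^{+}$, $\eta$ and $\varepsilon$ are all built from geometric constructions; this yields the equivalence of classifying toposes $\mathscr{E}_{\mathbb{M}}\simeq\mathscr{E}_{\mathbb{L}}$, i.e.\ the asserted Morita-equivalence.
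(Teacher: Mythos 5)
Your proposal is correct and follows essentially the same route as the paper: the Grothendieck group functor $G(-)$ in one direction, the positive cone functor in the other, the isomorphism $x\mapsto [x,0]$ on the monoid side, and the isomorphism between $\mathcal{G}$ and $G(\mathcal{G}^{+})$ on the group side (which the paper writes in the opposite direction, as $g\mapsto [g^{+},g^{-}]$, but this is the inverse of your $\varepsilon_{\mathcal{G}}$), with naturality in $\mathscr{E}$ obtained from the geometricity of all the constructions.
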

\begin{proof}
We need to prove that the categories of models of the two theories in any Grothendieck topos $\E$ are equivalent, naturally in $\E$.

Let $\E$ be a Grothendieck topos. We can define two functors:
\begin{itemize}
\item $T_{\E}:\mathbb{M}$-mod$(\E)\rightarrow\mathbb{L}$-mod$(\E)$. For any monoid $\mathcal{M}$ in $\mathbb{M}$-mod$(\E)$  we set $T_{\E}(\mathcal{M})$ to be the Grothendieck group $G(\mathcal{M})$. For a $\mathbb M$-model homomorphism $f:\mathcal{M}\rightarrow \mathcal{N}$, we set $T_{\E}(f)$ equal to the function $f^*:G(\mathcal{M})\rightarrow G(\mathcal{N})$ defined by using the internal language of the topos $\E$ as $f^*([x,y])=[f(x),f(y)]$. 

\item $R_{\E}:\mathbb{L}$-mod$(\E)\rightarrow \mathbb{M}$-mod$(\E)$. For every $\ell$-group $\mathcal{G}$ in $\mathbb{L}$-mod$(\E)$, its positive cone is trivially a model of $\mathbb{M}$. We set $R_{\E}(\mathcal{G})=(G^+,+,\leq,\inf,\sup,0)$, where $+,\leq,\inf,\sup$ are the restrictions to the positive cone of $\mathcal{G}$ of the ope\-rations and of the order of $\mathcal{G}$. Every $\ell$-homomorphism preserves the order; thus, we set $R_{\E}(g)=g|_{G^+}$.
\end{itemize}

These two functors are categorical inverses to each other. Indeed, we can define two natural isomorphisms $T_{\E}\circ R_{\E}(\mathcal{G})\simeq \mathcal{G}$ and $R_{\E}\circ T_{\E}(\mathcal{M})\simeq \mathcal{M}$ (for every $\ell$-group $\mathcal{G}$ and for every model $\mathcal{M}$ of $\mathbb{M}$ in an arbitrary Grothendieck topos $\E$). 

Let $\mathcal{M}$ be a model of $\mathbb{M}$ in $\E$. The arrow $\phi_{\mathcal{M}}:\mathcal{M}\rightarrow G(\mathcal{M})^+$ with $\phi_{\mathcal{M}}(x):=[x,0]$ is an isomorphism.

\begin{itemize}
\item[-] $\phi_{\mathcal{M}}$ is injective: given $x,y\in M$, $[x,0]=[y,0]$ iff $x=y$.
\item[-] $\phi_{\mathcal{M}}$ is surjective: given $[x,y]\in G(\mathcal{M})^+$, this means that
\begin{center}
$[0,0]\leq [x,y]\Leftrightarrow Inf([0,0],[x,y])=[0,0]\Leftrightarrow[\inf(x,y),y]=[0,0]\Leftrightarrow \inf(x,y)=y \Leftrightarrow y\leq x$
\end{center}
By axiom $M.14$, there exists $z\in M$ such that $x=z+y$. Thus, $[x,y]=[z,0]=\phi_{\mathcal{M}}(z)$.
\item[-] $\phi_{\mathcal{M}}$ preserves $+$: given $x,y\in M$, $\phi_{\mathcal{M}}(x)+\phi_{\mathcal{M}}(y)=[x,0]+[y,0]=[x+y,0]=\phi_{\mathcal{M}}(x+y)$.
\end{itemize}

In a similar way we can prove that $\phi_{\mathcal{M}}$ preserves the other $\ell$-group operations whence the order relation.

Let $\mathcal{G}$ be a model of $\mathbb{L}$ in $\E$. The arrow  $\chi_{\mathcal{G}}:G\rightarrow G(R_{\E}(\mathcal{G}))$ with $\chi_{\mathcal{G}}(g):=[g^+,g^-]$ is an isomorphism.

\begin{itemize}
\item[-] $\chi_{\mathcal{G}}$ is injective: given $g,h\in G$ such that $[g^+,g^-]=[h^+,h^-]$, we have
\begin{center}
$g^++h^-=g^-+h^+$ iff $g^+-g^-=h^+-h^-$ iff $g=h$
\end{center}
\item[-] $\chi_{\mathcal{G}}$ is surjective: given $[x,y]\in G(R_{\E}(\mathcal{G}))$, there exists $g=x-y$ in $G$.
\begin{center}
$[g^+,g^-]=[x,y]$ iff $g^++y=g^-+x$ iff $g^+-g^-=x-y$
\end{center}
Thus, $\chi_{\mathcal{G}}(g)=[x,y]$
\item[-] $\chi_{\mathcal{G}}$ preserves $+$: given $g,h\in G$, we have that $\chi_{\mathcal{G}}(g+h)=[(g+h)^+,(g+h)^-]$ and $\chi_{\mathcal{G}}(g)+\chi_{\mathcal{G}}(h)=[g^++h^+,g^-+h^-]$. These two elements are equal iff
\begin{center}
$(g+h)^++g^-+h^-=(g+h)^-+g^++h^+$ iff $(g+h)^+-(g+h)^-=g^+-g^-+h^+-h^-$ iff $g+h=g+h$
\end{center}
\item[-] $\chi_{\mathcal{G}}$ preserves $-$: given $g\in G$. We have that $\chi_{\mathcal{G}}(-g)=[(-g)^+,(-g)^-]$ and $-\chi_{\mathcal{G}}(g)=[g^-,g^+]$. These two elements are equal iff
\begin{center}
$(-g)^++g^+=(-g)^-+g^-$ iff $(-g)^+-(-g)^-=g^--g^+$ iff $-g=-g$
\end{center}
\end{itemize}
It is easy to check that $\chi_{\mathcal{G}}$ is a homomorphism.

Finally, the categorical equivalence just established is natural in $\E$; indeed, all the constructions that we have used are geometric.
\end{proof}

Recall that the `bridge technique' introduced in \cite{Caramello1} can be applied to any pair of Morita-equivalent theories. This method allows to transfer properties and constructions from one theory to the other by using the common classifying topos as a `bridge' on which various kinds of topos-theoretic invariants can be considered. 

Since the theories $\mathbb{M}$ and $\mathbb{L}$ are cartesian, they are both of presheaf type. In this case, an interesting invariant to consider is the notion of irreducible object of the classifying topos.

\begin{remark}\label{rem:irreducibles}
For any two Morita-equivalent theories of presheaf type $\mathbb T$ and ${\mathbb T}'$, the equivalence of classifying toposes $[{\mathscr{C}_{\mathbb{T}}^{\textrm{irr}}}^{\textrm{op}}, \Set]\simeq [{\mathscr{C}_{\mathbb{T}'}^{\textrm{irr}}}^{\textrm{op}}, \Set]$ restricts to the full subcategories ${\mathscr{C}_{\mathbb{T}}^{\textrm{irr}}}$ and ${\mathscr{C}_{\mathbb{T}'}^{\textrm{irr}}}$ of irreducible objects (cf. Remark \ref{rem:cartirr}), yielding an equivalence 
\[
{\mathscr{C}_{\mathbb{T}}^{\textrm{irr}}}\simeq {\mathscr{C}_{\mathbb{T}'}^{\textrm{irr}}}.
\]
\end{remark}

Applying this to our theories, we obtain a categorical equivalence 
\[
\mathscr{C}_{\mathbb{M}}^{\textrm{irr}}\simeq \mathscr{C}_{\mathbb{L}}^{\textrm{irr}},
\]
which we can explicitly describe as follows. Since both the theories $\mathbb M$ and $\mathbb L$ are cartesian, we have natural equivalences ${\mathscr{C}_{\mathbb{M}}^{\textrm{irr}}}\simeq {\mathscr{C}_{\mathbb{M}}^{\textrm{cart}}}$ and ${\mathscr{C}_{\mathbb{L}}^{\textrm{irr}}}\simeq {\mathscr{C}_{\mathbb{L}}^{\textrm{cart}}}$. In fact, the $\mathbb T$-irreducible formulas for a cartesian theory $\mathbb{T}$ are precisely the $\mathbb T$-cartesian ones (up to isomorphism in the syntactic category).

Recall that for any cartesian theory $\mathbb T$ and cartesian category $\mathscr{C}$, we have a categorical equivalence

\[
\mathbf{Cart}(\mathscr{C}_{\mathbb{T}}^{\textrm{cart}},\mathscr{C})\simeq \mathbb{T}\textrm{-mod}(\mathscr{C}),
\] 
where $\mathbf{Cart}(\mathscr{C},\mathscr{D})$ is the category of cartesian functors between cartesian categories $\mathscr{C}$ and $\mathscr{D}$. 
In the category $\mathscr{C}_{\mathbb{L}}^{\textrm{cart}}$ there is a canonical model of $\mathbb{L}$ given by the structure $\mathcal{G}_{\mathbb{L}}=(\{x.\top\},+,-,\leq,$ $\inf,\sup,0)$. It is immediate to see that we can restrict the operations $+$, $\inf$ and $\sup$ on $\mathcal{G}_{\mathbb{L}}$ to the subobject $\{x.x\geq 0\}$ of $\{x.\top\}$. The resulting structure $
(\{x.x\geq 0\},+,\leq,\inf,\sup,0)$ is a model $U$ of $\mathbb{M}$ in $\mathscr{C}_{\mathbb{L}}$.

In the converse direction, consider the syntactic category $\mathscr{C}_{\mathbb{M}}^{\textrm{cart}}$ of $\mathbb{M}$ and the canonical model $\mathcal{M}_{\mathbb{M}}=(\{y.\top\},+,\leq,\inf,\sup,0)$ of $\mathbb{M}$ in it. The $\ell$-group associated to a model of $\mathbb{M}$ in an arbitrary Grothendieck topos $\E$ via the Morita-equivalence described above is the Grothendieck group of $\mathcal{M}$, whose elements, we recall, are equivalence classes $[x,y]$ of pairs of elements of $\mathcal{M}$. Given a pair of elements $(x, y)$ of $\mathcal{M}$, consider $\inf(x,y)$; since $\inf(x,y)\leq x$ and $\inf(x,y)\leq y$, by axiom M.14 there exist exactly two elements $u,v$ such that $x=\inf(x,y)+u$ and $y=\inf(x,y)+v$. These elements clearly satisfy $[x,y]=[u, v]$; moreover, $\inf(u,v)=0$. Indeed, 
\[
\inf(u,v)+\inf(x,y)=\inf(u+\inf(x,y),v+\inf(x,y))=\inf(x,y),
\]
whence $\inf(u,v)=0$ by axiom M.12.

Note that the pair $(u, v)$ does not depend on the equivalence class of $(x, y)$. Indeed, if $[x,y ]=[u',v']$ and $\inf(u',v')=0$ then $x+v'=y+u'$ and the following identities hold:
\begin{center}
$\inf(x,y)+u'=\inf(x+u',y+u')=\inf(x+u',x+v')=x+\inf(u',v')=x$,
\end{center} 
which implies that $u=u'$. In an analogous way we can prove that $v=v'$.

This allows us to choose the pair $(u, v)$ defined above as a canonical representative for the equivalence class $[x,y]$ in $G(\mathcal{M})$.

We are thus led to consider the following structure in $\mathscr{C}_{\mathbb{M}}^{\textrm{cart}}$:
\begin{itemize}
\item underlying object: $\{(u,v).\inf(u,v)=0\}$. 
\item sum: $[z+v+b = t+u+a\wedge \inf(z,t)=0]:\{(u,v).\inf(u,v)=0\}\times\{(a,b).a\wedge b=0\}\rightarrow \{(z,t).z\wedge t=0\}$. 
\item opposite: $[a=v\wedge b=u]:\{(u,v).\inf(u,v)=0\}\rightarrow \{(a,b).\inf(a,b)=0\}$
\item zero: $[u=0,v=0]:\{[].\top\}\rightarrow \{(u,v).\inf(u,v)=0\}$
\item $\textrm{Inf}:[z+u+b=t+\inf(u+b,v+a)\wedge \inf(z,t)=0]:\{(u,v).\inf(u,v)=0\}\times\{(a,b).\inf(a,b)=0\}\rightarrow \{(z,t).\inf(z,t)=0\}$
\item $\textrm{Sup}:[z+u+b=t+\sup(u+b,v+a)\wedge \inf(z,t)=0]:\{(u,v).\inf(u,v)=0 \}\times\{(a,b).\inf(a,b)=0\}\rightarrow \{(z,t).\inf(z,t)=0\}$ 
\end{itemize}
 
It can be easily seen that this structure is a model $V$ of $\mathbb{L}$ inside $\mathscr{C}_{\mathbb{M}}^{\textrm{cart}}$.

Let $F_{U}:\mathscr{C}_{\mathbb{M}}^{\textrm{cart}}\rightarrow \mathscr{C}_{\mathbb{L}}^{\textrm{cart}}$ and $F_{V}:\mathscr{C}_{\mathbb{L}}^{\textrm{irr}}\rightarrow \mathscr{C}_{\mathbb{M}}^{\textrm{cart}}$ be the cartesian functors respectively induced by the models $U$ and $V$. For every object $\{\vec{x}.\phi\}$ of $\mathscr{C}_{\mathbb{M}}$, $F_{U}(\{\vec{x}.\phi\}):=\{\vec{x}.\phi\wedge \vec{x}\geq 0\}$. The functor $F_{V}$ admits the following inductive definition:
\begin{itemize}
\item $F_{V}(\{\vec{y}.\top\}):= \{(\vec{u},\vec{v}).\inf(\vec{u},\vec{v})=0\}$
\item $F_{V}((\{\vec{y},\vec{x}).\vec{y}+\vec{x}\}):= \{(\vec{u},\vec{v},\vec{a},\vec{b}).(\vec{u},\vec{v})+(\vec{a},\vec{b})\wedge \inf(\vec{u},\vec{v})=0\wedge \inf(\vec{a},\vec{b})=0\}$
\item $F_{V}(\{\vec{y}.-\vec{y}\}):=\{(\vec{v},\vec{u}).\inf(\vec{v},\vec{u})=0\}$
\item $F_{V}(\{(\vec{y},\vec{x}).\textrm{Inf}(\vec{y},\vec{x})\}):=\{(\vec{u},\vec{v},\vec{a},\vec{b}).(\inf(\vec{u},\vec{a}),\inf(\vec{v},\vec{b}))\wedge \inf(\vec{u},\vec{v})=0\wedge \inf(\vec{a},\vec{b})=0\}$
\item $F_{V}(\{(\vec{y},\vec{x}).\textrm{Sup}(\vec{y},\vec{x})\}):=\{(\vec{u},\vec{v},\vec{a},\vec{b}).(\sup(\vec{u},\vec{a}),\sup(\vec{v},\vec{b}))\wedge \inf(\vec{u},\vec{v})=0\wedge \inf(\vec{a},\vec{b})=0\}$
\end{itemize}

Let us now proceed to show that the functors $F_{U}$ and $F_{V}$ are categorical inverses to each other.

\begin{itemize}
\item[\textit{Claim 1}] The formulas $\{x.\top\}$ and $\{(u,v).\inf(u,v)=0\}$ are isomorphic in $\mathscr{C}_{\mathbb{M}}^{\textrm{irr}}$. 

To see this, consider the following arrow in $\mathscr{C}_{\mathbb{M}}$:
\begin{center}
$[u=x,v=0]: \{x.\top\}\rightarrow \{(u,v).\inf(u,v)=0\wedge (u,v)\geq (0,0)\}$. 
\end{center}
 All the ``elements'' of the object $\{(u,v).\inf(u,v)=0\wedge (u,v)\geq (0,0)\}$ are of the form $(u,0)$; indeed, $(u,v)\geq (0,0)$ iff $\inf((0,0),(u,v))=(0,0)$, and this means that $v=\inf(u,v)=0$. It follows that the arrow just defined is an isomorphism.
\item[\textit{Claim 2}] The formulas $\{x.\top\}$ and $\{(u,v).(u\wedge v=0)\wedge u\geq 0\wedge v\geq 0\}$ are isomorphic in $\mathscr{C}_{\mathbb{L}}^{\textrm{irr}}$. 

To see this, consider the following arrow in $\mathscr{C}_{\mathbb{L}}$:
\begin{center}
$[u=x^+,v=x^-]:\{x.\top\}\rightarrow \{(u,v).(u\wedge v=0)\wedge u\geq 0\wedge v\geq 0\}$ 
\end{center}
It is well-defined because $\inf(x^+,x^-)=0$. In addition, taken $u,v$ such that $\inf(u,v)=0$, we can consider $x=u-v$. We have that:
\begin{center}
$u=(u-v)^+$ and $v=(u-v)^-$ iff

\

$v+u=v+(u-v)^+$ and $v+u=u+(u-v)^-$ iff

\

$v+u=v+(\sup((u-v), 0))$ and $v+u=u+(\sup((v-u),0))$ iff \cite[Proposition 1.2.2]{BK}

\

$v+u=\sup((v+u-v),(v+ 0))$ and $v+u=\sup((u+v-u),(u+0))$ iff

\

$v+u=\sup(u, v)$ and $v+u=\sup(v, u)$.
\end{center}
But the sequent $\top\vdash_{x,y}x+y=\sup(x, y)+\inf(x, y)$ is provable in $\mathbb{L}$ (cf. \cite[Proposition 1.2.6]{BK}). Hence, the arrow $[u=x^{+}, v=x^{-}]$ is an isomorphism, as required.
\end{itemize}

From Claims 1 and 2 it follows at once that the functors $F_{U}$ and $F_{V}$ are categorical inverses to each other.
 
Summarizing, we have the following result.

\begin{proposition}\label{monodi-groups irriducibili}
The functors $F_{U}:\mathscr{C}_{\mathbb{M}}^{cart} \to \mathscr{C}_{\mathbb{L}}^{cart}$ and $F_{V}:\mathscr{C}_{\mathbb{L}}^{cart} \to \mathscr{C}_{\mathbb{M}}^{cart}$ defined above form a categorical equivalence.
\end{proposition}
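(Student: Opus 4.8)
The plan is to deduce the proposition from the universal property of cartesian syntactic categories rather than by comparing the two functors head-on. Recall that for every cartesian category $\mathscr{C}$ there is an equivalence $\mathbf{Cart}(\mathscr{C}_{\mathbb{T}}^{\textrm{cart}},\mathscr{C})\simeq \mathbb{T}\textrm{-mod}(\mathscr{C})$, natural in $\mathscr{C}$, under which the identity functor of $\mathscr{C}_{\mathbb{T}}^{\textrm{cart}}$ corresponds to the canonical model of $\mathbb{T}$ in $\mathscr{C}_{\mathbb{T}}^{\textrm{cart}}$, and postcomposition with a cartesian functor corresponds, on the side of models, to transporting a model along that functor. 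Hence, to obtain natural isomorphisms $F_{V}\circ F_{U}\cong 1_{\mathscr{C}_{\mathbb{M}}^{\textrm{cart}}}$ and $F_{U}\circ F_{V}\cong 1_{\mathscr{C}_{\mathbb{L}}^{\textrm{cart}}}$, it suffices to exhibit isomorphisms
\[
(F_{V}\circ F_{U})(\mathcal{M}_{\mathbb{M}})\;\cong\;\mathcal{M}_{\mathbb{M}}
\qquad\text{and}\qquad
(F_{U}\circ F_{V})(\mathcal{G}_{\mathbb{L}})\;\cong\;\mathcal{G}_{\mathbb{L}},
\]
the first as $\mathbb{M}$-models in $\mathscr{C}_{\mathbb{M}}^{\textrm{cart}}$ and the second as $\mathbb{L}$-models in $\mathscr{C}_{\mathbb{L}}^{\textrm{cart}}$; naturality in the base category then automatically promotes them to the desired isomorphisms of functors.

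Next I would identify the two composite models explicitly. Since $F_{U}$ is induced by $U$ and $F_{V}$ by $V$, we have $F_{U}(\mathcal{M}_{\mathbb{M}})=U$ and $F_{V}(\mathcal{G}_{\mathbb{L}})=V$. Now $U$ is by construction the positive cone of $\mathcal{G}_{\mathbb{L}}$, and the positive cone is cut out of an $\ell$-group by a finite-limit (equalizer) diagram; as $F_{V}$ is cartesian it preserves this diagram, so $(F_{V}\circ F_{U})(\mathcal{M}_{\mathbb{M}})=F_{V}(U)$ is exactly the positive cone of $V$. Dually, $V$ is obtained from $\mathcal{M}_{\mathbb{M}}$ by the Grothendieck-group construction with canonical representatives --- again a finite-limit construction --- so $(F_{U}\circ F_{V})(\mathcal{G}_{\mathbb{L}})=F_{U}(V)$ is the Grothendieck group $G(U)$ of $U$. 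The statement thus reduces to the two assertions that the positive cone of $V$ is isomorphic to $\mathcal{M}_{\mathbb{M}}$ and that $G(U)$ is isomorphic to $\mathcal{G}_{\mathbb{L}}$; on underlying objects these are precisely Claims 1 and 2 above, witnessed respectively by the arrows $[u=x,\,v=0]$ and $[u=x^{+},\,v=x^{-}]$.

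It then remains to promote these underlying-object isomorphisms to isomorphisms of models, i.e.\ to check that $[u=x,v=0]$ intertwines the operations $0,+,\inf,\sup$ of $\mathcal{M}_{\mathbb{M}}$ with those of the positive cone of $V$, and that $[u=x^{+},v=x^{-}]$ intertwines $0,+,-,\inf,\sup$ of $\mathcal{G}_{\mathbb{L}}$ with those of $G(U)$. This is the computational core: one unwinds the explicit formulas defining the structure maps of $V$ and of $G(U)$ and simplifies them on elements of the special forms $(u,0)$ and $(x^{+},x^{-})$, using the cancellation axiom M.12, the subtractivity axiom M.14, and the $\mathbb{L}$-provable identity $(\top\vdash_{x,y}x+y=\sup(x,y)+\inf(x,y))$; for instance, the sum of $V$ on $(u_{1},0)$ and $(u_{2},0)$ unwinds, by cancellation, to $(u_{1}+u_{2},0)$, matching the sum of $\mathcal{M}_{\mathbb{M}}$.

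I expect the main obstacle to be precisely this operation-by-operation bookkeeping, together with the two structural points that make the reduction legitimate: that ``positive cone'' and ``Grothendieck group with canonical representatives'' are genuinely finite-limit constructions, hence commute with the cartesian functors $F_{U}$ and $F_{V}$; and that the isomorphisms of Claims 1 and 2 are components of morphisms of models which are natural in the varying cartesian category, so that the conclusion falls out of the $2$-equivalence $\mathbf{Cart}(\mathscr{C}_{\mathbb{T}}^{\textrm{cart}},-)\simeq\mathbb{T}\textrm{-mod}(-)$. None of these steps is conceptually deep; the effort lies entirely in the organisation and in the identities just listed.
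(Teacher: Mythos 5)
Your proposal is correct and follows essentially the same route as the paper: the paper also reduces the statement to the two object-level isomorphisms $[u=x,\,v=0]$ and $[u=x^{+},\,v=x^{-}]$ (its Claims 1 and 2) and then invokes the correspondence between cartesian functors out of $\mathscr{C}_{\mathbb{T}}^{\textrm{cart}}$ and $\mathbb{T}$-models. The only difference is one of explicitness: you spell out the universal-property reduction and flag the remaining check that these isomorphisms respect the model structure, which the paper compresses into ``it follows at once''.
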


\begin{remark}
The equivalence of Proposition \ref{monodi-groups irriducibili} induces, in light of \cite[Theorem 4.3]{Caramello2}, an equivalence of categories $\textrm{f.p.}\mathbb{M}$-mod$(\Set)\simeq \textrm{f.p.}\mathbb{L}$-mod$(\Set)$, which is precisely the restriction of the Morita-equivalence of Theorem \ref{monoid-group}.
\end{remark}

\begin{lemma}\label{Grothendieck group}
The $\ell$-Grothendieck group $G(\mathcal{M})$ associated to a   model of $\mathcal{M}$ of $\mathbb M$ in $\Set$ satisfies the following universal property:
\begin{itemize}
\item[(*)]there exists an $\ell$-monoid homomorphism $i:\mathcal{M}\rightarrow G(\mathcal{M})$ of models such that for every $\ell$-monoid homomorphism $f:\mathcal{M}\rightarrow \mathcal{H}$, where $H$ is an $\ell$-group, there exists a unique $\ell$-group homomorphism $g:G(\mathcal{M})\rightarrow \mathcal{H}$ such that $f=g\circ i$.
\end{itemize}
\end{lemma}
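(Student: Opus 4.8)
This is the classical universal property presenting $G(\mathcal{M})$ as the group completion of the lattice-ordered abelian monoid $\mathcal{M}$; the plan is to write down the unit $i$ and the induced map $g$ explicitly and to check the (essentially routine) homomorphism and uniqueness conditions.

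First I would take $i:=(x\mapsto [x,0]):\mathcal{M}\to G(\mathcal{M})$. That $i$ preserves $+$, $0$, $\inf$ and $\sup$ (and hence the order relation) is, up to the inclusion $G(\mathcal{M})^{+}\hookrightarrow G(\mathcal{M})$, precisely the computation already carried out for the map $\phi_{\mathcal{M}}$ in the proof of Theorem \ref{monoid-group}, so this step requires no new work.

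Next, given an $\ell$-monoid homomorphism $f:\mathcal{M}\to \mathcal{H}$ (i.e.\ a map preserving $+$, $0$, $\inf$, $\sup$) with $\mathcal{H}$ an $\ell$-group, I would define $g:G(\mathcal{M})\to \mathcal{H}$ by $g([x,y]):=f(x)-f(y)$. The first point is well-definedness: if $[x,y]=[z,t]$, i.e.\ $x+t=y+z$ in $\mathcal{M}$, then applying $f$ gives $f(x)+f(t)=f(y)+f(z)$ in $\mathcal{H}$, whence $f(x)-f(y)=f(z)-f(t)$ since $\mathcal{H}$ is a group. Preservation of $+$, $-$ and $0$ by $g$ is then immediate from the definitions of these operations on $G(\mathcal{M})$ together with $f(0)=0$. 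For $\inf$ and $\sup$ one combines the explicit formula $\textrm{Inf}([x,y],[h,k])=[\inf(x+k,y+h),y+k]$ (and its analogue for $\textrm{Sup}$) with the facts that $f$ preserves $\inf$, $\sup$, $+$ and that $\mathcal{H}$, being an $\ell$-group, satisfies translation invariance (axiom L.12); concretely,
\[
g(\textrm{Inf}([x,y],[h,k]))=\inf\bigl(f(x)+f(k),f(y)+f(h)\bigr)-f(y)-f(k)=\inf\bigl(f(x)-f(y),f(h)-f(k)\bigr)=\inf(g[x,y],g[h,k]),
\]
and symmetrically for $\textrm{Sup}$; preservation of the relation $\leq$ follows since $[x,y]\leq[h,k]$ iff $\textrm{Inf}([x,y],[h,k])=[x,y]$. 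Finally $g\circ i=f$, because $g(i(x))=g([x,0])=f(x)-f(0)=f(x)$.

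Uniqueness is the only step relying on a structural fact rather than a computation: every element of $G(\mathcal{M})$ has the form $[x,y]=[x,0]+[0,y]=i(x)-i(y)$, so $i(\mathcal{M})$ generates $G(\mathcal{M})$ as an abelian group; hence any $\ell$-group homomorphism $g'$ with $g'\circ i=f$ must satisfy $g'([x,y])=g'(i(x))-g'(i(y))=f(x)-f(y)=g([x,y])$, forcing $g'=g$. I do not expect a genuine obstacle here; the one verification that needs a little care is the compatibility of $g$ with $\inf$ and $\sup$, which hinges on translation invariance in $\mathcal{H}$ — exactly the property that makes the quotient $M\times M/{\sim}$ a lattice-ordered group in the first place.
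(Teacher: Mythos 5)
Your proposal is correct and follows essentially the same route as the paper: the same unit $i(x)=[x,0]$ justified via the isomorphism $\phi_{\mathcal{M}}$ from Theorem \ref{monoid-group}, the same induced map $g([x,y])=f(x)-f(y)$, and the same computation (using that $f$ preserves $\inf$/$\sup$ together with translation invariance in $\mathcal{H}$) to show $g$ preserves the lattice operations. Your explicit well-definedness check and the uniqueness argument via $[x,y]=i(x)-i(y)$ merely spell out what the paper compresses into ``we are forced to define $g$ as\dots'' and ``clearly a well-defined group homomorphism.''
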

\begin{proof}
Set $i:\mathcal{M}\rightarrow G(M)$ equal to the function $i(x)=[x,0]$. This is a $\ell$-monoid homomorphism since it is the composite of the $\ell$-monoid isomorphism $\phi_{\mathcal{M}}:\mathcal{M}\rightarrow G(\mathcal{M})^+$ considered in the proof of Theorem \ref{monoid-group} with the  inclusion $G(\mathcal{M})^+ \hookrightarrow G(\mathcal{M})$, which is an $\ell$-monoid homomorphism since the $\ell$-monoid structure on $G(\mathcal{M})^+$ is induced by restriction of that on $G(\mathcal{M})$.
 
Given an $\ell$-monoid homomorphism $f:\mathcal{M}\rightarrow \mathcal{H}$, where $H$ is an $\ell$-group, in order to have $g\circ i=f$, we are forced to define $g$ as $g:[x,y]\in G(\mathcal{M})\rightarrow f(x)-f(y) \in \mathcal{H}$. This is clearly a well-defined group homomorphism. It remains to show that it also preserves the lattice structure.
\begin{itemize}
\item $g$ preserves Inf: $g(\textrm{Inf}([x,y],[h,k]))=g([\inf(x+k,y+h),y+k])=\inf(f(x+k),f(y+h))-f(y+k)=\inf(f(x+k)-f(y+k),f(y+h)-f(y+k))=\inf(f(x)-f(y),f(h)-f(k))=\textrm{Inf}(g([x,y]),g([h,k]))$
\item $g$ preserves Sup: the proof is analogous to that for Inf.
\end{itemize}
\end{proof}

\begin{proposition}\label{commutation}
The functors $F_{U}$ and $F_{V}$ correspond to the functors $T_{\Set}$ and $R_{\Set}$ under the canonical equivalences $\mathscr{C}_{\mathbb{M}}^{irr} \simeq  f.p.\mathbb{M}$-mod$(\Set)^{op}$ and $\mathscr{C}_{\mathbb{L}}^{irr} \simeq  f.p.\mathbb{L}$-mod$(\Set)^{op}$:

\begin{center}
\begin{tikzpicture}
\node (C) at (0,0) {$\mathscr{C}_{\mathbb{M}}^{irr}$};
\node (A) at (3,0) {$f.p.\mathbb{M}$-mod$(\Set)^{op}$};
\node (3) at (1,0) {$\simeq$};
\node (4) at (1,2) {$\simeq$};
\node (D) at (0,2) {$\mathscr{C}_{\mathbb{L}}^{irr}$ };
\node (B) at (3,2) {$f.p.\mathbb{L}$-mod$(\Set)^{op}$};
\draw[->] (A) to node [midway,right] {$T_{\Set}$} (B);
\draw[->] (C) to node [midway, left] {$F_{U}$} (D);
\node (C') at (7,0) {$\mathscr{C}_{\mathbb{M}}^{irr}$};
\node (A') at (10,0) {$f.p.\mathbb{M}$-mod$(\Set)^{op}$};
\node (3') at (8,0) {$\simeq$};
\node (4') at (8,2) {$\simeq$};
\node (D') at (7,2) {$\mathscr{C}_{\mathbb{L}}^{irr}$ };
\node (B') at (10,2) {$f.p.\mathbb{L}$-mod$(\Set)^{op}$};
\draw[->] (B') to node [midway,right] {$R_{\Set}$} (A');
\draw[->] (D') to node [midway,left] {$F_{V}$} (C');
\end{tikzpicture}
\end{center} 
\end{proposition}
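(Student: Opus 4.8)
The plan is to verify that the left-hand square commutes up to natural isomorphism; the right-hand one then follows for free, since $F_{U},F_{V}$ are mutually quasi-inverse (Proposition \ref{monodi-groups irriducibili}) and $T_{\Set},R_{\Set}$ are mutually quasi-inverse (Theorem \ref{monoid-group}). Recall (Theorem \ref{irr-f.p.} and Remark \ref{rem:cartirr}) that the canonical equivalence $\mathscr{C}_{\mathbb{T}}^{\textrm{irr}}\simeq f.p.\mathbb{T}$-mod$(\Set)^{\textrm{op}}$ sends a $\mathbb{T}$-irreducible (equivalently $\mathbb{T}$-cartesian) formula $\{\vec{x}.\phi\}$ to the $\mathbb{T}$-model $M_{\phi}$ it presents, and a $\mathbb{T}$-provably functional formula to the homomorphism of presented models it induces (Definition \ref{fin_p.ati-cat}). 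Hence it suffices to produce, for each $\mathbb{M}$-cartesian $\phi(\vec{x})$, an isomorphism of $\mathbb{L}$-models between the one presented by $F_{U}(\{\vec{x}.\phi\})=\{\vec{x}.\phi\wedge\vec{x}\geq 0\}$ and $T_{\Set}(M_{\phi})=G(M_{\phi})$, natural in $\{\vec{x}.\phi\}$.

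For the object part, the claim is that the $\mathbb{L}$-model presented by $\{\vec{x}.\phi\wedge\vec{x}\geq 0\}$ is precisely the Grothendieck group $G(M_{\phi})$ of Lemma \ref{Grothendieck group}, with presenting tuple $i(\vec{a})$, where $\vec{a}$ presents $M_{\phi}$ and $i:M_{\phi}\to G(M_{\phi})$ is the canonical $\ell$-monoid embedding. The one ingredient needed beyond the two relevant universal properties is the identity $[[\vec{x}.\phi]]_{R_{\Set}(\mathcal{H})}=[[\vec{x}.\phi\wedge\vec{x}\geq 0]]_{\mathcal{H}}$, valid for every $\mathbb{L}$-model $\mathcal{H}$: this is just the compatibility of the cartesian functor $F_{U}$ — equivalently, of the $\mathbb{M}$-model $U$ in $\mathscr{C}_{\mathbb{L}}^{\textrm{cart}}$ defining it — with interpretation of formulas, together with the fact that precomposition with $F_{U}$ carries a $\mathbb{L}$-model $\mathcal{H}$ to its positive cone $R_{\Set}(\mathcal{H})$. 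Granting this, one has $i(\vec{a})\in[[\vec{x}.\phi\wedge\vec{x}\geq 0]]_{G(M_{\phi})}$, and for any $\mathbb{L}$-model $\mathcal{H}$ and any $\vec{c}\in[[\vec{x}.\phi\wedge\vec{x}\geq 0]]_{\mathcal{H}}=[[\vec{x}.\phi]]_{R_{\Set}(\mathcal{H})}$, the universal property of $M_{\phi}$ gives a unique $\mathbb{M}$-homomorphism $f:M_{\phi}\to R_{\Set}(\mathcal{H})$ with $f(\vec{a})=\vec{c}$; composing with the inclusion $R_{\Set}(\mathcal{H})\hookrightarrow\mathcal{H}$ (an $\ell$-monoid homomorphism) and invoking Lemma \ref{Grothendieck group} yields a unique $\ell$-group homomorphism $g:G(M_{\phi})\to\mathcal{H}$ with $g\circ i$ equal to this composite, hence $g(i(\vec{a}))=\vec{c}$. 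For uniqueness: any $\mathbb{L}$-homomorphism $G(M_{\phi})\to\mathcal{H}$ sending $i(\vec{a})$ to $\vec{c}$ carries $i(M_{\phi})$ into $\mathcal{H}^{+}$ (since $i(M_{\phi})$ lies in the positive cone of $G(M_{\phi})$ and $\ell$-homomorphisms preserve $\leq$ and $0$), hence factors through $R_{\Set}(\mathcal{H})\hookrightarrow\mathcal{H}$ via an $\mathbb{M}$-homomorphism sending $\vec{a}$ to $\vec{c}$, which is forced to be $f$, so that $g$ is forced. In particular $i(\vec{a})$ generates $G(M_{\phi})$ as an $\mathbb{L}$-model.

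For the morphism part, let $[\theta]:\{\vec{x}.\phi\}\to\{\vec{y}.\psi\}$ be $\mathbb{M}$-provably functional, and let $\vec{d}\in[[\vec{y}.\psi]]_{M_{\phi}}$ be the value at the presenting tuple $\vec{a}$ of the functional relation defined by $\theta$; then the induced $\mathbb{M}$-homomorphism $M_{\psi}\to M_{\phi}$ sends the presenting tuple $\vec{b}$ of $M_{\psi}$ to $\vec{d}$, so $T_{\Set}$ of it sends $i(\vec{b})$ to $i(\vec{d})$. On the other side, $F_{U}([\theta])$ is the $\mathbb{L}$-provably functional formula obtained by relativizing $\theta$ to the positive cone, and the $\mathbb{L}$-homomorphism $G(M_{\psi})\to G(M_{\phi})$ it induces on presented models sends $i(\vec{b})$ to the value at $i(\vec{a})$ of this relativized relation, which — by the identification of $[[\vec{y}.\psi]]_{M_{\phi}}$ with $[[\vec{y}.\psi\wedge\vec{y}\geq 0]]_{G(M_{\phi})}$ through $i$ from the object part — is again $i(\vec{d})$. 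Since $i(\vec{b})$ generates $G(M_{\psi})$, the two homomorphisms coincide, yielding naturality and hence the commutativity of the left-hand square. The main obstacle is the object part: one has to fuse the two universal properties — that of the model presented by a cartesian formula (Definition \ref{fin_p.ati-cat}) and that of the Grothendieck group (Lemma \ref{Grothendieck group}) — while carefully navigating between a model $M$ of $\mathbb{M}$, its cancellative $\ell$-monoid structure, and the positive cone of $G(M)$, all of which is made precise by the interpretation identity $[[\vec{x}.\phi]]_{R_{\Set}(\mathcal{H})}=[[\vec{x}.\phi\wedge\vec{x}\geq 0]]_{\mathcal{H}}$. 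One could instead shortcut this by invoking the general principle — Remark \ref{rem:irreducibles} together with \cite[Theorem 4.3]{Caramello2} — that the restriction of a Morita-equivalence of presheaf-type theories to the categories of irreducible objects corresponds to its restriction to finitely presentable models, and then merely checking that $F_{U},F_{V}$ as defined implement the restriction of the Morita-equivalence of Theorem \ref{monoid-group}.
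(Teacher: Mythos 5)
Your proof is correct and follows essentially the same route as the paper: reduce to the left-hand square because both pairs of functors are quasi-inverses, then use Lemma \ref{Grothendieck group} to show that if $\mathcal{N}$ is presented by $\{\vec{x}.\phi\}$ then $T_{\Set}(\mathcal{N})=G(\mathcal{N})$ is presented by $F_{U}(\{\vec{x}.\phi\})=\{\vec{x}.\phi\wedge\vec{x}\geq 0\}$. The paper states this in three sentences and leaves the fusion of the two universal properties (and the check on morphisms) implicit, whereas you spell these details out; the content is the same.
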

\begin{proof}
Since $T_{\Set}$, $R_{\Set}$ and $F_{U}$, $F_{V}$ are respectively categorical inverses to each other, it is sufficient to prove that the diagram on the left-hand side commutes (up to natural isomorphism). 

From Lemma \ref{Grothendieck group} it follows that if $\mathcal{N}$ is a model of $\mathbb{M}$ presented by a formula $\{\vec{x}.\phi\}$ in $\mathbb{M}$, then the model $T_{\Set}(\mathcal{N})$ of $\mathbb{L}$ is presented by the formula $\{\vec{x}.\phi\wedge \vec{x}\geq 0\}$, that is by the image of the object $\{\vec{x}.\phi\}$ under the functor $F_{U}$. This immediately implies our thesis.
\end{proof}

\begin{obs}
From Proposition \ref{commutation} it follows in particular that $\mathbb{N}\times \mathbb{N}$, as a model of $\mathbb{M}$, is presented by the formula $\{(u,v).\inf(u,v)=0\}$.
Indeed, $\mathbb{Z}\times \mathbb{Z}$ is presented as an $\mathbb L$-model by the formula $\{x. \top\}$, it being the free $\ell$-group on one generator (namely, $(1,-1)$), whence $\mathbb{N}\times \mathbb{N}=R_{\Set}(\mathbb{Z}\times \mathbb{Z})$ is presented by the formula $F_{V}(\{x. \top\})=\{(u,v).\inf(u,v)=0\}$; a pair of generators is given by $((1,0), (0,1))$.
\end{obs}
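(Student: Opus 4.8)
The plan is to combine the two equivalences established just above, namely Di Nola--Lettieri's equivalence in the form of Proposition \ref{commutation} (which identifies $R_{\Set}$ with the syntactic functor $F_V$) with the explicit description of the free $\ell$-group on one generator. First I would recall that $\mathbb{Z}$, the additive $\ell$-group of integers, is freely generated as an $\ell$-group by a single element: indeed $\mathbb{Z}\times\mathbb{Z}$ is the free $\ell$-group on one generator $(1,-1)$ — any $\ell$-homomorphism out of it is determined by the image of $(1,-1)$, since every element of $\mathbb{Z}\times\mathbb{Z}$ is an $\ell$-group term in $(1,-1)$, using that $(1,-1)^+=(1,0)$ and $(1,-1)^-=(0,1)$. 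Hence, as an $\mathbb{L}$-model, $\mathbb{Z}\times\mathbb{Z}$ is presented by the formula $\{x.\top\}$.

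Next I would apply the right adjoint-type passage: the positive cone functor $R_{\Set}:\mathbb{L}\textrm{-mod}(\Set)\to\mathbb{M}\textrm{-mod}(\Set)$ sends $\mathbb{Z}\times\mathbb{Z}$ to $\mathbb{N}\times\mathbb{N}$ (with the pointwise monoid structure and the product order), since the positive cone of $\mathbb{Z}\times_{lex}$ — here of the \emph{pointwise}-ordered $\mathbb{Z}\times\mathbb{Z}$ — is exactly $\mathbb{N}\times\mathbb{N}$. By Proposition \ref{commutation}, the equivalence $\mathscr{C}_{\mathbb{L}}^{\textrm{irr}}\simeq \mathrm{f.p.}\mathbb{L}\textrm{-mod}(\Set)^{\textrm{op}}$ carries the syntactic functor $F_V$ to $R_{\Set}$; therefore the $\mathbb{M}$-model presented by $F_V(\{x.\top\})$ is $R_{\Set}$ of the $\mathbb{L}$-model presented by $\{x.\top\}$, i.e. $R_{\Set}(\mathbb{Z}\times\mathbb{Z})=\mathbb{N}\times\mathbb{N}$. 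Finally, by the inductive definition of $F_V$ given above, $F_V(\{x.\top\})=\{(u,v).\inf(u,v)=0\}$, which yields the claim. To exhibit the generators explicitly, I would trace through Claim 2 in the proof of Proposition \ref{monodi-groups irriducibili}: the isomorphism $\{x.\top\}\simeq\{(u,v).(u\wedge v=0)\wedge u\geq 0\wedge v\geq 0\}$ sends the generic element $x$ to $(x^+,x^-)$, so the single generator $(1,-1)$ of $\mathbb{Z}\times\mathbb{Z}$ maps to the pair of generators $((1,0),(0,1))$ of $\mathbb{N}\times\mathbb{N}$.

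I do not expect a genuine obstacle here, as this is a corollary assembled from results already in hand; the only point requiring a little care is making sure the $\ell$-group structure on $\mathbb{Z}\times\mathbb{Z}$ under discussion is the \emph{pointwise}-ordered one (so that it is the free $\ell$-group on one generator and its positive cone is $\mathbb{N}\times\mathbb{N}$), as opposed to the lexicographically ordered $\mathbb{Z}\times_{lex}\mathcal{G}$ appearing elsewhere in the paper. With that distinction kept straight, the argument is immediate from Proposition \ref{commutation} and the free generation of $\mathbb{Z}$ as an $\ell$-group.
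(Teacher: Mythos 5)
Your argument is correct and follows essentially the same route as the paper's own remark: identify $\mathbb{Z}\times\mathbb{Z}$ (with the pointwise order) as the free $\ell$-group on the generator $(1,-1)$, hence presented by $\{x.\top\}$, and then transport this along Proposition \ref{commutation} to conclude that $\mathbb{N}\times\mathbb{N}=R_{\Set}(\mathbb{Z}\times\mathbb{Z})$ is presented by $F_{V}(\{x.\top\})=\{(u,v).\inf(u,v)=0\}$ with generators $((1,0),(0,1))$ obtained via $x\mapsto(x^{+},x^{-})$. Your explicit caution about distinguishing the pointwise-ordered product from the lexicographic one is well placed but does not change the argument.
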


\section{Interpretability}\label{sct:intepretability}

In section \ref{equivalence in topos} we proved that the theories $\mathbb{P}$ and $\mathbb{L}$ are Morita-equivalent by establishing a categorical equivalence between the categories of models of these two theo\-ries in any Grothendieck topos $\mathscr{E}$, naturally in $\mathscr{E}$. This result would be trivial if the theories were bi-interpretable. In this section we show that this is not the case, i.e., the theories $\mathbb{P}$ and $\mathbb{L}$ are not bi-intepretable in a global sense.
Nevertheless, if we consider particular categories of formulas we have three different forms of bi-interpretability.

\begin{definition}
A geometric theory $\mathbb{T}$ is \textit{interpretable} in a geometric theory $\mathbb{S}$ if there is a geometric functor $I:\mathscr{C}_{\mathbb{T}}\rightarrow \mathscr{C}_{\mathbb{S}}$ between the respective syntactic categories. If $I$ is also a categorical equivalence we say that $\mathbb{T}$ and $\mathbb{S}$ are \textit{bi-interpretable.}
\end{definition}

\begin{obs}
Two bi-interpretable theories are trivially Morita-equivalent. Indeed, this immediately follows from the the standard representation of the classifying topos of a geometric theory $\mathbb{T}$ as the category of sheaves on its syntactic site $(\mathscr{C}_{\mathbb{T}},J_{\mathbb{T}})$.
\end{obs}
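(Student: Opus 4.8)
The plan is to derive the claim directly from the syntactic construction of classifying toposes. Recall that the classifying topos of a geometric theory $\mathbb{T}$ can be represented as $\Sh(\mathscr{C}_{\mathbb{T}}, J_{\mathbb{T}})$, the topos of sheaves on its geometric syntactic category with respect to the syntactic (canonical) topology $J_{\mathbb{T}}$. Assuming $\mathbb{T}$ and $\mathbb{S}$ bi-interpretable, we are given a geometric equivalence $I:\mathscr{C}_{\mathbb{T}}\to\mathscr{C}_{\mathbb{S}}$. Being an equivalence of the underlying categories, $I$ induces an equivalence of presheaf toposes $[\mathscr{C}_{\mathbb{T}}^{\mathrm{op}},\Set]\simeq[\mathscr{C}_{\mathbb{S}}^{\mathrm{op}},\Set]$, so the whole task reduces to checking that this equivalence restricts to the subcategories of sheaves, for which it suffices to verify that $I$ and $I^{-1}$ carry covering sieves to covering sieves.

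I would obtain that last point from the fact that the syntactic topology is an intrinsic invariant of the geometric structure of the syntactic category. Concretely, a sieve on $\{\vec{x}.\phi\}$ generated by morphisms $[\theta_i]:\{\vec{x_i}.\phi_i\}\to\{\vec{x}.\phi\}$ $(i\in I)$ is $J_{\mathbb{T}}$-covering precisely when the sequent $(\phi\vdash_{\vec{x}}\bigvee_{i\in I}(\exists\vec{x_i})\theta_i)$ is provable in $\mathbb{T}$, that is, precisely when $\{\vec{x}.\phi\}$ is the union of the images of the $[\theta_i]$ in $\mathscr{C}_{\mathbb{T}}$. Since a geometric equivalence preserves and reflects images and arbitrary unions of subobjects, it preserves and reflects exactly such covering families; hence $I$ is an equivalence of sites under which $J_{\mathbb{T}}$ and $J_{\mathbb{S}}$ correspond, and the presheaf equivalence above restricts to the subcategories of sheaves (an instance of the Comparison Lemma), yielding $\Sh(\mathscr{C}_{\mathbb{T}}, J_{\mathbb{T}})\simeq\Sh(\mathscr{C}_{\mathbb{S}}, J_{\mathbb{S}})$, i.e.\ $\mathscr{E}_{\mathbb{T}}\simeq\mathscr{E}_{\mathbb{S}}$, which is the definition of Morita-equivalence.

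An alternative, perhaps more transparent, route uses the universal property of the syntactic category: for every Grothendieck topos $\mathscr{F}$ the category $\mathbb{T}\textrm{-mod}(\mathscr{F})$ is equivalent to the category of geometric functors $\mathscr{C}_{\mathbb{T}}\to\mathscr{F}$, and similarly for $\mathbb{S}$; pre-composition with $I$ and with $I^{-1}$ then yields an equivalence $\mathbb{T}\textrm{-mod}(\mathscr{F})\simeq\mathbb{S}\textrm{-mod}(\mathscr{F})$, manifestly natural in $\mathscr{F}$, and the conclusion follows from the characterization of Morita-equivalence via categories of models in arbitrary Grothendieck toposes. There is no genuine obstacle in either argument; the only thing worth isolating is the observation that $J_{\mathbb{T}}$ is definable purely from the geometric structure of $\mathscr{C}_{\mathbb{T}}$, so that any geometric equivalence automatically matches up the two syntactic topologies.
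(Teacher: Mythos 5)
Your proposal is correct and follows essentially the same route as the paper, which justifies the remark in one sentence by appealing to the representation of the classifying topos as $\Sh(\mathscr{C}_{\mathbb{T}},J_{\mathbb{T}})$; you simply make explicit the (true and standard) point that the syntactic topology is determined by the geometric structure of the syntactic category, so an equivalence of syntactic categories is automatically an equivalence of syntactic sites. Your alternative argument via $\mathbf{Geom}(\mathscr{C}_{\mathbb{T}},\mathscr{F})\simeq\mathbb{T}\textrm{-mod}(\mathscr{F})$ is also valid and is the universal-property counterpart of the same fact.
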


Recall that for any geometric category $\mathscr{C}$ there is a categorical equivalence
\begin{center}
$\textbf{Geom}(\mathscr{C}_{\mathbb{T}},\mathscr{C})\simeq \mathbb{T}$-mod$(\mathscr{C})$,
\end{center}
where $\mathbf{Geom}(\mathscr{C}_{\mathbb{T}},\mathscr{C})$ is the category of geometric functors from $\mathscr{C}_{\mathbb{T}}$ to $\mathscr{C}$, which sends any $\mathbb{T}$-model $\mathcal{M}$ in $\mathscr{C}$ to the geometric functor $F_\mathcal{M}:\mathscr{C}_{\mathbb{T}}\rightarrow \mathscr{C}$ defined by setting 
\begin{center}
$F_\mathcal{M}(\{\vec{x}.\phi\})=[[\vec{x}. \phi]]_\mathcal{M}$.
\end{center}

\begin{theorem}\label{interpretability}
The theory $\mathbb{L}$ is interpretable in the theory $\mathbb{P}$ but not bi-interpretable.
\end{theorem}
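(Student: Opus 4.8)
The plan is to prove the two halves of the statement separately: first produce an explicit interpretation $I:\mathscr{C}_{\mathbb{L}}\to\mathscr{C}_{\mathbb{P}}$, then show that no geometric functor in the other direction can be an equivalence.

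For the \emph{interpretability} half, I would exploit the intermediary Morita-equivalence of section~\ref{intermediary}. By Lemma~\ref{monoid_topos} and Proposition~\ref{radical}, for any perfect MV-algebra $\mathcal{A}$ the subobject $Rad(\mathcal{A})=\{x.\,x\leq\neg x\}$ carries the structure of a model of $\mathbb{M}$, and this construction is definable, i.e.\ the $\mathbb{M}$-operations on $Rad(\mathcal{A})$ are given by geometric formulas in the language of $\mathbb{P}$ (the monoid sum is the restriction of $\oplus$, and $\inf$, $\sup$ are the MV-lattice operations restricted along $\{x.\,x\leq\neg x\}\mono\mathcal{A}$). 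This yields a geometric functor $\mathscr{C}_{\mathbb{M}}\to\mathscr{C}_{\mathbb{P}}$ (equivalently, an interpretation of $\mathbb{M}$ in $\mathbb{P}$, witnessed by the model $Rad(\mathcal{A}_{\mathbb{P}})$ of $\mathbb{M}$ in the universal perfect MV-algebra $\mathcal{A}_{\mathbb{P}}$ living in $\mathscr{C}_{\mathbb{P}}$). Composing with the interpretation $\mathscr{C}_{\mathbb{L}}\to\mathscr{C}_{\mathbb{M}}$ coming from the functor $F_{V}$ of Proposition~\ref{monodi-groups irriducibili} — which realises an $\ell$-group inside $\mathscr{C}_{\mathbb{M}}^{\textrm{cart}}$ as pairs $(u,v)$ with $\inf(u,v)=0$ — gives the desired $I:\mathscr{C}_{\mathbb{L}}\to\mathscr{C}_{\mathbb{P}}$. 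Concretely, $\mathbb{L}$ is interpreted in $\mathbb{P}$ by sending the generic sort to $\{(u,v).\,u\leq\neg u\wedge v\leq\neg v\wedge \inf(u,v)=0\}$ with the group operations read off from the $\Delta$-construction of section~\ref{equivalence in topos}; one checks that $\Delta(\mathcal{A})\cong I^{\ast}(\mathcal{A})$ naturally, so $I$ indeed picks out the $\mathbb{L}$-model corresponding to $\mathcal{A}$ under the Morita-equivalence. All the formulas involved are manifestly geometric, so $I$ is a geometric functor, which is exactly an interpretation of $\mathbb{L}$ in $\mathbb{P}$.

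For the \emph{non-bi-interpretability} half, the strategy is to find a topos-theoretic or logical invariant preserved by bi-interpretations that distinguishes the two syntactic categories. The cleanest choice is the terminal object and, more generally, the Boolean/complemented subobjects of the terminal object, or the structure of the subobject lattices of small objects; a bi-interpretation $\mathscr{C}_{\mathbb{L}}\simeq\mathscr{C}_{\mathbb{P}}$ would in particular be an equivalence of the syntactic categories, hence induce an equivalence between $\mathbb{L}$-mod$(\mathscr{C})$ and $\mathbb{P}$-mod$(\mathscr{C})$ for \emph{every} geometric category $\mathscr{C}$ (not merely every Grothendieck topos), via $\mathbf{Geom}(\mathscr{C}_{\mathbb{L}},\mathscr{C})\simeq\mathbf{Geom}(\mathscr{C}_{\mathbb{P}},\mathscr{C})$. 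I would exhibit a small geometric (indeed coherent, even a pretopos) category $\mathscr{C}$ in which the two model categories differ. A natural candidate exploits that $\mathbb{P}$, being coherent but not cartesian, has a non-trivial coherent syntactic site with a proper coherent topology, whereas $\mathbb{L}$ is cartesian, so $\mathscr{C}_{\mathbb{L}}$ is cartesian (has all finite limits and its syntactic topology is trivial) — but $\mathscr{C}_{\mathbb{P}}$ is \emph{not} a cartesian category: the disjunction P.3 / the axiom $\beta:(\top\vdash_{x}x\leq\neg x\vee\neg x\leq x)$ forces a coproduct decomposition that cannot be expressed with finite limits alone. Concretely, the coproduct $\{x.\top\}\cong\{x.\,x\leq\neg x\}+\{x.\,\neg x\leq x\}$ in $\mathscr{C}_{\mathbb{P}}$ (with the two pieces overlapping only in $\{x.\,x=\neg x\}=\bot$ by P.4) produces a nontrivial complemented subobject of an object of $\mathscr{C}_{\mathbb{P}}$; one shows that the analogous phenomenon is impossible in $\mathscr{C}_{\mathbb{L}}$ because the free $\mathbb{L}$-model on one generator $\mathbb{Z}$ is connected/indecomposable and its representable $\{x.\top\}$ has no nontrivial complemented subobjects. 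More simply: evaluate both sides at $\mathscr{C}=\Set$ will not help (the categories of set-models are equivalent), so the point is precisely that bi-interpretation would force agreement at non-set-theoretic $\mathscr{C}$; taking $\mathscr{C}$ to be, say, the opposite of the category of finitely presentable $\mathbb{L}$-models, one computes $\mathbb{L}$-mod$(\mathscr{C})$ has an initial object ($\mathbb{Z}$, which is projective) whereas the corresponding object on the $\mathbb{P}$-side fails the matching categorical property. I expect \textbf{this is the main obstacle}: pinning down the precise invariant and the precise witnessing category $\mathscr{C}$ requires care, since the two theories are Morita-equivalent and hence agree on every \emph{Grothendieck-topos}-valued model category, so the distinguishing $\mathscr{C}$ must be a small geometric category that is not a Grothendieck topos, and one must verify by hand that the functor $F_{\mathcal{M}}(\{\vec x.\phi\})=[[\vec x.\phi]]_{\mathcal{M}}$ detects the difference. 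Once such a $\mathscr{C}$ and invariant are in hand, the contradiction with the assumed equivalence $\mathscr{C}_{\mathbb{L}}\simeq\mathscr{C}_{\mathbb{P}}$ is immediate.
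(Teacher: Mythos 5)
Your first half (interpretability of $\mathbb{L}$ in $\mathbb{P}$) is essentially identical to the paper's: the radical $\{x.\,x\leq \neg x\}$ gives a model of $\mathbb{M}$ in $\mathscr{C}_{\mathbb{P}}$ by Lemma \ref{rad_ideal}, hence a geometric functor $\mathrm{Rad}:\mathscr{C}_{\mathbb{M}}\to\mathscr{C}_{\mathbb{P}}$, which composed with $F_{V}$ from Proposition \ref{monodi-groups irriducibili} yields an interpretation of $\mathbb{L}$ in $\mathbb{P}$ with underlying object $\{(u,v).\,\inf(u,v)=0\wedge u\leq\neg u\wedge v\leq\neg v\}$. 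That part is fine.

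The second half has a genuine gap, and moreover your guiding heuristic is wrong in a way that sends you in an unprofitable direction. You assert that ``evaluating both sides at $\mathscr{C}=\Set$ will not help (the categories of set-models are equivalent), so the distinguishing $\mathscr{C}$ must be a small geometric category that is not a Grothendieck topos.'' This misdiagnoses the obstruction. A bi-interpretation is not merely an abstract equivalence of model categories: it forces the induced equivalence to be \emph{definable}, in the sense that if $J:\mathscr{C}_{\mathbb{P}}\to\mathscr{C}_{\mathbb{L}}$ is an interpretation and $\mathcal{N}=s_{J}(\mathcal{M})$, then $F_{\mathcal{N}}=F_{\mathcal{M}}\circ J$, so the underlying set of $\mathcal{N}$ is $[[\vec{x}.\psi]]_{\mathcal{M}}\subseteq M^{n}$ where $\{\vec{x}.\psi\}=J(\{x.\top\})$. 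The paper's argument then works entirely in $\Set$: take $\mathcal{M}$ to be the trivial $\ell$-group $\{0\}$; then $N\subseteq \{0\}^{n}$ has at most one element, whereas every perfect MV-algebra is non-trivial (indeed the only finite one is $\{0,1\}$). This is an immediate contradiction, and it in fact shows the stronger statement that there is no interpretation of $\mathbb{P}$ in $\mathbb{L}$ at all. By contrast, none of your candidate invariants is brought to completion: the ``$\mathscr{C}_{\mathbb{L}}$ is cartesian'' claim conflates $\mathscr{C}_{\mathbb{L}}$ with $\mathscr{C}_{\mathbb{L}}^{\textrm{cart}}$ (the geometric syntactic category of a cartesian theory still has all finite unions, $\bot$, etc.); the complemented-subobject idea would additionally require showing that \emph{no} object of $\mathscr{C}_{\mathbb{L}}$ that could serve as the image of $\{x.\top\}$ admits such a decomposition, not just the generic object; and the proposal to evaluate at an exotic geometric category is never made precise. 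As written, the non-bi-interpretability half is not a proof.
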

\begin{proof}
By Lemma \ref{rad_ideal}, the object $\{x. x\leq\neg x\}$ of $\mathscr{C}_{\mathbb{P}}$ has the structure of a cancellative lattice-ordered abelian monoid with bottom element, and therefore defines a model $M$ of the theory $\mathbb M$ inside the category $\mathscr{C}_{\mathbb{P}}$. This induces a geometric functor $\textrm{Rad}:\mathscr{C}_{\mathbb{M}}\to \mathscr
{C}_{\mathbb{P}}$, that is an interpretation of the theory $\mathbb M$ in the theory $\mathbb P$. Composing this functor with the functor $F_{V}:\mathscr{C}_{\mathbb{L}}^{\textrm{cart}} \to \mathscr{C}_{\mathbb{M}}^{\textrm{cart}}$ of Proposition \ref{monodi-groups irriducibili}, we obtain a cartesian functor $\mathcal{C}_{\mathbb{L}}\to \mathscr{C}_{\mathbb{P}}$, which corresponds to a model of $\mathbb{L}$ in $\mathscr{C}_{\mathbb{P}}$ whose underlying object is the formula-in-context $\{(u,v).\inf(u,v)=0\wedge u\leq \neg u\wedge v\leq \neg v\}$, and hence to an interpretation functor $\mathscr{C}_{\mathbb{L}} \to \mathscr
{C}_{\mathbb{P}}$.
 

Suppose now that $\mathbb{P}$ and $\mathbb{L}$ were bi-interpretable. Then there would be in particular an interpretation functor
\begin{center}
$J:\mathscr{C}_{\mathbb{P}}\rightarrow \mathscr{C}_{\mathbb{L}}$,
\end{center}
inducing a functor
\begin{center}
$s_J:\mathbb{L}\textrm{-mod}(\mathbf{Set})\rightarrow \mathbb{P}\textrm{-mod}(\mathbf{Set})$.
\end{center}
Notice that if $\mathcal{M}$ is a $\mathbb{L}$-model in $\mathbf{Set}$ and $\mathcal{N}=s_J(\mathcal{M})$, we would have that $F_\mathcal{N}=F_\mathcal{M}\circ J$.

Now, let $\mathcal{M}$ be the trivial model of $\mathbb{L}$ in $\mathbf{Set}$, that is the model whose underlying set is $\{0\}$, $\mathcal{N}=s_J(\mathcal{M})$ and $J(\{\vec{x}.\top\})=\{\vec{x}.\psi\}$. We would have
\begin{center}
$F_\mathcal{N}(\{\vec{x}.\top\})\cong F_\mathcal{M}\{\vec{x}. \psi\}$

\

$[[\vec{x}. \top]]_\mathcal{N}\cong [[\vec{x}. \psi]]_\mathcal{M}$

\

$N\simeq [[\vec{x}.\psi]]_\mathcal{M}\subseteq M^n\cong M$
\end{center}

Hence the domain of $\mathcal{N}$ would be contained in $\{0\}$. But we know from \cite[Proposition 5(1)]{P-L} that the only finite perfect MV-algebra is the one whose underlying set is $\{0,1\}$. This is a contradiction. 
\end{proof}

Even though, as we have just seen, the theories of perfect MV-algebras and of $\ell$-groups are not bi-interpretable in the classical sense, the Morita-equivalence between them, combined with the fact that both theories are of presheaf type, guarantees that there is a bi-interpretation between them holding at the level of irreducible formulas (cf. Remark \ref{rem:irreducibles}). More specifically, the following result holds.

\begin{theorem}\label{irreducible_formulas}
The categories of irreducible formulas of the theories $\mathbb P$ of perfect MV-algebras and $\mathbb L$ of $\ell$-groups are equivalent. 

In particular, the functor $\mathscr{C}_{\mathbb{L}}^{\textrm{irr}}=\mathscr{C}_{\mathbb{L}}^{\textrm{cart}}\to \mathscr{C}_{\mathbb{P}}$ given by the composite of the functor $F_{V}:\mathscr{C}_{\mathbb{L}}^{\textrm{cart}} \to \mathscr{C}_{\mathbb{M}}^{\textrm{cart}}$ of Proposition \ref{monodi-groups irriducibili} with the restriction to $\mathscr{C}_{\mathbb{M}}^{\textrm{cart}}$ of the functor $\textrm{Rad}:\mathscr{C}_{\mathbb{M}}\to \mathscr
{C}_{\mathbb{P}}$ yields a categorical equivalence
\begin{center}
$\mathscr{C}_{\mathbb{L}}^{\textrm{irr}}=\mathscr{C}_{\mathbb{L}}^{\textrm{cart}}\simeq \mathscr{C}_{\mathbb{P}}^{\textrm{irr}}$.
\end{center}
\end{theorem}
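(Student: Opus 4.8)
The plan is to get the first assertion from abstract nonsense and then to pin down the concrete functor. For the first part, $\mathbb{P}$ and $\mathbb{L}$ are Morita-equivalent (Theorem~\ref{Moritaeq1}) and both of presheaf type ($\mathbb{L}$ being cartesian, $\mathbb{P}$ by the Morita-equivalence), so Remark~\ref{rem:irreducibles} yields an equivalence $\mathscr{C}_{\mathbb{L}}^{\textrm{irr}}\simeq\mathscr{C}_{\mathbb{P}}^{\textrm{irr}}$, where $\mathscr{C}_{\mathbb{L}}^{\textrm{irr}}=\mathscr{C}_{\mathbb{L}}^{\textrm{cart}}$ by Remark~\ref{rem:cartirr}. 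For the second part I would factor the functor of the statement as $\mathscr{C}_{\mathbb{L}}^{\textrm{cart}}\xrightarrow{F_{V}}\mathscr{C}_{\mathbb{M}}^{\textrm{cart}}\xrightarrow{\textrm{Rad}}\mathscr{C}_{\mathbb{P}}$ and treat the two factors separately. The first is an equivalence by Proposition~\ref{monodi-groups irriducibili} (and $\mathscr{C}_{\mathbb{M}}^{\textrm{cart}}=\mathscr{C}_{\mathbb{M}}^{\textrm{irr}}$, $\mathscr{C}_{\mathbb{L}}^{\textrm{cart}}=\mathscr{C}_{\mathbb{L}}^{\textrm{irr}}$ since $\mathbb{M}$, $\mathbb{L}$ are cartesian), so it remains to show that the restriction of $\textrm{Rad}$ to $\mathscr{C}_{\mathbb{M}}^{\textrm{cart}}$ corestricts to an equivalence $\mathscr{C}_{\mathbb{M}}^{\textrm{irr}}\simeq\mathscr{C}_{\mathbb{P}}^{\textrm{irr}}$.

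To this end I would first record the following general fact, proved exactly like the identification carried out inside the proof of Proposition~\ref{commutation} by means of Lemma~\ref{Grothendieck group}: if $I:\mathscr{C}_{\mathbb{S}}\to\mathscr{C}_{\mathbb{T}}$ is an interpretation functor whose induced functor $I^{\ast}:\mathbb{T}\textrm{-mod}(\Set)\to\mathbb{S}\textrm{-mod}(\Set)$ admits a left adjoint $L$, and an $\mathbb{S}$-model $\mathcal{N}$ is finitely presented by a formula $\{\vec{x}.\phi\}$, then $L(\mathcal{N})$ is finitely presented by the formula $\{\vec{y}.\chi\}:=I(\{\vec{x}.\phi\})$; this follows from the natural bijections
\[
\textrm{Hom}(L(\mathcal{N}),\mathcal{A})\cong\textrm{Hom}(\mathcal{N},I^{\ast}(\mathcal{A}))\cong[[\vec{x}.\phi]]_{I^{\ast}(\mathcal{A})}=[[\vec{y}.\chi]]_{\mathcal{A}},
\]
using that $I^{\ast}(\mathcal{A})=F_{\mathcal{A}}\circ I$ for the geometric functor $F_{\mathcal{A}}:\mathscr{C}_{\mathbb{T}}\to\Set$ attached to $\mathcal{A}$, whence $[[\vec{x}.\phi]]_{I^{\ast}(\mathcal{A})}=F_{\mathcal{A}}(I(\{\vec{x}.\phi\}))=[[\vec{y}.\chi]]_{\mathcal{A}}$. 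I would then observe two things. First, $\textrm{Rad}:\mathscr{C}_{\mathbb{M}}\to\mathscr{C}_{\mathbb{P}}$ is the interpretation functor corresponding to the $\mathbb{M}$-model carried by the object $\{x.x\leq\neg x\}$ of $\mathscr{C}_{\mathbb{P}}$, so by Proposition~\ref{radical} the induced functor $\textrm{Rad}^{\ast}:\mathbb{P}\textrm{-mod}(\Set)\to\mathbb{M}\textrm{-mod}(\Set)$ is the radical functor $\mathcal{A}\mapsto Rad(\mathcal{A})$, with the monoid structure of Lemma~\ref{rad_ideal}. Second, composing the Morita-equivalence of Theorem~\ref{Moritaeq1} with that of Theorem~\ref{monoid-group} and using that the positive cone of $G(\mathcal{M})$ is isomorphic to $\mathcal{M}$ (proof of Theorem~\ref{monoid-group}), the radical functor is seen to be an equivalence $\mathbb{P}\textrm{-mod}(\Set)\simeq\mathbb{M}\textrm{-mod}(\Set)$; fixing a quasi-inverse $S$, it is both a left and a right adjoint to $S$, and it restricts to an equivalence $\textrm{f.p.}\,\mathbb{P}\textrm{-mod}(\Set)\simeq\textrm{f.p.}\,\mathbb{M}\textrm{-mod}(\Set)$.

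Combining these ingredients, any $\mathbb{M}$-irreducible (equivalently $\mathbb{M}$-cartesian) formula $\{\vec{x}.\phi\}$ presents a finitely presentable $\mathbb{M}$-model $\mathcal{N}$ by Theorem~\ref{irr-f.p.}, and the general fact with $I=\textrm{Rad}$, $L=S$ gives that $\textrm{Rad}(\{\vec{x}.\phi\})$ presents the finitely presentable $\mathbb{P}$-model $S(\mathcal{N})$; since a geometric formula presenting a finitely presentable model of a theory of presheaf type is necessarily irreducible — being isomorphic in the syntactic category to the irreducible presenting formula supplied by Theorem~\ref{irr-f.p.}(i), and irreducibility being stable under isomorphism in $\mathscr{C}_{\mathbb{P}}$ — the functor $\textrm{Rad}$ carries $\mathscr{C}_{\mathbb{M}}^{\textrm{irr}}$ into $\mathscr{C}_{\mathbb{P}}^{\textrm{irr}}$. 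Under the dualities $\mathscr{C}_{\mathbb{M}}^{\textrm{irr}}\simeq(\textrm{f.p.}\,\mathbb{M}\textrm{-mod}(\Set))^{\textrm{op}}$ and $\mathscr{C}_{\mathbb{P}}^{\textrm{irr}}\simeq(\textrm{f.p.}\,\mathbb{P}\textrm{-mod}(\Set))^{\textrm{op}}$ of Theorem~\ref{irr-f.p.}, this functor is identified with the opposite of $S|_{\textrm{f.p.}}$, hence is an equivalence $\mathscr{C}_{\mathbb{M}}^{\textrm{irr}}\simeq\mathscr{C}_{\mathbb{P}}^{\textrm{irr}}$; precomposing with the equivalence $F_{V}$ of Proposition~\ref{monodi-groups irriducibili} yields the asserted equivalence $\mathscr{C}_{\mathbb{L}}^{\textrm{cart}}\simeq\mathscr{C}_{\mathbb{P}}^{\textrm{irr}}$. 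The main obstacle is precisely the step asserting that $\textrm{Rad}$, restricted to irreducible formulas, realizes the Morita-equivalence $\mathbb{M}\simeq\mathbb{P}$ — i.e.\ agrees with the abstract equivalence of Remark~\ref{rem:irreducibles}; this is exactly what the general fact above (the abstract form of the computation already carried out in Proposition~\ref{commutation}) secures, everything else being formal.
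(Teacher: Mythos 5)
Your proof is correct and follows the paper's intended route: the first assertion is obtained, exactly as you do, from Remark \ref{rem:irreducibles} applied to the Morita-equivalence of Theorem \ref{Moritaeq1}, and the paper states the explicit identification of the functor with no written proof. Your verification of that second part --- the abstract adjunction/presentation argument showing that an interpretation functor whose induced semantic functor has a left adjoint sends a formula presenting $\mathcal{N}$ to one presenting $L(\mathcal{N})$, applied to $\textrm{Rad}$ with $\textrm{Rad}^{\ast}$ the radical functor --- is precisely the pattern of Lemma \ref{Grothendieck group} and Proposition \ref{commutation} that the authors rely on implicitly, so your write-up supplies the details the paper omits rather than taking a genuinely different route.
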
\qed

\begin{obs}
It follows from the theorem that the $\mathbb{P}$-irreducible formulas are precisely, up to isomorphism in the syntactic category, the ones that come from the $\mathbb{M}$-cartesian formulas via the functor $\textrm{Rad}:\mathscr{C}_{\mathbb{M}}\to \mathscr
{C}_{\mathbb{P}}$.
\end{obs}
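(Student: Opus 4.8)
The plan is to deduce this observation formally from Theorem \ref{irreducible_formulas} together with Proposition \ref{monodi-groups irriducibili}, without re-examining the concrete formulas: the content of the observation is precisely that the restricted interpretation functor $\textrm{Rad}|$ is an equivalence onto $\mathscr{C}_{\mathbb{P}}^{\textrm{irr}}$, and both required inclusions will be read off from this via a two-out-of-three argument.

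First I would recall that the equivalence of Theorem \ref{irreducible_formulas} is exhibited as the composite
\[
\mathscr{C}_{\mathbb{L}}^{\textrm{cart}} \xrightarrow{\;F_{V}\;} \mathscr{C}_{\mathbb{M}}^{\textrm{cart}} \xrightarrow{\;\textrm{Rad}|\;} \mathscr{C}_{\mathbb{P}}^{\textrm{irr}},
\]
where $\textrm{Rad}|$ denotes the restriction of the interpretation functor $\textrm{Rad}:\mathscr{C}_{\mathbb{M}}\to \mathscr{C}_{\mathbb{P}}$ to the full subcategory $\mathscr{C}_{\mathbb{M}}^{\textrm{cart}}$. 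The mere assertion that this composite takes values in $\mathscr{C}_{\mathbb{P}}^{\textrm{irr}}$ already yields one inclusion: since $F_{V}$ is essentially surjective by Proposition \ref{monodi-groups irriducibili}, every $\mathbb{M}$-cartesian formula $\{\vec{x}.\phi\}$ is isomorphic in $\mathscr{C}_{\mathbb{M}}$ to some $F_{V}(Y)$, whence $\textrm{Rad}(\{\vec{x}.\phi\})\cong(\textrm{Rad}|\circ F_{V})(Y)$ is $\mathbb{P}$-irreducible, $\mathbb{P}$-irreducibility being invariant under isomorphism in the syntactic category (it is the $J_{\mathbb{P}}$-irreducibility condition of Definition \ref{rigidity}, a property of objects of $\mathscr{C}_{\mathbb{P}}$). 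Thus every image under $\textrm{Rad}$ of an $\mathbb{M}$-cartesian formula is indeed $\mathbb{P}$-irreducible.

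For the converse inclusion I would invoke the two-out-of-three property for categorical equivalences: the composite $\textrm{Rad}|\circ F_{V}$ is an equivalence by Theorem \ref{irreducible_formulas} and $F_{V}$ is an equivalence by Proposition \ref{monodi-groups irriducibili}, so the co-restricted functor $\textrm{Rad}|:\mathscr{C}_{\mathbb{M}}^{\textrm{cart}}\to \mathscr{C}_{\mathbb{P}}^{\textrm{irr}}$ is itself a categorical equivalence. In particular it is essentially surjective, so every $\mathbb{P}$-irreducible formula is isomorphic in $\mathscr{C}_{\mathbb{P}}$ to $\textrm{Rad}(\{\vec{x}.\phi\})$ for some $\mathbb{M}$-cartesian formula $\{\vec{x}.\phi\}$. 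Combining the two inclusions gives the stated characterisation of the $\mathbb{P}$-irreducible formulas, up to isomorphism in the syntactic category.

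There is no genuine obstacle here; the argument is pure bookkeeping. The only points requiring a moment's care are that the codomain of the composite in Theorem \ref{irreducible_formulas} is genuinely $\mathscr{C}_{\mathbb{P}}^{\textrm{irr}}$, so that $\textrm{Rad}|$ co-restricts to $\mathscr{C}_{\mathbb{P}}^{\textrm{irr}}$ and two-out-of-three applies, and that $\mathbb{P}$-irreducibility is an isomorphism-invariant property of the objects of $\mathscr{C}_{\mathbb{P}}$, which is exactly what allows one to pass freely between a formula and its images under the equivalences $F_V$ and $\textrm{Rad}|$.
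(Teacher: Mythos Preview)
Your argument is correct and is exactly the natural unpacking of the paper's bare assertion ``It follows from the theorem''; the paper gives no further proof for this observation. Your two-out-of-three deduction that $\textrm{Rad}|:\mathscr{C}_{\mathbb{M}}^{\textrm{cart}}\to\mathscr{C}_{\mathbb{P}}^{\textrm{irr}}$ is itself an equivalence, together with the isomorphism-invariance of $J_{\mathbb{P}}$-irreducibility, is precisely what is needed and is implicit in the paper's one-line remark.
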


Changing the invariant property to consider on the classifying topos of the theo\-ries $\mathbb P$ and $\mathbb L$, we uncover another level of bi-interpretability. Specifically, the invariant notion of subterminal object of the classifying topos yields a categorical equivalence between the full subcategories of $\mathscr{C}_{\mathbb{P}}$ and $\mathscr{C}_{\mathbb{L}}$ on the geometric sentences. Recall that a geometric sentence is a geometric formula without any free variables. For any geometric theory $\mathbb T$, the subterminal objects of its classifying topos $\Sh(\mathscr{C}_{\mathbb T}, J_{\mathbb T})$ can be exactly identified with the geometric sentences over the signature of $\mathbb T$, considered up to the following equivalence relation: $\phi \sim_{\mathbb T} \psi$ if and only if $(\phi\vdash_{[]}\psi)$ and $(\psi\vdash_{[]}\phi)$ are provable in $\mathbb T$.

Since the theories $\mathbb{P}$ and $\mathbb{L}$ are Morita-equivalent, we thus obtain the following 
\begin{theorem}
There is a bijective correspondence between the classes of geometric sentences of $\mathbb{P}$ and of $\mathbb{L}$.
\end{theorem}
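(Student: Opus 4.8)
The plan is to obtain this directly from the Morita-equivalence of Theorem~\ref{Moritaeq1} by an application of the `bridge technique' of \cite{Caramello1}, taking as topos-theoretic invariant the notion of \emph{subterminal object} of the classifying topos. Theorem~\ref{Moritaeq1} gives an equivalence of Grothendieck toposes $\mathscr{E}_{\mathbb P}\simeq\mathscr{E}_{\mathbb L}$. Now, being a subobject of the terminal object is preserved and reflected by any equivalence of categories (an equivalence preserves the terminal object, monomorphisms and isomorphisms of subobjects), so the equivalence $\mathscr{E}_{\mathbb P}\simeq\mathscr{E}_{\mathbb L}$ restricts to an isomorphism of posets $\Sub_{\mathscr{E}_{\mathbb P}}(1)\cong\Sub_{\mathscr{E}_{\mathbb L}}(1)$ of subterminal objects; in any Grothendieck topos these posets are frames, so the resulting bijection automatically respects the entailment order, finite conjunctions, and arbitrary disjunctions.

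The second ingredient, recalled just before the statement, identifies, for a geometric theory $\mathbb T$, the subterminal objects of $\mathscr{E}_{\mathbb T}$ with the geometric sentences over the signature of $\mathbb T$ modulo the relation $\sim_{\mathbb T}$. I would make this precise through the syntactic representation $\mathscr{E}_{\mathbb T}\simeq\Sh(\mathscr{C}_{\mathbb T},J_{\mathbb T})$: using the bijection between subterminal objects of $\Sh(\mathscr{C},J)$ and $J$-ideals recalled in \S\ref{back-groth}, together with the elementary fact that there is a unique arrow $\{\vec{x}.\phi\}\to\{[].\top\}$ in $\mathscr{C}_{\mathbb T}$ for every formula-in-context (so that $\{[].\top\}$ is terminal and a sieve on it is simply a downward-closed class of formulas-in-context), a subterminal object of $\mathscr{E}_{\mathbb T}$ is the same thing as a $J_{\mathbb T}$-ideal $I$, and $I$ is matched with the geometric sentence $\bigvee_{\{\vec{x}.\phi\}\in I}(\exists\vec{x})\phi$, the inverse assignment sending a sentence $\psi$ to $\{\,\{\vec{x}.\phi\}\mid (\phi\vdash_{\vec{x}}\psi)\text{ is provable in }\mathbb T\,\}$; two sentences have the same image exactly when they are $\sim_{\mathbb T}$-equivalent. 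Applying this to $\mathbb T=\mathbb P$ and to $\mathbb T=\mathbb L$ and composing with the bijection of the first paragraph gives the asserted bijective correspondence — in fact an order-isomorphism between the posets of geometric sentences of $\mathbb P$ and of $\mathbb L$ modulo provable equivalence, compatible with $\vdash_{[]}$, $\wedge$ and $\bigvee$ (this is precisely the object part of the equivalence between the full subcategories of $\mathscr{C}_{\mathbb P}$ and $\mathscr{C}_{\mathbb L}$ on geometric sentences).

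Both steps are essentially formal, so I do not expect a real obstacle; the one point deserving care is in the second ingredient, where one must check that $\Sub(1)$ is genuinely \emph{exhausted} by the interpretations of geometric sentences on each side, so that the invariant is rendered by sentences and by nothing coarser. This uses the essential smallness of the syntactic categories, which guarantees both that the $J_{\mathbb T}$-ideals form a set and that the indexed disjunction $\bigvee_{\{\vec{x}.\phi\}\in I}(\exists\vec{x})\phi$ above is a legitimate (possibly infinitary) geometric formula. If an explicit form of the correspondence is desired, it can be computed through the equivalence of Theorem~\ref{equivalence}: given a sentence $\phi$ of $\mathbb P$, one transports the subterminal object given by the interpretation of $\phi$ in the universal $\mathbb P$-model along the equivalence $\mathscr{E}_{\mathbb P}\simeq\mathscr{E}_{\mathbb L}$ induced by the geometric functors underlying $\Sigma$ and $\Delta$, and then reads off, using the strong definability available for the presheaf-type theory $\mathbb L$, the $\mathbb L$-sentence it corresponds to; the reverse direction is symmetric.
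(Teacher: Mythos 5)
Your proposal is correct and follows essentially the same route as the paper: the theorem is obtained by applying the `bridge' technique to the Morita-equivalence of Theorem \ref{Moritaeq1}, taking the invariant notion of subterminal object of the common classifying topos and identifying, for each theory, the subterminal objects of $\Sh(\mathscr{C}_{\mathbb T},J_{\mathbb T})$ with the $\sim_{\mathbb T}$-classes of geometric sentences. Your extra care about $J$-ideals and the exhaustiveness of sentences is consistent with (and anticipates) the explicit description the paper gives afterwards via Lemma \ref{ideal-sentences}.
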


We can explicitly describe this correspondence by using the bi-interpretation between irreducible formulas provided by Theorem \ref{irreducible_formulas} and the concept of ideal on a category.

\begin{lemma}\label{ideal-sentences}
Let $\mathbb{T}$ be a theory of presheaf type and
\begin{center}
$A=\{\mathbb{T}$-classes of geometric sentences$\}$,

\
$B=\{$ideals of $\mathscr{C}_{\mathbb{T}}^{\textrm{irr}}\}$.
\end{center}
There is a canonical bijection between $A$ and $B$.
\end{lemma}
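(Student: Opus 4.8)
The plan is to exhibit the bijection between geometric-sentence classes and ideals of $\mathscr{C}_{\mathbb{T}}^{\textrm{irr}}$ by routing through the classifying topos $\mathscr{E}_{\mathbb{T}}$, which for a theory of presheaf type is the presheaf topos $[{\mathscr{C}_{\mathbb{T}}^{\textrm{irr}}}^{\textrm{op}},\Set]$. First I would recall the two facts already available in the excerpt: the subterminal objects of $\Sh(\mathscr{C}_{\mathbb{T}}, J_{\mathbb{T}})$ correspond bijectively to $\mathbb{T}$-classes of geometric sentences (this is stated just before the lemma), and for any site $(\mathscr{C},J)$ the subterminal objects of $\Sh(\mathscr{C},J)$ correspond bijectively to the $J$-ideals on $\mathscr{C}$ (stated in section~\ref{back-groth}). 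Applying the latter to the site $({\mathscr{C}_{\mathbb{T}}^{\textrm{irr}}}, \textrm{triv})$ presenting the presheaf topos $[{\mathscr{C}_{\mathbb{T}}^{\textrm{irr}}}^{\textrm{op}},\Set]$, and noting that with respect to the trivial topology the only covering sieves are the maximal ones, so the $J$-ideal condition reduces to the plain ideal condition, we get that the subterminal objects of $\mathscr{E}_{\mathbb{T}}$ correspond bijectively to the ideals of $\mathscr{C}_{\mathbb{T}}^{\textrm{irr}}$. Composing the two bijections with the canonical equivalence $\Sh(\mathscr{C}_{\mathbb{T}}, J_{\mathbb{T}})\simeq [{\mathscr{C}_{\mathbb{T}}^{\textrm{irr}}}^{\textrm{op}},\Set]$ (Theorem~\ref{irr-f.p.} together with the presheaf-type representation) yields the desired canonical bijection $A\cong B$.

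Concretely, I would then describe the bijection at the level of syntax, so that the word ``canonical'' is justified. Given a geometric sentence $\phi$ over the signature of $\mathbb{T}$, one associates the set $I_\phi$ of those $\mathbb{T}$-irreducible formulas $\{\vec{x}.\chi\}$ such that $(\chi \vdash_{\vec{x}} \phi)$ is provable in $\mathbb{T}$ (equivalently, such that $\{\vec{x}.\chi\}\leq \{[].\phi\}$ in $\mathscr{C}_{\mathbb{T}}$, viewing $\phi$ as a subterminal object and $\{\vec{x}.\chi\}$ mapped to $\mathscr{C}_{\mathbb{T}}$ via the inclusion $\mathscr{C}_{\mathbb{T}}^{\textrm{irr}}\hookrightarrow \mathscr{C}_{\mathbb{T}}$). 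One checks $I_\phi$ is an ideal: if there is an arrow $\{\vec{y}.\theta\}\to\{\vec{x}.\chi\}$ in $\mathscr{C}_{\mathbb{T}}^{\textrm{irr}}$ and $\chi\vdash_{\vec{x}}\phi$, then composing shows $\theta\vdash_{\vec{y}}\phi$, so $\{\vec{y}.\theta\}\in I_\phi$. Conversely, given an ideal $I$ of $\mathscr{C}_{\mathbb{T}}^{\textrm{irr}}$, one forms the geometric sentence $\phi_I := \bigvee_{\{\vec{x}.\chi\}\in I}(\exists\vec{x})\chi$; since by Theorem~\ref{irr-f.p.}(ii) every $\mathbb{T}$-irreducible formula presents a model, and by the rigidity of $J_{\mathbb{T}}$ (presheaf type) the irreducible formulas generate covering sieves, the two assignments are mutually inverse up to $\mathbb{T}$-provable equivalence. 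The ideal condition is exactly what makes $\bigvee$ well-behaved: it ensures that pulling back along any arrow into an irreducible object already lands in the disjunction.

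I expect the main obstacle to be the careful verification that $\phi\mapsto I_\phi$ and $I\mapsto \phi_I$ are genuinely inverse to each other — i.e., that $\phi \dashv\vdash_{[]} \phi_{I_\phi}$ and that $I = I_{\phi_I}$ — since this is where one must use both halves of Theorem~\ref{irr-f.p.} (every finitely presentable model is presented by an irreducible formula, and conversely) together with the fact (Theorem stated after Definition of $\mathbb{T}$-irreducible) that $J_{\mathbb{T}}$ is rigid. For the first identity, one notes that $\phi$, as a subterminal object of $\mathscr{E}_{\mathbb{T}}$, is the union of the irreducible objects below it, which is rigidity applied to the subobject $\{[].\phi\}$; for the second, one needs that $I_{\phi_I}\supseteq I$ is immediate and $I_{\phi_I}\subseteq I$ follows because an irreducible formula $\chi$ with $\chi\vdash_{\vec{x}}\bigvee_{i\in I}(\exists\vec{x_i})\chi_i$ must, by $\mathbb{T}$-irreducibility, factor through some $\chi_i\in I$, hence lie in $I$ since $I$ is an ideal. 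A small point to get right is the passage between a geometric sentence $\phi$ in an empty context and the corresponding subterminal subobject of the terminal object of $\mathscr{C}_{\mathbb{T}}$, but this is routine given Proposition~\ref{internal_language}(v) and the definition of the syntactic category. I would present the syntactic description as the main content and treat the topos-theoretic derivation as the conceptual explanation of why it works.
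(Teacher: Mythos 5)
Your proposal is correct and follows essentially the same route as the paper: the concrete assignments $\phi\mapsto I_\phi$ (irreducible formulas whose unique arrow to the terminal factors through $\{[].\phi\}$) and $I\mapsto \phi_I=\bigvee_{\{\vec{x}.\chi\}\in I}(\exists\vec{x})\chi$, with rigidity of $J_{\mathbb{T}}$ giving $\phi\dashv\vdash\phi_{I_\phi}$ and $\mathbb{T}$-irreducibility plus the ideal property giving $I_{\phi_I}\subseteq I$, are exactly the paper's proof. Your preliminary derivation via subterminal objects of the two representations of the classifying topos is a sound conceptual gloss that the paper leaves implicit, but the substantive argument is the same.
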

\begin{proof}
For any object $\{\vec{x}.\psi\}\in \mathscr{C}_{\mathbb{T}}^{irr}$ there is a unique arrow $!_{\psi}:\{\vec{x}.\psi\}\rightarrow \{[].\top\}$ in $\mathscr{C}_{\mathbb{T}}$, where $\{[].\top\}$ is the terminal object of $\mathscr{C}_{\mathbb{T}}$. Given $\{[].\phi\}\in A$, we set $I_{\phi}:=\{\{\vec{x}.\psi\}\in\mathscr{C}_{\mathbb{T}}^{\textrm{irr}}\mid$ \ $!_{\psi}$ factors through $\{[].\phi\}\mono \{[]. \top\}\}$. This is an ideal of $\mathscr{C}_{\mathbb{T}}^{\textrm{irr}}$. Indeed, if $\{\vec{x}.\psi\}\in I_{\phi}$ and $f:\{\vec{y}.\chi\}\rightarrow \{\vec{x}.\psi\}$ is an arrow in $\mathscr{C}_{\mathbb{T}}^{\textrm{irr}}$, the commutativity of the following diagram guarantees that the arrow $!_{\chi}$ factors through $\{[].\phi\}$, i.e. that $\{\vec{y}.\chi\}\in I_{\psi}$:
\begin{center}
\begin{tikzpicture}
\node(1) at (0,0) {$\{\vec{x}.\psi\}$};
\node(2) at (5,0) {$\{[].\phi\}$};
\node(3) at (5,2) {$\{[].\top\}$};
\node(4) at (0,2) {$\{\vec{y}.\chi\}$};
\draw[->] (1) to node[below, midway]{} (2);
\draw[->] (1) to node[above, midway]{$!_{\psi}$} (3);
\draw[->] (2) to node[below] {} (3);
\draw[->] (4) to node[left,midway]{$f$} (1);
\draw[->] (4) to node[above,midway]{$!_{\chi}$}(3);
\end{tikzpicture}
\end{center}

The assignment $\phi \to I_{\phi}$ defines a map $f:A\rightarrow B$. 

In the converse direction, suppose that $I\in B$. For any $\{\vec{x}.\psi\}\in I$, the arrow $!_{\psi}$ factors through the subobject $\{[].\exists\vec{x}\psi(\vec{x})\}\mono \{[]. \top\}$. We can consider the union $\{[]. \phi_{I}\}\mono \{[]. \top\}$ of these subobjects for all the objects in $I$. In other words, we set $\phi_{I}$ equal to the ($\mathbb T$-class of) the formula $\mathbin{\mathop{\textrm{\huge $\vee$}}\limits_{\{\vec{x}.\psi\}\in I}}\exists\vec{x}\psi(\vec{x})$. 

The assignment $I \to \phi_{I}$ defines a map $g:B\rightarrow A$.

\begin{itemize}
\item[\textit{Claim} 1.]  $g\circ f=id_{A}$. Given $\{[].\phi\}\in A$, we need to prove that $\phi\sim_{\mathbb T}\phi_{I_{\phi}}$. Recall that, since $\mathbb{T}$ is a theory of presheaf type, there exists a $J_{\mathbb T}$-covering $\{\{\vec{x}_j.\psi_j\}\mid j\in J\}$ of $\{[].\phi]\}$ by $\mathbb{T}$-irreducible formulas, i.e. $(\phi\vdash \mathbin{\mathop{\textrm{\huge $\vee$}}\limits_{j\in J}}\exists\vec{x}_j\psi_j(\vec{x}_j))$ is provable in $\mathbb T$. Thus, for each $j\in J$, the arrow $!_{\psi_j}$ factors through $\{[].\phi\}\mono \{[]. \top\}$ whence $\{\vec{x}_j.\phi_j\}\in I_{\phi}$. Therefore $(\mathbin{\mathop{\textrm{\huge $\vee$}}\limits_{j\in J}}\exists\vec{x}_j\psi_j(\vec{x}_j)\vdash \phi_{I_{\phi}})$ is provable in $\mathbb T$ whence $(\phi\vdash \phi_{I_{\phi}})$ is provable in $\mathbb T$. On the other hand, it is clear that $(\phi_{I_{\phi}}\vdash \phi)$ is provable in $\mathbb T$. Therefore $\phi\sim_{\mathbb T}\phi_{I_{\phi}}$.

\item[\textit{Claim} 2.] $f\circ g=id_{B}$. Given $I\in B$, we have to prove that $I=I_{\phi_I}$. The elements of $I_{\phi_I}$ are the $\mathbb{T}$-irreducible formulas $\{\vec{x}.\psi\}$ such that the arrow $!_{\psi}$ factors through $\{[].\phi_I\}\mono \{[]. \top\}$. By definition of $\phi_I$, this property is satisfied by the elements of $I$ whence $I\subseteq I_{\phi_I}$. The converse inclusion can be proved as follows.  Given $\{\vec{x}.\psi\}\in I_{\phi_I}$, the unique arrow $!_{\psi}$ factors through $\{[].\phi_I\}$. Since $\{[]. \phi_{I}\}$ is $J_{\mathbb T}$-covered by $\mathbb T$-irreducible formulae in $I$, it follows that $\{\vec{x}. \psi\}$ is also $J_{\mathbb T}$-covered by $\mathbb T$-irreducible formulae in $I$. Since the formula $\{\vec{x}.\psi\}$ is $\mathbb{T}$-irreducible, this covering is trivial whence $\{\vec{x}.\psi\}\in I$. 
\end{itemize}
\end{proof} 

\begin{obs}
Applying Lemma \ref{ideal-sentences} to the theory $\mathbb P$ of perfect MV-algebras and to the theory $\mathbb L$ of  $\ell$-groups we obtain two bijections:
\begin{itemize}
\item[1] $\{\mathbb{P}$-classes of sentences$\}\simeq \{$ideals of $\mathscr{C}_{\mathbb{P}}^{\textrm{irr}}\}$
\item[2] $\{\mathbb{L}$-classes of sentences$\}\simeq \{$ideals of $\mathscr{C}_{\mathbb{L}}^{\textrm{irr}}\}$
\end{itemize}

From these bijections and Theorem \ref{irreducible_formulas} we obtain a bijection between the $\mathbb{P}$-classes of geometric sentences and the $\mathbb{L}$-classes of geometric sentences.
\end{obs}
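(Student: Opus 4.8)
The plan is to obtain the bijection purely formally, by combining the two instances of Lemma~\ref{ideal-sentences} coming from the $\mathbb{P}$-side and the $\mathbb{L}$-side with the bi-interpretation of irreducible formulas established in Theorem~\ref{irreducible_formulas}. The first thing I would record is that both theories are of presheaf type: $\mathbb{L}$ is cartesian, hence of presheaf type, and by the Morita-equivalence of Theorem~\ref{Moritaeq1} the theory $\mathbb{P}$ has (up to equivalence) the same classifying topos as $\mathbb{L}$, so its classifying topos is a presheaf topos as well. Consequently Lemma~\ref{ideal-sentences} applies to each of $\mathbb{P}$ and $\mathbb{L}$.

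Applying Lemma~\ref{ideal-sentences} twice yields canonical bijections
\[
\{\mathbb{P}\textrm{-classes of geometric sentences}\}\;\cong\;\{\textrm{ideals of }\mathscr{C}_{\mathbb{P}}^{\textrm{irr}}\},
\]
\[
\{\mathbb{L}\textrm{-classes of geometric sentences}\}\;\cong\;\{\textrm{ideals of }\mathscr{C}_{\mathbb{L}}^{\textrm{irr}}\}.
\]
By Theorem~\ref{irreducible_formulas} there is an equivalence of categories $\mathscr{C}_{\mathbb{L}}^{\textrm{irr}}\simeq\mathscr{C}_{\mathbb{P}}^{\textrm{irr}}$, realised concretely by the composite of the functor $F_{V}$ of Proposition~\ref{monodi-groups irriducibili} with the functor $\textrm{Rad}$. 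So the only remaining step is to observe that an equivalence between small categories induces a bijection between their respective collections of ideals; composing this bijection with the two displayed above gives the statement.

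For that last step I would argue invariantly rather than by chasing elements: under the trivial Grothendieck topology the ideals of a small category $\mathscr{C}$ are exactly the subterminal objects of the presheaf topos $[\mathscr{C}^{\textrm{op}}, \Set]$ (the special case of the correspondence between $J$-ideals and subterminal objects recalled in Section~\ref{topos theory}), and an equivalence $\mathscr{C}\simeq\mathscr{D}$ of small categories induces an equivalence of toposes $[\mathscr{C}^{\textrm{op}}, \Set]\simeq[\mathscr{D}^{\textrm{op}}, \Set]$, hence a bijection between their subterminal objects. Equivalently, one can note directly that an ideal is automatically closed under isomorphism, so it is determined by a collection of isomorphism classes that is a downward-closed subset for the preorder induced by the arrows, and this structure transports along any equivalence together with a chosen quasi-inverse. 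I do not expect a genuine obstacle anywhere: every ingredient — the Morita-equivalence, the presheaf-type property of both theories, Lemma~\ref{ideal-sentences}, and the equivalence of irreducible syntactic categories — is already in hand, and the only point requiring a moment's care is precisely the invariance of the collection of ideals under equivalence of categories, which the presheaf-topos reformulation settles cleanly.
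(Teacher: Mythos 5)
Your proposal is correct and follows exactly the route the paper intends: apply Lemma \ref{ideal-sentences} to each of $\mathbb{P}$ and $\mathbb{L}$ (both of presheaf type, the latter being cartesian and the former Morita-equivalent to it) and then transport ideals along the equivalence $\mathscr{C}_{\mathbb{L}}^{\textrm{irr}}\simeq\mathscr{C}_{\mathbb{P}}^{\textrm{irr}}$ of Theorem \ref{irreducible_formulas}. The only step the paper leaves implicit --- that an equivalence of small categories induces a bijection between their ideals --- you justify cleanly (either via subterminal objects of the presheaf toposes or via the isomorphism-closedness of ideals), so nothing is missing.
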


Since the theories $\mathbb P$ of perfect MV-algebras and $\mathbb L$ of $\ell$-groups are both coherent, we have a third level of bi-interpretability between them.

Let $\mathbb{T}$ be a coherent theory; starting from its coherent syntactic category $\mathscr{C}^{\textrm{coh}}_{\mathbb{T}}$, we can construct the category $\mathscr{C}_{\mathbb{T}}^{\textrm{eq}}$ of \textit{imaginaries} of $\mathbb T$ (also called the \textit{effective positivization of} $\mathscr{C}^{\textrm{coh}}_{\mathbb{T}}$) by adding formal finite coproducts and coequalizers of equivalence relations in $\mathscr{C}_{\mathbb{T}}^{\textrm{coh}}$. 

\begin{theorem}\label{imaginaries} \cite[Theorem D3.3.7]{SE}
Let $\mathbb{T}$ be a coherent theory. Then the category $\mathscr{C}_{\mathbb{T}}^{\textrm{eq}}$ is equivalent to the full subcategory of its classifying topos of the coherent objects.
\end{theorem}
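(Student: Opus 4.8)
The plan is to exhibit $\mathscr{C}_{\mathbb{T}}^{\textrm{eq}}$ as the essential image of a canonical pretopos functor into the classifying topos and then to characterise that image intrinsically; this is the argument of \cite[Theorem D3.3.7]{SE}, which I now sketch. Recall first that the classifying topos of $\mathbb{T}$ may be presented as $\Sh(\mathscr{C}_{\mathbb{T}}^{\textrm{coh}}, J_{\mathbb{T}}^{\textrm{coh}})$, that $\mathscr{C}_{\mathbb{T}}^{\textrm{coh}}$ is a small coherent category, and that $J_{\mathbb{T}}^{\textrm{coh}}$ is subcanonical; hence the Yoneda embedding $y\colon \mathscr{C}_{\mathbb{T}}^{\textrm{coh}}\to \Sh(\mathscr{C}_{\mathbb{T}}^{\textrm{coh}}, J_{\mathbb{T}}^{\textrm{coh}})$ is a full and faithful coherent functor (it preserves finite limits, and, since $J_{\mathbb{T}}^{\textrm{coh}}$ is generated by \emph{finite} covering families, it sends covering families to jointly epimorphic ones, so preserves images and finite unions). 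A Grothendieck topos is in particular a pretopos, and $\mathscr{C}_{\mathbb{T}}^{\textrm{eq}}$ is the free pretopos on $\mathscr{C}_{\mathbb{T}}^{\textrm{coh}}$ — coherent functors $\mathscr{C}_{\mathbb{T}}^{\textrm{coh}}\to\mathscr{D}$ into a pretopos $\mathscr{D}$ correspond to pretopos functors $\mathscr{C}_{\mathbb{T}}^{\textrm{eq}}\to\mathscr{D}$ — so $y$ extends, uniquely up to isomorphism, to a pretopos functor $\widetilde{y}\colon \mathscr{C}_{\mathbb{T}}^{\textrm{eq}}\to \Sh(\mathscr{C}_{\mathbb{T}}^{\textrm{coh}}, J_{\mathbb{T}}^{\textrm{coh}})$.

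One then checks that $\widetilde{y}$ is full and faithful. This uses the explicit form of the effective positivization: every object of $\mathscr{C}_{\mathbb{T}}^{\textrm{eq}}$ is a finite coproduct of quotients of objects of $\mathscr{C}_{\mathbb{T}}^{\textrm{coh}}$ by equivalence relations definable there, and its hom-sets are computed from those of $\mathscr{C}_{\mathbb{T}}^{\textrm{coh}}$ by the universal formulas for maps out of coproducts and out of coequalizers of equivalence relations; since $y$ is already full and faithful and the sheaf topos computes these finite coproducts and coequalizers ``correctly'' (coproducts are disjoint and stable, equivalence relations are effective), $\widetilde{y}$ inherits full faithfulness.

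It remains to identify the essential image of $\widetilde{y}$ with the full subcategory of coherent objects of $\Sh(\mathscr{C}_{\mathbb{T}}^{\textrm{coh}}, J_{\mathbb{T}}^{\textrm{coh}})$. For one inclusion, each representable $y(\{\vec{x}.\phi\})$ is a coherent object — quasi-compact because $J_{\mathbb{T}}^{\textrm{coh}}$ is generated by finite covering families, quasi-separated because the diagonal of a representable is again representable — and the class of coherent objects is closed under finite coproducts and under quotients by equivalence relations, so the whole image consists of coherent objects. For the converse, given a coherent object $X$ pick an epimorphism $\coprod_{i\in I} y(C_i)\twoheadrightarrow X$ from a coproduct of representables; quasi-compactness of $X$ lets us take $I$ finite, and then the kernel pair $R\rightrightarrows\coprod_{i\in I}y(C_i)$ is a subobject of $\coprod_{i,j\in I} y(C_i\times C_j)$ which, by quasi-separatedness of $X$, is again quasi-compact, hence itself expressible through finitely many representables; therefore $X$, being the coequalizer of $R\rightrightarrows\coprod_{i\in I}y(C_i)$, lies in the essential image of $\widetilde{y}$. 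Combining the three steps gives the asserted equivalence between $\mathscr{C}_{\mathbb{T}}^{\textrm{eq}}$ and the full subcategory of coherent objects of the classifying topos.

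The delicate point is this last step: the structural claim that every coherent object of a coherent topos is an explicit quotient of a finite coproduct of representables. Quasi-compactness is what allows the reduction to a finite index set, while quasi-separatedness is exactly what is needed to keep the kernel pair under control; without both finiteness conditions the coequalizer presentation would fail. The remaining work — invoking the universal property of the pretopos completion and the bookkeeping underlying full faithfulness — is routine, and the whole argument is carried out in detail in \cite[Section D3.3]{SE}.
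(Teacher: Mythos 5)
The paper offers no proof of this statement: it is quoted verbatim from \cite[Theorem D3.3.7]{SE}, so there is nothing internal to compare against. Your sketch correctly reproduces the standard argument from that source --- extend the (coherent, fully faithful) Yoneda embedding of $\mathscr{C}_{\mathbb{T}}^{\textrm{coh}}$ along the universal property of the pretopos completion $\mathscr{C}_{\mathbb{T}}^{\textrm{eq}}$, check full faithfulness using disjointness/stability of coproducts and effectiveness of equivalence relations in the topos, and identify the essential image with the coherent objects via quasi-compactness (finiteness of the covering family) and quasi-separatedness (quasi-compactness of the kernel pair) --- so the proposal is correct and in line with the intended proof.
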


From Theorem \ref{imaginaries} it follows that, if two coherent theories are Morita-equivalent, then the respective categories of imaginaries are equivalent. Notice that the topos-theoretic invariant used in this application of the `bridge' technique is the notion of coherent object. Specializing this to our Morita-equivalence between $\mathbb P$ and $\mathbb L$ yields the following result.

\begin{theorem}
The effective positivizations of the syntactic categories of the theories $\mathbb{P}$ and $\mathbb{L}$ are equivalent:
\begin{center}
$\mathscr{C}_{\mathbb{P}}^{\textrm{eq}}\simeq \mathscr{C}_{\mathbb{L}}^{\textrm{eq}}$.
\end{center}
\end{theorem}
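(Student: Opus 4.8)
The plan is to invoke the `bridge technique' directly, exactly as outlined in the paragraph preceding the statement. First I would record that both theories in question are coherent, so that the category of imaginaries is defined for each of them and Theorem \ref{imaginaries} is applicable: $\mathbb{L}$ is cartesian, hence in particular coherent, while $\mathbb{P}$ is the quotient of $\mathbb{MV}$ obtained by adding the sequents P.1--P.4, each of which is coherent (P.1 and P.2 are algebraic, P.3 involves only a finite disjunction, and P.4 has $\bot$ as consequent). Thus $\mathscr{C}_{\mathbb{P}}^{\textrm{eq}}$ and $\mathscr{C}_{\mathbb{L}}^{\textrm{eq}}$ are well-defined and, by Theorem \ref{imaginaries}, each is equivalent to the full subcategory of coherent objects of the corresponding classifying topos.

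Next I would recall that, by Theorem \ref{Moritaeq1}, the theories $\mathbb{P}$ and $\mathbb{L}$ are Morita-equivalent, so there is an equivalence of classifying toposes $\mathscr{E}_{\mathbb{P}}\simeq \mathscr{E}_{\mathbb{L}}$. The crucial observation is that the notion of \emph{coherent object} of a Grothendieck topos is a topos-theoretic invariant: it is defined purely in terms of the internal categorical structure of the topos, and hence is preserved and reflected by any equivalence of toposes. Consequently the equivalence $\mathscr{E}_{\mathbb{P}}\simeq \mathscr{E}_{\mathbb{L}}$ restricts to an equivalence between the respective full subcategories of coherent objects.

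Chaining these equivalences, I would conclude
\[
\mathscr{C}_{\mathbb{P}}^{\textrm{eq}}\simeq \mathrm{Coh}(\mathscr{E}_{\mathbb{P}})\simeq \mathrm{Coh}(\mathscr{E}_{\mathbb{L}})\simeq \mathscr{C}_{\mathbb{L}}^{\textrm{eq}},
\]
where $\mathrm{Coh}(-)$ denotes the full subcategory of coherent objects, the outer equivalences being instances of Theorem \ref{imaginaries} and the middle one coming from the Morita-equivalence together with the invariance of coherent objects. This yields the asserted equivalence $\mathscr{C}_{\mathbb{P}}^{\textrm{eq}}\simeq \mathscr{C}_{\mathbb{L}}^{\textrm{eq}}$.

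There is no genuine obstacle here: the statement is a formal consequence of results already in place, and the proof amounts to assembling them. The only points deserving a word of care are the verification that $\mathbb{P}$ is indeed coherent (so that $\mathscr{C}_{\mathbb{P}}^{\textrm{eq}}$ makes sense and Theorem \ref{imaginaries} applies) and the remark that coherence of objects transfers across the equivalence of classifying toposes; both are routine. If one wished, the equivalence could also be made explicit by transporting the bi-interpretation of irreducible formulas of Theorem \ref{irreducible_formulas} through the formal coproduct- and coequalizer-completions defining the effective positivizations, but this is not needed for the statement as given.
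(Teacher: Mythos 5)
Your proof is correct and follows essentially the same route as the paper: both theories are coherent, Theorem \ref{imaginaries} identifies each category of imaginaries with the coherent objects of the classifying topos, and the invariance of coherence under the equivalence of classifying toposes given by the Morita-equivalence of Theorem \ref{Moritaeq1} yields the result. The paper states this as an immediate consequence of the preceding discussion; your added checks (coherence of $\mathbb{P}$, invariance of coherent objects) are exactly the routine points the paper takes for granted.
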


\begin{obs}
It is natural to wonder whether we can give an explicit description of this equivalence. Consider the functor $F:\mathscr{C}_{\mathbb{L}}\rightarrow \mathscr{C}_{\mathbb{P}}$ given by the composite of the functor $\textrm{Rad}:\mathscr{C}_{\mathbb{M}}\rightarrow \mathscr{C}_{\mathbb{P}}$ with the functor $H:\mathscr{C}_{\mathbb L} \to \mathscr{C}_{\mathbb M}$ corresponding to the model $V$ of $\mathbb L$ in $\mathscr{C}_{\mathbb M}$ introduced in section \ref{intermediary}. The formal extension $F^{\textrm{eq}}:\mathscr{C}_{\mathbb{L}}^{\textrm{eq}}\rightarrow \mathscr{C}_{\mathbb{P}}^{\textrm{eq}}$ of $F$ is part of a categorical equivalence whose other half is the functor $\mathscr{C}_{\mathbb{P}}^{\textrm{eq}} \to \mathscr{C}_{\mathbb{L}}^{\textrm{eq}}$ induced by the model $Z$ of $\mathbb{P}$ in $\mathscr{C}_{\mathbb{L}}^{\textrm{eq}}$ defined as follows. Recall that, for any $\ell$-group $\mathcal{G}$ in $\Set$, the corresponding perfect MV-algebra is given by $\Gamma(\mathbb{Z}\times_{lex}\mathcal{G})$. Now, this set is isomorphic to the coproduct $G^+\sqcup G^-$, where $G^+$ and $G^-$ are respectively the positive and the negative cone of the $\ell$-group $\mathcal{G}$. The model $Z$ has as underlying object in $\mathscr{C}_{\mathbb{L}}^{\textrm{eq}}$ the coproduct $\{x.x\leq 0\}\sqcup\{x.x\geq 0\}$, whereas the operations and the order relation are defined as follows:

\begin{itemize}
\item $\oplus:(\{x.x\leq 0\}\sqcup\{x'.x'\geq 0\}) \times (\{y.y\leq 0\}\sqcup\{y'.y'\geq 0\}) \cong $

$\{x, y.x\leq 0 \wedge y\leq 0\} \sqcup \{u, v.u\leq 0 \wedge v\geq 0\} \sqcup \{w, p.w\geq 0 \wedge p\leq 0\} \sqcup  $

$\{q, r.q\geq 0 \wedge r\geq 0\} \to \{\alpha.\alpha\leq 0\} \sqcup \{\beta.\beta\geq 0\}$

is given by $[x, y. \alpha=0] \sqcup [u,v.\alpha=\inf(u+v,0)] \sqcup $

$[w,p.\alpha=\inf(w+p,0)] \sqcup [q,r.\beta=q+r]$;

\item $\neg:\{x.x\leq 0\}\sqcup\{x'.x'\geq 0\} \to \{y.y\leq 0\}\sqcup\{y'.y'\geq 0\}$

is given by $[y=-x'] \sqcup [y'=-x]$;

\item $0:\{[]. \top\} \to \{\alpha.\alpha\leq 0\} \sqcup \{\beta.\beta\geq 0\}$ is given by $[\beta=0]$. 
\end{itemize}
\end{obs}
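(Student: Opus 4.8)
The plan is to obtain the equivalence abstractly, as an instance of the bridge technique, and then to identify it with the concrete functors described in the Remark. For the abstract part: both $\mathbb{P}$ and $\mathbb{L}$ are coherent (Theorem~\ref{Moritaeq1}), so by Theorem~\ref{imaginaries} the categories $\mathscr{C}_{\mathbb{P}}^{\textrm{eq}}$ and $\mathscr{C}_{\mathbb{L}}^{\textrm{eq}}$ are equivalent to the full subcategories of coherent objects of the classifying toposes $\mathscr{E}_{\mathbb{P}}$ and $\mathscr{E}_{\mathbb{L}}$, respectively. The Morita-equivalence of Theorem~\ref{Moritaeq1} gives an equivalence $\mathscr{E}_{\mathbb{P}}\simeq\mathscr{E}_{\mathbb{L}}$, and since being a coherent object is an invariant notion---preserved and reflected by every equivalence of toposes---this restricts to an equivalence of the two subcategories of coherent objects, hence to $\mathscr{C}_{\mathbb{P}}^{\textrm{eq}}\simeq\mathscr{C}_{\mathbb{L}}^{\textrm{eq}}$.

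For the concrete description I would first extend the interpretation $F=\textrm{Rad}\circ H:\mathscr{C}_{\mathbb{L}}\to\mathscr{C}_{\mathbb{P}}$ of Theorem~\ref{interpretability} (with $H$ the functor $\mathscr{C}_{\mathbb{L}}\to\mathscr{C}_{\mathbb{M}}$ attached to the model $V$ of $\mathbb{L}$ in $\mathscr{C}_{\mathbb{M}}$) along $\mathscr{C}_{\mathbb{P}}\hookrightarrow\mathscr{C}_{\mathbb{P}}^{\textrm{eq}}$ to a functor $F^{\textrm{eq}}:\mathscr{C}_{\mathbb{L}}^{\textrm{eq}}\to\mathscr{C}_{\mathbb{P}}^{\textrm{eq}}$ preserving finite coproducts and quotients of equivalence relations; this is possible by the universal property of the effective positivization. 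In the converse direction I would check that the structure $Z$ displayed above is genuinely a model of $\mathbb{P}$ in $\mathscr{C}_{\mathbb{L}}^{\textrm{eq}}$: the axioms MV.1--MV.6 and P.1--P.4 are verified by the case analysis over the (at most four) summands of the coproducts involved, along the lines of the proof of Proposition~\ref{Sigma}---P.3, P.4 and $\beta$ hold because the coproduct $\{x.x\leq 0\}\sqcup\{x.x\geq 0\}$ is disjoint and exhibits $Z$ as the union of its radical $\{x.x\geq 0\}$ and coradical $\{x.x\leq 0\}$, while P.1 is checked piecewise (using $x^{2}=0$ on the radical summand and $2x=1$ on the coradical summand). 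The model $Z$ then induces, again by the universal property, a functor $G^{\textrm{eq}}:\mathscr{C}_{\mathbb{P}}^{\textrm{eq}}\to\mathscr{C}_{\mathbb{L}}^{\textrm{eq}}$ with $G^{\textrm{eq}}(\{\vec{x}.\phi\})=[[\vec{x}.\phi]]_{Z}$, preserving the same structure.

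It then remains to show that $F^{\textrm{eq}}$ and $G^{\textrm{eq}}$ are mutually quasi-inverse. A structure-preserving functor out of $\mathscr{C}_{\mathbb{L}}^{\textrm{eq}}$ (resp. $\mathscr{C}_{\mathbb{P}}^{\textrm{eq}}$) is determined, up to isomorphism, by the $\mathbb{L}$-model (resp. $\mathbb{P}$-model) it sends the tautological model to, so it suffices to produce isomorphisms $G^{\textrm{eq}}(F^{\textrm{eq}}(\mathcal{G}_{\mathbb{L}}))\cong\mathcal{G}_{\mathbb{L}}$ and $F^{\textrm{eq}}(G^{\textrm{eq}}(\mathcal{A}_{\mathbb{P}}))\cong\mathcal{A}_{\mathbb{P}}$, where $\mathcal{G}_{\mathbb{L}}$ and $\mathcal{A}_{\mathbb{P}}$ denote the tautological models of $\mathbb{L}$ and $\mathbb{P}$. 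By construction $F^{\textrm{eq}}(\mathcal{G}_{\mathbb{L}})=\Delta(\mathcal{A}_{\mathbb{P}})$ (the interpretation functor of Theorem~\ref{interpretability} realizes the construction $\Delta$, via the computations of Section~\ref{intermediary}) and $G^{\textrm{eq}}(\mathcal{A}_{\mathbb{P}})=Z=\Sigma(\mathcal{G}_{\mathbb{L}})$ in the finite-coproduct presentation $G^{-}\sqcup G^{+}$ of $\Sigma$ (the underlying-object part of this is Claim~1 in the proof of Proposition~\ref{Sigma}). In that presentation both $\Delta$ and $\Sigma$ are definable using only finite limits, finite coproducts and quotients of equivalence relations, hence commute with structure-preserving functors between pretoposes; therefore $G^{\textrm{eq}}(F^{\textrm{eq}}(\mathcal{G}_{\mathbb{L}}))=\Delta(\Sigma(\mathcal{G}_{\mathbb{L}}))\cong\mathcal{G}_{\mathbb{L}}$ by the natural isomorphism $\varphi$ of Theorem~\ref{equivalence}, and symmetrically $F^{\textrm{eq}}(G^{\textrm{eq}}(\mathcal{A}_{\mathbb{P}}))=\Sigma(\Delta(\mathcal{A}_{\mathbb{P}}))\cong\mathcal{A}_{\mathbb{P}}$ by $\beta$.

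The main obstacle I expect is the bookkeeping behind those two identifications: one must verify, at the level of geometric formulas, that the explicitly written operations of $Z$ (the four-way case split defining $\oplus$, together with $\neg$ and $0$) really do reproduce the operations of $\Sigma(\mathcal{G}_{\mathbb{L}})$ under the coproduct identification $\Gamma(\mathbb{Z}_{\mathscr{E}}\times_{lex}\mathcal{G})\cong G^{-}\sqcup G^{+}$, and the parallel statement that $\textrm{Rad}\circ H$ reproduces $\Delta$ on the operations. These are routine but error-prone formula comparisons; together with the piecewise verification of P.1 for $Z$---the only point calling for a genuine, if short, MV-algebraic computation---they are essentially all the work, after which the natural isomorphisms $\varphi$ and $\beta$ of Theorem~\ref{equivalence} supply the rest.
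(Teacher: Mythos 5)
The statement you are proving is given in the paper as a Remark with no accompanying argument: the authors assert the explicit description of the equivalence $\mathscr{C}_{\mathbb{P}}^{\textrm{eq}}\simeq \mathscr{C}_{\mathbb{L}}^{\textrm{eq}}$ (whose abstract existence is the content of the preceding theorem, via coherent objects of the common classifying topos) and simply display the model $Z$. So there is no proof in the paper to compare against; what you have written is a genuine filling-in, and it is correct and follows the route the paper's framework suggests. Your key moves are all sound: the universal property of the effective positivization (coherent functors out of $\mathscr{C}_{\mathbb{T}}^{\textrm{eq}}$ into a pretopos correspond to $\mathbb{T}$-models there, and are determined up to isomorphism by the image of the tautological model), the identification of $F^{\textrm{eq}}(\mathcal{G}_{\mathbb{L}})$ with $\Delta(\mathcal{A}_{\mathbb{P}})$ in its subobject (canonical-representative) presentation from section \ref{intermediary}, the identification of $Z$ with $\Sigma(\mathcal{G}_{\mathbb{L}})$ in the finite-coproduct presentation $G^{-}\sqcup G^{+}$ (which is legitimate precisely because P.4 makes $Rad$ and $Corad$ disjoint, so their union is a coproduct in the pretopos), and the reduction of quasi-inverseness to the isomorphisms $\varphi$ and $\beta$ of Theorem \ref{equivalence}. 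Two small points to tighten: your ``$=$'' signs in $F^{\textrm{eq}}(\mathcal{G}_{\mathbb{L}})=\Delta(\mathcal{A}_{\mathbb{P}})$ and $G^{\textrm{eq}}(\mathcal{A}_{\mathbb{P}})=\Sigma(\mathcal{G}_{\mathbb{L}})$ should be canonical isomorphisms mediated by the representative-choice argument of section \ref{intermediary} and by Claim 1 of Proposition \ref{Sigma} respectively; and the isomorphisms $\varphi$, $\beta$ are established in the paper for models in Grothendieck toposes, so you should note that they transfer to $\mathscr{C}_{\mathbb{L}}^{\textrm{eq}}$ and $\mathscr{C}_{\mathbb{P}}^{\textrm{eq}}$ either because these embed fully as the coherent objects of the classifying toposes, or because their being isomorphisms is witnessed by provability of coherent sequents. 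Neither point is a gap, only bookkeeping you have already flagged.
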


\section{Finitely presented perfect MV-algebras}\label{finitely presented}

We have proved that the categories of irreducible formulas of the theory of perfect MV-algebras and of the theory of $\ell$-groups are equivalent (cf. Theorem \ref{irreducible_formulas}). By Theorem \ref{irr-f.p.}, the semantical counterpart of this equivalence is the categorical equivalence between the categories of finitely presented models of the two theories in $\Set$. In symbols $\textrm{f.p.}\mathbb{P}$-mod$(\Set)\simeq \textrm{f.p.}\mathbb{L}$-mod$(\Set)$. The finitely presentable perfect MV-algebras are thus the images of the finitely presented $\ell$-groups under Di Nola-Lettieri's equivalence.  

Since the class of finitely presentable perfect MV-algebras is a subclass of the variety of MV-algebras, it is natural to wonder whether it is also contained in the class of finitely presented MV-algebras. The answer is negative since Chang's algebra is finitely presented as a model of $\mathbb P$ by the formula $\{x. x\leq \neg x\}$ but it is not finitely presented as an MV-algebra. Indeed, every finitely presented MV-algebra is \textit{semisimple} (see Theorem 3.6.9 \cite{CDM}), i.e., its radical is $\{0\}$, hence the only finitely presented MV-algebra that is also perfect is $\{0,1\}$. 

In general, for any MV-algebra $\mathcal{A}$, the MV-algebra homomorphisms $C\to {\mathcal A}$ are in natural bijection with the set $Inf(\mathcal{A})$ of infinitesimal elements of $\mathcal A$ plus the zero. Thus, whilst the property of  an element of an MV-algebra to be an infinitesimal or $0$ is preserved by filtered colimits of arbitrary algebras in $V(C)$ (since it is defined by the geometric formula $\{x. x\leq \neg x\}$ for algebras in $V(C)$, as stated by Proposition \ref{radical}), this is no longer true for arbitrary MV-algebras, by the definability theorem for theories of presheaf type (Theorem \ref{definability}) applied to the theory $\mathbb{MV}$. Indeed, this property is clearly preserved under arbitrary homomorphisms of MV-algebras and, if it were also preserved by filtered colimits, it would be definable by a geometric formula (cf. Remark \ref{rmk:definability}(b)) and hence $C$ would be finitely presented as an MV-algebra, which we have seen it is not true.

However, as we shall prove below, every finitely presentable perfect MV-algebra is finitely presentable as an algebra in Chang's variety, that is, as a model of $\mathbb C$; conversely, every finitely presentable model of $\mathbb C$ which is perfect is finitely presentable as a model of $\mathbb P$.

\begin{theorem}\label{fin.pre.perf}
Every finitely presentable perfect MV-algebra is finitely presented as an algebra in ${\mathbb C}\textrm{-mod}(\Set)$.
\end{theorem}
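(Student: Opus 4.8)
Since $\mathbb{C}$ is cartesian it is of presheaf type, and for a cartesian theory the categorical notion of finite presentability of a model coincides with that of a finitely presented algebra in the usual universal‑algebraic sense; so it suffices to exhibit a \emph{finite} generating tuple for $\mathcal{A}$ together with \emph{finitely many} MV‑equations which, modulo the axioms of $\mathbb{C}$, present it. The plan is to read off such data from the radical of $\mathcal{A}$.

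First I would record the presentation. As $\mathcal{A}$ is a finitely presentable $\mathbb{P}$‑model, Theorem \ref{irr-f.p.} furnishes a $\mathbb{P}$‑irreducible formula presenting it, and by the remark following Theorem \ref{irreducible_formulas} this formula is, up to isomorphism in $\mathscr{C}_{\mathbb{P}}$, of the form $\textrm{Rad}(\{\vec{x}.\psi\})$ for an $\mathbb{M}$‑cartesian formula $\psi(\vec{x})$; tracing through the description of the functor $F_{V}$ and of $\textrm{Rad}:\mathscr{C}_{\mathbb{M}}\to\mathscr{C}_{\mathbb{P}}$ given in section \ref{intermediary}, we may take $\psi$ to be a finite conjunction $\bigwedge_{j}(s_{j}(\vec{x})=t_{j}(\vec{x}))$ of atomic $\mathcal{L}_{M}$‑formulas, and then — by the Morita‑equivalences of Theorems \ref{equivalence} and \ref{monoid-group}, under which $\mathcal{A}$ corresponds to $Rad(\mathcal{A})$ regarded as a model of $\mathbb{M}$ — this $\psi$ presents $Rad(\mathcal{A})$ on a generating tuple $\vec{r}=(r_{1},\dots,r_{m})$. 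Since $\mathcal{A}$ is perfect we have $\mathcal{A}=Rad(\mathcal{A})\cup Corad(\mathcal{A})=Rad(\mathcal{A})\cup\neg Rad(\mathcal{A})$, and as $\inf$ and $\sup$ are derived MV‑operations, $Rad(\mathcal{A})$ — and hence, together with its image under $\neg$, all of $\mathcal{A}$ — is generated by $\vec{r}$ as an algebra of ${\mathbb C}\textrm{-mod}(\Set)$. Because the monoid operations $+,\inf,\sup,0$ on $Rad(\mathcal{A})$ are induced by the MV‑operations (Lemma \ref{monoid_topos}), the $\mathbb{P}$‑irreducible formula presenting $\mathcal{A}$ unwinds to $\phi(\vec{z}):=\bigwedge_{i}(z_{i}\odot z_{i}=0)\wedge\bigwedge_{j}(s_{j}^{\star}(\vec{z})=t_{j}^{\star}(\vec{z}))$, where $(-)^{\star}$ reinterprets $+,\inf,\sup,0$ as $\oplus,\inf,\sup,0$ and $z\odot z=0$ expresses $z\leq\neg z$; that is, $\mathcal{A}$ is presented, as a model of $\mathbb{P}$, by $\phi$ on the generators $\vec{r}$.

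Next I would introduce the candidate: let $\mathcal{B}$ be the algebra of ${\mathbb C}\textrm{-mod}(\Set)$ presented by generators $\bar{r}_{1},\dots,\bar{r}_{m}$ subject to the finitely many equations $\bar{r}_{i}\odot\bar{r}_{i}=0$ and $s_{j}^{\star}(\bar{r}_{1},\dots,\bar{r}_{m})=t_{j}^{\star}(\bar{r}_{1},\dots,\bar{r}_{m})$; thus $\mathcal{B}$ is a finitely presented algebra in Chang's variety. Since these equations hold of $\vec{r}$ in $\mathcal{A}$, there is a surjective MV‑homomorphism $\eta:\mathcal{B}\twoheadrightarrow\mathcal{A}$ with $\bar{r}_{i}\mapsto r_{i}$; in particular $\mathcal{B}$ is non‑trivial. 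The crucial point is that $\mathcal{B}$ is \emph{perfect}: each $\bar{r}_{i}$ satisfies $\bar{r}_{i}\leq\neg\bar{r}_{i}$, hence lies in $Rad(\mathcal{B})$ (Proposition \ref{radical}); by Lemma \ref{rad_ideal} — which, by the remark following it, holds already in $\mathbb{C}$ — together with the inclusion $Rad(\mathcal{B})\oplus Corad(\mathcal{B})\subseteq Corad(\mathcal{B})$ (valid since $\neg(x\oplus y)=\neg x\odot\neg y\leq\neg y\leq y\leq x\oplus y$ whenever $\neg y\leq y$), the subset $Rad(\mathcal{B})\cup Corad(\mathcal{B})$ is an MV‑subalgebra of $\mathcal{B}$; it contains $\bar{r}_{1},\dots,\bar{r}_{m}$ and hence all of $\mathcal{B}$, so $\mathcal{B}=Rad(\mathcal{B})\cup Corad(\mathcal{B})$. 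With Lemma \ref{lemmaP3} (which yields sequent P.4 for the non‑trivial $\mathbb{C}$‑algebra $\mathcal{B}$) and the argument of Theorem \ref{axiomatization}, this shows that $\mathcal{B}$ is a model of $\mathbb{P}$.

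Finally, since $\mathcal{B}$ is a model of $\mathbb{P}$ and the tuple $(\bar{r}_{1},\dots,\bar{r}_{m})$ lies in $[[\vec{z}.\phi]]_{\mathcal{B}}$ by construction, the universal property of the presentation of $\mathcal{A}$ by $\phi$ on $\vec{r}$ yields an MV‑homomorphism $\mu:\mathcal{A}\to\mathcal{B}$ with $r_{i}\mapsto\bar{r}_{i}$; then $\eta\circ\mu$ and $\mu\circ\eta$ fix the respective generating tuples, hence are identities, so $\mathcal{A}\cong\mathcal{B}$ is finitely presented as an algebra in ${\mathbb C}\textrm{-mod}(\Set)$. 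I expect the main obstacle to be the verification that $\mathcal{B}$ is perfect — that imposing only the `obvious' relations (the generators lying in the radical, plus the $\mathbb{M}$‑relations of $Rad(\mathcal{A})$) already forces $\mathcal{B}=Rad(\mathcal{B})\cup Corad(\mathcal{B})$; once this is in place the two universal properties fit together routinely. A secondary point needing care is the identification of the $\mathbb{P}$‑irreducible formula presenting $\mathcal{A}$ with the explicit formula $\phi$, which rests on the description of $\textrm{Rad}:\mathscr{C}_{\mathbb{M}}\to\mathscr{C}_{\mathbb{P}}$ and on the monoid structure of $Rad(\mathcal{A})$ being induced by the MV‑operations.
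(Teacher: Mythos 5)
Your argument is correct in substance and turns on the same key fact as the paper's proof --- namely that the generators of $\mathcal{A}$ can be taken in the radical and that, for an algebra $\mathcal{B}$ in $\mathbb{C}\textrm{-mod}(\Set)$, the subset $Rad(\mathcal{B})\cup Corad(\mathcal{B})$ is a (perfect) subalgebra --- but it packages that fact differently. The paper simply takes the $\mathbb{P}$-irreducible formula $\phi(\vec{x})$ presenting $\mathcal{A}$ and checks directly that $\{\vec{x}.\phi\wedge x_{1}\leq\neg x_{1}\wedge\cdots\wedge x_{n}\leq\neg x_{n}\}$ presents $\mathcal{A}$ as a $\mathbb{C}$-model: any tuple satisfying it in $\mathcal{B}$ lies in $Rad(\mathcal{B})$, hence the $\mathbb{P}$-universal property of $\mathcal{A}$ applied to the perfect algebra $\langle Rad(\mathcal{B})\rangle$ gives the required unique homomorphism. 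You instead extract an explicit finite equational presentation, form the algebra $\mathcal{B}$ it presents in Chang's variety, prove $\mathcal{B}$ is perfect, and match the two universal properties; this buys an explicit presentation by MV-equations, at the cost of the reduction in your first paragraph, where you should say a word about stripping the provably unique existential quantifiers that an $\mathbb{M}$-cartesian formula may contain (adjoining their witnesses as extra generators) before claiming $\psi$ is a conjunction of atomic formulas.

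Two points need repair. First, you justify the closure properties of $Rad(\mathcal{B})\cup Corad(\mathcal{B})$ by invoking the remark following Lemma \ref{rad_ideal}, i.e.\ Proposition \ref{prop:cartesianization}; but that proposition is proved using the subcanonicity of $J_{\mathbb{P}}$ (Theorem \ref{thm:subdirect_product}), which relies on the rigidity of $J_{\mathbb{P}}$, which is itself deduced from the theorem you are proving. As written this is circular. It is easily fixed without any forward reference: by Proposition \ref{radical}, $\{x. x\leq\neg x\}$ is the radical of $\mathcal{B}$, hence an ideal, hence closed under $\oplus$, $\inf$, $\sup$ and downward closed, and your direct computations for the coradical then give the subalgebra property. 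Second, the step ``$\eta\circ\mu$ fixes the generating tuple, hence is the identity'' tacitly assumes that $\vec{r}$ generates $\mathcal{A}$ as an MV-algebra, which is not immediate from $\vec{r}$ being the generating tuple of a presentation of the (merely cartesian, not algebraic) theory $\mathbb{M}$; it is cleaner to invoke the uniqueness clause of the $\mathbb{P}$-presentation of $\mathcal{A}$ by $\phi$, which forces $\eta\circ\mu=\mathrm{id}_{\mathcal{A}}$ since both homomorphisms send $\vec{r}$ to $\vec{r}\in[[\vec{z}.\phi]]_{\mathcal{A}}$. The surjectivity of $\eta$ is likewise unnecessary: non-triviality of $\mathcal{B}$ already follows from the mere existence of a homomorphism onto a tuple in the non-trivial algebra $\mathcal{A}$.
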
 
\begin{proof}
Let $\mathcal{A}$ be a finitely presentable perfect MV-algebra, presented by a $\mathbb{P}$-irreducible geometric formula $\{\vec{x}.\phi\}$, with $\vec{x}=(x_1, \dots, x_n)$. This MV-algebra is finitely presented as an MV-algebra in ${\mathbb C}\textrm{-mod}(\Set)$ by the formula $\{\vec{x}.\phi\wedge x_{1}\leq \neg x_{1} \wedge \cdots \wedge x_{n}\leq \neg x_{n}\}$. Indeed, for any MV-algebra $\mathcal{B}$ in ${\mathbb C}\textrm{-mod}(\Set)$ and any tuple $\vec{y}\in[[\vec{x}.\phi\wedge x_{1}\leq \neg x_{1} \wedge \cdots \wedge x_{n}\leq \neg x_{n}]]_{\mathcal{B}}$, $y_{1}, \ldots, y_{n}\in Rad(\mathcal{B})$. Now, the MV-subalgebra of $\mathcal{B}$ generated by $Rad(\mathcal{B})$ is perfect, whence there exists a unique MV-algebra homomorphism $f:\mathcal{A}\rightarrow \langle Rad(\mathcal{B})\rangle\hookrightarrow {\mathcal B}$ such that $f(\vec{x})=\vec{y}$.  
\end{proof}

\section{The classifying topos for perfect MV-algebras}\label{classifying topos}

Recall that the theory $\mathbb{P}$ of perfect MV-algebras is a quotient of the theory ${\mathbb C}$ of MV-algebras in Chang's variety. From the Duality Theorem of \cite{Caramello1} we know that the classifying topos of $\mathbb{P}$ can be represented as a subtopos $\mathbf{Sh}(\textrm{f.p.}\mathbb{C}$-mod$(\Set)^{op},J_{\mathbb{P}})$ of the classifying topos $[\textrm{f.p.}\mathbb{C}$-mod$(\Set),\Set]$ of $\mathbb{C}$, where $J_{\mathbb{P}}$ is the Grothendieck topology associated to the quotient $\mathbb{P}$. 

\begin{theorem}(Theorem 6.26 \cite{Caramello5})
Let $\mathbb{T}'$ be a quotient of a theory of presheaf type $\mathbb{T}$ corresponding to a Grothendieck topology $J$ on the category $\textrm{f.p.}\mathbb{T}$-mod$(\Set)^{op}$ under the duality theorem of \cite{Caramello1}. Suppose that $\mathbb{T}'$ is itself of presheaf type. Then every finitely presentable $\mathbb{T}'$-model is finitely presentable also as a $\mathbb{T}$-model if and only if the topology $J$ is rigid.
\end{theorem}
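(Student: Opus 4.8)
The strategy is to play the two available descriptions of the classifying topos of $\mathbb{T}'$ against one another: on one side $\mathscr{E}_{\mathbb{T}'}\simeq[\textrm{f.p.}\mathbb{T}'\textrm{-mod}(\Set),\Set]$ since $\mathbb{T}'$ is of presheaf type, and on the other $\mathscr{E}_{\mathbb{T}'}\simeq\Sh(\mathscr{C},J)$ with $\mathscr{C}:=\textrm{f.p.}\mathbb{T}\textrm{-mod}(\Set)\op$, by the Duality Theorem of \cite{Caramello1}. First I would dispose of the ``easy half'': for $\mathbb{T}'$-models, homomorphisms coincide with homomorphisms of the underlying $\mathbb{T}$-models ($\mathbb{T}'\textrm{-mod}(\Set)$ being a full subcategory of $\mathbb{T}\textrm{-mod}(\Set)$), and filtered colimits of $\mathbb{T}'$-models are computed at the level of underlying structures, so a $\mathbb{T}'$-model which is finitely presentable as a $\mathbb{T}$-model is automatically finitely presentable as a $\mathbb{T}'$-model. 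Hence the content of the theorem is the converse implication, and it can be recast as a statement comparing, inside $\mathscr{C}$, the full subcategory $\mathscr{C}_{\textrm{irr}}\hookrightarrow\mathscr{C}$ of $J$-irreducible objects with the full subcategory of those finitely presentable $\mathbb{T}'$-models whose underlying $\mathbb{T}$-model is still finitely presentable (the latter being, by the easy half, a full subcategory of $\mathscr{C}\op$).

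For the implication ``$J$ rigid $\Rightarrow$ every finitely presentable $\mathbb{T}'$-model is finitely presentable as a $\mathbb{T}$-model'': when $J$ is rigid, the Comparison Lemma applied to the dense full subcategory $\mathscr{C}_{\textrm{irr}}$ (cf.\ \cite[C2.2.18--C2.2.20]{SE}) gives $\Sh(\mathscr{C},J)\simeq[\mathscr{C}_{\textrm{irr}}\op,\Set]$, with the representables of $\mathscr{C}_{\textrm{irr}}$-objects being $J$-sheaves, whence $[\textrm{f.p.}\mathbb{T}'\textrm{-mod}(\Set),\Set]\simeq[\mathscr{C}_{\textrm{irr}}\op,\Set]$. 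Both $\textrm{f.p.}\mathbb{T}'\textrm{-mod}(\Set)$ and $\mathscr{C}_{\textrm{irr}}$ are Cauchy-complete --- the former since finitely presentable models are stable under retracts, the latter since $J$-irreducible objects are stable under retracts in the Cauchy-complete category $\mathscr{C}$ (a short check: a $J$-covering sieve on a retract $c$ of a $J$-irreducible $d$ pulls back along the retraction $d\to c$ to the maximal sieve of $d$, hence, after composing with the section $c\to d$, contains the identity of $c$). Since a presheaf topos determines its indexing category up to Cauchy-completion, we obtain an equivalence $\textrm{f.p.}\mathbb{T}'\textrm{-mod}(\Set)\op\simeq\mathscr{C}_{\textrm{irr}}\subseteq\mathscr{C}=\textrm{f.p.}\mathbb{T}\textrm{-mod}(\Set)\op$, and one checks that it is compatible with the forgetful functor $\mathbb{T}'\textrm{-mod}(\Set)\to\mathbb{T}\textrm{-mod}(\Set)$, i.e.\ that it carries a $J$-irreducible $c$, regarded as a finitely presentable $\mathbb{T}$-model, to the finitely presentable $\mathbb{T}'$-model with the same underlying structure; this is done by tracking the universal models along the canonical geometric inclusion $\Sh(\mathscr{C},J)\hookrightarrow[\mathscr{C}\op,\Set]=\mathscr{E}_{\mathbb{T}}$, whose inverse image sends the universal $\mathbb{T}$-model to the universal $\mathbb{T}'$-model. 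It then follows that every finitely presentable $\mathbb{T}'$-model is isomorphic, as a $\mathbb{T}$-model, to an object of $\mathscr{C}_{\textrm{irr}}$, hence finitely presentable as a $\mathbb{T}$-model.

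For the converse, assume every finitely presentable $\mathbb{T}'$-model is finitely presentable as a $\mathbb{T}$-model; by the easy half this identifies $\textrm{f.p.}\mathbb{T}'\textrm{-mod}(\Set)\op$ with a full subcategory $\mathscr{D}$ of $\mathscr{C}$. I would first show that every object of $\mathscr{D}$ is $J$-irreducible: under $\Sh(\mathscr{C},J)\simeq[\textrm{f.p.}\mathbb{T}'\textrm{-mod}(\Set),\Set]$, the object of $\mathscr{C}$ attached to a finitely presentable $\mathbb{T}'$-model is sent by sheafification (from $[\mathscr{C}\op,\Set]$) to the corresponding representable of the presheaf topos, which is an irreducible object whose sheafification is representable; unwinding this forces the only $J$-covering sieve on such an object of $\mathscr{C}$ to be the maximal one. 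Next, for an arbitrary object $c$ of $\mathscr{C}$, one writes the sheafification $a(\ell(c))$ (with $\ell$ the Yoneda embedding of $\mathscr{C}$) as a quotient of a coproduct of representables of $\textrm{f.p.}\mathbb{T}'\textrm{-mod}(\Set)$ --- possible because these form a separating family --- and pulls this epimorphism back along the unit $\ell(c)\to a(\ell(c))$; using that $a$ preserves images, this produces a $J$-covering sieve on $c$ generated by arrows out of objects of $\mathscr{D}=\mathscr{C}_{\textrm{irr}}$, which is precisely the rigidity of $J$.

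The step I expect to be the real obstacle is the compatibility/identification underlying both directions: pinning down that, under $\Sh(\mathscr{C},J)\simeq[\textrm{f.p.}\mathbb{T}'\textrm{-mod}(\Set),\Set]$, the subcategory of $J$-irreducible objects of $\mathscr{C}$ corresponds to exactly those finitely presentable $\mathbb{T}'$-models whose underlying $\mathbb{T}$-model stays finitely presentable, and that this correspondence is the identity on underlying structures --- which forces one to relate carefully the sheafification functor $[\mathscr{C}\op,\Set]\to\Sh(\mathscr{C},J)$, the representables on both sides, and the universal models of $\mathbb{T}$ and $\mathbb{T}'$. Everything else --- the Comparison Lemma for rigid topologies, the Cauchy-completion invariance of presheaf toposes, and the behaviour of finitely presentable models under filtered colimits --- is routine once this identification is available.
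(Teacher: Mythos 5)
A preliminary remark: the paper does not prove this statement at all — it imports it verbatim as Theorem 6.26 of \cite{Caramello5} — so there is no internal proof to measure you against. Judged on its own terms, your proposal follows what is essentially the intended argument: play the presheaf representation $[\textrm{f.p.}\mathbb{T}'\textrm{-mod}(\Set),\Set]$ against the subtopos representation $\Sh(\mathscr{C},J)$, use the Comparison Lemma and Cauchy-completeness for the forward direction, and separation by representables for the converse. Your ``easy half'' (finitely presentable over $\mathbb{T}$ implies finitely presentable over $\mathbb{T}'$, since $\mathbb{T}'\textrm{-mod}(\Set)$ is a full subcategory of $\mathbb{T}\textrm{-mod}(\Set)$ closed under filtered colimits) is correct and is the right way to set up the comparison.

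The obstacle you flag at the end is the genuine crux, and both of your ``unwinding'' steps in the converse direction silently rely on it, so it is worth recording the one computation that closes it. Writing $\ell=a\circ y:\mathscr{C}\to\Sh(\mathscr{C},J)$, the inverse image of the inclusion $\Sh(\mathscr{C},J)\hookrightarrow[\mathscr{C}^{\textrm{op}},\Set]\simeq\mathscr{E}_{\mathbb{T}}$ sends the universal $\mathbb{T}$-model to the universal $\mathbb{T}'$-model, so $\ell(c)$ is the interpretation in $U_{\mathbb{T}'}$ of the formula presenting $M_{c}$; under $\mathscr{E}_{\mathbb{T}'}\simeq[\textrm{f.p.}\mathbb{T}'\textrm{-mod}(\Set),\Set]$ this is the functor $N\mapsto \textrm{Hom}_{\mathbb{T}\textrm{-mod}}(M_{c},N)$. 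Hence, whenever $d$ corresponds to a $\mathbb{T}'$-model, $\textrm{Hom}_{\Sh(\mathscr{C},J)}(\ell(d),\ell(c))\cong \ell(c)(d)\cong \textrm{Hom}_{\mathscr{C}}(d,c)$: maps out of (and, for $c\in\mathscr{D}$, into) such objects in the sheaf topos all come from actual arrows of $\mathscr{C}$. This is exactly what lets you (a) conclude, in the first step of the converse, that the splitting of some $\ell(f)$, $f\in S$, provided by projectivity and indecomposability of the representable $\ell(c)$ is of the form $\ell(g)$ with $f\circ g=1_{c}$, so that $1_{c}\in S$ and $c$ is $J$-irreducible — without it, a section of $\ell(f)$ need not lie in the image of $\ell$ and you cannot conclude that $S$ is maximal; and (b) conclude, in the second step, that the jointly epimorphic family of maps from representables is induced by a genuine sieve on $c$ generated by arrows from objects of $\mathscr{D}$. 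So your proof is complete once this identification is written out, and you have correctly located it as the load-bearing step. One small inaccuracy: in the forward direction you assert that the representables of $J$-irreducible objects are $J$-sheaves; rigidity does not imply subcanonicity, so this need not hold — but it is also not needed, since the Comparison Lemma only requires $\mathscr{C}_{\textrm{irr}}$ to be $J$-dense with trivial induced topology.
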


From the theorem it follows that the topology $J_{\mathbb{P}}$ is rigid, since we proved in the last section that $\textrm{f.p.}\mathbb{P}$-mod$(\Set)\subseteq \textrm{f.p.}\mathbb{C}$-mod$(\Set)$. Moreover, from the remark following Theorem 6.26 \cite{Caramello5} we know that the $J_{\mathbb{P}}$-irreducible objects are precisely the objects of the category $\textrm{f.p.}\mathbb{P}$-mod$(\mathbf{Set})$.
 
We can describe the Grothendieck topology $J_{\mathbb{P}}$ explicitly as follows (cf. \cite{Caramello5} for the general method for calculating the Grothendieck topology associated to a quotient of a theory of presheaf type). The theory $\mathbb{P}$ of perfect MV-algebras  is obtained from $\mathbb{C}$ by adding the axioms 
\begin{itemize}
\item[P.3] $x\oplus x=x\vdash_{x}x=0\vee x=1$
\item[P.4] $x=\neg x\vdash_{x}\perp$
\end{itemize} 
or equivalently, 
\begin{itemize}
\item[ P.3'] $\inf(x, \neg x)=0\vdash_{x}x=0\vee x=1$
\item[ P.4 ] $x=\neg x\vdash_{x}\perp$
\end{itemize}
where $d$ is defined as follows: for any $x,y\in A$, $d(x,y)=(x\ominus y)\oplus (y\ominus x)$.

The axioms P.3' and P.4 generate two cosieves $S_{P.3'}$ and $S_{P.4}$ in $\textrm{f.p.}\mathbb{C}$-mod$(\Set)$, and consequently two sieves in $\textrm{f.p.}\mathbb{C}$-mod$(\Set)^{\textrm{op}}$. The topology $J_{\mathbb{P}}$ on $\textrm{f.p.}\mathbb{C}$-mod$(\Set)^{\textrm{op}}$ is generated by these sieves. Specifically,

\begin{itemize}
\item the cosieve $S_{P.3'}$ is generated by the canonical projections 
\begin{center}
$p_1:Free_{x}/(\inf(x,\neg x))\rightarrow Free_{x}/(x)$,
$p_2:Free_{x}/(\inf(x,\neg x))\rightarrow Free_{x}/(\neg x)$ 
\end{center}
\item the cosieve $S_{P.4}$ is the empty one on the trivial algebra in Chang's variety.
\end{itemize}

The cotopology induced by $J_{\mathbb{P}}$ on the category $\textrm{f.p.}\mathbb{C}$-mod$(\Set)$ is thus genera\-ted by the empty cosieve on the trivial algebra and the finite `multicompositions' of the pushouts of the generating arrows of the cosieve $S_{P.3'}$ along arbitrary homomorphisms in $\textrm{f.p.}\mathbb{C}$-mod$(\Set)$. We can describe these pushouts explici\-tly. Let $f:Free_{x}/(\inf(x,\neg x))\rightarrow \mathcal{A}$ be an MV-homomorphism in $\textrm{f.p.}\mathbb{C}$-mod$(\Set)$; then the pushouts of the generating arrows of $S_{P.3'}$ along $f$ are: $f_1:\mathcal{A}\rightarrow \mathcal{A}/(a)$ and $f_2:\mathcal{A}\rightarrow \mathcal{A}/(\neg a)$, where $a=f([x])\in A$ satisfies $\inf(a,\neg a)=0$.


We shall say that an MV-algebra $\mathcal A$ is a \emph{weak subdirect product} of a family $\{{\mathcal{A}_{i} \mid i\in I}\}$ of MV-algebras if the arrows $\mathcal{A}\to \mathcal{A}_{i}$ are jointly injective (equivalently, jointly monic).


Note that every weak subdirect product of finitely presented perfect MV-algebras is in $\mathbb{C}$-mod$(\Set)$. Indeed, perfect MV-algebras are in $\mathbb{C}$-mod$(\Set)$ and the identities that define this variety are preserved by weak subdirect products. It is natural to wonder if the converse is true, that is if every algebra in $\mathbb{C}$-mod$(\Set)$ is a weak subdirect product of finitely presented perfect MV-algebras. We shall prove in the following that the answer is affirmative. 

\begin{theorem}\label{thm:subdirect_product}
Every finitely presented non-trivial MV-algebra in $\mathbb{C}$-mod$(\Set)$ is a direct product of a finite family of finitely presented perfect MV-algebras. In fact, the topology $J_{\mathbb{P}}$ is subcanonical.
\end{theorem}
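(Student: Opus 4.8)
The plan is to derive the statement from the explicit description of the Grothendieck topology $J_{\mathbb{P}}$ on $\textrm{f.p.}\mathbb{C}\textrm{-mod}(\Set)^{\textrm{op}}$ recalled above, together with its rigidity, via the classical correspondence between Boolean elements of an MV-algebra and its binary direct product decompositions. The key input is the following \emph{Fact} (i): for any Boolean element $a$ of an MV-algebra $\mathcal{A}$ (i.e.\ $a\oplus a=a$, equivalently $\inf(a,\neg a)=0$), the two quotient maps assemble into an isomorphism $\mathcal{A}\cong\mathcal{A}/(a)\times\mathcal{A}/(\neg a)$ (this is standard; cf.\ \cite{CDM}), and both factors are finitely presentable $\mathbb{C}$-models when $\mathcal{A}$ is, being quotients of $\mathcal{A}$ by principal ideals. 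In the language of the syntactic site this says that the cosieve on $\mathcal{A}$ generated by the pair $\mathcal{A}\to\mathcal{A}/(a)$, $\mathcal{A}\to\mathcal{A}/(\neg a)$ --- which is precisely the pushout along the homomorphism $Free_{x}/(\inf(x,\neg x))\to\mathcal{A}$ sending $[x]$ to $a$ of the generating pair of $S_{P.3'}$ --- exhibits $\mathcal{A}$ as a disjoint binary coproduct of $\mathcal{A}/(a)$ and $\mathcal{A}/(\neg a)$ in $\textrm{f.p.}\mathbb{C}\textrm{-mod}(\Set)^{\textrm{op}}$ (the pushout of the two quotient maps being the trivial algebra); hence this cosieve generates an effective-epimorphic sieve that remains effective-epimorphic under any pullback, and consequently so does any finite `multicomposition' of such splittings, which then realises $\mathcal{A}$ as a finite product of its `leaves'.

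I would next record \emph{Fact} (ii): a non-trivial $\mathbb{C}$-model $\mathcal{A}$ whose only Boolean elements are $0$ and $1$ is perfect. Indeed, by axioms P.1 and P.2 the element $2x^2=(2x)^2$ is Boolean for each $x$, hence equals $0$ or $1$; in the first case $x^2=0$, i.e.\ $x\leq\neg x$, and in the second $2x=1$, i.e.\ $\neg x\leq x$, so $A=Rad(\mathcal{A})\cup Corad(\mathcal{A})$ by Proposition \ref{radical}. Conversely a perfect MV-algebra has no non-trivial Boolean element (one lying in the radical is $0$ by cancellativity of $Rad(\mathcal{A})$, Lemma \ref{monoid}, and one lying in the coradical is $1$); so by the description of $J_{\mathbb{P}}$, a finitely presentable perfect $\mathbb{C}$-model admits only the maximal covering sieve, i.e.\ is $J_{\mathbb{P}}$-irreducible, and therefore belongs to $\textrm{f.p.}\mathbb{P}\textrm{-mod}(\Set)$. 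Now let $\mathcal{A}$ be a non-trivial finitely presentable $\mathbb{C}$-model. By rigidity of $J_{\mathbb{P}}$, the cosieve $R$ on $\mathcal{A}$ generated by all homomorphisms from $\mathcal{A}$ to finitely presentable perfect MV-algebras is $J_{\mathbb{P}}$-covering. Since the finite multicompositions of Boolean splittings, together with the empty cosieve on the trivial algebra, form a basis for the induced cotopology --- they are closed under composition and stable under pullback, because the pushout of a Boolean splitting along any homomorphism is again a Boolean splitting --- the cosieve $R$ contains such a basic cover; as $\mathcal{A}$ is non-trivial this is not the empty one, so, discarding trivial factors, we obtain a finite multicomposition of Boolean splittings of $\mathcal{A}$ with non-trivial finitely presentable $\mathbb{C}$-model leaves $\mathcal{B}_1,\dots,\mathcal{B}_k$. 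Each $\mathcal{A}\to\mathcal{B}_j$ lies in $R$, hence factors through a surjection onto a finitely presentable perfect MV-algebra, so $\mathcal{B}_j$ is a non-trivial quotient of a perfect MV-algebra, hence perfect, and so, by Fact (ii), a finitely presentable perfect MV-algebra. By Fact (i), $\mathcal{A}\cong\mathcal{B}_1\times\cdots\times\mathcal{B}_k$, which proves the first assertion.

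For the second assertion, recall that the canonical topology on $\textrm{f.p.}\mathbb{C}\textrm{-mod}(\Set)^{\textrm{op}}$ is a Grothendieck topology all of whose covering sieves are effective-epimorphic, its covers being precisely those sieves which, together with all their pullbacks, are effective-epimorphic. By Fact (i) the generating cosieve $S_{P.3'}$ of $J_{\mathbb{P}}$ has this property, and the empty cosieve $S_{P.4}$ on the trivial algebra has it trivially, the trivial algebra being the initial object of $\textrm{f.p.}\mathbb{C}\textrm{-mod}(\Set)^{\textrm{op}}$. Hence the canonical topology contains both generating cosieves of $J_{\mathbb{P}}$, so it contains $J_{\mathbb{P}}$ itself; therefore every $J_{\mathbb{P}}$-covering sieve is effective-epimorphic, i.e.\ $J_{\mathbb{P}}$ is subcanonical.

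The step I expect to be the main obstacle is the careful verification of Fact (i): both the MV-algebraic content --- that the pair of quotient maps induces an isomorphism $\mathcal{A}\cong\mathcal{A}/(a)\times\mathcal{A}/(\neg a)$ and that the resulting coproduct in $\textrm{f.p.}\mathbb{C}\textrm{-mod}(\Set)^{\textrm{op}}$ is disjoint --- and the ensuing claim that a finite iteration of such splittings behaves as a finite (disjoint) coproduct cone, equivalently that the representable presheaves satisfy the sheaf condition with respect to these basic covers. A secondary point demanding care is the verification that the Boolean splittings are pullback-stable, so that every $J_{\mathbb{P}}$-covering sieve really does contain a finite multicomposition of them.
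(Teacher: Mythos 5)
Your proposal is correct and follows essentially the same route as the paper: rigidity of $J_{\mathbb{P}}$ plus the explicit basis of Boolean splittings, the product decomposition $\mathcal{A}\cong\mathcal{A}/(a)\times\mathcal{A}/(\neg a)$ for Boolean $a$ (which the paper derives from the Dubuc--Poveda pushout-pullback lemma rather than citing \cite{CDM}), and the effective-epimorphy of the generating sieves for subcanonicity. Your added Fact (ii) and the ``non-trivial quotient of a perfect algebra is perfect'' step are a slightly more explicit justification of what the paper compresses into its appeal to the identification of the $J_{\mathbb{P}}$-irreducible objects with the finitely presented perfect MV-algebras.
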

\begin{proof}

Let $\mathcal{A}\in \mathbb{C}$-mod$(\Set)$ be a finitely presented non-trivial MV-algebra. This algebra satisfies the axiom P.4 (cf. [Lemma \ref{lemmaP3}]); thus, the only non-trivial $J_{\mathbb{P}}$-coverings of $\mathcal{A}$ are those which contain a cosieve generated by finite multicompositions of the pushouts of $p_1$ and $p_2$, that is

\begin{center}
\begin{tikzpicture}
\node (1) at (0,0) {$\mathcal{A}$};
\node (2) at (2.5,1.5) {$\mathcal{A}/(a_1^1)$};
\node (3) at (2.5,-1.5) {$\mathcal{A}/(\neg a_1^1)$};
\node (4) at (5,3) {$\mathcal{A}/(a_1^1)/([a_1^2])$};
\node (5) at (5,1) {$\mathcal{A}/(a_1^1)/(\neg[a_1^2])$};
\node (6) at (5,-1) {$\mathcal{A}/(\neg a_1^1)/([a_2^2])$};
\node (7) at (5,-3) {$\mathcal{A}/(\neg a_1^1)/(\neg[a_2^2])$};
\node (8) at (6.5,3) {$\dots$} ;
\node (9) at (6.5,1) {$\dots$};
\node (10) at (6.5,-1) {$\dots$};
\node (11) at (6.5,-3) {$\dots$};
\node (12) at (10,3.5) {$\mathcal{A}/(a_1^1)/\dots/([\dots[a^{n}_1]\dots])(=:\mathcal{A}_1)$};
\node (13) at (10,2.5) {$\mathcal{A}/(a_1^1)/\dots/(\neg[\dots[a^{n}_1]\dots])(=:\mathcal{A}_2)$};
\node (14) at (10,-2.5) {$\mathcal{A}/(\neg a_1^1)/\dots/([\dots[a^{n}_{2^{n-1}}]\dots])(=:\mathcal{A}_{2^{n}-1})$};
\node (15) at (10,-3.5) {$\mathcal{A}/(\neg a_1^1)/\dots/(\neg[\dots[a^{n}_{2^{n-1}}]\dots])(=:\mathcal{A}_{2^n})$};
\draw[->](1) to node [below]{} (2);
\draw[->](1) to node [below]{} (3);
\draw[->](2) to node [below]{} (4);
\draw[->](2) to node [below]{} (5);
\draw[->](3) to node [below]{} (6);
\draw[->](3) to node [below]{} (7);
\draw[->](8) to node [below]{} (12);
\draw[->](8) to node [below]{} (13);
\draw[->](11) to node [below]{} (14);
\draw[->](11) to node [below]{} (15);
\end{tikzpicture}
\end{center}


Now, the pushout-pullback lemma (Lemma 7.1 \cite{DubucPoveda}) asserts that for any MV-algebra $\mathcal{A}$ and any elements $x,y \in \mathcal{A}$, the following pullback diagram is also a pushout:
\[  
\xymatrix {
\mathcal{A}\slash (\inf(x, y)) \ar[d] \ar[r] &  \mathcal{A}\slash (y) \ar[d]  \\
\mathcal{A}\slash (x) \ar[r]  & \mathcal{A}\slash (\sup(x, y)).}
\] 

Note that if $\inf(x, y)=0$ then $\sup(x, y)=x\oplus y$; in particular, for any Boolean element $x$ of $\mathcal A$, $\mathcal A$ is the product of $\mathcal{A}\slash (x)$ and $\mathcal{A}\slash (\neg x)$. The same reasoning can be repeated for every pair of arrows in the diagram. It follows that the MV-algebra $\mathcal{A}$ is the direct product of the $\mathcal{A}_i$. 

Since $J_{\mathbb{P}}$ is rigid and the $J_{\mathbb{P}}$-irreducible objects are the finitely presented perfect MV-algebras, there is a $J_{\mathbb{P}}$-covering of $\mathcal{A}$ such that all the $\mathcal{A}_i$ are finitely presented perfect MV-algebras.

Finally, we observe that for any Boolean element $x$ of an MV-algebra $\mathcal{A}$, there is a unique arrow $\mathcal{A}\slash (x) \to \mathcal{A}\slash (\neg x)$ over $\mathcal{A}$ if and only if $x=0$, whence the sieve generated by the family $\{\mathcal{A} \to \mathcal{A}\slash (x), \mathcal{A} \to \mathcal{A}\slash (\neg x) \}$ is effective epimorphic in $\textrm{f.p.}\mathbb{C}$-mod$(\Set)^{\textrm{op}}$ if and only if $\{\mathcal{A} \to \mathcal{A}\slash (x), \mathcal{A} \to \mathcal{A}\slash (\neg x) \}$ is a product diagram in $\textrm{f.p.}\mathbb{C}$-mod$(\Set)$. 

This proves our statement. 
\end{proof}

We can give a more explicit description of a family of finitely presented perfect MV-algebras $\{\mathcal{A}_1,\dots,\mathcal{A}_{m}\}$ such that the family of arrows $\{\mathcal{A}\to \mathcal{A}_{i} \mid i\in \{1, \ldots m\}\}$ as in the proof of Theorem \ref{thm:subdirect_product} generates a $J_{\mathbb P}$-covering sieve. 

\begin{lemma}\label{Boolgen}
Let $\mathcal{A}$ be an MV-algebra in $\mathbb{C}$-mod$(\Set)$ generated by elements $\{x_1,\dots,x_n\}$. Then the boolean skeleton\footnote{Recall that the Boolean skeleton of an MV-algebra is the subalgebra  $\mathcal{B}(\mathcal{A})=\{x\in A\mid x\oplus x=x\}$ of Boolean elements of $\mathcal{A}$, cf. p. 25 \cite{Mundici}.} $\mathcal{B}(\mathcal{A})$ of $\mathcal{A}$ is finitely generated by the family $\{(2x_1)^2,\dots,(2x_n)^2\}$.
\end{lemma}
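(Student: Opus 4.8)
The plan is to compare $\mathcal{B}(\mathcal{A})$ with the quotient $\mathcal{A}/Rad(\mathcal{A})$ through the canonical projection $\pi\colon \mathcal{A}\to \mathcal{A}/Rad(\mathcal{A})$. First one records the routine facts: $\mathcal{B}(\mathcal{A})$ is a Boolean subalgebra of $\mathcal{A}$; each $(2x_i)^2$ belongs to it, since axiom P.2 of $\mathbb C$ asserts precisely that $(2x_i)^2$ is idempotent for $\oplus$; and hence the subalgebra $\mathcal{B}'$ of $\mathcal{A}$ generated by $\{(2x_1)^2,\dots,(2x_n)^2\}$ is contained in $\mathcal{B}(\mathcal{A})$. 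The content of the lemma is the reverse inclusion.

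The key structural input is that $\mathcal{A}/Rad(\mathcal{A})$ is a Boolean algebra. Since $V(C)$ is a variety, $\mathcal{A}/Rad(\mathcal{A})$ again lies in it, and being semisimple it embeds into a product of the simple quotients $\mathcal{A}/\mathfrak{m}$, each of which again lies in $V(C)$; so it suffices to check that a simple MV-algebra $\mathcal{S}$ in $V(C)$ is isomorphic to $\{0,1\}$. By the classical fact that a simple MV-algebra embeds into $[0,1]$ (cf. \cite{CDM}) and the validity of sequent $\beta$ in $[0,1]$, every $a\in \mathcal{S}$ satisfies $a\leq \neg a$ or $\neg a\leq a$; but by Proposition \ref{radical} we have $Rad(\mathcal{S})=\{x\mid x\leq \neg x\}$, which is $\{0\}$ by simplicity of $\mathcal{S}$, so one of $a$, $\neg a$ equals $0$, i.e. $a\in\{0,1\}$. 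Hence $\mathcal{A}/Rad(\mathcal{A})$ embeds into a power of $\{0,1\}$ and is therefore a Boolean algebra.

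With this in hand the argument closes quickly. The kernel of the restriction $\pi|_{\mathcal{B}(\mathcal{A})}$ is $\mathcal{B}(\mathcal{A})\cap Rad(\mathcal{A})$, and this is $\{0\}$: if $b$ is Boolean and $b\leq \neg b$ (using Proposition \ref{radical} once more) then $b=\inf(b,\neg b)=0$ by the complement law in the Boolean algebra $\mathcal{B}(\mathcal{A})$; so $\pi|_{\mathcal{B}(\mathcal{A})}$ is injective. On the other hand, since every element of the Boolean algebra $\mathcal{A}/Rad(\mathcal{A})$ is $\oplus$-idempotent, we get $\pi((2x_i)^2)=(2\pi(x_i))^2=\pi(x_i)$; as $\pi(x_1),\dots,\pi(x_n)$ generate $\mathcal{A}/Rad(\mathcal{A})$ (they are images of generators of $\mathcal{A}$ under a surjection) and $\pi(\mathcal{B}')$ is a subalgebra containing all of them, we conclude $\pi(\mathcal{B}')=\mathcal{A}/Rad(\mathcal{A})$. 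Therefore, given any $b\in\mathcal{B}(\mathcal{A})$ there is $b'\in\mathcal{B}'$ with $\pi(b')=\pi(b)$, and injectivity of $\pi$ on $\mathcal{B}(\mathcal{A})$ forces $b=b'\in\mathcal{B}'$; hence $\mathcal{B}(\mathcal{A})=\mathcal{B}'$, as required. The one genuinely non-bookkeeping step is the structural lemma that $\{0,1\}$ is the only simple algebra of $V(C)$ — equivalently, that $\mathcal{A}/Rad(\mathcal{A})$ is Boolean — while everything else is manipulation of the radical and the Boolean skeleton, with no constructivity concern since we work in $\mathbf{Set}$ and may freely use maximal ideals and subdirect representations.
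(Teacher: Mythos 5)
Your proof is correct, and its skeleton is the same as the paper's: both arguments reduce the statement to the fact that for $\mathcal{A}$ in $\mathbb{C}$-mod$(\Set)$ the canonical projection identifies $\mathcal{B}(\mathcal{A})$ with $\mathcal{A}/Rad(\mathcal{A})$ via $x\mapsto (2x)^2$, and then observe that generators pass to generators along a surjection. The difference is that the paper simply cites this identification as Theorem 5.12 of Di Nola--Lettieri's paper, whereas you reprove it from scratch: you establish that $\mathcal{A}/Rad(\mathcal{A})$ is Boolean by combining semisimplicity with the classification of simple algebras in $V(C)$ as $\{0,1\}$ (via the embedding of simple MV-algebras into $[0,1]$, sequent $\beta$, and Proposition \ref{radical}), check that $\mathcal{B}(\mathcal{A})\cap Rad(\mathcal{A})=\{0\}$ so that $\pi$ is injective on the Boolean skeleton, and use $\pi((2x_i)^2)=\pi(x_i)$ to get surjectivity of $\pi$ restricted to the subalgebra generated by the $(2x_i)^2$. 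All of these steps check out (in particular $(2y)^2=y$ for idempotent $y$, by Chang's equivalence $y\oplus y=y\Leftrightarrow y\odot y=y$). What your route buys is self-containedness at the cost of invoking maximal ideals and the subdirect representation of semisimple algebras, i.e.\ the axiom of choice; since the cited Theorem 5.12 rests on the same classical machinery and the lemma is only used in $\Set$, nothing is lost, but it is worth noting that your argument is exactly a proof of the black box the paper imports rather than a way around it.
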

\begin{proof}
By axiom P.2 for every $x\in A$, $(2x)^2\in \mathcal{B}(\mathcal{A})$ and from Theorem 5.12 \cite{P-L} we know that an MV-algebra $\mathcal{A}$ is in $\mathbb{C}$-mod$(\Set)$ if and only if $\mathcal{A}\slash {Rad(\mathcal{A})}\simeq \mathcal{B}(\mathcal{A})$, where the isomorphism is given by the following map: 
\begin{center}
$f:x\in A\rightarrow (2x)^2\in \mathcal{B}(\mathcal{A})$
\end{center} 
If $\mathcal{A}$ is an MV-algebra in $\mathbb{C}$-mod$(\Set)$ generated by $\{x_1,\dots,x_n\}$ then the quotient $\mathcal{A}\slash {Rad(\mathcal{A})}$ is generated by $\{[x_1],\dots,[x_n]\}$; hence, $\mathcal{B}(\mathcal{A})$ is generated by the family $\{(2x_1)^2,\dots,(2x_n)^2\}$.
\end{proof}

Recall that if an MV-algebra $\mathcal{A}$ is finitely presented then it is finitely generated. Let $\mathcal{A}=<x_1,\dots,x_n>$ be an MV-algebra as in Theorem \ref{thm:subdirect_product}. From Lemma \ref{Boolgen} it follows that a family of finitely presented perfect MV-algebras that $J_{\mathbb P}$-covers $\mathcal{A}$ is given by $\{\mathcal{A}_1,\dots,\mathcal{A}_{2^n}\}$ (in the notation of Theorem \ref{thm:subdirect_product}), where $a_j^i=(2x_i)^2$ for all $j=1,\dots,2^{i-1}$. Indeed, the iterated quotients of the previous diagram actually remove every non-trivial boolean element, thus every $\mathcal{A}_k$ is perfect. In fact, this argument shows more generally that every finitely generated algebra in $\mathbb{C}$-mod$(\Set)$ can be represented as a direct product of finitely generated perfect MV-algebras.

This result can be alternatively deduced from existing theorems on weak Boolean products of MV-algebras as follows. First, we need a lemma, clarifying the relationship between finite direct products and weak Boolean products of MV-algebras. Recall from \cite{CDM} that a \emph{weak Boolean product} of a family $\{\mathcal{A}_{x} \mid x\in X\}$ of MV-algebras is a subdirect product $\mathcal{A}$ of the given family, in such a way that $X$ can be endowed with a Boolean (i.e. Stone) topology satisfying the following conditions (where $\pi_{x}:\mathcal{A} \to \mathcal{A}_{x}$ are the canonical projections):
\begin{enumerate}[(i)]
\item for all $f,g\in A$, the set $\{x\in X \mid \pi_{x}(f)=\pi_{x}(g)\}$ is open in $X$;

\item for every clopen set $Z$ of $X$ and any $f, g\in A$, there exists a unique element $h\in A$ such that $\pi_{x}(h)=\pi_{x}(f)$ for all $x\in Z$ and $\pi_{x}(h)=\pi_{x}(g)$ for all $x\in X\setminus Z$.
\end{enumerate} 

\begin{lemma}\label{wbb}
Let $\mathcal{A}$ be a weak Boolean product of a finite family $\{\mathcal{A}_{x} \mid x\in X\}$ of MV-algebras. Then the topology of $X$ is discrete and $\mathcal{A}$ is a finite direct product of the $\mathcal{A}_{x}$.   
\end{lemma}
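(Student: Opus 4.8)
The statement to prove is Lemma~\ref{wbb}: if $\mathcal{A}$ is a weak Boolean product of a \emph{finite} family $\{\mathcal{A}_x \mid x\in X\}$, then the topology on $X$ is discrete and $\mathcal{A}\cong\prod_{x\in X}\mathcal{A}_x$. The key observation is that a Boolean (Stone) topology on a finite set is automatically discrete: every point of a finite Hausdorff space is closed, hence its complement is open, so every singleton is clopen. This is the first step, and it essentially does all the topological work.

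\textbf{First step: discreteness.} Since $X$ is finite and the topology witnessing the weak Boolean product structure is Stone (in particular Hausdorff), every singleton $\{x\}$ is closed; as $X\setminus\{x\}$ is a finite union of closed singletons it is also closed, so $\{x\}$ is open. Hence the topology on $X$ is discrete and every subset of $X$ is clopen.

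\textbf{Second step: the projections are jointly monic and surjective with the universal gluing property.} By definition of weak Boolean product (equivalently, weak subdirect product), the family of canonical projections $\pi_x:\mathcal{A}\to\mathcal{A}_x$ is jointly injective, and each $\pi_x$ is surjective. I now show the induced homomorphism $\pi=\langle\pi_x\rangle_{x\in X}:\mathcal{A}\to\prod_{x\in X}\mathcal{A}_x$ is an isomorphism. It is injective because the $\pi_x$ are jointly injective. For surjectivity, pick any tuple $(f_x)_{x\in X}\in\prod_x\mathcal{A}_x$; by surjectivity of each $\pi_x$ choose $g_x\in A$ with $\pi_x(g_x)=f_x$. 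Using condition~(ii) in the definition of weak Boolean product repeatedly — applied to the clopen sets $\{x\}$, which are clopen by the first step — one glues the finitely many elements $g_x$ into a single element $h\in A$ with $\pi_x(h)=\pi_x(g_x)=f_x$ for all $x$; a straightforward finite induction on $|X|$ suffices, splitting off one point at a time with the clopen decomposition $X=\{x_0\}\sqcup(X\setminus\{x_0\})$. Then $\pi(h)=(f_x)_x$, so $\pi$ is surjective, hence an isomorphism of MV-algebras.

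\textbf{Expected obstacle.} There is really no serious obstacle here: the result is essentially a routine consequence of finiteness forcing the Stone topology to be discrete, after which the defining conditions of a weak Boolean product literally say that $\mathcal{A}$ is the product. The only point requiring a modicum of care is the iterated application of the gluing condition~(ii) to build a preimage of an arbitrary tuple — but since $X$ is finite this is a finite induction, and condition~(i) is not even needed (it is automatic once the topology is discrete). One should also note that the MV-operations on $\prod_x\mathcal{A}_x$ are computed coordinatewise and $\pi$ preserves them by construction, so the bijection $\pi$ is automatically an MV-isomorphism.
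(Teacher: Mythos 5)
Your proof is correct and follows essentially the same route as the paper's: finiteness forces the Stone topology to be discrete, and surjectivity of the induced map onto the product is obtained by repeatedly applying the gluing condition (ii) to the singleton clopens. The extra details you supply (finite Hausdorff implies discrete; the explicit finite induction) are harmless elaborations of the same argument.
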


\begin{proof}
It is clear that the only Boolean topology on finite set is the discrete one. To prove that $\mathcal{A}$ is a finite direct product of the $\mathcal{A}_{x}$ via the weak Boolean product projections $\pi_{x}$, it suffices to verify that for every family $\{z_{x}\in A_{x}\}$ of elements of the $\mathcal{A}_{x}$ there exists an element $h\in A$ such that $\pi_{x}(h)=z_{x}$ for all $x\in X$. Since $\mathcal{A}$ is a subdirect product of the $\mathcal{A}_{x}$, the functions $\pi_{x}$ are all surjective. By choosing, for each $x\in X$, an element $a_{x}\in A$ such that $\pi_{x}(a_{x})=z_{x}$ and repeatedly applying condition (ii) to such elements (taking $Z$ to be the singletons $\{x\}$ for $x\in X$), we obtain the existence of an element $h\in A$ such that $\pi_{x}(h)=\pi_{x}(a_{x})=z_{x}$ for each $x\in X$, as required.    
\end{proof}

Now, by Lemma 9.4 in \cite{DiNola3}, every algebra $\mathcal{A}$ in $V(C)$ is quasi-perfect, that is, by Theorem 5.9 in \cite{DiNola4}, it is a weak Boolean product of perfect MV-algebras. By Theorem 6.5.2 in \cite{CDM}, the indexing set of this Boolean product identifies with the set of ultrafilters of the Boolean algebra $\mathcal{B}(\mathcal{A})$. But by Lemma \ref{Boolgen} the set of ultrafilters of $\mathcal{B}(\mathcal{A})$ is finite, and can be identified with the set of atoms of $\mathcal{B}(\mathcal{A})$, since $\mathcal{B}(\mathcal{A})$ is finitely generated and hence finite. By Lemma \ref{wbb}, we can then conclude that the given weak Boolean product is in fact a finite direct product.     


\begin{theorem}\label{StoneChang}
Every MV-algebra in $\mathbb{C}$-mod$(\Set)$ is a weak subdirect product of (finitely presentable) perfect MV-algebras.
\end{theorem}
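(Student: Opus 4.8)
The plan is to deduce this from the quasi-perfectness of the algebras in Chang's variety recalled just above, together with Theorem~\ref{thm:subdirect_product} for the refinement in the finitely generated case. The shortest route is: an arbitrary $\mathcal{A}$ in $\mathbb{C}$-mod$(\Set)$, being an algebra of $V(C)$, is quasi-perfect, hence a weak Boolean product of perfect MV-algebras (Lemma~9.4 of \cite{DiNola3} and Theorem~5.9 of \cite{DiNola4}); since a weak Boolean product is in particular a subdirect product, the attendant projections $\mathcal{A}\to\mathcal{A}_x$ are jointly monic and realize $\mathcal{A}$ as a weak subdirect product of perfect MV-algebras.

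I would also record a self-contained argument that bypasses the weak-Boolean-product literature. The variety $V(C)=\mathbb{C}$-mod$(\Set)$ is generated by the single algebra $C$ and is congruence-distributive (MV-algebras being lattices), so by J\'onsson's Lemma every subdirectly irreducible algebra of $V(C)$ lies in $HSP_U(C)$. Now, by Theorem~\ref{axiomatization} an MV-algebra lying in $V(C)$ is perfect precisely when it is non-trivial and satisfies the sentence $\forall x\,(x\leq\neg x\vee\neg x\leq x)$ --- recall here that the identity \textrm{P.1} coincides with the identity axiomatizing $V(C)$ over $\mathbb{MV}$, so it holds automatically in $V(C)$, and that $\{\textrm{P.1},\textrm{P.2},\textrm{P.3}\}$ is equivalent over $\mathbb{MV}$ to $\{\textrm{P.1},\beta\}$. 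That sentence is positive, hence preserved under surjective homomorphisms, substructures and ultraproducts, and non-triviality is inherited by subalgebras, ultraproducts and non-trivial quotients; since $C$ is perfect, every non-trivial member of $HSP_U(C)$ is perfect, whence every subdirectly irreducible algebra of $V(C)$ is perfect. Birkhoff's subdirect representation theorem then presents an arbitrary $\mathcal{A}$ as a subdirect product of subdirectly irreducible algebras of $V(C)$, all of which are perfect, which is the assertion.

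For the parenthetical I would observe that, when $\mathcal{A}$ is finitely presentable (respectively finitely generated) in $\mathbb{C}$-mod$(\Set)$, Theorem~\ref{thm:subdirect_product} (respectively the discussion preceding it, using Lemma~\ref{Boolgen}) already presents $\mathcal{A}$ as a \emph{finite direct} product of finitely presentable (respectively finitely generated) perfect MV-algebras, which a fortiori is a weak subdirect product with such factors. The only delicate point in the whole argument is the step guaranteeing that the separating quotients are \emph{perfect}: in the general case this is exactly the substantive input, supplied either by the quasi-perfectness theorem or by the stability of the first-order characterization of perfectness under $H$, $S$, $P_U$ inside $V(C)$ (one must keep all the quotients and subalgebras involved within $V(C)$ so that Proposition~\ref{radical} applies and that characterization is available). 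I would also point out that one cannot improve ``perfect'' to ``finitely presentable perfect'' for arbitrary $\mathcal{A}$: for instance $\Sigma(\mathbb{Q})$, with $\mathbb{Q}$ the additive $\ell$-group of rationals, admits no nonzero homomorphism onto a finitely generated perfect MV-algebra, since $\mathbb{Q}$ has no proper nonzero $\ell$-ideal and does not embed as an $\ell$-subgroup into any finitely generated $\ell$-group.
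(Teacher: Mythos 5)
Your argument establishes the assertion correctly, but by a genuinely different route from the paper's. The paper does not invoke quasi-perfectness or Birkhoff's theorem: it writes an arbitrary algebra of $\mathbb{C}$-mod$(\Set)$ as a filtered colimit of finitely presented ones and then applies Theorem \ref{thm:subdirect_product}, the point of this choice being constructivity (the remark following the theorem explicitly presents it as a constructive version of the quasi-perfectness result you cite in your first route). Both of your routes are classical: the weak-Boolean-product theorems rest on non-constructive representation theory, and your J\'onsson--Birkhoff argument needs Zorn's lemma to produce a congruence maximal with respect to excluding a given pair. That said, your second route is sound and self-contained: $V(C)$ is congruence-distributive, $\beta$ is a positive universal sentence and so persists through $H$, $S$ and $P_U$, and non-triviality together with P.1 and $\beta$ characterizes perfectness inside $V(C)$ by Theorem \ref{axiomatization}, so every subdirectly irreducible member of $V(C)$ is perfect. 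What your routes buy is independence from the explicit Grothendieck-topology computation; what they give up is the constructive character and the "finitely presentable" refinement.

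Your closing observation is the most valuable part of the proposal, because it exposes a genuine defect in the statement as printed and in the paper's own proof of it. The reduction "it suffices to prove the statement for the finitely presentable algebras" is not justified for the parenthetical "(finitely presentable)": homomorphisms out of a filtered colimit are not controlled by homomorphisms out of its stages, so joint injectivity of the maps from each finitely presented stage into finitely presented perfect algebras does not transfer to the colimit. Your example $\Sigma(\mathbb{Q})$ confirms that the strengthened claim is in fact false: since Di Nola--Lettieri's equivalence is full and faithful, homomorphisms $\Sigma(\mathbb{Q})\to\mathcal{P}$ into a finitely presentable perfect algebra correspond to $\ell$-homomorphisms from $\mathbb{Q}$ into a finitely presented $\ell$-group; $\mathbb{Q}$ has no proper non-zero convex subgroups, so any such homomorphism is zero or an embedding, and a finitely presented $\ell$-group (a group of integer-coefficient piecewise-linear functions, by Baker--Beynon) has no non-zero divisible elements, so only the zero map remains, and it collapses the radical. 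The theorem is therefore correct only with the parenthetical removed, or restricted to finitely generated algebras, where Theorem \ref{thm:subdirect_product} applies directly.
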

 
\begin{proof}
Since every MV-algebra in $\mathbb{C}$-mod$(\Set)$ is a filtered colimit of finitely presented MV-algebras in $\mathbb{C}$-mod$(\Set)$, it suffices to prove the statement for the finitely presentable MV-algebras in $\mathbb{C}$-mod$(\Set)$; indeed, an MV-algebra is a weak subdirect product of finitely presentable perfect MV-algebras if and only if the arrows from it to such algebras are jointly monic. But this follows from Theorem \ref{thm:subdirect_product}. 
\end{proof} 

\begin{remark}
Theorem \ref{StoneChang} represents a constructive version of \cite[Lemma 9.6]{DiNola3}, which asserts that every MV-algebra in Chang's variety $V(C)$ is quasi-perfect, i.e. a weak Boolean product of perfect MV-algebras.
\end{remark}

It is natural to wonder if one can intrinsically characterize the class of MV-algebras in $\mathbb{C}$-mod$(\Set)$ which are direct products of perfect MV-algebras. We already know from the discussion above that all the finitely generated MV-algebras in $\mathbb{C}$-mod$(\Set)$ belong to this class. 

The following lemma, which generalizes its finitary version given by Lemmas 6.4.4 and 6.4.5 \cite{CDM} as well as the version for complete MV-algebras given by Lemma 6.6.6 \cite{CDM}, will be useful in this respect. Relevant references on the relationship between direct product decompositions of MV-algebras and Boolean elements are \cite{Jacubik}, \cite{Rodriguez} and sections 6.4-5-6 of \cite{CDM}.

\begin{lemma}\label{lem:gencomplete}
Let $\mathcal{A}$ be an MV-algebra. Then the following two conditions are equivalent:
\begin{enumerate}[(i)]
\item $\mathcal{A}$ is a direct product of MV-algebras $\mathcal{A}_{i}$ (for $i\in I$);

\item There exists a family $\{a_i\mid i\in I\}$ of Boolean pairwise disjoint elements of $\mathcal{A}$ such that every family of elements of the form $\{z_{i}\leq a_{i} \mid i\in I\}$ has a supremum $\mathbin{\mathop{\textrm{\huge $\vee$}}\limits_{i\in I}}z_{i}$ in $\mathcal{A}$ and every element $a$ of $\mathcal{A}$ can be expressed (uniquely) in this form. 
\end{enumerate}
\end{lemma}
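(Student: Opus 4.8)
The plan is to establish the two implications separately: $(i)\Rightarrow(ii)$ is essentially bookkeeping inside a product, while $(ii)\Rightarrow(i)$ is the substantive direction, and the guiding idea there will be to use the \emph{uniqueness} clause of $(ii)$ in order to avoid invoking any infinitary distributive law for $\inf$ over suprema.

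For $(i)\Rightarrow(ii)$ I would assume $\mathcal{A}=\prod_{i\in I}\mathcal{A}_i$ and let $a_i\in\mathcal{A}$ be the element which is $1$ in the $i$-th coordinate and $0$ in all others. Each $a_i$ is Boolean and the $a_i$ are pairwise disjoint, since $x\oplus x=x$ and $\inf(x,y)=0$ can be checked coordinatewise. If $\{z_i\leq a_i\mid i\in I\}$ is any family, then $z_i$ is supported in the $i$-th coordinate, and the element of $\prod_i\mathcal{A}_i$ whose $i$-th coordinate is the $i$-th coordinate of $z_i$ is immediately seen to be the least upper bound $\bigvee_{i\in I}z_i$; in particular the supremum exists. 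Every $a\in\mathcal{A}$ arises this way with $z_i=\inf(a,a_i)$, and in any such representation of $a$ the component $z_i$ is forced coordinatewise to equal $\inf(a,a_i)$, giving uniqueness.

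For $(ii)\Rightarrow(i)$ I would take $\mathcal{A}_i$ to be the interval MV-algebra $[0,a_i]=\{z\in A\mid z\leq a_i\}$, with the inherited order, with $\oplus$ restricted (well defined since $z\oplus w\leq a_i\oplus a_i=a_i$ for $z,w\leq a_i$), zero $0$, and relative negation $z\mapsto\neg z\odot a_i$. It is standard (cf. Sections 6.4 and 6.6 of \cite{CDM}) that for a Boolean element $b$ the structure $[0,b]$ is an MV-algebra and that $a\mapsto\inf(a,b)$ is a surjective MV-homomorphism $\mathcal{A}\to[0,b]$; hence the maps $\pi_i\colon\mathcal{A}\to[0,a_i]$, $\pi_i(a)=\inf(a,a_i)$, are MV-homomorphisms and assemble into an MV-homomorphism $\Phi=(\pi_i)_{i\in I}\colon\mathcal{A}\to\prod_{i\in I}[0,a_i]$. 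I would then define $\Psi\colon\prod_{i\in I}[0,a_i]\to\mathcal{A}$ by $\Psi((y_i)_{i\in I})=\bigvee_{i\in I}y_i$, which is well defined by $(ii)$ since $y_i\leq a_i$, and show that $\Phi$ and $\Psi$ are mutually inverse, so that $\Phi$, being a bijective MV-homomorphism, is an isomorphism. For $\Psi\circ\Phi=\mathrm{id}$: writing $a=\bigvee_i z_i$ with $z_i\leq a_i$ as in $(ii)$, one has $z_i\leq a$ and $z_i\leq a_i$, hence $z_i\leq\inf(a,a_i)\leq a$, and since all the suprema exist (again by $(ii)$, as $\inf(a,a_i)\leq a_i$) we get $a=\bigvee_i z_i\leq\bigvee_i\inf(a,a_i)\leq a$, so $\Psi(\Phi(a))=a$. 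For $\Phi\circ\Psi=\mathrm{id}$: given $(y_i)_{i\in I}$ with $y_i\leq a_i$, set $y=\bigvee_i y_i$ and fix $j$; the family $(z_i)_{i\in I}$ with $z_j=\inf(y,a_j)$ and $z_i=y_i$ for $i\neq j$ again satisfies $z_i\leq a_i$ for all $i$, and its supremum is both $\geq y$ (since $y_i\leq z_i$ for every $i$, using $y_j\leq y$ and $y_j\leq a_j$) and $\leq y$ (since $z_i\leq y$ for every $i$), hence equals $y$; the uniqueness clause of $(ii)$, applied to $y=\bigvee_i z_i=\bigvee_i y_i$, then forces $z_j=y_j$, i.e. $\pi_j(\Psi((y_i)_i))=\inf(y,a_j)=y_j$.

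The main obstacle to watch for is the temptation, in the two computations above, to use an infinitary distributive law $\inf(a,\bigvee_k z_k)=\bigvee_k\inf(a,z_k)$: such laws are delicate in general MV-algebras, and the argument should instead replace them by the uniqueness assumption in $(ii)$, which pins the canonical components down directly. A secondary point that relies on the quoted background is the assertion that $[0,b]$ is an MV-algebra on which $a\mapsto\inf(a,b)$ restricts to a homomorphism for Boolean $b$; this is precisely what Lemmas 6.4.4--6.4.5 (finite case) and 6.6.6 (complete case) of \cite{CDM} handle, and which the present lemma generalizes to arbitrary index sets.
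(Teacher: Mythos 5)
Your proof is correct and follows essentially the same route as the paper's: the same choice of idempotents $a_i$ in the product for $(i)\Rightarrow(ii)$, and for $(ii)\Rightarrow(i)$ the same pair of mutually inverse maps $b\mapsto(\inf(b,a_i))_i$ and $(z_i)_i\mapsto\bigvee_i z_i$ between $\mathcal{A}$ and $\prod_i[0,a_i]$, with the identity $b=\bigvee_i\inf(b,a_i)$ established by the same comparison of the two decompositions of $b$. Your argument is in fact slightly more complete on one point: the identity $\inf(\bigvee_i z_i,a_j)=z_j$, which the paper dismisses as ``clearly the identity,'' is exactly the step where one might be tempted to invoke an infinitary distributive law, and your derivation of it from the uniqueness clause of $(ii)$ supplies the justification the paper omits.
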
 

\begin{proof}
Let $\mathcal{A}$ be an MV-algebra that is direct product of a family $\{\mathcal{A}_i\mid i\in I\}$ of MV-algebras. The elements $a_{i}$ of the MV-algebra $\mathbin{\mathop{\prod}\limits_{i\in I}}\mathcal{A}_{i}$ which are $0$ everywhere except at the place $i$ where it is equal to $1$ are Boolean and satisfy the following properties: they are pairwise disjoint (i.e., $a_i\wedge a_{i}=0$ whenever $i\neq i'$), $1=\mathbin{\mathop{\textrm{\huge $\vee$}}\limits_{i\in I}}a_{i}$, every family of elements of the form $\{z_{i}\leq a_{i} \mid i\in I\}$ has a supremum $\mathbin{\mathop{\textrm{\huge $\vee$}}\limits_{i\in I}}z_{i}$ in $\mathbin{\mathop{\prod}\limits_{i\in I}}\mathcal{A}_{i}$ and every element $a$ of $\mathbin{\mathop{\prod}\limits_{i\in I}}\mathcal{A}_{i}$ can be expressed uniquely in this form. 
  
Conversely, suppose that $\{a_{i}\in \mathcal{A} \mid i\in I\}$ is a set of Boolean pairwise disjoint elements of an MV-algebra $\mathcal{A}$ such that every family of elements of the form $\{z_{i}\leq a_{i} \mid i\in I\}$ has a supremum $\mathbin{\mathop{\textrm{\huge $\vee$}}\limits_{j\in J}}z_{j}$ in $\mathcal{A}$ and every element $a$ of $\mathcal{A}$ can be expressed uniquely in this form. Then $\mathcal{A}$ is isomorphic to the product of the MV-algebras $(a_{i}]$ considered in \cite{CDM} (cf. Corollary 1.5.6) via the canonical homomorphism $\mathcal{A} \to \mathbin{\mathop{\prod}\limits_{i\in I}}(a_{i}]$ (equivalently, by Proposition 6.4.3 \cite{CDM}, to the product of the quotient algebras $\mathcal{A}\slash (\neg a_{i})$ via the canonical projections). Indeed, the canonical homomorphism $\mathcal{A} \to \mathbin{\mathop{\prod}\limits_{i\in I}}(a_{i}]$, which sends any element $b$ of $\mathcal{A}$ to the string $(b\wedge a_{i})$ admits as inverse the map sending a tuple $(z_{i})$ in $\mathbin{\mathop{\prod}\limits_{i\in I}}(a_{i}]$ to the supremum $\mathbin{\mathop{\textrm{\huge $\vee$}}\limits_{i\in I}}z_{i}$. This can be proved as follows. The composite of the former homomorphism with the latter is clearly the identity, so it remains to prove the converse. Given an element $b\in \mathcal{A}$, we have to prove that $b=\mathbin{\mathop{\textrm{\huge $\vee$}}\limits_{i\in I}}(b\wedge a_{i})$. Set $b'=\mathbin{\mathop{\textrm{\huge $\vee$}}\limits_{i\in I}}(b\wedge a_{i})$. Clearly, $b'\leq b$. Now, by our hypothesis, we can decompose $b$ in the form $b=\mathbin{\mathop{\textrm{\huge $\vee$}}\limits_{i\in I}}c_{i}$ where $c_{i}\leq a_{i}$ for each $i$. Now, $c_{i}\leq b$, whence $c_{i}\leq a_{i}\wedge b$ and $b=\mathbin{\mathop{\textrm{\huge $\vee$}}\limits_{i\in I}}c_{i}\leq \mathbin{\mathop{\textrm{\huge $\vee$}}\limits_{i\in I}}(b\wedge a_{i})=b'$. So $b=b'$, as required.
\end{proof}

\begin{remark}
The algebras $\mathcal{A}_{i}$ as in the first condition are given by the quotients $\mathcal{A}\slash (\neg a_{i}]$, while the elements $a_{i}$ of the product $\mathbin{\mathop{\prod}\limits_{i\in I}}\mathcal{A}_{i}$ satisfying the second conditions are the tuples which are zero everywhere except at the place $i$ where they are equal to $1$. 
\end{remark}

In order to achieve an intrinsic characterization of the MV-algebras $\mathcal{A}$ which are products of perfect MV-algebras, it remains to characterize the elements $a_{i}$ such that $\mathcal{A}\slash (\neg a_{i})$ is a perfect MV-algebra. Since $\mathcal{A}$ is in $\mathbb{C}$-mod$(\Set)$, this amounts to requiring that $a_i$ is Boolean and for every element $x$ such that $x\wedge \neg x\leq \neg a_{i}$ (equivalently, $x\wedge \neg x \wedge a_{i}=0$), either $x\leq \neg a_{i}$ (equivalently, $x\wedge a_{i}=0$) or $\neg x\leq \neg a_{i}$ (equivalently, $a_{i}\leq x$) but not both. We shall call such elements the \emph{perfect elements} of the algebra $\mathcal{A}$. 

Summarizing, we have the following result.

\begin{theorem}\label{MVatomic}
For a MV-algebra $\mathcal{A}$, the following conditions are equivalent:
\begin{enumerate}[(i)]
\item $\mathcal{A}$ is isomorphic to a direct product of perfect MV-algebras;
\item $\mathcal{A}$ belongs to $\mathbb{C}$-mod$(\Set)$ and there exists a family of Boolean pairwise disjoint perfect elements of $\mathcal{A}$ such that every family of elements of the form $\{z_{i}\leq a_{i} \mid i\in I\}$ has a supremum $\mathbin{\mathop{\textrm{\huge $\vee$}}\limits_{j\in J}}z_{j}$ in $\mathcal{A}$ and every element $a$ of $\mathcal{A}$ can be expressed (uniquely) in this form. 
\end{enumerate}
\end{theorem}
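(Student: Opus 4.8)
The plan is to obtain the theorem as an immediate refinement of Lemma \ref{lem:gencomplete}, the only additional ingredient being the dictionary — already set up in the paragraph preceding the statement — between \emph{perfectness of the quotient} $\mathcal{A}\slash(\neg a_{i})$ and \emph{$a_{i}$ being a perfect element} of $\mathcal{A}$.

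\textbf{(i) $\Rightarrow$ (ii).} Suppose $\mathcal{A}\cong\mathbin{\mathop{\prod}\limits_{i\in I}}\mathcal{A}_{i}$ with each $\mathcal{A}_{i}$ a perfect MV-algebra. Every perfect MV-algebra lies in $\mathbb{C}$-mod$(\Set)$ by \cite[Proposition 5(5)]{DiNola1}, and since $\mathbb{C}$ is (equivalent to) an equational theory its models are closed under arbitrary products; hence $\mathcal{A}\in\mathbb{C}$-mod$(\Set)$. The elements $a_{i}$ of $\mathbin{\mathop{\prod}\limits_{i\in I}}\mathcal{A}_{i}$ which are $1$ in the $i$-th coordinate and $0$ elsewhere form a Boolean pairwise disjoint family for which every string $\{z_{i}\leq a_{i}\mid i\in I\}$ has a supremum $\bigvee_{i\in I}z_{i}$ and every element admits a unique decomposition of this form, exactly as in the proof of Lemma \ref{lem:gencomplete}. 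It remains to see that each $a_{i}$ is a perfect element. By the Remark following Lemma \ref{lem:gencomplete} (equivalently, by Proposition 6.4.3 \cite{CDM}), $\mathcal{A}\slash(\neg a_{i})\cong\mathcal{A}_{i}$ is perfect; since $\mathcal{A}\in\mathbb{C}$-mod$(\Set)$, the characterization recalled just before the statement shows that $a_{i}$ satisfies the defining conditions of a perfect element.

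\textbf{(ii) $\Rightarrow$ (i).} Conversely, given a family $\{a_{i}\mid i\in I\}$ of Boolean pairwise disjoint perfect elements of $\mathcal{A}\in\mathbb{C}$-mod$(\Set)$ with the stated supremum/decomposition property, Lemma \ref{lem:gencomplete} yields $\mathcal{A}\cong\mathbin{\mathop{\prod}\limits_{i\in I}}\mathcal{A}\slash(\neg a_{i})$. For each $i$, the hypothesis that $a_{i}$ is a perfect element means, by the same characterization, that $\mathcal{A}\slash(\neg a_{i})$ is a non-trivial algebra in $\mathbb{C}$-mod$(\Set)$ satisfying axioms $\beta$ and P.4 — using Proposition \ref{radical} to recognize its radical as $\{x.\,x\leq\neg x\}$ and the fact that membership in $\mathbb{C}$-mod$(\Set)$, being equational, descends to quotients. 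By Theorem \ref{axiomatization} such an algebra is a perfect MV-algebra, so $\mathcal{A}$ is isomorphic to a direct product of perfect MV-algebras.

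\textbf{Main obstacle.} All the structural content is already packaged into Lemma \ref{lem:gencomplete}; the only genuine work, carried out in the paragraph preceding the theorem, is the equivalence "$\mathcal{A}\slash(\neg a_{i})$ is perfect" $\iff$ "$a_{i}$ is a perfect element of $\mathcal{A}$". The delicate point there is handling non-triviality together with axiom P.4: one must verify that the clause forbidding, for elements $x$ with $x\wedge\neg x\wedge a_{i}=0$, both $x\wedge a_{i}=0$ and $a_{i}\leq x$ corresponds exactly to non-triviality of $\mathcal{A}\slash(\neg a_{i})$ (i.e. $a_{i}\neq 0$) together with the validity of P.4 in the quotient, and that the remaining perfectness requirement $\beta$ is witnessed coordinatewise by $x\wedge a_{i}$. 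Once this dictionary is in place, the theorem follows by combining it with Lemma \ref{lem:gencomplete}.
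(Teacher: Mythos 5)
Your proposal is correct and follows exactly the route the paper intends: the theorem is stated there as a summary ("Summarizing, we have the following result") of Lemma \ref{lem:gencomplete} combined with the definition of perfect elements, which is set up in the preceding paragraph precisely so that $a_i$ is a perfect element if and only if $\mathcal{A}\slash(\neg a_i)$ is a perfect MV-algebra. Your write-up in fact makes explicit a few points the paper leaves implicit (closure of $\mathbb{C}$-mod$(\Set)$ under products, and the role of the ``but not both'' clause in encoding non-triviality of the quotient), but the argument is the same.
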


\begin{remark}
By Theorem \ref{StoneChang}, every finitely generated MV-algebra $\mathcal{A}$ in $\mathbb{C}$-mod$(\Set)$ satisfies these conditions. In fact, for every finite set $\{x_1, \ldots, x_n\}$ of generators of $\mathcal{A}$, a family of elements satisfying the hypotheses of Lemma \ref{lem:gencomplete} is given by the family of finite meets of the form $u_{1}\wedge \cdots \wedge u_{n}$ where for each $i$, $u_{i}$ is either equal to $(2x_{i})^{2}$ or its complement $\neg(2x_{i})^{2}$.   
\end{remark}

The class of MV-algebras in $\mathbb{C}$-mod$(\Set)$ naturally generalizes that of Boolean algebras (recall that every Boolean algebra is an MV-algebra, actually lying in $\mathbb{C}$-mod$(\Set)$), with perfect algebras representing the counterpart of the algebra $\{0,1\}$ and powerset algebras, that is products of the algebra $\{0,1\}$, corresponding to products of perfect MV-algebras. The class of Boolean algebras isomorphic to powersets can be intrinsically characterized, thanks to Lindenbaum-Tarski's theorem, as that of complete atomic Boolean algebras. Theorem \ref{StoneChang} represents a natural generalization in this setting of the Stone representation of a Boolean algebra as a field of sets, while Theorem \ref{MVatomic} represents the analogue of Lindenbaum-Tarski's theorem. Note that, as every Boolean algebra with $n$ generators is a product of $2^{n}$ copies of the algebra $\{0,1\}$, so every finitely presented algebra in $\mathbb{C}$-mod$(\Set)$ with $n$ generators is a product of $2^{n}$ finitely presented perfect MV-algebras (cf. Theorem \ref{thm:subdirect_product} above). These relationships are summarized in the table below.

\setlength{\arrayrulewidth}{0.008mm}

\begin{center}
\begin{tabular*}{122mm}{p{5.5cm}lp{5.6cm}}
\textbf{Classical context} & \vline & \textbf{MV-algebraic generalization} \\ \hline\hline
Boolean algebra & \vline & MV-algebra in $\mathbb{C}$-mod$(\Set)$ \\ \hline
$\{0,1\}$ & \vline & Perfect MV-algebra  \\ \hline 
Powerset $\cong$ product of $\{0,1\}$ & \vline & Product of perfect MV-algebras \\ \hline
\raisebox{-1.5ex}[0pt]{Finite Boolean algebra} & \vline & Finitely presentable MV-algebra in $\mathbb{C}$-mod$(\Set)$ \\ \hline
\raisebox{-1.5ex}[0pt]{Complete atomic Boolean algebra} & \vline & MV-algebra in $\mathbb{C}$-mod$(\Set)$ satisfying the hypotheses of Theorem \ref{MVatomic} \\ \hline
Representation theorem for finite Boolean algebras & \vline & \raisebox{-1.5ex}[0pt]{Theorem \ref{thm:subdirect_product}} \\ \hline
Stone representation for Boolean algebras & \vline & \raisebox{-1.5ex}[0pt]{Theorem \ref{StoneChang}} \\ \hline 
Lindenbaum-Tarski's theorem & \vline & Theorem \ref{MVatomic}  
\end{tabular*}
\end{center}

\vspace{0.3cm}

\section{A related Morita-equivalence}\label{sec:relatedMoritaequivalence}

Finally, we discuss the relationship between the category of perfect MV-algebras and that of lattice-ordered abelian groups with strong unit. Generalizing the work \cite{Yosida} of Belluce and Di Nola concerning locally archimedean MV-algebras and archimedean $\ell$-u groups, we establish a Morita-equivalence between a category of pointed perfect MV-algebras and the category of $\ell$-u groups. This will allow us to reinterpret in the context of $\ell$-groups the representation results for the MV-algebras in Chang's variety obtained in the last section.

We call a perfect MV-algebra \textit{pointed} if its radical is generated by a single element. This class of algebras can be axiomatized. Let us extend the signature $\mathcal{L}_{MV}$ by adding a new constant symbol $a$. We call $\mathbb{P}^*$ the theory over this signature whose axioms are those of $\mathbb{P}$ plus:
\begin{itemize}
\item[P*.1] $\top\vdash a\leq \neg a$
\item[P*.2] $x\leq \neg x\vdash_x\mathbin{\mathop{\textrm{\huge $\vee$}}\limits_{n\in\mathbb{N}}}x\leq na$
\end{itemize}

We shall prove that the theory $\mathbb{P}^*$ is Morita-equivalent to the theory $\mathbb{L}_u$. Indeed we can ``restrict'' the functors $\Delta$ and $\Sigma$ respectively to the categories $\mathbb{P}^*$-mod$(\mathscr{E})$ and $\mathbb{L}_u$-mod$(\mathscr{E})$, for every Grothendieck topos $\mathscr{E}$, and show that they are still categorical inverses to each other.

Let $\mathcal{A}=(A,a)$ be a model of $\mathbb{P}^*$ in $\mathscr{E}$. This structure, without the constant $a$, is a perfect MV-algebra in $\E$. We can thus consider $\Delta(\mathcal{A})$ and we know that it is a model of $\mathbb{L}$ in $\E$.

\begin{proposition}
The structure $(\Delta(\mathcal{A}),[a,0])$ is a model of $\mathbb{L}_u$ in $\E$.
\end{proposition}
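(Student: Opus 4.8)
The plan is to use the fact, established in the preceding proposition, that $\Delta(\mathcal{A})$ is already a model of $\mathbb{L}$ in $\mathscr{E}$, and then simply verify the two extra axioms $\mathrm{L}_u.1$ and $\mathrm{L}_u.2$ for the distinguished element $[a,0]$, arguing throughout in the internal language of $\mathscr{E}$. First I would note that $[a,0]$ is a legitimate element of $\Delta(\mathcal{A})$: by axiom P*.1 we have $a\leq\neg a$, so $a$ belongs to $Rad(\mathcal{A})$ (Proposition \ref{radical}), and $\Delta(\mathcal{A})$ is by construction the Grothendieck group of $Rad(\mathcal{A})$ regarded as a model of $\mathbb{M}$.

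For $\mathrm{L}_u.1$, I would compute directly with the formula for $\mathrm{Inf}$ on $\Delta(\mathcal{A})$: $\mathrm{Inf}([0,0],[a,0])=[\inf(0\oplus 0,\,0\oplus a),\,0\oplus 0]=[\inf(0,a),0]=[0,0]$, since $0$ is the bottom element of $Rad(\mathcal{A})$; hence $[0,0]\leq[a,0]$, i.e.\ $[a,0]\geq 0$. For $\mathrm{L}_u.2$, the key preliminary observation is that every positive element of $\Delta(\mathcal{A})$ is of the form $[z,0]$ with $z\in Rad(\mathcal{A})$: indeed $[x,y]\geq[0,0]$ unwinds (exactly as in the proof of Theorem \ref{monoid-group}) to $\inf(x,y)=y$, i.e.\ $y\leq x$, and then axiom M.14 for the monoid $Rad(\mathcal{A})$ furnishes the (provably unique) $z$ with $x=y\oplus z$, so that $[x,y]=[z,0]$. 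Next, $n[a,0]=[na,0]$, and $[z,0]\leq[na,0]$ unwinds via the same computation as above to $\inf(z,na)=z$, i.e.\ $z\leq na$ in $Rad(\mathcal{A})$. Therefore the disjunction $\bigvee_{n\in\mathbb{N}}\bigl([z,0]\leq n[a,0]\bigr)$ is equivalent to $\bigvee_{n\in\mathbb{N}}(z\leq na)$, which holds for every $z\in Rad(\mathcal{A})$ precisely by axiom P*.2 of $\mathbb{P}^*$; this gives $\mathrm{L}_u.2$. Since $\Delta(\mathcal{A})$ is decidable in no special sense here, all the steps are geometric/constructive, so the argument is valid in an arbitrary Grothendieck topos and is natural in it.

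I do not expect a genuine obstacle: the statement is essentially a bookkeeping check. The one point requiring a little care is the manipulation of the quotient object $\Delta(\mathcal{A})$ in the internal language — in particular, that "every positive element is represented by a pair with second coordinate $0$" is legitimate because the existential in M.14 is provably unique, so it defines an actual arrow rather than requiring a choice — and that the infinitary disjunction in $\mathrm{L}_u.2$ transfers term-by-term (the $n$-th disjunct $z\leq na$ gives exactly the $n$-th disjunct $[z,0]\leq n[a,0]$), so no rearrangement of the disjunction is needed.
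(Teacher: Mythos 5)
Your proof is correct and follows essentially the same route as the paper: take the $\ell$-group structure on $\Delta(\mathcal{A})$ as given, check L$_u$.1 by direct computation with $\mathrm{Inf}$, and derive L$_u$.2 from axiom P*.2 applied to the radical. The only (cosmetic) difference is that you first normalize a positive element to the canonical form $[z,0]$ via M.14, whereas the paper works directly with a general representative $[x,y]$ and uses $y\leq x\leq na$; both reductions are valid and constructive.
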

\begin{proof}
We already know that $\Delta(\mathcal{A})$ is an $\ell$-group in $\E$, so it remains to prove that $[a,0]$ is a strong unit for it.
\begin{itemize}
\item[-] $[a,0]\geq [0,0] \Leftrightarrow \inf([a,0],[0,0])=[0,0]\Leftrightarrow [\inf(a\oplus 0,0\oplus 0),0\oplus 0]=[0,0]\Leftrightarrow [0,0]=[0,0]$. Thus, L$_u$.1 holds.
\item[-] given $[x,y]\in \Delta(\mathcal{A})$ such that $[x,y]\geq [0,0]$, we have that $x,y\in Rad(A)$, i.e. $x\leq \neg x$ and $y\leq \neg y$. By axiom P*.2 we have $\mathbin{\mathop{\textrm{\huge $\vee$}}\limits_{n\in \mathbb{N}}} x\leq na$ and $\mathbin{\mathop{\textrm{\huge $\vee$}}\limits_{n\in\mathbb{N}}}y\leq ma$. Further, by definition of the order relation in $\Delta(A)$
\begin{center}
 $[x,y]\geq [0,0]\Leftrightarrow x\geq y$
\end{center}
Thus $\mathbin{\mathop{\textrm{\huge $\vee$}}\limits_{n\in\mathbb{N}}}y\leq x\leq na$ and $\mathbin{\mathop{\textrm{\huge $\vee$}}\limits_{n\in\mathbb{N}}} [x,y]\leq n[a,0]$. Therefore L$_u$.2 holds.
\end{itemize}
\end{proof}

Let $\mathcal{A}=(A,a)$ and $\mathcal{A}'=(A',a')$ be two models of $\P$ in $\E$ and $h:\mathcal{A}\rightarrow \mathcal{A}'$ an arrow in $\P$-mod$(\E)$, i.e., an MV-homomorphism such that $h(a)=a'$. We can consider $\Delta(h)$. This is an $\ell$-homomorphism satisfying $\Delta(h)([a, 0])=[h(a),0]=[a',0]$. So $\Delta(h)$ defines an ${\mathbb L}_{u}$-model homomorphism $(\Delta(\mathcal{A}), [a, 0]) \to (\Delta(\mathcal{A'}), [a', 0])$. Thus $\Delta$ is a functor from $\P$-mod$(\E)$ to $\mathbb{L}_u$-mod$(\E)$.

In the converse direction, let $\mathcal{G}=(G,u)$ be a model of $\mathbb{L}_u$ in $\E$. We know that $\Sigma(\mathcal{G})$ is a model of $\mathbb{P}$ in $\E$.

\begin{proposition}
The structure $(\Sigma(G),(0,u))$ is a model of $\P$ in $\mathbb{E}$.
\end{proposition}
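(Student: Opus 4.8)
The plan is to lean on Proposition \ref{Sigma}, which already shows that $\Sigma(\mathcal{G})$ is a perfect MV-algebra in $\mathscr{E}$, so that all the axioms of $\mathbb{P}$ hold automatically; what remains is precisely to check that the interpretation of the new constant symbol, namely $(0,u)$, witnesses the two extra axioms P*.1 and P*.2. As everywhere in the paper, I would argue informally in the internal language of $\mathscr{E}$, relying on the explicit descriptions of $\Sigma(G)$ and of $\mathrm{Rad}(\Sigma(\mathcal{G}))$ obtained in the proof of Proposition \ref{Sigma} (Claims 1--3 there). Note first that $(0,u)$ does lie in $\Sigma(G)$, since $(0,0)\leq(0,u)$ is equivalent to $0\leq u$, which holds by axiom L$_u$.1, and $(0,u)\leq(1,0)$ holds because $0<1$ in $\mathbb{Z}_{\mathscr{E}}$.

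For P*.1, observe that $u\geq 0$ in $\mathcal{G}$ by axiom L$_u$.1, so by Claim 2 in the proof of Proposition \ref{Sigma} the element $(0,u)$ belongs to $\mathrm{Rad}(\Sigma(\mathcal{G}))$; by the definition of the radical this means exactly $(0,u)\leq\neg(0,u)$, which is the sequent $\top\vdash a\leq\neg a$ with $a$ interpreted as $(0,u)$.

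For P*.2, I would first compute the iterated MV-sum $n\cdot(0,u)$ inside $\Sigma(\mathcal{G})$. Since the operation $\oplus$ on $\Sigma(\mathcal{G})=\Gamma(\mathbb{Z}_{\mathscr{E}}\times_{lex}\mathcal{G},(1,0))$ is the truncation at $(1,0)$ of the group addition, and each partial sum $(0,ku)$ has first coordinate $0<1$, the truncation is inactive and one gets $n\cdot(0,u)=(0,nu)$ by an easy induction. Then, arguing internally: assuming $x\leq\neg x$, Claim 2 again gives that $x$ is of the form $(0,g)$ with $g\geq 0$, and by the definition of the lexicographic order (first coordinates equal) the inequality $(0,g)\leq(0,nu)$ is equivalent to $g\leq nu$ in $\mathcal{G}$. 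Now $g\geq 0$ and $u$ is a strong unit of $\mathcal{G}$, so axiom L$_u$.2 applied in $\mathcal{G}$ yields $\bigvee_{n\in\mathbb{N}}\,g\leq nu$; transporting this disjunction back along the above equivalences gives $\bigvee_{n\in\mathbb{N}}\,x\leq n\cdot(0,u)$, which is precisely axiom P*.2.

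I do not expect a real obstacle here: the only points deserving a line of care are the verification that the truncated MV-sum $n\cdot(0,u)$ coincides with $(0,nu)$ (so that the symbol ``$na$'' in P*.2, which refers to the MV-algebra operation, agrees with the $\ell$-group multiple of $u$ inside the radical), and the manipulation of the infinitary disjunction in the internal language, which is legitimate because P*.2 and L$_u$.2 range over the same index set $\mathbb{N}$ and the reasoning used is constructive. Everything else is a routine unwinding of the definitions of the lexicographic order and of $\Gamma$.
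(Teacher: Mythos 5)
Your proposal is correct and follows essentially the same route as the paper: P*.1 via the observation that $(0,u)$ lies in the radical (i.e.\ $(0,u)\leq\neg(0,u)=(1,-u)$), and P*.2 by using Claim 2 of Proposition \ref{Sigma} to write any $x$ with $x\leq\neg x$ as $(0,g)$ with $g\geq 0$ and then invoking axiom L$_u$.2 in $\mathcal{G}$. The only difference is that you spell out the check that the MV-sum $n\cdot(0,u)$ equals $(0,nu)$, a point the paper leaves implicit; this is a welcome extra line of care rather than a divergence.
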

\begin{proof}
It remains to show that this structure satisfies P*.1 and P*.2. 
\begin{itemize}
\item[-] $\neg(0,u)=(1,u)\geq(0,u)$. Thus, P*.1 holds.
\item[-] let $(c,x)$ be an element of $\Sigma(G)$ such that $(c,x)\leq \neg (c,x)$. By Theorem \ref{Sigma}, $(c,x)=(0,y)$ with $y\geq 0$. Thus, by axiom L$_u$.2, we have $\mathbin{\mathop{\textrm{\huge $\vee$}}\limits_{n\in\mathbb{N}}}y\leq nu$. Hence, $\mathbin{\mathop{\textrm{\huge $\vee$}}\limits_{n\in\mathbb{N}}}(0,y)\leq n(0,u)$ and P*.2 holds.  
\end{itemize}
\end{proof}

It is easily seen that $\Sigma$ is a functor from $\mathbb{L}_u$-mod$(\E)$ to $\P$-mod$(\E)$, i.e., that $\Sigma(h)$ is an MV-homomorphism which preserves the generating element of the radical for every $\ell$-unital homomorphism $h$.

\begin{theorem}
The categories $\P$-mod$(\E)$ and $\mathbb{L}_u$-mod$(\E)$ are equivalent, naturally in $\E$. Hence the theories $\mathbb{P}^{\ast}$ and ${\mathbb L}_{u}$ are Morita-equivalent.
\end{theorem}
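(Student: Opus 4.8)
The plan is to reuse the Morita-equivalence of Theorem~\ref{equivalence} between $\mathbb{L}$-mod$(\E)$ and $\mathbb{P}$-mod$(\E)$, checking that the two natural isomorphisms constructed there are compatible with the additional structure given by the distinguished constants. By the previous propositions the functors $\Delta\colon \mathbb{P}^{\ast}$-mod$(\E)\to\mathbb{L}_u$-mod$(\E)$ and $\Sigma\colon\mathbb{L}_u$-mod$(\E)\to\mathbb{P}^{\ast}$-mod$(\E)$ are well defined, and they commute with the forgetful functors $\mathbb{P}^{\ast}$-mod$(\E)\to\mathbb{P}$-mod$(\E)$, $\mathbb{L}_u$-mod$(\E)\to\mathbb{L}$-mod$(\E)$, being literally the old functors $\Delta$, $\Sigma$ applied to the underlying perfect MV-algebra (resp.\ $\ell$-group) together with a prescription for the new constant. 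So it suffices to verify that the natural isomorphisms $\varphi\colon 1_{\mathbb{L}}\Rightarrow\Delta\circ\Sigma$ and $\beta\colon 1_{\mathbb{P}}\Rightarrow\Sigma\circ\Delta$ of Theorem~\ref{equivalence} lift to the pointed categories, i.e.\ that their components preserve the distinguished constants.

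For $\varphi$: given $(\mathcal{G},u)$, the strong unit of $(\Delta\circ\Sigma)(\mathcal{G},u)$ is by construction $[(0,u),(0,0)]\in\Delta(\Sigma(\mathcal{G}))$, while $\varphi_{\mathcal{G}}(u)=[(0,u^{+}),(0,u^{+}-u)]$. Axiom L$_u$.1 gives $u\geq 0$, hence $u^{+}=u$ and $u^{+}-u=0$, so $\varphi_{\mathcal{G}}(u)=[(0,u),(0,0)]$ is exactly that strong unit; thus $\varphi_{\mathcal{G}}$ is an isomorphism of $\mathbb{L}_u$-models. For $\beta$: given $(\mathcal{A},a)$, axiom P*.1 says $a\leq\neg a$, so $a\in Rad(\mathcal{A})$ by Proposition~\ref{radical}, whence $\beta_{\mathcal{A}}(a)=(0,[a,0])$, which is precisely the distinguished radical generator of $(\Sigma\circ\Delta)(\mathcal{A},a)=(\Sigma(\Delta(\mathcal{A})),(0,[a,0]))$; thus $\beta_{\mathcal{A}}$ is an isomorphism of $\mathbb{P}^{\ast}$-models. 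Since the naturality squares are inherited from Theorem~\ref{equivalence}, $\varphi$ and $\beta$ give natural isomorphisms $1_{\mathbb{L}_u}\cong\Delta\circ\Sigma$ and $1_{\mathbb{P}^{\ast}}\cong\Sigma\circ\Delta$, establishing $\mathbb{P}^{\ast}$-mod$(\E)\simeq\mathbb{L}_u$-mod$(\E)$. As before, all the constructions involved (the lexicographic product, the functor $\Gamma$, the Grothendieck group, and the choice of the new constants) are geometric, so the equivalence is natural in $\E$; by the characterization of Morita-equivalence recalled in Section~\ref{topos theory} this yields the Morita-equivalence of $\mathbb{P}^{\ast}$ and $\mathbb{L}_u$.

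The only genuine work, and hence the main obstacle, is the bookkeeping of the distinguished constants through the iterated functors: recognizing that $u\geq 0$ forces $\varphi_{\mathcal{G}}(u)$ to land on the nose on the strong unit, and that $a$ lying in the radical makes $\beta_{\mathcal{A}}(a)$ the intended generator. Everything else is already contained in Theorem~\ref{equivalence}. It is worth noting in passing that axioms P*.2 and L$_u$.2 play no role in the equivalence itself — they were used only to ensure that $\Delta$ and $\Sigma$ actually land in the pointed categories — and that their mutual correspondence is automatic once $\varphi$ and $\beta$ are shown to be pointed isomorphisms.
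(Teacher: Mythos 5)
Your proposal is correct and follows essentially the same route as the paper: the paper's proof likewise reduces to Theorem \ref{equivalence} by observing that $\beta_{\mathcal{A}}(a)=(0,[a,0])$ and $\varphi_{\mathcal{G}}(u)=[(0,u),(0,0)]$, i.e.\ that the natural isomorphisms already constructed preserve the distinguished constants. You merely spell out the short computations (using $u\geq 0$ to get $u^{+}=u$, and P*.1 to place $a$ in the radical) that the paper leaves implicit.
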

\begin{proof}
This immediately follows from Theorem \ref{Moritaeq1} noticing that the isomorphisms $\beta_A:A\rightarrow \Sigma\circ\Delta(A)$ and $\varphi_G: G\rightarrow \Delta\circ\Sigma(G)$ defined in the proof of Theorem \ref{equivalence} satisfy:

$\beta_A(a)=(0,[a,0])$;

$\varphi_G(u)=[(0,u),(0,0)]$.
\end{proof}

\begin{obs}
From the main result of \cite{Russo} we obtain that the theory $\P$ is Morita-equivalent to the theory $\mathbb{MV}$.
\end{obs}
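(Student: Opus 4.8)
The plan is to obtain the asserted Morita-equivalence by transitivity, composing two equivalences that are already at our disposal. On the one hand, the theorem just proved exhibits, for every Grothendieck topos $\mathscr{E}$, an equivalence $\Delta_{\mathscr{E}}:\P$-mod$(\mathscr{E})\to \mathbb{L}_u$-mod$(\mathscr{E})$, natural in $\mathscr{E}$, so that $\mathscr{E}_{\P}\simeq \mathscr{E}_{\mathbb{L}_u}$. On the other hand, the main result of \cite{Russo} is precisely that the unit interval functor $\Gamma$ and the topos-theoretic ``Grothendieck $\ell$-group with strong unit'' construction inverse to it yield a Morita-equivalence between $\mathbb{L}_u$ and $\mathbb{MV}$, i.e.\ an equivalence $\Gamma_{\mathscr{E}}:\mathbb{L}_u$-mod$(\mathscr{E})\to \mathbb{MV}$-mod$(\mathscr{E})$ natural in $\mathscr{E}$, whence $\mathscr{E}_{\mathbb{L}_u}\simeq \mathscr{E}_{\mathbb{MV}}$. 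Since a Morita-equivalence is nothing but an equivalence of classifying toposes, and equivalence of categories is transitive, these two facts immediately give $\mathscr{E}_{\P}\simeq \mathscr{E}_{\mathbb{MV}}$, which is the statement.

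Carried out concretely, I would first recall the statement of \cite{Russo} in the form just quoted; then invoke the preceding theorem for the $\P$/$\mathbb{L}_u$ half; then form the composite $\Gamma_{\mathscr{E}}\circ \Delta_{\mathscr{E}}:\P$-mod$(\mathscr{E})\to \mathbb{MV}$-mod$(\mathscr{E})$, which is an equivalence, being a composite of equivalences, and is natural in $\mathscr{E}$, being a composite of natural ones; and finally appeal to the universal property of the classifying topos to read off the Morita-equivalence. Moreover, since the functors involved — $\Delta$, its pseudo-inverse $\Sigma$, $\Gamma$, and the Grothendieck-group functor inverting it — are all defined by geometric constructions, naturality in $\mathscr{E}$ need not even be checked by hand.

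I do not anticipate any genuine obstacle: the argument is pure two-step transitivity. The only point meriting a word of care is the bookkeeping of the distinguished symbols, namely that under $\Delta_{\mathscr{E}}$ the constant $a$ of a $\P$-model is sent to the chosen strong unit of the associated $\ell$-u group, so that the subsequent application of $\Gamma_{\mathscr{E}}$ is legitimate and returns the radical generator one started from. This is exactly the content of the identities $\beta_A(a)=(0,[a,0])$ and $\varphi_G(u)=[(0,u),(0,0)]$ recorded in the proof of the preceding theorem, so nothing beyond those observations is required.
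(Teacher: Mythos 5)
Your proposal is correct and matches the paper's (implicit) argument exactly: the remark follows by composing the Morita-equivalence $\P \simeq_{\textrm{Morita}} \mathbb{L}_u$ established in the immediately preceding theorem with the Morita-equivalence $\mathbb{L}_u \simeq_{\textrm{Morita}} \mathbb{MV}$ of \cite{Russo}, using transitivity of equivalence of classifying toposes. Your additional care about the distinguished constant $a$ being sent to the strong unit is precisely what the identities $\beta_A(a)=(0,[a,0])$ and $\varphi_G(u)=[(0,u),(0,0)]$ in the preceding proof guarantee, so nothing further is needed.
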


\begin{obs}
In \cite{l-u groups_perfect}, the authors characterized the class of $\ell$-groups with strong unit that corresponds to that of perfect MV-algebras via the $\Gamma$ functor of Mundici's equivalence. These groups are called \textit{antiarchimedean}. Specifically, let $\mathbb{A}nt$ be the quotient of the theory $\mathbb{L}_u$ of $\ell$-groups with strong unit obtained by adding the following axioms:
\begin{center}
$(0\leq x\wedge x\leq u\vdash_{x} \sup(0,2\inf(2x,u)-u)=\inf(u,2\sup(2x-u,0))$

$(0\leq x\wedge x\leq u\wedge \inf(2x,u)=x\vdash_{x} x=0\vee x=u )$
\end{center}

By the results of \cite{Russo}, the quotient $\mathbb{A}nt$ is Morita-equivalent to the theory $\mathbb{P}$; hence, the theory $\mathbb{A}nt$ is Morita-equivalent to the theory of lattice-ordered abelian groups $\mathbb{L}$, by Theorem \ref{Moritaeq1}. It follows that an $\ell$-u group is antiarchimedean if and only if it is isomorphic to a $\ell$-u group of the form $\mathbb{Z}\times_{lex} \mathcal{G}$, for an $\ell$-group $\mathcal{G}$.
\end{obs}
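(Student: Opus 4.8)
The plan is to deduce everything from the Morita-equivalence between $\mathbb{MV}$ and $\mathbb{L}_u$ of \cite{Russo} together with Di Nola--Lettieri's equivalence, rather than constructing new functors. Since $\mathbb{MV}$ and $\mathbb{L}_u$ share a classifying topos, the Duality Theorem of \cite{Caramello1} puts their quotients in canonical bijection, corresponding quotients being themselves Morita-equivalent; this bijection is realised concretely by the interpretation functor $I\colon\mathscr{C}_{\mathbb{MV}}\to\mathscr{C}_{\mathbb{L}_u}$ used already in the proof of Proposition \ref{radical}. As $\mathbb{P}$ is, by definition, the quotient of $\mathbb{MV}$ axiomatised by P.1--P.4, the task reduces to showing that the quotient of $\mathbb{L}_u$ matching $\mathbb{P}$ under this bijection is exactly $\mathbb{A}nt$; granting this, $\mathbb{A}nt$ and $\mathbb{P}$ have equivalent classifying toposes, and composing with the Morita-equivalence between $\mathbb{P}$ and $\mathbb{L}$ of Theorem \ref{Moritaeq1} shows that $\mathbb{A}nt$ is Morita-equivalent to $\mathbb{L}$.

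First I would compute the images under $I$ of the defining axioms of $\mathbb{P}$, recalling that $I$ sends an MV-element to a group element $x$ with $0\le x\le u$ and translates $x\oplus y\mapsto\inf(u,x+y)$, $\neg x\mapsto u-x$, $x^2\mapsto\sup(0,2x-u)$ and $2x\mapsto\inf(2x,u)$. The computation carried out in Proposition \ref{radical} already shows that $I(\mathrm{P}.1)$ is the first antiarchimedean axiom, and the same bookkeeping shows that $I(\mathrm{P}.3)$, namely $\inf(2x,u)=x\vdash_x x=0\vee x=u$, is the second. It then remains to see that the translations of the two remaining axioms are absorbed: P.2 is valid throughout $V(C)=\mathrm{Mod}(\mathrm{P}.1)$ (Proposition 5(6) of \cite{DiNola1}), and substituting an element with $2x=u$ into the first antiarchimedean axiom forces $u=0$, so that $I(\mathrm{P}.4)=(2x=u\vdash_x\bot)$ is, modulo $I(\mathrm{P}.1)$, equivalent to the non-triviality $u\neq0$ already built into the notion of antiarchimedean group in \cite{l-u groups_perfect}. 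This identifies the quotient matching $\mathbb{P}$ with $\mathbb{A}nt$ and yields the two Morita-equivalences.

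For the concrete description I would transport the models through the two equivalences. A set-based $\ell$-u group $(\mathcal{H},u)$ is antiarchimedean exactly when $\Gamma(\mathcal{H},u)$ is perfect, this being the defining property in \cite{l-u groups_perfect}. By Di Nola--Lettieri's equivalence (Proposition \ref{Sigma} and Theorem \ref{equivalence}) such a perfect algebra is isomorphic to $\Sigma(\mathcal{G})=\Gamma(\mathbb{Z}\times_{lex}\mathcal{G},(1,0))$ with $\mathcal{G}=\Delta(\Gamma(\mathcal{H},u))$; since Mundici's functor $\Gamma$ \cite{Mundici} is a categorical equivalence, the resulting isomorphism $\Gamma(\mathcal{H},u)\cong\Gamma(\mathbb{Z}\times_{lex}\mathcal{G},(1,0))$ lifts to $(\mathcal{H},u)\cong(\mathbb{Z}\times_{lex}\mathcal{G},(1,0))$. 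Conversely, Proposition \ref{Sigma} shows that every $\Gamma(\mathbb{Z}\times_{lex}\mathcal{G},(1,0))=\Sigma(\mathcal{G})$ is perfect, so every $\mathbb{Z}\times_{lex}\mathcal{G}$ is antiarchimedean, which gives the stated characterisation.

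The main obstacle I anticipate is the passage from the set-level to the topos-level: the delicate points are that the identification of quotients must be made through the subtopos bijection of \cite{Russo} and \cite{Caramello1}, not merely by comparing categories of set-based models, and that the redundancy of $I(\mathrm{P}.2)$ and $I(\mathrm{P}.4)$ over $\{I(\mathrm{P}.1),I(\mathrm{P}.3)\}$ has to be read off at the level of geometric provability in $\mathbb{L}_u$, where the semantic anchor of \cite{l-u groups_perfect} guarantees that $\mathbb{A}nt$ is indeed the full quotient corresponding to $\mathbb{P}$. Once the quotient is correctly pinned down, the remaining transport along the two Morita-equivalences is routine.
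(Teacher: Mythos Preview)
Your approach is essentially the same as the paper's: the observation carries its own justification in the text (cite \cite{Russo} for the correspondence of quotients giving $\mathbb{A}nt\simeq\mathbb{P}$, compose with Theorem \ref{Moritaeq1}, then read off the set-based characterisation via Mundici and Di Nola--Lettieri), and you have correctly unpacked what ``by the results of \cite{Russo}'' means by computing the images of P.1--P.4 under the interpretation functor $I$ and matching $I(\mathrm{P}.1)$, $I(\mathrm{P}.3)$ with the two displayed $\mathbb{A}nt$-axioms. Your derivation of the final clause via $\Gamma(\mathcal{H},u)\cong\Gamma(\mathbb{Z}\times_{lex}\mathcal{G},(1,0))$ and the fact that $\Gamma$ is an equivalence is exactly the intended argument.

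One small correction and one caution. The redundancy of P.2 over P.1 is attributed in the paper to Claim 1 in the proof of \cite[Theorem 5.8]{P-L}, not to Proposition 5(6) of \cite{DiNola1} (the latter only identifies $\xi=\mathrm{P}.1$ as axiomatising $V(C)$). More substantively, your handling of $I(\mathrm{P}.4)$ is not quite closed: you correctly observe that $2x=u$ together with the first $\mathbb{A}nt$-axiom forces $u=0$, but the two axioms displayed for $\mathbb{A}nt$ do not by themselves exclude $u=0$, so you are tacitly importing non-triviality from the semantic definition in \cite{l-u groups_perfect}. The paper does the same (it simply defers to \cite{Russo} and \cite{l-u groups_perfect}), so this is not a gap relative to the paper, but if you want a self-contained syntactic verification you should note explicitly that the quotient bijection of \cite{Russo} matches $\mathbb{P}$ with the $\mathbb{L}_u$-quotient generated by $I(\mathrm{P}.1),\ldots,I(\mathrm{P}.4)$, and then argue that this quotient coincides with the one generated by the two displayed sequents together with $u\neq 0$.
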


\section{Some applications}\label{sec:someapplications}

In this section we describe some applications, obtained by applying the technique `toposes as bridges' of \cite{Caramello1}, of the main results of the paper.

Recall from \cite{Caramello5} that, given a geometric theory $\mathbb T$ over a signature $\Sigma$, its \emph{cartesianization} is the sub-theory of $\mathbb T$ consisting of all the $\mathbb T$-cartesian sequents which are provable in $\mathbb T$.

\begin{proposition}\label{prop:cartesianization}
The theory $\mathbb C$ axiomatizing Chang's variety $V(C)$ coincides with the cartesianization of the theory $\mathbb P$ of perfect MV-algebras. That is, for any $\mathbb C$-cartesian sequent $\sigma=(\phi \vdash_{\vec{x}} \psi)$, $\sigma$ is provable in $\mathbb C$ (equivalently, valid in all algebras in $V(C)$) if and only if it is provable in $\mathbb P$ (that is, valid in all perfect MV-algebras). 

Moreover, for any $\mathbb C$-cartesian formulae $\phi(\vec{x})$ and $\psi(\vec{y})$ and a geometric formula $\theta(\vec{x}, \vec{y})$, $\theta$ is $\mathbb P$-provably functional from $\phi(\vec{x})$ to $\psi(\vec{y})$ if and only if it is $\mathbb C$-provably functional from $\phi(\vec{x})$ to $\psi(\vec{y})$. 
\end{proposition}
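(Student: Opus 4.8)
The statement asserts that $\mathbb C$ is the cartesianization of $\mathbb P$, i.e. that the two theories prove exactly the same $\mathbb C$-cartesian sequents, and likewise that $\mathbb P$-provable functionality between $\mathbb C$-cartesian formulae coincides with $\mathbb C$-provable functionality. My approach is to pass through the classifying toposes and use the fact that $\mathbb C$ is a theory of presheaf type (being cartesian), while $\mathbb P$ is a quotient of $\mathbb C$ whose associated Grothendieck topology $J_{\mathbb P}$ on $\textrm{f.p.}\mathbb{C}\textrm{-mod}(\Set)^{\textrm{op}}$ we have shown to be subcanonical (Theorem \ref{thm:subdirect_product}). The key consequence of subcanonicity is that the canonical functor
\[
y:\textrm{f.p.}\mathbb{C}\textrm{-mod}(\Set)^{\textrm{op}}\hookrightarrow \Sh(\textrm{f.p.}\mathbb{C}\textrm{-mod}(\Set)^{\textrm{op}}, J_{\mathbb P})=\mathscr{E}_{\mathbb P}
\]
sending a finitely presentable $\mathbb C$-model $\mathcal B$ to the sheaf $\textrm{Hom}(-,\mathcal B)$ is full and faithful, and in fact preserves all finite limits (it is the restriction of the Yoneda embedding, which lands in sheaves precisely because the topology is subcanonical).

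\textbf{First step: interpreting $\mathbb C$-cartesian formulae in $\mathscr{E}_{\mathbb P}$.} The universal model $U_{\mathbb C}$ of $\mathbb C$ in $[\textrm{f.p.}\mathbb{C}\textrm{-mod}(\Set),\Set]$ restricts, under the inclusion $\mathscr{E}_{\mathbb P}\hookrightarrow \mathscr{E}_{\mathbb C}=[\textrm{f.p.}\mathbb{C}\textrm{-mod}(\Set),\Set]$ given by $a^{\ast}$ (the associated-sheaf functor), to a model of $\mathbb P$ in $\mathscr{E}_{\mathbb P}$; call it $U_{\mathbb P}$. Since $U_{\mathbb C}$ is the tautological presheaf $\mathcal B\mapsto U_{\mathbb C}\mathcal B$ and $a^{\ast}$ preserves finite limits, for any $\mathbb C$-cartesian formula $\phi(\vec x)$ we have $[[\vec x.\phi]]_{U_{\mathbb P}}=a^{\ast}([[\vec x.\phi]]_{U_{\mathbb C}})$; but $[[\vec x.\phi]]_{U_{\mathbb C}}$ is the representable presheaf $\textrm{Hom}(F_{\phi},-)$ on the $\mathbb C$-model $F_{\phi}$ presented by $\phi$ (since $\mathbb C$ is cartesian, every $\mathbb C$-cartesian formula presents a model, cf. Theorem \ref{irr-f.p.} and Remark \ref{rem:cartirr}), and $a^{\ast}$ sends it to $y(F_{\phi})=\textrm{Hom}(-,F_{\phi})$, which is already a $J_{\mathbb P}$-sheaf by subcanonicity. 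The upshot is that the interpretation of $\mathbb C$-cartesian formulae in $U_{\mathbb P}$ is computed by exactly the same representable presheaves as in $U_{\mathbb C}$, with no sheafification actually taking place.

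\textbf{Second step: reading off the two claims.} For the sequent claim: a $\mathbb C$-cartesian sequent $(\phi\vdash_{\vec x}\psi)$ is provable in $\mathbb P$ iff it holds in $U_{\mathbb P}$ iff there is a monomorphism $[[\vec x.\phi]]_{U_{\mathbb P}}\mono[[\vec x.\psi]]_{U_{\mathbb P}}$ over $[[\vec x.\phi\wedge\psi]]$; by the first step this is a statement about the representable sheaves $y(F_{\phi})$, $y(F_{\psi})$, $y(F_{\phi\wedge\psi})$, and since $y$ is full and faithful it is equivalent to the corresponding statement about the representable presheaves in $\mathscr{E}_{\mathbb C}$, i.e. to the sequent holding in $U_{\mathbb C}$, i.e. to provability in $\mathbb C$. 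One direction is of course automatic ($\mathbb P$ is a quotient of $\mathbb C$, so anything provable in $\mathbb C$ is provable in $\mathbb P$); the content is the converse. The provable-functionality claim follows by the same mechanism applied to the three sequents in the definition of provable functionality, each of which is built from $\mathbb C$-cartesian formulae (note $\phi\wedge\psi$, $\theta\wedge\theta[\vec z/\vec y]\wedge\vec z=\vec y$ etc. involve $\theta$, which is only geometric — so here one argues instead that $[[\vec x,\vec y.\theta]]_{U_{\mathbb P}}$, as a subobject of the representable sheaf $y(F_{\phi\wedge\psi})=a^{\ast}(y(F_{\phi}\times F_{\psi}))$, corresponds by the equivalence between $J_{\mathbb P}$-closed sieves on $F_{\phi}\times_{\mathbb C}F_{\psi}$ and subobjects of its representable, hence to a subobject in the presheaf topos, i.e. to a subfunctor of $\textrm{Hom}(-,F_{\phi}\times_{\mathbb C}F_{\psi})$, and the provable-functionality conditions are all subobject inclusions among objects obtained from these representables by finite limits, hence transfer between $\mathscr{E}_{\mathbb P}$ and $\mathscr{E}_{\mathbb C}$ via the full-and-faithful $y$).

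\textbf{The main obstacle.} The delicate point is the second claim: $\theta$ is merely a geometric formula, so $[[\vec x,\vec y.\theta]]_{U_{\mathbb P}}$ is a genuine sheaf-theoretic subobject of a representable and need not be representable itself. The argument must show that all the conditions expressing provable functionality are preserved and reflected when passing between the presheaf topos $\mathscr{E}_{\mathbb C}$ and its subtopos $\mathscr{E}_{\mathbb P}$ along $y$. The clean way is: the $\mathbb P$-provably-functional formulae from $\phi(\vec x)$ to $\psi(\vec y)$ correspond bijectively to morphisms $y(F_{\psi})\to y(F_{\phi})$ in $\mathscr{E}_{\mathbb P}$ (these are exactly the morphisms $\{\vec x.\phi\}\to\{\vec y.\psi\}$ in the syntactic category of $\mathbb P$ with representable domain and codomain), and since $y:\textrm{f.p.}\mathbb{C}\textrm{-mod}(\Set)^{\textrm{op}}\to\mathscr{E}_{\mathbb P}$ is full and faithful (subcanonicity) such morphisms are in bijection with $\mathbb C$-model homomorphisms $F_{\phi}\to F_{\psi}$, hence with $\mathbb C$-provably-functional formulae from $\phi$ to $\psi$. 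One then checks that this chain of bijections sends a $\mathbb C$-provably-functional $\theta$ to (the $\mathbb P$-provable equivalence class of) itself, which is routine once the identifications are in place. I expect the bookkeeping of which formula corresponds to which arrow — and verifying that the correspondence is the identity on the level of the formula $\theta$ — to be the only real work; the conceptual skeleton is entirely carried by subcanonicity of $J_{\mathbb P}$ plus the presheaf-type representation of $\mathbb C$.
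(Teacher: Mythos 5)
Your proof is correct and follows essentially the same route as the paper's: both arguments rest on the subcanonicity of $J_{\mathbb P}$ (Theorem \ref{thm:subdirect_product}), the identification of the universal model of $\mathbb C$ with a representable which therefore also serves as the universal model of $\mathbb P$ in the subtopos $\Sh(\textrm{f.p.}\mathbb{C}\textrm{-mod}(\Set)^{\textrm{op}}, J_{\mathbb P})$, the criterion ``provable iff valid in the universal model'', and the full faithfulness of the canonical functor $\mathscr{C}_{\mathbb C}^{\textrm{cart}}\simeq \textrm{f.p.}\mathbb{C}\textrm{-mod}(\Set)^{\textrm{op}}\to \Sh(\textrm{f.p.}\mathbb{C}\textrm{-mod}(\Set)^{\textrm{op}}, J_{\mathbb P})$ combined with the correspondence between arrows between interpretations in a universal model and provably functional formulae. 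The only blemish is a harmless variance slip in the final step (an arrow $\{\vec{x}.\phi\}\to\{\vec{y}.\psi\}$ of the syntactic category corresponds to a morphism $[[\vec{x}.\phi]]_{U}\to [[\vec{y}.\psi]]_{U}$, i.e.\ to a map of representables $Hom(-,F_{\phi})\to Hom(-,F_{\psi})$, i.e.\ to an algebra homomorphism $F_{\psi}\to F_{\phi}$), which you yourself flag as mere bookkeeping and which does not affect the argument.
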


\begin{proof}
The theory $\mathbb{C}$ is algebraic, hence it is of presheaf type. By \cite[Corollary D3.1.2]{SE}, the universal model $U_{\mathbb{C}}$ of $\mathbb{C}$ in its classifying topos $[\textrm{f.p.}\mathbb{C}\textrm{-mod}(\Set),\Set]$ is given by $Hom_{\textrm{f.p.}\mathbb{C}\textrm{-mod}(\Set)}(F, -)$, where $F$ is the free $\mathbb{C}$-algebra on one generator. Since $J_{\mathbb{P}}$ is subcanonical, the model $U_{\mathbb{C}}$ is also a universal model of $\mathbb{P}$ in the topos $\Sh(\textrm{f.p.}\mathbb{C}\textrm{-mod}(\Set)^{\textrm{op}}, J_{\mathbb P})$ (cf. \cite[Lemma 2.1]{Caramello3}). Now, given a geometric theory $\mathbb{T}$, a geometric sequent over its signature is provable in $\mathbb{T}$ if and only if it is satisfied in its universal model $U_{\mathbb{T}}$ (cf. \cite[Theorem D1.4.6]{SE}). From this the first part of the proposition follows at once.

The second part follows from the fact that the canonical functor $r:{\mathcal C}_{\mathbb C}^{\textrm{cart}}\simeq \textrm{f.p.}\mathbb{C}\textrm{-mod}(\Set)^{\textrm{op}}\to \Sh(\textrm{f.p.}\mathbb{C}\textrm{-mod}(\Set)^{\textrm{op}}, J_{\mathbb P})$ is full and faithful since the topo\-logy $J_{\mathbb{P}}$ is subcanonical. Recalling from \cite[Theorem 2.2]{Caramello3} that, given a universal model $U$ of a geometric theory $\mathbb T$ in its classifying topos $\mathscr{E}$, for any geometric formulas $\{\vec{x}. \phi\}$ and $\{\vec{y}. \psi\}$ over the signature of $\mathbb T$, the arrows $[[\vec{x}. \phi]]_{U}\to [[\vec{y}. \psi]]_{U}$ in $\mathscr{E}$ correspond exactly to the $\mathbb{T}$-provably functional formulae from $\{\vec{x}. \phi\}$ to $\{\vec{y}. \psi\}$, the thesis follows immediately.
\end{proof}

Another application concerns definability and functional completeness. 

\begin{proposition}

The following definability properties of the theory $\mathbb P$ in relation to the theory $\mathbb C$ hold:

\begin{enumerate}[(i)] 
\item Every property $P$ of tuples $\vec{x}$ of elements of perfect MV-algebras which is preserved by arbitrary MV-algebra homomorphisms and by filtered colimits of perfect MV-algebras is definable by a geometric formula $\phi(\vec{x})$ over the signature of $\mathbb P$. For any two geometric formulae $\phi(\vec{x})$ and $\psi(\vec{y})$ over the signature of $\mathbb P$, every assignment $M\to f_{M}:[[\vec{x}. \phi]]_{M}\to [[\vec{y}. \psi]]_{M}$ (for finitely presented perfect MV-algebras $M$) which is natural in $M$ is definable by a $\mathbb P$-provably functional formula $\theta(\vec{x}, \vec{y})$ from $\phi(\vec{x})$ to $\psi(\vec{y})$.

\item The properties $P$ of tuples $\vec{x}$ of elements of perfect MV-algebras which are preserved by arbitrary MV-algebra homomorphisms and by filtered colimits of perfect MV-algebras are in natural bijection with the properties $Q$ of tuples $\vec{x}$ of elements of algebras in $\mathbb{C}\textrm{-mod}(\Set)$ which are preserved by filtered colimits of algebras in $\mathbb{C}\textrm{-mod}(\Set)$ and such that for any finitely presented algebra $\mathcal{A}$ in $\mathbb{C}\textrm{-mod}(\Set)$ and any Boolean element $a$ of $\mathcal{A}$, the canonical projections ${\mathcal A}\to \mathcal{A}\slash (a)$ and ${\mathcal A}\to \mathcal{A}\slash (\neg a)$ jointly reflect $Q$.
\end{enumerate} 
\end{proposition}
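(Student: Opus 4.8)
The plan is to obtain part (i) directly from the definability theorem for theories of presheaf type, and part (ii) by transporting the bijection between properties of tuples and subobjects of powers of the universal model across the subtopos inclusion $\mathscr{E}_{\mathbb{P}}\hookrightarrow \mathscr{E}_{\mathbb{C}}$.

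For part (i): the theory $\mathbb{P}$ is of presheaf type, being Morita-equivalent (Theorem \ref{Moritaeq1}) to the cartesian theory $\mathbb{L}$; moreover a $\mathbb{P}$-model homomorphism between perfect MV-algebras is exactly an MV-algebra homomorphism, and the finitely presentable $\mathbb{P}$-models are exactly the finitely presentable perfect MV-algebras. Hence a property $P$ as in the statement assigns to each finitely presentable perfect MV-algebra $\mathcal{M}$ a subset $P_{\mathcal{M}}\subseteq \mathcal{M}^{n}$ preserved by all $\mathbb{P}$-model homomorphisms, and Theorem \ref{definability} produces a geometric formula $\phi(\vec{x})$ over the signature of $\mathbb{P}$ with $[[\vec{x}.\phi]]_{\mathcal{M}}=P_{\mathcal{M}}$ for every finitely presentable $\mathbb{P}$-model; since $P$ is in addition preserved by filtered colimits of perfect MV-algebras, Remark \ref{rmk:definability}(b) upgrades this to the assertion that $\phi(\vec{x})$ defines $P$ on \emph{all} perfect MV-algebras. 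The second statement of (i) is then exactly Remark \ref{rmk:definability}(a) applied with $\mathbb{T}=\mathbb{P}$.

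For part (ii): recall from Section \ref{classifying topos} that $\mathscr{E}_{\mathbb{P}}=\mathbf{Sh}(\textrm{f.p.}\mathbb{C}\textrm{-mod}(\Set)^{\textrm{op}},J_{\mathbb{P}})$ is a subtopos of $\mathscr{E}_{\mathbb{C}}=[\textrm{f.p.}\mathbb{C}\textrm{-mod}(\Set),\Set]$, that $J_{\mathbb{P}}$ is subcanonical (Theorem \ref{thm:subdirect_product}) with $J_{\mathbb{P}}$-irreducible objects the finitely presentable perfect MV-algebras, and that the universal model $U$ of $\mathbb{C}$ is also the universal model of $\mathbb{P}$ (cf.\ the proof of Proposition \ref{prop:cartesianization}). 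For each $n$, $U^{n}\cong Hom_{\textrm{f.p.}\mathbb{C}\textrm{-mod}(\Set)}(F_{n},-)$, with $F_{n}$ the free algebra in $\mathbb{C}\textrm{-mod}(\Set)$ on $n$ generators, is representable and hence a $J_{\mathbb{P}}$-sheaf. I would then chain together three natural bijections. First, by part (i) and the identification $\mathscr{E}_{\mathbb{P}}=[\textrm{f.p.}\mathbb{P}\textrm{-mod}(\Set),\Set]$, the properties $P$ as in (i) correspond to the subobjects of $U^{n}$ in $\mathscr{E}_{\mathbb{P}}$. Second, since $U^{n}$ is a $J_{\mathbb{P}}$-sheaf and $\mathscr{E}_{\mathbb{P}}$ is the subtopos of $\mathscr{E}_{\mathbb{C}}$ associated with $J_{\mathbb{P}}$, these are exactly the $J_{\mathbb{P}}$-closed subobjects of $U^{n}$ in $\mathscr{E}_{\mathbb{C}}$. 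Third, by definability applied to the presheaf type theory $\mathbb{C}$ together with Remark \ref{rmk:definability}(b), the subobjects of $U^{n}$ in $\mathscr{E}_{\mathbb{C}}$ correspond to the properties $Q$ of tuples of elements of algebras in $\mathbb{C}\textrm{-mod}(\Set)$ that are preserved by MV-algebra homomorphisms and by filtered colimits. The crux is to recognise which $Q$ correspond, under the second and third bijections, to the \emph{$J_{\mathbb{P}}$-closed} subobjects. This I would do by testing the $J_{\mathbb{P}}$-sheaf condition against the explicit generators of $J_{\mathbb{P}}$ recalled in Section \ref{classifying topos}: the families $\{\mathcal{A}\to \mathcal{A}/(a),\ \mathcal{A}\to \mathcal{A}/(\neg a)\}$ with $a$ a Boolean element of a finitely presentable $\mathcal{A}$, together with the empty covering of the trivial algebra. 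Such families are stable under pushout along arbitrary MV-algebra homomorphisms (a pushout of $\mathcal{A}\to\mathcal{A}/(a)$ along $\mathcal{A}\to\mathcal{B}$ is $\mathcal{B}\to\mathcal{B}/(a')$ with $a'$ Boolean), so they form a generating pretopology for $J_{\mathbb{P}}$, and a subobject of $U^{n}$ is $J_{\mathbb{P}}$-closed precisely when it is a sheaf for each of them. Using that $\mathcal{A}\cong \mathcal{A}/(a)\times \mathcal{A}/(\neg a)$ for Boolean $a$ (recorded in the proof of Theorem \ref{thm:subdirect_product}), the sheaf condition for the Boolean-splitting families is precisely the requirement that the projections $\mathcal{A}\to\mathcal{A}/(a)$ and $\mathcal{A}\to\mathcal{A}/(\neg a)$ jointly reflect $Q$, while the empty covering of the trivial algebra forces the (necessary) condition that $Q$ hold vacuously on the trivial algebra. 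Composing the three bijections yields the asserted correspondence, in which $Q$ is the $J_{\mathbb{P}}$-closure in $\mathscr{E}_{\mathbb{C}}$ of the property defined by a $\phi(\vec{x})$ as in (i), and $P$ is recovered as the restriction of $Q$ to perfect MV-algebras; that these assignments are mutually inverse uses that the $J_{\mathbb{P}}$-irreducible objects are the finitely presentable perfect MV-algebras and that a filtered colimit of perfect MV-algebras computed in $\mathbb{C}\textrm{-mod}(\Set)$ is again perfect, the axioms P.3 and P.4 being geometric.

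The step I expect to be the main obstacle is exactly this last identification: converting ``$J_{\mathbb{P}}$-closedness of a subobject of $U^{n}$'' into the joint-reflection condition on $Q$. It requires moving carefully between the presheaf and the sheaf presentations of $\mathscr{E}_{\mathbb{P}}$, verifying that the Boolean-splitting families genuinely generate $J_{\mathbb{P}}$ as a pretopology, and handling the degenerate behaviour at the trivial algebra, which is a model of $\mathbb{C}$ but not of $\mathbb{P}$.
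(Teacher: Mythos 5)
Your proof follows essentially the same route as the paper's: part (i) via the definability theorem for theories of presheaf type, and part (ii) by identifying the properties $P$ with the subobjects of powers of the universal model in the classifying topos of $\mathbb{P}$, using that this model coincides with the universal model of $\mathbb{C}$, and hence with the $J_{\mathbb{P}}$-closed sieves on powers of the free algebra in $\textrm{f.p.}\mathbb{C}\textrm{-mod}(\Set)^{\textrm{op}}$. The only difference is that you explicitly carry out the final translation of $J_{\mathbb{P}}$-closedness into the joint-reflection condition via the generating Boolean-splitting families (including the degenerate behaviour at the trivial algebra and the need for $Q$ to be functorial), a step the paper compresses into ``from this our thesis follows at once.''
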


\begin{proof}
The first part of the theorem follows from Theorem \ref{definability} in light of the fact that $\mathbb P$ is of presheaf type.

The second part follows from the fact that the properties $P$ of tuples $\vec{x}=(x_{1}, \ldots, x_{n})$ of elements of perfect MV-algebras which are preserved by arbitrary MV-algebra homomorphisms and by filtered colimits of perfect MV-algebras correspond precisely to the subobjects of $U\times \cdots \times U$ in the classifying topos of $\mathbb P$, where $U$ is a universal model of $\mathbb P$ inside it. But, as we have observed above in the proof of Proposition \ref{prop:cartesianization}, the universal model $U_{\mathbb{C}}=Hom_{\textrm{f.p.}\mathbb{C}\textrm{-mod}(\Set)}(F, -)$ (where $F$ is the free $\mathbb{C}$-algebra on one generator), of $\mathbb C$ in its classifying topos is also a universal model of $\mathbb{P}$ in its classifying topos $\Sh(\textrm{f.p.}\mathbb{C}\textrm{-mod}(\Set)^{\textrm{op}}, J_{\mathbb P})$. Now, the subobjects of $U\times \cdots \times U$ in $\Sh(\textrm{f.p.}\mathbb{C}\textrm{-mod}(\Set)^{\textrm{op}}, J_{\mathbb P})$ are precisely the $J_{\mathbb P}$-closed sieves on $F\times \cdots \times F$ in  $\textrm{f.p.}\mathbb{C}\textrm{-mod}(\Set)^{\textrm{op}}$ (cf. section \ref{back-groth}). From this our thesis follows at once.   
\end{proof}

The following proposition provides an explicit reformulation of the subcanonicity property of the Grothendieck topology $J_{\mathbb P}$.

\begin{proposition}
Let $M$ be a finitely presented algebra in Chang's variety\\ $\mathbb{C}\textrm{-mod}(\Set)$ and $\phi(\vec{x})$ a $\mathbb C$-cartesian formula. For any family of tuples $\vec{a_{i}}\in [[\vec{x}. \phi]]_{M_{i}}$ indexed by the MV-homomorphisms $f_{i}:M\to M_{i}$ from $M$ to finitely presented perfect MV-algebras $M_{i}$ such that for any MV-homomorphism $g:M_{i}\to M_{j}$ such that $g\circ f_{i}=f_{j}$, $g(\vec{a_{i}})=\vec{a_{j}}$, there exists a unique tuple $\vec{a}\in [[\vec{x}.\phi]]_{M}$ such that $f_{i}(\vec{a})=\vec{a_{i}}$ for all $i$.  
\begin{center}
\begin{tikzpicture}
\node (0) at (0,0) {$M$};
\node (1) at (-2,2) {$M_i$};
\node (2) at (2,2) {$M_j$};
\draw[->] (1) to node [above]{$g$}(2);
\draw[->] (0) to node [left] {$f_i$} (1);
\draw[->] (0) to node [right]{$f_j$} (2);
\end{tikzpicture}
\end{center}  
\end{proposition}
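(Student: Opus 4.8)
The statement is exactly an unwinding of the fact that the canonical functor
$r\colon \textrm{f.p.}\mathbb{C}\textrm{-mod}(\Set)^{\textrm{op}}\simeq \mathcal{C}_{\mathbb{C}}^{\textrm{cart}}\to \Sh(\textrm{f.p.}\mathbb{C}\textrm{-mod}(\Set)^{\textrm{op}}, J_{\mathbb{P}})=\mathscr{E}_{\mathbb{P}}$ is full and faithful and preserves the interpretation of $\mathbb{C}$-cartesian formulas, together with the description of $\mathscr{E}_{\mathbb{P}}$ as sheaves for the rigid, subcanonical topology $J_{\mathbb{P}}$ whose irreducible objects are the finitely presentable perfect MV-algebras.

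The plan is as follows. First, I would fix notation: let $y\colon \textrm{f.p.}\mathbb{C}\textrm{-mod}(\Set)^{\textrm{op}}\to [\textrm{f.p.}\mathbb{C}\textrm{-mod}(\Set), \Set]$ be the Yoneda embedding and recall that, since $J_{\mathbb{P}}$ is subcanonical (Theorem \ref{thm:subdirect_product}), its composite $r$ with the associated-sheaf inclusion into $\Sh(\textrm{f.p.}\mathbb{C}\textrm{-mod}(\Set)^{\textrm{op}}, J_{\mathbb{P}})$ is still full and faithful. Under the equivalence $\textrm{f.p.}\mathbb{C}\textrm{-mod}(\Set)^{\textrm{op}}\simeq \mathcal{C}_{\mathbb{C}}^{\textrm{cart}}$ the object $M$ corresponds to the $\mathbb{C}$-cartesian formula presenting it, and the universal model $U_{\mathbb{C}}$ of $\mathbb{C}$ in $[\textrm{f.p.}\mathbb{C}\textrm{-mod}(\Set), \Set]$ is $Hom_{\textrm{f.p.}\mathbb{C}\textrm{-mod}(\Set)}(F, -)$, which by the argument in the proof of Proposition \ref{prop:cartesianization} is also a universal model of $\mathbb{P}$ in $\mathscr{E}_{\mathbb{P}}$ (using that $J_{\mathbb{P}}$ is subcanonical, via \cite[Lemma 2.1]{Caramello3}).

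Next, the key observation: for a $\mathbb{C}$-cartesian formula $\phi(\vec{x})$, the interpretation $[[\vec{x}.\phi]]_{U_{\mathbb{C}}}$ in $\mathscr{E}_{\mathbb{P}}$ is the image under $r$ of the object $\{\vec{x}.\phi\}$ of $\mathcal{C}_{\mathbb{C}}^{\textrm{cart}}$ — indeed $r$ is a cartesian functor preserving the interpretation of cartesian formulas, so it sends the object representing $\phi$ to $[[\vec{x}.\phi]]_{U_{\mathbb{C}}}$. Concretely, this means that for any finitely presentable $\mathbb{C}$-model $N$ (which by rigidity may be taken to be a finitely presentable perfect MV-algebra when computing sections over such objects), the set of global elements of $[[\vec{x}.\phi]]_{U_{\mathbb{C}}}$ relative to $y(N)$, i.e.\ the elements of $[[\vec{x}.\phi]]_{U_{\mathbb{C}}}(N)$, is naturally identified with $[[\vec{x}.\phi]]_{N}$. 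I would spell out that, since $r$ is full and faithful and sheaves send colimits of representables to limits, an element of $[[\vec{x}.\phi]]_{U_{\mathbb{C}}}(M)=\Hom_{\mathscr{E}_{\mathbb{P}}}(r(M), [[\vec{x}.\phi]]_{U_{\mathbb{C}}})$ is precisely a morphism $\{\vec{x}.\top\}\text{-style}$ datum, but more usefully: because $M$ is $J_{\mathbb{P}}$-covered by the finitely presentable perfect MV-algebras $M_i$ (the cosieve of arrows $f_i\colon M\to M_i$ generates a $J_{\mathbb{P}}$-covering sieve on $M$, by rigidity of $J_{\mathbb{P}}$ and Theorem \ref{thm:subdirect_product}), the sheaf condition for $[[\vec{x}.\phi]]_{U_{\mathbb{C}}}$ says exactly that $[[\vec{x}.\phi]]_{M}$ is the limit of the diagram of the sets $[[\vec{x}.\phi]]_{M_i}$ over the comma category of such arrows. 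Unwinding this limit is the assertion in the proposition: a compatible family $(\vec{a_i})_i$, compatibility being precisely $g(\vec{a_i})=\vec{a_j}$ for $g\circ f_i = f_j$, glues to a unique $\vec{a}\in [[\vec{x}.\phi]]_{M}$ with $f_i(\vec{a})=\vec{a_i}$.

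Thus the proof reduces to: (1) the covering sieve generated by all MV-homomorphisms from $M$ to finitely presentable perfect MV-algebras is $J_{\mathbb{P}}$-covering — which follows from rigidity of $J_{\mathbb{P}}$ (the $J_{\mathbb{P}}$-irreducible objects being exactly the finitely presentable perfect MV-algebras, and $M$ being non-trivial or, if trivial, handled by the empty covering, noting $[[\vec{x}.\phi]]_{M}$ is then a singleton or empty accordingly); and (2) the sheaf $[[\vec{x}.\phi]]_{U_{\mathbb{C}}}$, restricted along Yoneda, has value $[[\vec{x}.\phi]]_{N}$ at $N$ and is indeed a $J_{\mathbb{P}}$-sheaf, which holds because $\phi$ is $\mathbb{C}$-cartesian so $\{\vec{x}.\phi\}$ lives in $\mathcal{C}_{\mathbb{C}}^{\textrm{cart}}$ and its image under the full and faithful $r$ is a sheaf whose sections are computed by $r$ being full and faithful. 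I expect the main obstacle to be bookkeeping: carefully matching the abstract sheaf-gluing condition (a limit over a possibly large comma category of arrows out of $M$) with the concrete finite diagram in the statement, and checking that the indexing by \emph{all} such $f_i$ (not just those in one particular generating family) does not cause set-theoretic or coherence issues — but since any two generating families of a $J_{\mathbb{P}}$-covering sieve give the same gluing data, and the sheaf condition is equivalent for any cofinal subfamily, one can reduce to a finite generating family as produced in Theorem \ref{thm:subdirect_product} (the $2^n$ algebras $\mathcal{A}_i$), making the limit genuinely finite. With that reduction, uniqueness and existence of $\vec{a}$ are immediate from the sheaf axiom, completing the proof.
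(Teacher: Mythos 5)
Your proposal is correct and follows essentially the same route as the paper, whose proof is precisely the one-line observation that the statement is the sheaf condition for the (representable, by subcanonicity of $J_{\mathbb{P}}$) presheaf $N\mapsto [[\vec{x}.\phi]]_{N}$ with respect to the $J_{\mathbb{P}}$-covering sieve generated by the arrows from $M$ to finitely presentable perfect MV-algebras, under the equivalence $\mathscr{C}_{\mathbb{C}}^{\textrm{cart}}\simeq \textrm{f.p.}\mathbb{C}\textrm{-mod}(\Set)^{\textrm{op}}$. Your unwinding via rigidity, subcanonicity and the identification of $[[\vec{x}.\phi]]_{U_{\mathbb{C}}}$ with the image of $\{\vec{x}.\phi\}$ under the full and faithful functor $r$ is exactly the intended argument, only spelled out in more detail.
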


\begin{proof}
This immediately follows from the subcanonicity of the topology $J_{\mathbb P}$ (cf. Theorem \ref{thm:subdirect_product}) in view of the equivalence $\mathscr{C}_{\mathbb C}^{\textrm{cart}}\simeq \textrm{f.p.}\mathbb{C}\textrm{-mod}(\Set)^{\textrm{op}}$.
\end{proof}

The following proposition provides a characterization of the 
$\mathbb P$-equivalence classes of geometric sentences in terms of the theory $\mathbb C$.

\begin{proposition}
The $\mathbb P$-equivalence classes of geometric sentences are in natural bijection, besides with the ideals on $f.p.\mathbb{P}\textrm{-mod}(\Set)^{\textrm{op}}$ (cf. Lemma \ref{ideal-sentences}), with the $J_{\mathbb P}$-ideals on $\textrm{f.p.}\mathbb{C}\textrm{-mod}(\Set)^{\textrm{op}}$, that is with the sets $S$ of finitely presented algebras in $\mathbb{C}\textrm{-mod}(\Set)$ such that for any homomorphism $f:\mathcal{A}\to \mathcal{B}$ in $\textrm{f.p.}\mathbb{C}\textrm{-mod}(\Set)$, $\mathcal{A}\in S$ implies $\mathcal{B}\in S$ and for any $\mathcal{A}\in \mathbb{C}\textrm{-mod}(\Set)$ and any Boolean element $a$ of $\mathcal{A}$, $\mathcal{A}\slash (a)\in S$ and $\mathcal{A}\slash (\neg a)\in S$ imply $\mathcal{A}\in S$.
\end{proposition}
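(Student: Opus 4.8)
The plan is to combine the already-established bijection of Lemma~\ref{ideal-sentences} with the identification of the classifying topos of $\mathbb{P}$ as the subtopos $\mathbf{Sh}(\textrm{f.p.}\mathbb{C}\textrm{-mod}(\Set)^{\textrm{op}}, J_{\mathbb{P}})$ of $[\textrm{f.p.}\mathbb{C}\textrm{-mod}(\Set),\Set]$. Since $\mathbb{P}$ is of presheaf type (being Morita-equivalent to the cartesian theory $\mathbb{L}$), Lemma~\ref{ideal-sentences} already gives a canonical bijection between $\mathbb{P}$-equivalence classes of geometric sentences and ideals on $\mathscr{C}_{\mathbb{P}}^{\textrm{irr}}\simeq \textrm{f.p.}\mathbb{P}\textrm{-mod}(\Set)^{\textrm{op}}$. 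So the only new content is the bijection between these ideals and the $J_{\mathbb{P}}$-ideals on $\textrm{f.p.}\mathbb{C}\textrm{-mod}(\Set)^{\textrm{op}}$, together with the explicit unpacking of what a $J_{\mathbb{P}}$-ideal is in terms of the generating cosieves of $J_{\mathbb{P}}$.

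First I would recall the fact stated in section~\ref{back-groth}: for any site $(\mathscr{C}, J)$, the $J$-ideals on $\mathscr{C}$ correspond bijectively to the subterminal objects of $\Sh(\mathscr{C}, J)$. Applying this to $(\textrm{f.p.}\mathbb{C}\textrm{-mod}(\Set)^{\textrm{op}}, J_{\mathbb{P}})$ identifies the $J_{\mathbb{P}}$-ideals with the subterminal objects of $\mathscr{E}_{\mathbb{P}}$. On the other hand, subterminal objects of $\mathscr{E}_{\mathbb{P}}$ are, as noted in section~\ref{sct:intepretability}, exactly the $\mathbb{P}$-equivalence classes of geometric sentences. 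This already yields the desired bijection without even passing through Lemma~\ref{ideal-sentences}; but the mention of the latter is appropriate since both descriptions of subterminals (via $\mathbb{P}$-irreducible formulas and via $\mathbb{C}$-objects) are available, and the two ideal descriptions are thereby matched. Concretely, the equivalence $\mathscr{E}_{\mathbb{P}}\simeq \mathbf{Sh}(\textrm{f.p.}\mathbb{C}\textrm{-mod}(\Set)^{\textrm{op}}, J_{\mathbb{P}})$, restricted to $\mathbb{P}$-irreducible objects (which by the remark following Theorem~6.26 of \cite{Caramello5} are precisely the objects of $\textrm{f.p.}\mathbb{P}\textrm{-mod}(\Set)$), sends an ideal $I$ of $\textrm{f.p.}\mathbb{P}\textrm{-mod}(\Set)^{\textrm{op}}$ to the union of the corresponding representables inside $\mathscr{E}_{\mathbb{P}}$, which as a subterminal object corresponds to a $J_{\mathbb{P}}$-ideal on $\textrm{f.p.}\mathbb{C}\textrm{-mod}(\Set)^{\textrm{op}}$.

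Next I would unwind the definition of a $J_{\mathbb{P}}$-ideal using the explicit description of $J_{\mathbb{P}}$ given in section~\ref{classifying topos}. A subset $S$ of $\textrm{f.p.}\mathbb{C}\textrm{-mod}(\Set)$ corresponds to an ideal of $\textrm{f.p.}\mathbb{C}\textrm{-mod}(\Set)^{\textrm{op}}$ precisely when it is upward closed under homomorphisms, i.e.\ $\mathcal{A}\in S$ and $f:\mathcal{A}\to\mathcal{B}$ imply $\mathcal{B}\in S$ (this is the ``ideal'' condition read in the opposite category). The further ``$J_{\mathbb{P}}$'' condition requires closure under the generating covers of $J_{\mathbb{P}}$; since $J_{\mathbb{P}}$ is generated by the cosieve $S_{P.3'}$ — whose generators are, after pushout along an arbitrary homomorphism $f$, the pair of projections $\mathcal{A}\to\mathcal{A}\slash(a)$, $\mathcal{A}\to\mathcal{A}\slash(\neg a)$ for a Boolean element $a$ — and by the empty cosieve on the trivial algebra, the $J_{\mathbb{P}}$-closure condition reads: whenever $a$ is a Boolean element of $\mathcal{A}$ with $\mathcal{A}\slash(a)\in S$ and $\mathcal{A}\slash(\neg a)\in S$, then $\mathcal{A}\in S$ (the empty-cover condition on the trivial algebra being automatic, as the trivial algebra is then forced into $S$, which is harmless). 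This is exactly the description in the statement.

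The main obstacle, such as it is, is bookkeeping: one must be careful that the generating arrows of $J_{\mathbb{P}}$ together with their stability-closure indeed reduce, for the purpose of testing membership of ideals, to just the Boolean-element splittings, rather than to the full transfinite closure of the topology — but this is legitimate because an ideal that is closed under a generating set of covers is automatically closed under the topology they generate (by the transitivity axiom of a Grothendieck topology, reading off that $J$-ideals only need closure under generating covers). The other point to check is that the correspondence is natural, i.e.\ compatible with the bijection of Lemma~\ref{ideal-sentences}; this follows since all the equivalences involved ($\mathscr{E}_{\mathbb{P}}\simeq [\textrm{f.p.}\mathbb{P}\textrm{-mod}(\Set),\Set]$ and $\mathscr{E}_{\mathbb{P}}\simeq \mathbf{Sh}(\textrm{f.p.}\mathbb{C}\textrm{-mod}(\Set)^{\textrm{op}}, J_{\mathbb{P}})$) send the universal model of $\mathbb{P}$ to the universal model, so subterminals are matched coherently. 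I expect the whole proof to be short once these identifications are laid out.
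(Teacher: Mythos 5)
Your proof is correct and follows essentially the same route as the paper's: the paper's entire argument is that subterminal objects of $\Sh(\textrm{f.p.}\mathbb{C}\textrm{-mod}(\Set)^{\textrm{op}}, J_{\mathbb P})$ are the $J_{\mathbb P}$-ideals (section on Grothendieck topologies) while subterminals of the classifying topos are the $\mathbb P$-classes of geometric sentences. Your additional unwinding of the $J_{\mathbb P}$-ideal condition in terms of the generating covers (including the remark about the empty cover on the trivial algebra) is a careful elaboration of what the paper leaves implicit in the statement itself.
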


\begin{proof}
The thesis follows immediately from the fact that the subterminal objects of the topos $\Sh(\textrm{f.p.}\mathbb{C}\textrm{-mod}(\Set)^{\textrm{op}}, J_{\mathbb P})$ can be naturally identified with the $J_{\mathbb P}$-ideals on the category $\textrm{f.p.}\mathbb{C}\textrm{-mod}(\Set)^{\textrm{op}}$ (cf. section \ref{review}).
\end{proof}

Finally, we consider from the perspective of the two different representations $\Sh({\mathscr{C}_{\mathbb P}}, J_{\mathbb P})$ and $[\textrm{f.p.}\mathbb{P}\textrm{-mod}(\Set), \Set]$ of the classifying topos of $\mathbb P$ the invariant property of satisfying De Morgan's law. 

\[  
\xymatrix {
 & & [\textrm{f.p.}\mathbb{P}\textrm{-mod}(\Set), \Set]\simeq\Sh({\mathscr{C}_{\mathbb P}}, J_{\mathbb P})  \ar@/^12pt/@{--}[drr] & & \\
\textrm{f.p.}\mathbb{P}\textrm{-mod}(\Set)          \ar@/^12pt/@{--}[urr] & & & &  ({\mathscr{C}_{\mathbb P}}, J_{\mathbb P})      }
\]

For any small category $\mathscr{C}$, the topos $[\mathscr{C}, \Set]$ satisfies De Morgan's law if and only if the category $\mathscr{C}$ satisfies the \emph{amalgamation property} (AP), that is the property that every pair of arrows in $\mathscr{C}$ with common domain can be completed to a commutative square (cf. Example D4.6.3 \cite{SE}). On the other hand, for every coherent theory $\mathbb T$ over a signature $\Sigma$, the classifying topos $\Sh({\mathscr{C}_{\mathbb T}}, J_{\mathbb T})$ of $\mathbb T$ satisfies De Morgan's law if and only if for every geometric formula $\phi(\vec{x})$ over $\Sigma$ there exists a $\mathbb T$-Boolean coherent formula $\chi(\vec{x})$ in the same context such that for every geometric formula $\psi(\vec{x})$ over $\Sigma$, $(\psi \vdash_{x} \chi)$ is provable in $\mathbb T$ if and only if $(\psi \wedge \phi \vdash_{\vec{x}} \bot)$ is provable in $\mathbb T$ (cf. \cite{Caramello6}).

From the fact that the category of finitely presented $\ell$-groups satisfies the amalgamation property (since the theory $\mathbb L$ is cartesian), it follows that the category $\textrm{f.p.}\mathbb{P}\textrm{-mod}(\Set)$ satisfies AP as well and hence that $P$ satisfies the above-mentioned syntactic property. Summarizing, we have the following

\begin{proposition}
For every geometric formula $\phi(\vec{x})$ over the signature $\mathcal{L}_{MV}$ there exists a $\mathbb P$-Boolean coherent formula $\chi(\vec{x})$ in the same context such that for every geometric formula $\psi(\vec{x})$ over $\Sigma$, $(\psi \vdash_{x} \chi)$ is provable in $\mathbb P$ (equivalently, valid in all perfect MV-algebras) if and only if $(\psi \wedge \phi \vdash_{\vec{x}} \bot)$ is provable in $\mathbb P$ (equivalently, valid in all perfect MV-algebras).
\end{proposition}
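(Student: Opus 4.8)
The plan is to apply the `bridge technique' to the Morita-equivalence between $\mathbb P$ and $\mathbb L$ established in Theorem \ref{Moritaeq1}, using the property of satisfying De Morgan's law as the relevant topos-theoretic invariant. The invariant, read on the two sides of the bridge, translates into a statement about the site $({\mathscr{C}_{\mathbb P}}, J_{\mathbb P})$ on one side and into a statement about the presheaf representation $[\textrm{f.p.}\mathbb{P}\textrm{-mod}(\Set), \Set]$ on the other. The key point is that the classifying topos $\mathscr{E}_{\mathbb P}$ has two faithful representations: as $\Sh({\mathscr{C}_{\mathbb P}}, J_{\mathbb P})$, since $\mathbb P$ is coherent, and as $[\textrm{f.p.}\mathbb{P}\textrm{-mod}(\Set), \Set]$, since $\mathbb P$ is of presheaf type (being Morita-equivalent to the cartesian theory $\mathbb L$, cf. the discussion after Theorem \ref{Moritaeq1}).

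The steps, in order, are as follows. First, I would recall that, by \cite[Example D4.6.3]{SE}, for any small category $\mathscr{C}$ the presheaf topos $[\mathscr{C}, \Set]$ satisfies De Morgan's law if and only if $\mathscr{C}^{\textrm{op}}$ satisfies the amalgamation property (AP); hence $[\textrm{f.p.}\mathbb{P}\textrm{-mod}(\Set), \Set]$ satisfies De Morgan's law if and only if $\textrm{f.p.}\mathbb{P}\textrm{-mod}(\Set)$ satisfies AP. Second, I would use Theorem \ref{irr-f.p.} (or rather the equivalence $\textrm{f.p.}\mathbb{P}\textrm{-mod}(\Set)\simeq \textrm{f.p.}\mathbb{L}\textrm{-mod}(\Set)$ coming from Theorem \ref{irreducible_formulas}) to transfer AP from $\textrm{f.p.}\mathbb{L}\textrm{-mod}(\Set)$ to $\textrm{f.p.}\mathbb{P}\textrm{-mod}(\Set)$. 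The category $\textrm{f.p.}\mathbb{L}\textrm{-mod}(\Set)$ satisfies AP because $\mathbb L$ is cartesian: its cartesian syntactic category $\mathscr{C}_{\mathbb L}^{\textrm{cart}}\simeq \textrm{f.p.}\mathbb{L}\textrm{-mod}(\Set)^{\textrm{op}}$ has finite colimits (in particular pushouts), so any span can be completed to a commutative square. (More concretely, one may also invoke the classical fact that $\ell$-groups have the amalgamation property; but the categorical argument via cartesianness is cleaner and constructive.) Third, having concluded that $\mathscr{E}_{\mathbb P}$ satisfies De Morgan's law, I would invoke the syntactic characterization of this invariant for coherent theories from \cite{Caramello6}: the classifying topos $\Sh({\mathscr{C}_{\mathbb T}}, J_{\mathbb T})$ of a coherent theory $\mathbb T$ satisfies De Morgan's law if and only if, for every geometric $\phi(\vec{x})$, there is a $\mathbb T$-Boolean coherent $\chi(\vec{x})$ in the same context such that, for every geometric $\psi(\vec{x})$, $(\psi \vdash_{\vec{x}} \chi)$ is provable in $\mathbb T$ iff $(\psi \wedge \phi \vdash_{\vec{x}} \bot)$ is provable in $\mathbb T$. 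Applying this with $\mathbb T = \mathbb P$ and translating provability into validity in all perfect MV-algebras (by soundness and completeness of geometric logic) yields the statement.

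**Main obstacle.**
The routine parts — the equivalence $[\mathscr{C},\Set]\models$ De Morgan $\iff \mathscr{C}^{\textrm{op}}$ has AP, and the syntactic characterization of De Morgan's law for coherent classifying toposes — are cited results. The one point requiring genuine care is the transfer of AP along the equivalence $\textrm{f.p.}\mathbb{P}\textrm{-mod}(\Set)\simeq \textrm{f.p.}\mathbb{L}\textrm{-mod}(\Set)$: one must check that AP is an equivalence-invariant property of a category (it is — it is expressed purely in terms of objects, arrows and commutative squares) and that the equivalence of Theorem \ref{irreducible_formulas} is an equivalence of the categories of finitely presentable models, not merely of some syntactic categories. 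This is exactly the content of the discussion in section \ref{finitely presented}, so it is available. I would therefore expect the hardest conceptual step to be simply making sure the two representations of $\mathscr{E}_{\mathbb P}$ are being used consistently — i.e. that it is legitimate to compute the invariant on the presheaf side (where AP lives) and read it off on the syntactic-site side (where the desired syntactic statement lives) — which is precisely what the `bridge' formalism of \cite{Caramello1} licenses, given that both representations classify the same theory $\mathbb P$.
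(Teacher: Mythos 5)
Your proposal is correct and follows essentially the same route as the paper: De Morgan's law as the invariant on the classifying topos, read as the amalgamation property of $\textrm{f.p.}\mathbb{P}\textrm{-mod}(\Set)$ on the presheaf side (transferred from $\textrm{f.p.}\mathbb{L}\textrm{-mod}(\Set)$, which has AP since $\mathbb L$ is cartesian) and as the stated syntactic property on the $({\mathscr{C}_{\mathbb P}}, J_{\mathbb P})$ side via the characterization in \cite{Caramello6}. One small slip: the criterion of \cite[Example D4.6.3]{SE} is that $[\mathscr{C},\Set]$ is De Morgan iff $\mathscr{C}$ itself (not $\mathscr{C}^{\textrm{op}}$) has AP --- your subsequent application to $\textrm{f.p.}\mathbb{P}\textrm{-mod}(\Set)$ already uses the correct version, so nothing downstream is affected.
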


\begin{remark}
Recall from section \ref{review} that, for a coherent theory of presheaf type $\mathbb T$, the $\mathbb T$-Boolean coherent formulas are precisely the formulas defining the prope\-rties of tuples of elements $\mathbb T$-models which are preserved by filtered colimits of models and both preserved and reflected by arbitrary homomorphisms of models. We can thus identify the $\mathbb P$-Boolean coherent formulas with the properties of tuples of elements of perfect MV-algebras which are preserved under filtered colimits of perfect MV-algebras and which are both preserved and reflected by arbitrary homomorphisms of perfect MV-algebras. 
\end{remark}

\subsection{Transferring results for $\ell$-groups with strong unit}\label{sec:transferring}

In this section we transfer some of the representation results that we obtained for MV-algebras in Chang's variety to $\ell$-u groups. 

\begin{proposition}\label{double}
Under Mundici's equivalence 
\[
\mathbb{MV}\textrm{-mod}(\Set)\simeq \mathbb{L}_{u}\textrm{-mod}(\Set)
\]
\begin{enumerate}[(i)]
\item the injective homomorphisms of MV-algebras correspond precisely to the injective homomorphisms of $\ell$-u groups;

\item the finitely generated MV-algebras correspond precisely to the finitely generated $\ell$-u groups.
\end{enumerate}

\end{proposition}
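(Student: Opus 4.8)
The plan is to use Mundici's functor $\Gamma$ and its quasi-inverse explicitly, exploiting the fact that both constructions are concrete and well-understood on underlying sets. For part (i), recall that for an $\ell$-u group $(\mathcal{G}, u)$ the MV-algebra $\Gamma(\mathcal{G}, u)$ has underlying set the unit interval $[0, u] = \{x \in G \mid 0 \leq x \leq u\}$, while in the converse direction the $\ell$-group associated to an MV-algebra $\mathcal{A}$ is built from the monoid $\mathcal{M}_{\mathcal{A}}$ of good sequences (cf. the Remark following Theorem \ref{monoid-group}), which embeds into its Grothendieck group; crucially, $\mathcal{A}$ embeds into $\mathcal{M}_{\mathcal{A}}$ as the good sequences of length $\leq 1$, and hence into the associated $\ell$-u group. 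First I would observe that a unital $\ell$-homomorphism $h \colon (\mathcal{G}, u) \to (\mathcal{G}', u')$ restricts to $\Gamma(h) \colon [0,u] \to [0,u']$, and that $h$ is injective iff $\Gamma(h)$ is: one direction is trivial (restriction of an injection is an injection); for the other, if $\Gamma(h)$ is injective then, since every element of $G^{+}$ is a finite sum of elements of $[0,u]$ (as $u$ is a strong unit) and $h$ is a group homomorphism, $h$ is injective on $G^{+}$, hence on $G = G^{+} - G^{+}$. Conversely, an MV-homomorphism $f \colon \mathcal{A} \to \mathcal{B}$ induces a homomorphism $\mathcal{M}_f$ of the good-sequence monoids componentwise, and then a unital $\ell$-homomorphism of Grothendieck groups; injectivity transfers back and forth because $\mathcal{A} \hookrightarrow \mathcal{M}_{\mathcal{A}} \hookrightarrow G(\mathcal{M}_{\mathcal{A}})$ and the good-sequence construction is functorial and injectivity-preserving on each component.

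For part (ii), the key point is that a set of generators on one side gives a set of generators on the other, up to a controlled finite enlargement. If $(\mathcal{G}, u)$ is generated as an $\ell$-u group by $g_1, \dots, g_n$, I would show that $\Gamma(\mathcal{G}, u)$ is generated as an MV-algebra by a finite set obtained from the $g_i$ together with $u$: each $g_i$ can be written as $g_i = g_i^{+} - g_i^{-}$ with $g_i^{\pm} \in G^{+}$, and each positive element, being bounded by some $m u$, decomposes (using the strong unit and truncated subtraction) into a finite MV-algebraic combination of elements of $[0, u]$; concretely, one uses the standard fact that the map sending $\ell$-u groups to MV-algebras and the reverse map interact well with finite generation, which at bottom reduces to the observation that $[0, u]$ together with the MV-operations recovers, inside $G$, arbitrary finite sums and differences clamped to $[0, u]$. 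Conversely, if $\mathcal{A}$ is generated as an MV-algebra by $a_1, \dots, a_n$, then in the associated $\ell$-u group $G(\mathcal{M}_{\mathcal{A}})$ the images of $a_1, \dots, a_n$ together with $u$ generate: every element of $\mathcal{M}_{\mathcal{A}}$ is a finite $\oplus$-sum (equivalently, in $G$, an ordinary sum) of elements of $\mathcal{A}$, and every element of $G(\mathcal{M}_{\mathcal{A}})$ is a difference of two such; since $\oplus$, $\neg$ on $\mathcal{A}$ are expressible via $+$, $-$, $\inf$, $\sup$ and $u$ in $G$, the sub-$\ell$-u-group generated by the $a_i$ contains all of $\mathcal{A}$, hence all of $\mathcal{M}_{\mathcal{A}}$, hence all of $G(\mathcal{M}_{\mathcal{A}})$.

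I expect the main obstacle to be the bookkeeping in part (ii) showing that finitely many MV-generators of $\mathcal{A}$ suffice, together with $u$, to generate the whole $\ell$-u group: one must carefully verify that the good-sequence monoid $\mathcal{M}_{\mathcal{A}}$ — whose elements are unbounded-length sequences — is nonetheless generated under $+$ by the length-$\leq 1$ sequences, i.e. by $\mathcal{A}$ itself, and that this $+$ is the restriction of the group operation of $G(\mathcal{M}_{\mathcal{A}})$. This is essentially the content of Mundici's original analysis of $\Gamma$ (cf. \cite{Mundici}, \cite{CDM}), so the proof will proceed by citing the relevant structural facts about good sequences and then assembling the generation argument; the rest is routine. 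An alternative, cleaner route for (ii) would be to argue that both ``finitely generated'' conditions are equivalent to the corresponding object being a quotient of a \emph{finitely presented} object in a way compatible with the equivalence, and to invoke that finite presentability is preserved under Mundici's equivalence (which follows from it being an equivalence of categories together with the characterization of finitely presentable objects via filtered colimits); but the hands-on approach above is more self-contained and I would present that.
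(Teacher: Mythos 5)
Your proposal is correct in outline but takes a genuinely different route from the paper's in part (ii), and a slightly different (in fact more robust) route in part (i). For (i) the paper also reduces to showing that $f$ is injective on the positive cone, but it gets there by claiming that $0\leq x\leq nu$ yields a single $z\in[0,u]$ with $nz=x$; your version --- decomposing $x$ as a \emph{finite sum} $z_1+\cdots+z_n$ of elements of $[0,u]$ via the Riesz decomposition property and using that a sum of positive elements vanishes only if each summand does --- is the form of the argument that is actually available in an arbitrary $\ell$-group, so this step of yours is if anything cleaner. For (ii) the paper does not chase generators at all: it observes that, by (i), Mundici's equivalence restricts to an equivalence between the subcategories of injective homomorphisms, that a finitely generated MV-algebra is a finitely presentable object of that subcategory, and that an $\ell$-u group which is finitely presentable in the corresponding subcategory must be finitely generated (being the filtered union of its finitely generated $\ell$-u subgroups); this is precisely the ``alternative, cleaner route'' you mention and discard at the end. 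Your hands-on argument buys constructive, explicit generating sets, but to close it you must make precise the one point you currently gloss over: that the MV-subalgebra of $[0,u]$ generated by a finite set $S$ is exactly $H\cap[0,u]$ for $H$ the $\ell$-u subgroup generated by $S\cup\{u\}$ (the subalgebra--subgroup correspondence under $\Gamma$, cf. \cite{CDM}); granting that standard fact, both of your generation arguments go through, since in each direction the relevant subgroup is forced to be the whole group.
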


\begin{proof}
(i) Clearly, the homomorphisms of MV-algebras which correspond under Mundici's equivalence to injective homomorphisms of $\ell$-u groups are injective. The converse implication can be proved as follows. Given a homomorphism $f:{\mathcal G}\to {\mathcal G}'$ of $\ell$-u groups, $f$ is injective if and only if for every $x\in {\mathcal G}$, $f(x)=0$ implies $x=0$. Now, for any $x\in {\mathcal G}$, there exists $n$ such that $x\leq nu$. Using Riesz's interpolation property, there is $0\leq z\leq u$ such that $nz=x$. Now, $f(x)=f(nz)=nf(z)=0$, whence $f(z)=0$ since $\ell$-groups are torsion-free. So, from the fact that $f|_{[0, u]}$ is injective, it follows that $z=0$ and hence that $x=0$, as required.   

(ii) It is clear that the MV-algebra corresponding to a finitely generated $\ell$-u groups is finitely generated. Conversely, since by point (i) of the proposition the category of MV-algebra and injective homomorphisms between them and the category of $\ell$-u groups and injective homomorphisms between them are equivalent and every finitely generated MV-algebra is a finitely presentable object of the former category, every $\ell$-u group which corresponds to a finitely generated MV-algebra under Mundici's equivalence is finitely presentable as an object of the category of $\ell$-u groups and injective homomorphisms between them. Now, since every $\ell$-u group $\mathcal{G}$ is the filtered union of its finitely generated $\ell$-u subgroups, if $\mathcal{G}$ is finitely presentable as an object of the category of $\ell$-u groups and injective homomorphisms between them then $\mathcal{G}$ is finitely generated. This implies our thesis. 
\end{proof}

We are now in the position to transfer the representation results for the MV-algebras in Chang's variety that we obtained in section \ref{classifying topos} to the context of $\ell$-u groups.

First, we need to describe the quotient $\mathbb{A}$ of $\mathbb{L}_u$ corresponding to the quotient $\mathbb C$ of $\mathbb{MV}$ axiomatizing Chang's variety $V(C)$: this is clearly obtained from $\mathbb{L}_u$ by adding the following sequents:

\begin{center}
$(0\leq x\wedge x\leq u\vdash_{x} \sup(0,2\inf(2x,u)-u)=\inf(u,2\sup(2x-u,0))$

\

$((0\leq x\wedge x\leq u\vdash_{x}\inf(u,2\sup(0,2\inf(2x,u)-u))=\sup(0,2\inf(2x,u)-u))$
\end{center}

In view of Proposition \ref{double}, we immediately obtain the following result, representing the translation of Theorems \ref{thm:subdirect_product} and \ref{StoneChang}.

\begin{theorem}
Every $\ell$-u group which is a model of $\mathbb{A}$ is a weak subdirect product of antiarchimedean $\ell$-u groups. 

Every finitely generated (resp. finitely presentable) $\ell$-u group which is a model of $\mathbb{A}$ is a finite direct product of antiarchimedean (resp. antiarchimedean finitely presentable) $\ell$-u groups.
\end{theorem}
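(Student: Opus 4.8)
The plan is to transfer the MV-algebraic results of Theorems~\ref{thm:subdirect_product} and~\ref{StoneChang} across Mundici's equivalence $\mathbb{MV}\textrm{-mod}(\Set)\simeq \mathbb{L}_{u}\textrm{-mod}(\Set)$, using the fact (already recorded above, in the final Remark of section~\ref{sec:relatedMoritaequivalence}) that under this equivalence the algebras in Chang's variety $V(C)$ correspond exactly to the models of the quotient $\mathbb{A}$ of $\mathbb{L}_{u}$, and that perfect MV-algebras correspond to antiarchimedean $\ell$-u groups. First I would note that a weak subdirect product is a purely categorical notion — joint injectivity of a family of morphisms out of an object — so by Proposition~\ref{double}(i), which says that injective homomorphisms of MV-algebras correspond precisely to injective homomorphisms of $\ell$-u groups, a family $\{\mathcal{A}\to \mathcal{A}_{i}\}$ in $\mathbb{MV}\textrm{-mod}(\Set)$ is jointly monic if and only if the corresponding family $\{\mathcal{G}\to \mathcal{G}_{i}\}$ of $\ell$-u group homomorphisms is jointly monic. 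Hence Theorem~\ref{StoneChang}, applied to the MV-algebra $\Gamma^{-1}$-image of a given model of $\mathbb{A}$, yields that every model of $\mathbb{A}$ is a weak subdirect product of (finitely presentable) perfect MV-algebras, and translating back gives the weak subdirect product of antiarchimedean $\ell$-u groups.

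For the second statement I would argue similarly but now invoking Proposition~\ref{double}(ii): the finitely generated MV-algebras correspond precisely to the finitely generated $\ell$-u groups, and — since both $\mathbb{C}$ and $\mathbb{L}$ (hence, by the discussion in section~\ref{classifying topos}, the relevant finite presentability notions) behave well — the finitely presentable objects also correspond. Given a finitely generated (resp.\ finitely presentable) model $\mathcal{G}$ of $\mathbb{A}$, its image $\mathcal{A}=\Gamma(\mathcal{G})$ is a finitely generated (resp.\ finitely presentable) MV-algebra in $\mathbb{C}\textrm{-mod}(\Set)$, so by Theorem~\ref{thm:subdirect_product} (together with the remark after Lemma~\ref{wbb} on finitely generated algebras) $\mathcal{A}$ is a finite direct product $\prod_{k=1}^{m}\mathcal{A}_{k}$ of finitely generated (resp.\ finitely presentable) perfect MV-algebras. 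Since $\Gamma$ is an equivalence of categories it preserves finite products, so $\mathcal{G}\cong \prod_{k=1}^{m}\Gamma^{-1}(\mathcal{A}_{k})$, a finite direct product of antiarchimedean (resp.\ antiarchimedean finitely presentable) $\ell$-u groups, which is the claim.

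The one point requiring genuine care is the compatibility between the two notions of ``finitely presentable'' on the MV side and on the $\ell$-u side, and in particular whether $\Gamma$ carries finitely presentable objects to finitely presentable objects. This is exactly the content of Proposition~\ref{double}(ii) and its proof (via joint equivalence of the categories with injective morphisms and the fact that finitely generated MV-algebras are finitely presentable objects of that subcategory), so the main obstacle has effectively been isolated and dispatched before this theorem; here it only remains to apply it. A secondary technical check is that $\Gamma$, being an equivalence, preserves and reflects finite products and joint monomorphisms — both immediate since these are categorical — so the translation in both directions is clean and no further computation is needed.
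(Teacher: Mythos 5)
Your proof is correct and follows the paper's intended route exactly: the paper presents this theorem as an immediate translation of Theorems \ref{thm:subdirect_product} and \ref{StoneChang} across Mundici's equivalence via Proposition \ref{double}, which is precisely what you carry out. The additional checks you make explicit (that joint monicity, finite products and finite presentability are categorical notions and hence preserved by the equivalence, with Proposition \ref{double} supplying the non-categorical correspondences of injectivity and finite generation) are exactly the details the paper leaves implicit.
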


One could also, by using the same method as that leading to the proof of Theorem \ref{MVatomic}, intrinsically characterize the $\ell$-u groups which are direct products of antiarchimedean $\ell$-u groups.

\vspace{0.7cm}

\textbf{Acknowledgements:} We thank Antonio Di Nola for many helpful exchanges on the subject matter of this paper. We are also grateful to Giacomo Lenzi for useful discussions.

\vspace{1cm}
\textsc{Olivia Caramello}

{\small \textsc{Institut des Hautes \'Etudes Scientifiques\\
Bures-sur-Yvette, 35 route de Chartres, 91440, France}\\
\emph{E-mail addresses:} \texttt{olivia@ihes.fr}; \texttt{olivia@oliviacaramello.com}}. 

\vspace{0.5cm}

\textsc{Anna Carla Russo}

{\small \textsc{Dipartimento di Matematica e Informatica, Universit\' a degli Studi di Salerno, Via Giovanni Paolo II, 132, 84084 Fisciano (SA)}\\
\emph{E-mail address:} \texttt{anrusso@unisa.it}}

\end{document}